\newtheorem{theorem}{Theorem}[section]
\newtheorem{lemma}[theorem]{Lemma}
\newtheorem{prop}[theorem]{Proposition}
\newtheorem{corro}[theorem]{Corollary}
\theoremstyle{definition}
\newtheorem{definition}[theorem]{Definition}
\theoremstyle{remark}
\newtheorem{remark}[theorem]{Remark}
\numberwithin{equation}{section}
\DeclareMathAlphabet{\mathsl}{OT1}{cmss}{m}{sl}
\SetMathAlphabet{\mathsl}{bold}{OT1}{cmss}{bx}{sl}
\newcommand{\al}{\ensuremath{\alpha}}
\newcommand{\be}{\ensuremath{\beta}}
\newcommand{\ga}{\ensuremath{\gamma}}
\newcommand{\de}{\ensuremath{\delta}}
\newcommand{\ep}{\ensuremath{\epsilon}}
\newcommand{\la}{\ensuremath{\lambda}}
\newcommand{\si}{\ensuremath{\sigma}}
\newcommand{\om}{\ensuremath{\omega}}
\newcommand{\ve}{\ensuremath{\varepsilon}}
\newcommand{\vp}{\ensuremath{\varphi}}
\newcommand{\Ga}{\ensuremath{\Gamma}}
\newcommand{\De}{\ensuremath{\Delta}}
\newcommand{\La}{\ensuremath{\Lambda}}
\newcommand{\Si}{\ensuremath{\Sigma}}
\newcommand{\Om}{\ensuremath{\Omega}}
\newcommand{\cA}{\ensuremath{\mathcal A}}
\newcommand{\cC}{\ensuremath{\mathcal C}}
\newcommand{\cF}{\ensuremath{\mathcal F}}
\newcommand{\cO}{\ensuremath{\mathcal O}}
\newcommand{\cP}{\ensuremath{\mathcal P}}
\newcommand{\cQ}{\ensuremath{\mathcal Q}}
\newcommand{\cS}{\ensuremath{\mathcal S}}
\newcommand{\bbB}{\ensuremath{\mathbb B}}
\newcommand{\bbE}{\ensuremath{\mathbb E}}
\newcommand{\bbN}{\ensuremath{\mathbb N}} 
\newcommand{\bbP}{\ensuremath{\mathbb P}} 
\newcommand{\bbR}{\ensuremath{\mathbb R}}
\newcommand{\bbX}{\ensuremath{\mathbb X}} 
\newcommand{\bbY}{\ensuremath{\mathbb Y}}
\newcommand{\bfB}{\ensuremath{\mathbf B}}
\newcommand{\bfI}{\ensuremath{\mathbf I}}
\newcommand{\bfJ}{\ensuremath{\mathbf J}}
\newcommand{\bfK}{\ensuremath{\mathbf K}}
\newcommand{\bfL}{\ensuremath{\mathbf L}}
\newcommand{\bfP}{\ensuremath{\mathbf P}}
\newcommand{\bfX}{\ensuremath{\mathbf X}}
\newcommand{\bfY}{\ensuremath{\mathbf Y}}
\newcommand{\me}{\ensuremath{\mathrm{e}}}
\newcommand{\md}{\ensuremath{\mathrm{d}}}
\newcommand{\brb}{\bar{b}}
\newcommand{\diverg}{\mathrm{div}}
\DeclareMathOperator{\mean}{\mathbb{E}}
\DeclareMathOperator{\Mean}{\mathrm{E}}
\DeclareMathOperator{\prob}{\mathbb{P}}
\newcommand{\ldef}{\ensuremath{\mathrel{\mathop:}=}}
\newcommand{\rdef}{\ensuremath{=\mathrel{\mathop:}}}
\newtheorem{corollary}[theorem]{Corollary}
\begin{document}

%
% Title and abstract for the latex class: article
%
\title{Enhanced Sanov and robust propagation of chaos}
\date{\today}
%
%
%\title{More than one Author with different Affiliations}
\author[1]{Jean-Dominique Deuschel}
\author[1,2]{Peter K. Friz}
\author[1,2]{Mario Maurelli}
\author[1]{Martin Slowik}
{
  \tiny
  \affil[1]{{Institut f\"ur Mathematik, Technische Universit\"at Berlin}}  \affil[2]{{Weierstra\ss --Institut f\"ur Angewandte Analysis und Stochastik, Berlin}}
}

\maketitle

\begin{abstract}
  We establish a Sanov type large deviation principle for an ensemble of interacting Brownian rough paths. As application a large deviations for the ($k$-layer, enhanced) empirical measure of weakly interacting diffusions is obtained. This in turn implies a propagation of chaos result in rough path spaces and allows for a robust subsequent analysis of the particle system and its McKean-Vlasov type limit, as shown in two corollaries.
\end{abstract}

\tableofcontents

\section{Introduction and main results}
\subsection{Large deviation and rough paths}
The present paper is concerned with the intersection of large deviations, rough paths and (weakly) interacting diffusions. We note (i) that large deviations have been one of the first application areas of rough paths theory: indeed, following Ledoux et al. \cite{LQZ02}, a large deviation principe for Brownian motion and L\'evy's area, scaled by $\epsilon$ and $\epsilon^2$ respectively, in rough path topology, will yield immediately the Freidlin--Wentzell theory of large deviations for diffusions with small noise --  its suffices to combine continuity of the It\^o-map in rough path sense with the contraction principle of large deviation theory; \cite{FrVi10}. (ii) The interplay of rough paths with interacting stochastic differential equations was pioneered in \cite{CL15}. This work, as well as the more recent \cite{Ba15}, required in particular the development of a McKean Vlasov theory in the context of random rough differential equations (which is not at all the aim of this paper). At last, (iii) large deviations for interacting diffusions is a huge field, a small selection of relevant references is given by \cite{DG87, dPdH96, Ta84, dMZ03, Sz91}.

In sense, we combine here aspects of all the afore-mentioned references. In particular, when compared to the many classical works (iii) an advantage of our approach is robustness: as soon as we have a LDP on a suitably enhanced space (``enhanced Sanov") -- on which most stochastic operations of interest are continuous, the raison d'\^etre of rough paths -- basic facts of large deviation theory, such as contraction principle or Varadhan's lemma become directly applicable. On the contrary, stronger versions of contraction principles or Varadhan lemma need suitable approximated continuity properties which must be checked case by case.

We briefly describe our main results.  Let $\{B^i : i \in \bbN\}$ be a family of independent $d$-dimensional standard Brownian motions,\footnote{Later on, we shall allow for non-trivial $Law(B^i_0) \equiv \lambda.$} on a fixed filtered probability space $(\Omega, \cA, (\cF_t)_t, \prob)$.  On a finite time-horizon, say $[0, T]$, we may regard them as $C([0,T]; \bbR^d)$-valued i.i.d.\ random variables.  By a classical law of large numbers (LLN) argument (e.g. \cite[Thm 11.4.1]{Du02}) the \emph{empirical measure}, $L_n$, a random measure on pathspace, converges to the $d$-dimensional Wiener measure $P^{\{d\}}$.  More precisely, with probability one,\footnote{We regard $L_n^B$ and $P^d$ as random variables with values in the (Polish) space $\mathcal{P}(C([0,T];R^d))$, equipped with the $C_b$-weak topology.}
\begin{align*}
  L_n^B (\om)
  \;\ldef\;
  \frac{1}{n}\, \sum^n_{i=1} \de_{ B^i(\om) }
  \;\longrightarrow\;
  P^{\{d\}} 
  \qquad \text{as} \quad n \to \infty,
\end{align*}
Sanov's theorem quantifies the speed of this convergence: for a measure $Q$ on $C([0,T]; \bbR^d)$,
\begin{align*}
  \bbP\big[ L_n^B (\om) \approx Q\big]
  \;\approx\;
  \exp\!\big( \!-\!n\, H(Q \,|\, P^{\{d\}})\big),              
\end{align*}
in the form of a large deviation principle \cite{DeSt89,DeZe10}, where $H$ is the relative entropy.  Now, for each $1 \leq i,j \leq n$, we introduce the $2d$-dimensional \emph{double-layer} process $B^{\{2\};ij} \equiv B^{ij}$ as
\begin{align}
  B^{ij}_t
  \;\ldef\;
  \big( B_t^i, B_t^j \big) \,\in\, \bbR^{2d},
\end{align}
and then define the \emph{enhanced double-layer} process $\bbB^{\{2\};ij} \equiv \bbB^{ij}$, with values in the space of $2d \times 2d$ matrices, as
\begin{align}
  \bbB^{ij}_t \;=\; \int^t_0 B^{ij}_s \otimes \circ\, \md B^{ij}_s,
\end{align}
where $\circ$ denotes Stratonovich integration.  Clearly, for any $i \ne j$, we have
\begin{align*}
  \mathop{\mathrm{Law}}(B^{ij})
  \;=\;
  \mathop{\mathrm{Law}}\big(B^{12}\big)
  \;=\;
  P^{\{2d\}},
\end{align*}
where $P^{\{e\}}$ denotes $e$-\emph{dimensional Wiener-measure}.  We are interested in (the $G^2(\bbR^{2d})$-valued process)
\begin{align*}
  \bfB^{\{2\};ij}
  \;\equiv\;
  \bfB^{ij}
  \;\equiv\;
  (B^{ij},\bbB^{ij})
  \;\ldef\; ((B^i, B^j), \bbB^{ij}),
\end{align*}
with law
\begin{align*}
  \mathop{\mathrm{Law}}({\bfB^{ij}})
  \;=\;
  \mathop{\mathrm{Law}}({\bfB^{12}})
  \;=\;
  \mathop{\mathrm{Law}}  (B^1,B^2, \bbB^{12})
  \;\rdef\;
  \bfP^{\{2d\}},
\end{align*}
where $\bfP$ denotes the \emph{enhanced ($e$-dimensional) Wiener-measure}.\footnote{That is, the law of $e$-dimension Brownian motion $B$ and all its iterated integrals of the form $\int B^k \circ \md B^l, \, 1 \le k,l \le e$.}  For every $n$, define the \emph{enhanced ``double-layer'' empirical measure} as
\begin{align}
  \bfL_n^{\bfB;\{2\}}(\om) \equiv  \bfL_n^{\bfB}(\om)
  \;\ldef\;
  \frac{1}{n^2}\, \sum^n_{i,j=1}\de_{(B^{ij}, \bbB^{ij})(\om)}
  \;\equiv\;
  \frac{1}{n^2}\sum^n_{i,j = 1} \de_{\, \bfB^{ij}(\om) }.
\end{align}
In order to extend the enhanced ``double-layer'' ($k=2$) empirical measure to any $k \geq 3$,  define the ($kd$-dimensional) \emph{$k$-layer process} $B^{\{k\};i_1, \ldots, i_k} \equiv (B^{i_1}, \ldots ,B^{i_k})$, its rough path lift  $\bfB^{\{k\};i_1, \ldots ,i_k}$, and then the \emph{enhanced ``$k$-layer'' empirical measure} given by $\bfL_n^{\bfB;\{k\}}(\om) \ldef n^{-k} \sum \de_{\, \bfB^{\{k\};i_1,...,i_k}(\om)} $ with summation over all $1 \leq i_1, \ldots ,i_k \leq n$.  One may expect, as suggested by our notation, that, for any integer $k$,\footnote{Again we regard $\bfL_n^{\bfB;\{k\}}$ and $\bfP^{\{kd\}}$ as random variables with values in the (Polish) space $\mathcal{P}(C([0,T];G^2(\bbR^{kd}))$, equipped with the $C_b$-weak topology.}
\begin{align}\label{eq:convEMRP}
  \bfL^{\bfB;\{k\}}_n(\om)
  \;\longrightarrow\;
  \bfP^{\{kd\}}
  \qquad \text{as} \quad
  n \to \infty.    %from LLN: a.s. from LDP: in probability
\end{align}
This is indeed the case, however not a consequence of LLN, for even when $k=2$ the $\{ \bfB^{ij}:  i, j = 1, \ldots, n\}$ are not independent. In fact, we shall study the speed of convergence around this limit: one of our main results is a large deviation principle for the law of $\bfL_n^{\bfB;\{k\}}$, which gives \eqref{eq:convEMRP}, with convergence in probability (and a.s. by a Borel-Cantelli argument) with respect to the $\al$-H\"older rough path topology, as a byproduct. Here and below $\cC_g^{0,\al}([0,T]; \bbR^{e})$ denotes a rough path space (in notation of \cite{FrHa14}, some recalls below), a Polish space, elements of which are paths with values in the group $G^{(2)}(\bbR^e) \subset \bbR^e \otimes (\bbR^e)^{\otimes 2}$.
\begin{theorem} \label{thm:main_res_intro}
  Fix $\al$ in $(1/3, 1/2)$.  The sequence of laws $\{\mathop{\mathrm{Law}}(\bfL_n^{\bfB;\{k\}}) : n \in \bbN\}$ satisfies a large deviation principle on $\cP(\cC_g^{0,\al}([0,T]; \bbR^{kd}))$ endowed with the $C_b$-weak topology, with scale $n$ and good rate function $\bfI\!:\cP(\cC_g^{0,\al}([0,T]; \bbR^{kd})) \to \bbR \cup \infty$ that is given by
  \begin{align}
    \bfI^{\{k\}}(\mu) \equiv  \bfI(\mu)
    \;=\;
    \begin{cases}
      H( \mu \circ \pi_1^{-1} \,|\, P^{\{d\}}), \quad
      &\text{if }
      \mu = F^{\{k\}}(\mu \circ \pi_1^{-1}),
      \\[1ex]
      +\infty,
      &\text{otherwise} .
    \end{cases}
  \end{align}
  This LDP is also valid in a stronger (``modified Wasserstein") topology. 
\end{theorem}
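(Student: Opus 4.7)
The strategy is to deduce the enhanced LDP from the classical Sanov theorem by a contraction argument through a $k$-layer lift map $F^{\{k\}}$, circumventing the fact that the Stratonovich lift is not a continuous functional of the underlying Brownian paths by means of a piecewise linear approximation together with an exponential equivalence. Classical Sanov provides an LDP for $\{L_n^B\}_n$ on $\mathcal{P}(C([0,T];\bbR^d))$ with the $C_b$-weak topology, at speed $n$ and good rate $\nu \mapsto H(\nu\,|\,P^{\{d\}})$, so the task reduces to pushing this LDP forward to $\mathcal{P}(\cC_g^{0,\al}([0,T];\bbR^{kd}))$ along a sufficiently continuous version of the lift.

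\textbf{Definition of $F^{\{k\}}$ and its piecewise linear approximation.} For $\nu \in \mathcal{P}(C([0,T];\bbR^d))$ with $\nu \ll P^{\{d\}}$, define $F^{\{k\}}(\nu)$ as the law of the geometric rough path $\bfX = (X,\bbX)\in\cC_g^{0,\al}([0,T];\bbR^{kd})$, where $X = (X^1,\ldots,X^k)$ are iid $\nu$-distributed semimartingales realised by Girsanov from an underlying $kd$-dimensional Brownian motion, and $\bbX$ collects their Stratonovich iterated integrals; since absolute continuity preserves quadratic variation, $\bfX$ is genuinely $\al$-Hölder for $\al<1/2$ by Kolmogorov-type estimates. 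For dyadic partitions of $[0,T]$ at mesh $T/2^m$, let $B^{i,m}$ denote the piecewise linear interpolation of $B^i$ and let $\bfF_m^{\{k\}}:\mathcal{P}(C([0,T];\bbR^d))\to\mathcal{P}(\cC_g^{0,\al}([0,T];\bbR^{kd}))$ send $\nu$ to the law of the canonical geometric lift of $(X^{1,m},\ldots,X^{k,m})$ with $X^i$ iid $\nu$. Since the piecewise linear lift depends continuously on its input in $\al$-Hölder rough path topology, $\bfF_m^{\{k\}}$ is continuous in the $C_b$-weak topology. Writing $\bfL_n^{\bfB,m;\{k\}}$ for the empirical measure built from the piecewise linear lifts, one has $\bfL_n^{\bfB,m;\{k\}}=\bfF_m^{\{k\}}(L_n^B)$, so the contraction principle yields an LDP at speed $n$ for $\{\bfL_n^{\bfB,m;\{k\}}\}_n$ at each fixed $m$.

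\textbf{Exponential equivalence, rate function, and the stronger topology.} Passing $m\to\infty$ is carried out via the Dembo--Zeitouni framework of exponentially good approximations. The crucial input is super-exponential concentration for the $\al$-Hölder rough path distance between $\bfB^{\{k\};i_1,\ldots,i_k}$ and its piecewise linear analogue, obtained by combining classical Brownian rough path estimates of Friz--Victoir type with Gaussian concentration, together with a union bound over the $n^k$ tuples designed to keep the error at speed $n$. This identifies $\bfI$: for any $\mu$ with finite rate the marginal $\mu\circ\pi_1^{-1}$ is necessarily absolutely continuous with respect to $P^{\{d\}}$, and $F^{\{k\}}$ then recovers $\mu$ unambiguously from that single-layer marginal, so the rate collapses to the explicit formula stated. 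The upgrade to the modified Wasserstein topology follows from exponential tightness, itself a consequence of Fernique-type tail estimates for the rough path $\al$-Hölder norm. The principal obstacle is precisely the exponential equivalence step: the rough path approximation error must be controlled uniformly over all $n^k$ tuples at the correct speed $n$, which requires Gaussian concentration that respects the quadratic scaling of the iterated integrals and a careful interplay with the Wasserstein-type metric on $\mathcal{P}(\cC_g^{0,\al})$.
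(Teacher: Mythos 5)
Your overall architecture --- classical Sanov, continuous piecewise-linear lift maps $F_m^{\{k\}}$, exponentially good approximations, and an inverse-contraction upgrade to a stronger topology --- is the same as the paper's. But there is a genuine gap at exactly the step you single out as "the principal obstacle". A union bound over the $n^k$ tuples cannot produce the required superexponential estimate. What must be shown is
\[
\lim_{m\to\infty}\limsup_{n\to\infty}\frac1n\log\prob\Big[\,d\big(\bfL_n^{\bfB;\{k\}},\bfL_n^{\bfB^{(m)};\{k\}}\big)>\de\Big]\;=\;-\infty ,
\]
and the natural coupling bound controls the left-hand distance by the average $n^{-k}\sum_{I} d_\al(\bfB^{I},\bfB^{(m),I})$. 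A union bound gives $\prob[\exists I: d_\al(\bfB^I,\bfB^{(m),I})>\de]\le n^k\, C\,\me^{-c\,m^{\eta/2}\de}$, whose normalized logarithm tends to $0$ (not $-\infty$) as $n\to\infty$ for each fixed $m$; no tuning of the $m$-dependence rescues this, because the bound contains no factor decaying exponentially in $n$. The obvious alternative --- factoring the exponential moment of the sum --- is blocked by the fact that the $n^k$ summands are strongly dependent (distinct tuples share the underlying $B^i$'s). The paper's resolution, which is the essential idea missing from your sketch, is Hoeffding's decomposition for $U$/$V$-statistics: the average over tuples is rewritten as an average over permutations of sums of $\lfloor n/k\rfloor$ \emph{independent} blocks, after which Jensen's inequality and the uniform exponential moment bound $\sup_m\mean[\exp(c\,m^{\eta/2}d_\al(\bfB,\bfB^{(m)}))]<\infty$, applied with Chernoff parameter proportional to $m^{\eta}n$, yield the superexponential decay.

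A second, smaller omission: the extended contraction principle also requires the uniform convergence $\sup_{Q:\,H(Q\mid P^{\{d\}})\le a}\,d\big(F_m^{\{k\}}(Q),F^{\{k\}}(Q)\big)\to0$ on entropy sublevel sets, which you assert implicitly when "identifying $\bfI$" but never prove. Since $Q$ here ranges over all laws of bounded entropy and not just Wiener measure, the exponential estimate on $d_\al(S^{(m)}(X),S(X))$ under $(P^{\{d\}})^{\otimes k}$ must be transferred to $Q^{\otimes k}$; the paper does this via the Orlicz duality between $L\log L$ (controlling the density $\md Q^{\otimes k}/\md (P^{\{d\}})^{\otimes k}$ through the entropy bound) and $L_{\exp}$ (controlling the approximation error). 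Relatedly, the paper first upgrades Sanov to the $1$-Wasserstein topology (using the exponential integrability of the initial law), which is what makes both the coupling estimates above and the final passage to the modified Wasserstein topology --- where the relevant tail is that of the functional $N_\al(\bfB)$, not merely of the H\"older norm --- go through.
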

Here $\pi_1:  G^{(2)}(\bbR^{kd}) \cong  G^{(2)}( \oplus_{i=1}^k \bbR^{d}) \to \bbR^d$ is given by the projection $( (x^1, \dots, x^k ), \ldots ) \mapsto x^1$.  In particular, given a probabilty $\mu$ on $\cC_g^{0,\al}([0,T]; \bbR^{kd})$, the image measure $Q \ldef (\pi_1)_* \mu \equiv \mu \circ \pi_1^{-1}$ is a measure on the classical H\"older space $C^{0,\alpha}([0,T],\bbR^d)$.  Moreover, $H (. | P^{\{d\}})$ is the relative entropy and
\begin{align} \label{defFk}
  F^{\{k\}}\!: Q \;\longmapsto\; Q^{\otimes k}\circ (S^{\{kd\}})^{-1}
\end{align}
defines a map $F^{\{k\}}\!: \cP_1(C^{0,\al}([0,T];\bbR^d)) \rightarrow \cP_1(\cC^{0,\al}_g([0,T];\bbR^{kd}))$, where $S=S^{\{e\}}$ denotes the (measurable) lifting map of an $e$-dimensional path to a path with values in $G^2(\bbR^e)$.

The interest in a \emph{modified Wasserstein} topology (on probability measures on rough path space, Section 4 for details) is continuity of the map (here $k = 2$, but then trivially for $k \geq 2$ by projection)
\begin{align*}
  \mu
  \;\longmapsto\;
  \int_{\cC^{0,\al}_g([0,T];\bbR^{2d})}\, \int^T_0 \brb(X_t)\; \md\bfX_t\; \mu(\md\bfX)
\end{align*}
for sufficiently nice $\brb$.  Indeed, combining Girsanov's theorem and Varadhan's lemma will then imply a LDP for the empirical measures, as $n \to \infty$, for the particle system given by \footnote{Given a function $b\!: \bbR^{2d} \to \bbR^d$, we use the notation $(x^1, x^2) \in \bbR^d \times \bbR^d = \bbR^{2d}$ and we denote by $\brb\!:\bbR^{2d} \to \bbR^{2d}$ the function such that $\brb(x^1,x^2)^1 = b(x^1,x^2)$ and $\brb(x^1,x^2)^2 = 0$.}
\begin{align}
  \left\{
    \begin{array}{rcll}
      \md X^{i,n}_t
      &\mspace{-5mu}=\mspace{-5mu}
      &
      {\displaystyle
        \frac{1}{n}\,
        \sum_{j=1}^n b\big(X^{i,n}_t, X^{j,n}_t\big)\, \md t \,+\, \md B^i_t,
      }
      &\quad i = 1, \ldots n,\\[2ex]
      \mathop{\mathrm{Law}}(X^{i,n}_0)
      &\mspace{-5mu}=\mspace{-5mu}
      &\la
      \qquad \text{i.i.d.}
    \end{array}
  \right.
\end{align}
In fact, our approach not only  allows to recover the (known) rate function for the large deviations of such a particle system, of the form $J_b(Q) = H ( Q | \Phi (Q) )$ cf. Section \ref{rPOC} (where $\Phi = \Phi_b$ is introduced, such that fixed points of $\Phi$ are solutions to the martingale problem of the corresponding McKean--Vlasov equation with mean-field drift $b$), but it gives the LDP on the level of $k$-layer enhanced empirical measures. We shall see in two applications, namely Corollary \ref{intro:cor} and Corollary \ref{intro:cor_appl2} below, how useful exactly this can be. 
\begin{theorem}\label{intro:generalres}
  Assume that $b$ is in $C^2_b(\bbR^d \times \bbR^d)$, let $X^n = (X^{1,n}, \ldots, X^{n,n})$ be the solution to the above system where the initial law satisfies a suitable exponential integrability condition (Condition \eqref{eq:exp:intcond}).  Let $\bfL^{\bfX,\{k\}}_n$ be the corresponding enhanced $k$-layer empirical measure, $k \geq 2$.  Fix $\alpha \in (1/3,1/2)$.  Then the sequence of laws $\{\mathop{\mathrm{Law}}\big(\bfL^{\bfX,\{k\}}_n\big) : n \in \bbN \}$ satisfies a large deviation principle on (a modified Wasserstein) space of probability measures on $\cC^{0,\al}_g([0, T];\bbR^{kd})$ with scale $n$ and good rate function $\bfJ_b$ given by
  \begin{align}
    \bfJ_b^{\{k\}}(\mu) \equiv \bfJ_b(\mu)   
    \;=\;
    \begin{cases}
      H (\mu\circ\pi_1^{-1}| \Phi (\mu\circ\pi_1^{-1}) ),
      &\text{if }\; \mu=F^{\{k\}}(\mu\circ\pi_1^{-1}),\\[1ex]
      +\infty,
      &\text{otherwise}. 
    \end{cases}
  \end{align}
\end{theorem}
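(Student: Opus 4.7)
The plan is to derive the theorem from Theorem~\ref{thm:main_res_intro} by a change-of-measure argument: Girsanov's theorem combined with Varadhan's lemma in its unbounded-tilt form. Let $Y=(Y^1,\ldots,Y^n)$ be $n$ independent Brownian motions on the same stochastic basis, each with initial law $\la$; by Theorem~\ref{thm:main_res_intro}, $\bfL_n^{\bfY;\{k\}}$ already satisfies an LDP with good rate function $\bfI^{\{k\}}$. A standard Girsanov computation yields
\begin{align*}
  \frac{\md\mathop{\mathrm{Law}}(X^n)}{\md\mathop{\mathrm{Law}}(Y)}
  \;=\;
  \exp\!\big( n\,\Ga(\bfL_n^{\bfY;\{2\}}) \big),
\end{align*}
with tilting functional
\begin{align*}
  \Ga(\mu)
  \;\ldef\;
  \int\!\!\int_0^T \brb(X_t)\,\md\bfX_t\;\mu(\md\bfX)
  \;-\;\tfrac{1}{2}\int\!\!\int_0^T \big|\tilde b(\mu_1^{(t)},X^1_t)\big|^2\,\md t\;\mu(\md\bfX),
\end{align*}
where $\tilde b(\nu,x)\ldef\int b(x,y)\,\nu(\md y)$ and $\mu_1^{(t)}$ is the time-$t$ marginal of $\mu\circ\pi_1^{-1}$. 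The first integral is interpreted in the rough-path sense, and this is precisely where the enhancement is essential: on the raw path level the stochastic integral depends on $\mu$ only a.s.\ and not continuously.

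The technical core is to verify (i) that $\Ga$ is continuous in the modified Wasserstein topology of Section~4, and (ii) the unbounded-Varadhan integrability condition
\begin{align*}
  \limsup_{n\to\infty}\,\tfrac{1}{n}\log\mean\!\big[\exp\big(nM\,\Ga(\bfL_n^{\bfY;\{2\}})\big)\big]\;<\;\infty
  \quad\text{for some}\;M>1
\end{align*}
(cf.\ \cite[Lem.\,4.3.6]{DeZe10}). For (i), the quadratic correction term is continuous by boundedness and uniform continuity of $b$, while the rough-integral term is continuous because the map $(\brb,\bfX)\mapsto\int_0^T\brb(X_t)\,\md\bfX_t$ is locally Lipschitz in the rough-path norm, and the modified Wasserstein topology is precisely tailored to control uniform $\mu$-moments of that norm. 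For (ii), one uses boundedness of $b$, Novikov's inequality, and the exponential integrability assumption on $\la$ to bound the exponential moments of a Girsanov exponent for an SDE with bounded drift. Since $\bfL_n^{\bfY;\{2\}}=(\pi_{12})_*\bfL_n^{\bfY;\{k\}}$ is a continuous projection, Varadhan's lemma applied to Theorem~\ref{thm:main_res_intro} then yields an LDP for $\bfL_n^{\bfX;\{k\}}$ with rate function $\mu\mapsto\bfI^{\{k\}}(\mu)-\Ga((\pi_{12})_*\mu)-c$, where $c$ is the normalising constant.

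To identify this with $\bfJ_b$, restrict to the effective support $\mu=F^{\{k\}}(Q)$, $Q=\mu\circ\pi_1^{-1}$, off which both rate functions equal $+\infty$. Then $(\pi_{12})_*\mu=F^{\{2\}}(Q)$ and under this measure $X^1,X^2$ are independent with law $Q$, so a Fubini exchange gives
\begin{align*}
  \Ga((\pi_{12})_*\mu)
  \;=\;
  \int\!\Big[\int_0^T (b\star Q_t)(X_t)\,\md X_t
    -\tfrac{1}{2}\!\int_0^T|(b\star Q_t)(X_t)|^2\,\md t\Big]\,Q(\md X),
\end{align*}
which is exactly $\mean_Q[\log(\md\Phi(Q)/\md P^{\{d\}})]$, the logarithm of the Girsanov density of the McKean--Vlasov SDE. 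The chain rule for relative entropy then yields $\bfI^{\{k\}}(\mu)-\Ga((\pi_{12})_*\mu)=H(Q|\Phi(Q))$, whose infimum over $Q$ is zero (attained at any fixed point of $\Phi$), so $c=0$ and the tilted rate function coincides with $\bfJ_b$.

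The main obstacle is step (i): making the rough-integral tilt continuous in a topology rich enough to carry the LDP. This is not a corollary of Theorem~\ref{thm:main_res_intro} itself but a substantive property of the modified Wasserstein topology, which must be designed with the continuity of rough integration built in. By contrast, the exponential integrability in (ii) is essentially a standard Novikov estimate, given that $b$ is bounded and $\la$ has appropriate exponential moments.
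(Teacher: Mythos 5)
Your proposal follows essentially the same route as the paper's proof of Theorem~\ref{generalres}: an enhanced Girsanov theorem expresses $\mathop{\mathrm{Law}}(\bfL_n^{\bfX;\{k\}})$ as an exponential tilt of $\mathop{\mathrm{Law}}(\bfL_n^{\bfB;\{k\}})$ by a functional of the two-layer projection, the unbounded-tilt Varadhan lemma is applied using continuity of the rough-integral functional in the modified Wasserstein topology (via the linear growth in $N$) together with a martingale/Novikov exponential-moment bound, and the resulting rate function $H(Q\,|\,P^{\{d\}})-\bfK_b$ is identified with $H(Q\,|\,\Phi(Q))$ through the Girsanov density of $\Phi(Q)$. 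The only bookkeeping differences are that the paper's tilting functional $\bfK_b$ carries the It\^o--Stratonovich divergence correction $-\tfrac12\int\!\!\int\diverg\brb$ (needed because the rough integral against the geometric lift is a Stratonovich, not It\^o, integral), and that the Girsanov exponent is not exactly $n$ times a functional of $\bfL_n^{\bfB;\{2\}}$ but $n\bfK_b+\bfK_b'$ with a bounded remainder $\bfK_b'$ coming from the diagonal terms $i=j$ — neither of which affects the argument.
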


A first consequence of this large deviation principle, together with the fact that the rate function has a unique zero, is a ``law of large number'' which already contains a remarkably strong form of \emph{propagation of chaos} (POC), namely Theorem \ref{EPOC} below. Note that this result can also be recovered via classical It\^o calculus (the reader can verify this as an exercise), nevertheless it illustrates well the extra information carried by the LDP above, moreover its corollary \ref{intro:cor} is another example of the combination of mean field and rough paths arguments. For context, we first give the classical form of POC. Let us also note there is much new interest in POC, with recent applications ranging from calibration methods in quantitative finance to the analysis of lithium-ion batteries. 
\begin{theorem}[Classical POC, e.g. \cite{Sz91}] \label{thm:mfLDP_intro}
  Let $\{\bar{X}^{j} : j \in \bbN\} $ be an i.i.d. realizations of the McKean-Vlasov diffusion $\bar{X}$ (see Section \ref{rPOC} for details).  Then, for all $k \in \bbN$,
  \begin{align}
    \mathop{\mathrm{Law}}\big( X^{1,n}, \dots ,X^{k,n} \big) 
    \underset{n \to \infty }{\;\longrightarrow\;}
    \mathop{\mathrm{Law}}\big(\bar{X}^{1}, \dots, \bar{X}^{k}\big)
    \;=\;
    \mathop{\mathrm{Law}}( \bar{X} )^{\otimes k},
  \end{align}
  as $C_b$-weak\footnote{Actually, in $1$-Wasserstein sense ....} convergence of probability measures on $\big( C([0, T], \bbR^{d}) \big)^{\times k} \cong C([0, T], \bbR^{kd})$ equipped with uniform topology.
\end{theorem}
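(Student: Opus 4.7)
The plan is to deduce the classical propagation of chaos directly from the large deviation principle of Theorem \ref{intro:generalres}, combined with uniqueness of the McKean--Vlasov fixed point and exchangeability of the $n$-particle system. The purely elementary alternative --- a synchronous Sznitman-coupling of each $X^{i,n}$ with an i.i.d.\ MKV copy $\bar{X}^i$ driven by the \emph{same} Brownian motion $B^i$, followed by a Gr\"onwall estimate $\mathbb{E}\big[\sup_{t\le T}|X^{i,n}_t - \bar{X}^i_t|^2\big] = O(1/n)$ --- is the ``exercise'' alluded to in the text. I present the LDP route since it highlights exactly the content of Theorem \ref{intro:generalres}.

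First, I would identify the zero set of the good rate function $\bfJ_b^{\{k\}}$. The equation $\bfJ_b^{\{k\}}(\mu) = 0$ forces both $\mu = F^{\{k\}}(Q)$ (for $Q := \mu\circ\pi_1^{-1}$) and $H(Q \,|\, \Phi(Q)) = 0$, i.e.\ $Q = \Phi(Q)$. Under $b \in C^2_b$ the McKean--Vlasov SDE is well-posed (Lipschitz drift plus a Wasserstein contraction on $[0,T]$), so $\Phi_b$ admits a unique fixed point $Q^{\ast} = \mathop{\mathrm{Law}}(\bar{X})$, and the unique zero of $\bfJ_b^{\{k\}}$ is $\mu^{\ast} := F^{\{k\}}(Q^{\ast})$.

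Next, uniqueness of the minimizer of a good rate function upgrades the LDP to almost sure convergence: the upper bound together with lower semicontinuity and compactness of sublevel sets yield $\inf_{\mu\notin U}\bfJ_b^{\{k\}}(\mu) > 0$ for every open neighborhood $U$ of $\mu^{\ast}$, whence $\mathbb{P}\big(\bfL^{\bfX,\{k\}}_n \notin U\big) \le e^{-cn}$ for some $c=c(U)>0$ and all large $n$, and a Borel--Cantelli argument gives $\bfL^{\bfX,\{k\}}_n \to \mu^{\ast}$ almost surely in the modified Wasserstein topology. Pushing forward under the continuous projection $\Pi: \cC^{0,\al}_g([0,T];\bbR^{kd}) \to C([0,T];\bbR^{kd})$ from enhanced rough paths to paths, the unenhanced empirical measure
\begin{align*}
L_n^{X,\{k\}}
\;:=\;
\frac{1}{n^k}\sum_{i_1,\ldots,i_k = 1}^n \de_{(X^{i_1,n},\ldots,X^{i_k,n})}
\end{align*}
converges almost surely to $\Pi_{\ast}\mu^{\ast} = \mathop{\mathrm{Law}}(\bar{X})^{\otimes k}$ weakly on $C([0,T];\bbR^{kd})$.

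Finally, I would convert this empirical-level convergence into joint-marginal convergence via exchangeability. For any $f \in C_b(C([0,T];\bbR^{kd}))$,
\begin{align*}
\int f\, d\mathbb{E}\big[L_n^{X,\{k\}}\big]
\;=\;
\frac{1}{n^k}\sum_{(i_1,\ldots,i_k)} \mathbb{E}\big[f(X^{i_1,n},\ldots,X^{i_k,n})\big];
\end{align*}
the $n(n{-}1)\cdots(n{-}k{+}1) = n^k(1+O(1/n))$ tuples with pairwise distinct entries all evaluate to $\mathbb{E}[f(X^{1,n},\ldots,X^{k,n})]$ by exchangeability, while the $O(n^{k-1})$ coincident-index tuples contribute $O(\|f\|_{\infty}/n)$. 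Bounded convergence applied to the almost sure convergence of $\int f\, dL_n^{X,\{k\}}$ gives $\int f\, d\mathbb{E}[L_n^{X,\{k\}}] \to \int f\, d\mathop{\mathrm{Law}}(\bar{X})^{\otimes k}$, and combining the two displays yields $\mathbb{E}[f(X^{1,n},\ldots,X^{k,n})] \to \int f\, d\mathop{\mathrm{Law}}(\bar{X})^{\otimes k}$ for every $f \in C_b$ --- i.e.\ the claimed weak convergence. The only non-soft input is the uniqueness of the MKV fixed point under $b \in C^2_b$ (needed to exclude spurious zeros of $\bfJ_b^{\{k\}}$); the rest is a textbook consequence of the LDP plus exchangeability.
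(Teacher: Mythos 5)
Your argument is correct and is essentially the route the paper itself takes: the paper cites Sznitman for this classical statement, but its proof of the enhanced POC in Section~\ref{rPOC} is exactly your chain (LDP of Theorem~\ref{generalres} $\to$ unique zero of $\bfJ_b^{\{k\}}$ identified via Lemma~\ref{lem:zeros} and uniqueness of the fixed point of $\Phi$ $\to$ convergence of the $k$-layer empirical measure $\to$ the same distinct-indices/exchangeability decomposition), and the classical statement is just its projection to path space. The only cosmetic difference is that you pass through almost-sure convergence via Borel--Cantelli, which requires some care in the non-metrizable modified Wasserstein topology, whereas convergence in probability (already in the metrizable $1$-Wasserstein topology) suffices for the final bounded-convergence step, which is what the paper uses.
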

In classical terminology \cite{Sz91} the law of $\big( X^{1,n}, \dots, X^{k,n} \big)$ is \emph{$\cQ$-chaotic}, where $\cQ = \mathop{\mathrm{Law}}( \bar{X} )$ is a
probability measure on the (Polish) space $E = C([ 0, T], \bbR^{d})$.

We now state the enhanced POC\ on rough path space, that is paths with values in $G^{2}(\bbR^{N})$ rather than $\bbR^{N}$.  We insist that this is not just a form of the classical POC (a.k.a. $\cQ$-chaos) in which $E = C([0, T], \bbR^{d})$ is replaced by some other (Polish) space, which happens to be a rough path space.  To wit, the limiting measure in our Theorem \ref{EPOC} below is \emph{not} of product measure form, since it effectively tracks all areas between the particle trajectories (in the mean-field limit) which requires it to be a measure on the \emph{geometric rough path} space
\begin{align*}
  \cC^{0,\alpha}_g\big([0, T], \bbR^{kd}\big)
  \;\cong\;
  C^{0,\alpha}_g\big([0, T], G^{2}(\bbR^{kd}) \big) ,
\end{align*}
which indeed offers enough room to capture
\begin{align*}
  \int X^{j} \otimes \circ\, \md X^{j}...
  \qquad\text{for}\qquad
  1 \leq i,j \leq k
\end{align*}
(the anti-symmetric part of which corresponds to the afore-mentioned areas).  In contrast, a space of $k$ rough paths over $\bbR^{d}$, say
\begin{equation*}
  \big( \bfX^{1}, \dots, \bfX^{k} \big)
  \;\in\;
  \cC^{0, \alpha}_g\big([0, T], \bbR^{d}\big)^{\times k}
  \;\cong\;
  C^{0,\alpha }_g\big([0, T], \oplus_{i=1}^{k} G^{2}(\bbR^{d}) \big)
\end{equation*}
contains strictly less information as it contains, particle trajectories on $\bbR^{d}$ aside, only
\begin{align*}
  \int X^{i} \otimes \circ\, \md X^{i}
  \qquad \text{for} \qquad
  1 \leq i \leq k
\end{align*}
(and hence only the areas of each single $d$-dimensional particle trajectory).

This extra information contained in $\cC^{0,\alpha}_g\big([0, T], \bbR^{kd} \big)$ makes a difference indeed when one is interested in subsequent analysis of this particle system, as we shall see in the corollary below. But first we state our enhanced POC.  Recall that for a $e$-dimensional semimartingale $Z$, its Stratonovich (level $2$) lift is given by
\begin{align*}
  S^{\{e\}}(Z)
  \;=\;
  \Big(
    Z^{i} \,:\, 1 \leq i \leq e;
    \int Z^{i} \otimes \circ\, \md Z^{j} \,:\, 1 \leq i,j \leq e
  \Big).
\end{align*}
\begin{theorem}[Enhanced POC] \label{EPOC}
  Under the assumptions of the classical POC for all $k \in \bbN$
  \begin{align}
    \mathop{\mathrm{Law}}\big( S^{\{kd\}}( X^{1,n}, \dots, X^{k,n}) \big)    \underset{n \to \infty}{\;\longrightarrow\;}
    \mathop{\mathrm{Law}}\big( S^{\{kd\}}(\bar{X}^{1}, \dots ,\bar{X}^{k}) \big) ,
  \end{align}
  as $C_b$-weak convergence of probability measures on $\cC^{0,\alpha}_g([0, T], \bbR^{kd})$ equipped with $\alpha $-H\"{o}lder geometric rough path topology.
\end{theorem}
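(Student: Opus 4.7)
The plan is to deduce Theorem \ref{EPOC} as a soft consequence of the rough path LDP of Theorem \ref{intro:generalres}, using only that the rate function $\bfJ_b^{\{k\}}$ has a unique zero together with exchangeability of the particle system. First, I would identify this unique zero. Since $H(Q\,|\,\Phi(Q)) \geq 0$ with equality iff $Q$ is a fixed point of $\Phi = \Phi_b$, and since the only such fixed point is the law $\cQ := \mathop{\mathrm{Law}}(\bar X)$ of the McKean--Vlasov diffusion (well-posedness of its martingale problem being standard for $b \in C^2_b$), the sole zero of $\bfJ_b^{\{k\}}$ is
\[
\mu^\star \;=\; F^{\{k\}}(\cQ) \;=\; \cQ^{\otimes k} \circ (S^{\{kd\}})^{-1} \;=\; \mathop{\mathrm{Law}}\big(S^{\{kd\}}(\bar X^1,\ldots,\bar X^k)\big).
\]
Since $\bfJ_b^{\{k\}}$ is \emph{good}, every neighbourhood $U$ of $\mu^\star$ has $\inf_{U^c} \bfJ_b^{\{k\}} > 0$; the LDP upper bound then forces $\prob(\bfL_n^{\bfX,\{k\}} \notin U) \to 0$ exponentially fast, so $\bfL_n^{\bfX,\{k\}} \to \mu^\star$ in probability (in the modified Wasserstein topology, hence also $C_b$-weakly on $\cP(\cC^{0,\alpha}_g([0,T],\bbR^{kd}))$).

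Second, I would convert convergence of the random empirical measure into convergence of the fixed $k$-dimensional marginal. For $\phi \in C_b(\cC^{0,\alpha}_g([0,T],\bbR^{kd}))$ the real-valued variable $\bfL_n^{\bfX,\{k\}}(\phi)$ is bounded by $\|\phi\|_\infty$, and dominated convergence upgrades the previous step to $\mean[\bfL_n^{\bfX,\{k\}}(\phi)] \to \mu^\star(\phi)$. By exchangeability of $(X^{1,n},\ldots,X^{n,n})$, for every $k$-tuple $(i_1,\ldots,i_k)$ of pairwise distinct indices the law of $\bfX^{\{k\};i_1,\ldots,i_k}$ coincides with that of $S^{\{kd\}}(X^{1,n},\ldots,X^{k,n})$. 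Splitting the sum defining $\bfL_n^{\bfX,\{k\}}$ into distinct vs.\ repeated index patterns yields
\[
\mean\big[\bfL_n^{\bfX,\{k\}}(\phi)\big]
\;=\;
\tfrac{n(n-1)\cdots(n-k+1)}{n^k}\,\mean\big[\phi\big(S^{\{kd\}}(X^{1,n},\ldots,X^{k,n})\big)\big] \;+\; R_n,
\]
with $|R_n| \leq C_k\, n^{-1}\|\phi\|_\infty$ since the number of $k$-tuples with at least one repeated index is $O(n^{k-1})$. As $n(n-1)\cdots(n-k+1)/n^k \to 1$, I conclude $\mean[\phi(S^{\{kd\}}(X^{1,n},\ldots,X^{k,n}))] \to \mu^\star(\phi) = \mean[\phi(S^{\{kd\}}(\bar X^1,\ldots,\bar X^k))]$, which is precisely the claimed $C_b$-weak convergence.

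The genuine mathematical content is already packaged into Theorem \ref{intro:generalres} (enhanced Sanov on Brownian rough paths, modified Wasserstein continuity of the mean-field drift functional, Girsanov, Varadhan). Granted that input, the only delicate step in the argument above is the uniqueness of the McKean--Vlasov fixed point under $b \in C^2_b$, which is classical. Accordingly, I do not expect any serious obstacle beyond faithfully quoting the standard consequences of LDPs with a unique minimizer and carrying out the $O(1/n)$ bookkeeping for diagonal vs.\ off-diagonal index tuples.
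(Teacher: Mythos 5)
Your proposal is correct and follows essentially the same route as the paper's own proof: identify the unique zero of the good rate function via Lemma \ref{lem:zeros}, deduce convergence in probability of $\bfL_n^{\bfX,\{k\}}$ from the LDP, pass to expectations of bounded test functions, and use exchangeability plus the $O(1/n)$ count of repeated-index tuples to extract the fixed $k$-marginal. No gaps.
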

We now illustrate the power of this new form of propagation of chaos.  Recall that the solution flow to an SDE depends continuously on the driving noise in rough path topology (e.g. \cite{FrVi10}.) We then have immediately the following result, a direct proof of which would require
substantial work.
\begin{corro} \label{intro:cor}
  Fix some $k \in \bbN$ and consider, for $n \geq k$, the solution flow $Y^{n} \equiv Y$ to
  \begin{align}\label{intro:appl_SDE_1}
    \md Y_{t}
    \;=\;
    f_{0}(Y)\, \md t \,+\,
    \sum_{i=1}^{k} f_{i}(Y) \circ\, \md X^{i,n}
  \end{align}
  where the $f_{i}$'s are $C^{3}_b$ vector fields on $\bbR^{N}$ in the case $d=1$ (or $f_{i}\in C^{3}_b( \bbR^{N}; L(\bbR^{d},\bbR^{N})) $ more generally).  Then, (in the sense of flows, cf.\ \cite{FrVi10}, and $1/2^{-}$-H\"{o}lder on compacts in time)
  \begin{align}
    \mathop{\mathrm{Law}}(Y^{n})
    \underset{n \to \infty}{\;\longrightarrow\;}
    \mathop{\mathrm{Law}}(\bar{Y}),
  \end{align}
  where the weak limit flow is given by 
  \begin{align}
    \md \bar{Y}_{t}
    \;=\;
    f_{0}(\bar{Y})\, \md t \,+\,
    \sum_{i=1}^{k} f_{i}(\bar{Y}) \circ\, \md \bar{X}^{i}.
  \end{align}
\end{corro}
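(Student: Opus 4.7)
The plan is to deduce the statement from Enhanced propagation of chaos (Theorem \ref{EPOC}) together with the continuity of the Lyons--It\^o solution/flow map for rough differential equations, via the continuous mapping theorem for weak convergence.

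Concretely, fix $\alpha \in (1/3, 1/2)$ and set
\[
\bfX^n \;\ldef\; S^{\{kd\}}\!\big(X^{1,n}, \ldots, X^{k,n}\big),
\qquad
\bar{\bfX} \;\ldef\; S^{\{kd\}}\!\big(\bar{X}^1, \ldots, \bar{X}^k\big).
\]
Theorem \ref{EPOC} delivers $\mathop{\mathrm{Law}}(\bfX^n) \to \mathop{\mathrm{Law}}(\bar{\bfX})$ weakly on $\cC^{0,\alpha}_g([0,T], \bbR^{kd})$. To handle the drift $f_0(Y)\,\md t$ uniformly, I would append the smooth path $t \mapsto t$ as an extra coordinate of the driver: since it is deterministic it lifts trivially and factors out of any joint limit, so the weak convergence transfers to $\hat\bfX^n \ldef (t, \bfX^n)$ in $\cC^{0,\alpha}_g([0,T], \bbR^{1+kd})$. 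Equation \eqref{intro:appl_SDE_1} then takes the canonical RDE form $\md Y = \hat f(Y)\, \md\hat\bfX$ with coefficient $\hat f = (f_0, f_1, \ldots, f_k)$; the standard consistency of RDEs with Stratonovich SDEs ensures that this RDE solution coincides a.s.\ with the $Y^n$ of the statement, and analogously for $\bar Y$.

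Next, I would invoke the Lyons Universal Limit Theorem in its flow-valued version (see \cite{FrVi10}): under the assumed $C^3_b$ regularity of the $f_i$'s and for $\alpha > 1/3$, the It\^o--Lyons map $\Psi$ sending a geometric $\alpha$-H\"older rough path to its associated flow on $\bbR^N$ is continuous into the space of flows endowed with a topology that simultaneously controls uniform convergence and $\alpha$-H\"older-in-time regularity on compacts. Applying the continuous mapping theorem to $\Psi$ then yields
\[
\mathop{\mathrm{Law}}(Y^n) \;=\; \mathop{\mathrm{Law}}\!\big(\Psi(\hat\bfX^n)\big)
\;\longrightarrow\;
\mathop{\mathrm{Law}}\!\big(\Psi(\hat{\bar\bfX})\big) \;=\; \mathop{\mathrm{Law}}(\bar Y),
\]
and since $\alpha$ is arbitrary in $(1/3, 1/2)$ the $1/2^-$-H\"older-in-time control on compacts is recovered.

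The main obstacle, such as it is, lies not in the probabilistic part but in invoking the correct form of Lyons' theorem: one needs continuity of the \emph{flow}-valued map, not just of the pointwise solution, in the precise topology advertised. This is a known upgrade of the scalar-solution continuity theorem available under $C^3_b$ regularity of the vector fields, but it is the substantive analytic input. Once it is on the table, the corollary becomes a one-line application of the continuous mapping theorem, illustrating the ``robustness'' philosophy advocated in the introduction: a direct classical proof would require showing convergence in law of the stochastic integrals $\int f_i(Y^n)\,\md X^{i,n}$, an essentially harder task since $X^{i,n}$ are not martingales in the limit and the cross-areas are precisely what the enhanced POC encodes.
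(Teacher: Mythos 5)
Your proposal is correct and follows exactly the route the paper intends: the paper's entire justification for Corollary \ref{intro:cor} is the remark preceding it, namely that the solution flow depends continuously on the driving (geometric rough path) noise, so the result follows from Theorem \ref{EPOC} by the continuous mapping theorem. Your elaborations — augmenting the driver with the time coordinate to absorb the drift, and invoking the flow-valued universal limit theorem from \cite{FrVi10} under the $C^3_b$ hypothesis — are precisely the details the paper leaves implicit.
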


We give now a second application of Theorem \ref{intro:generalres}, which cannot be covered, to our understanding, by classical LDP results. This is a large deviation principle associated with SDEs driven by $k$-layer paths $(X^{i_1,n},\ldots X^{i_k,n})$: we take, for $i_1,\ldots i_k$ in $\{1,\ldots n\}$, the SDE
  \begin{align}\label{intro:appl_SDE_2}
    \md Y^{i_1,\ldots i_k;n}_{t}
    \;=\;
    \sum_{j=1}^{k} f_{i}(Y^{i_1,\ldots i_k;n}) \circ\, \md X^{i_j,n}
  \end{align}
with same initial condition $Y^{i_1,\ldots i_k;n}=y_0$; here $f_j:\bbR^d\rightarrow\bbR^m$, $j=1,\ldots k$, are given $C^3_b$ vector fields. For this SDE we can consider the empirical measure
\begin{align*}
L_n^{Y;\{k\}} = \frac{1}{n^k}\sum_{(i_1,\ldots i_k)\in\{1,\ldots n\}^k}\delta_{Y^{i_1,\ldots i_k;n}}.
\end{align*}
This empirical measure can be seen as a symmetrization of the system \eqref{intro:appl_SDE_1}, as it tracks the positions of $Y^{i_1,\ldots i_k}$ discarding the particular choice of indices $i_1,\ldots i_n$. Now, as in the previous application, rough paths provide continuity of the solution map of this SDE with respect to the driving noise in rough path topology. Therefore contraction principle implies the following:
\begin{corro}\label{intro:cor_appl2}
For any fixed $1/3<\beta<1/2$, the sequence $\{\text{Law}(L_n^{Y;\{k\}})|n\in\mathbb{N}\}$ satisfy a large deviation principle on $\cP(C^{0,\beta}([0,T];\bbR^m)$, endowed with the $C_0$-weak topology.
\end{corro}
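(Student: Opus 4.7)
The plan is to reduce the statement to Theorem~\ref{intro:generalres} via the contraction principle, using the It\^o--Lyons solution map as the continuous map. Fix $\beta\in(1/3,1/2)$ as in the corollary, and choose $\al\in(\beta,1/2)$. Define the RDE solution map
\begin{align*}
  \Ps\!:\; \cC^{0,\al}_g([0,T];\bbR^{kd}) \;\longrightarrow\; C^{0,\al}([0,T];\bbR^m),
\end{align*}
sending an $\bbR^{kd}$-valued geometric rough path $\bfX$ (whose first level splits into $k$ blocks $X^{(1)},\ldots,X^{(k)}$ over $\bbR^d$ and whose second level contains all cross iterated integrals $\int X^{(l)}\otimes\md X^{(m)}$) to the unique solution $Y$ of the rough differential equation
\begin{align*}
  \md Y_t \;=\; \sum_{j=1}^{k} f_j(Y_t)\, \md\bfX^{(j)}_t, \qquad Y_0=y_0.
\end{align*}
Crucially, the second level of $\bfX$ supplies precisely the cross-integrals needed to interpret this driver as a genuine rough path over $\bbR^{kd}$ — this is the point of working at the $k$-layer enhanced level. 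Since $f_j\in C^3_b$, classical rough path theory (\cite{FrHa14,FrVi10}) yields that $\Ps$ is continuous, in fact locally Lipschitz, in the $\al$-H\"older rough path topology.

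Composing with the continuous embedding $C^{0,\al}\hookrightarrow C^{0,\beta}$, the induced pushforward
\begin{align*}
  \Ps_*\!:\; \cP\bigl(\cC^{0,\al}_g([0,T];\bbR^{kd})\bigr) \;\longrightarrow\; \cP\bigl(C^{0,\beta}([0,T];\bbR^m)\bigr)
\end{align*}
is continuous with respect to the $C_b$-weak topology on both sides, and a fortiori continuous when the source is equipped with the (finer) modified Wasserstein topology in which Theorem~\ref{intro:generalres} is formulated. The essential identification is that, almost surely,
\begin{align*}
  \Ps_*\bigl(\bfL_n^{\bfX;\{k\}}\bigr) \;=\; L_n^{Y;\{k\}},
\end{align*}
because each atom $\bfX^{\{k\};i_1,\dots,i_k}$ of the enhanced empirical measure is the Stratonovich lift of the semimartingale $(X^{i_1,n},\ldots,X^{i_k,n})$, so by Wong--Zakai consistency between RDEs and Stratonovich SDEs (applied with drifts absorbed in the usual rough-path way) one has $\Ps(\bfX^{\{k\};i_1,\ldots,i_k}) = Y^{i_1,\ldots,i_k;n}$ almost surely.

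With these two ingredients, the contraction principle applied to Theorem~\ref{intro:generalres} delivers the announced LDP for $\{\mathop{\mathrm{Law}}(L_n^{Y;\{k\}})\}$ on $\cP(C^{0,\beta}([0,T];\bbR^m))$, with good rate function
\begin{align*}
  I(\nu) \;=\; \inf\!\bigl\{\bfJ_b^{\{k\}}(\mu)\,:\, \Ps_*(\mu)=\nu\bigr\}.
\end{align*}
The main conceptual point — not an obstacle so much as the whole raison d'\^etre of the approach — is that the RDE solution map for the $k$-layer driver is well-posed and continuous, which is false for the classical It\^o SDE formulation but automatic in the rough path setting. The only other care needed is that the domain topology in Theorem~\ref{intro:generalres} is at least as strong as what $\Ps_*$ requires; this is already built into the "modified Wasserstein" statement of that theorem, so the application of the contraction principle is immediate.
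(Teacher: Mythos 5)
Your proposal is correct and follows essentially the same route as the paper: both identify $L_n^{Y;\{k\}}$ as the pushforward of the enhanced empirical measure $\bfL_n^{\bfX;\{k\}}$ under the (locally Lipschitz) It\^o--Lyons solution map of the $k$-layer RDE, and then apply the contraction principle to the LDP of Theorem~\ref{intro:generalres}. The only cosmetic difference is that you factor the solution map through $C^{0,\al}$ and then embed into $C^{0,\beta}$, whereas the paper takes the solution map directly into $C^{0,\beta}$; the substance is identical.
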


The paper is organized as follows.  In Section 1, after a brief introduction, we explain our main results. Section~\ref{sec:not} is devoted to settle notation and some recalls on rough paths. In Section~\ref{section_enh_Sanov}, we prove the enhanced Sanov theorem (Theorem~\ref{thm:main_res_intro}) in the 1-Wasserstein metric, leaving the extension to the modified Wasserstein topology to Section \ref{section_modW}. For notational simplicity we focus in Sections~\ref{section_enh_Sanov} and \ref{section_modW} on the two-layer case ($k=2$).  We explain in Section~\ref{section:ext_k_layer} how to extend this to general $k$ (and so conclude with a full proof of Theorem~\ref{thm:main_res_intro}). In Section \ref{sec:LDP_wid}, we introduce the $n$-particle system, more precisely a system of $n$ weakly interacting diffusions, and prove a large deviation principle for the empirical measure, that is Theorem~\ref{thm:mfLDP_intro}. At last, in Sections \ref{rPOC} and \ref{appl2}, we prove resp.\ the (enhanced $k$-layer) propagation of chaos property and the LDP for the system \eqref{intro:appl_SDE_2}.

\subsection{Relation to the work of Cass--Lyons \cite{CL15}}
We comment in some detail on the relation of our work to Cass--Lyons. In \cite{CL15}, the authors first and foremost establish a theory of mean-field RDEs (more precisely, \cite[Theorem 4.9]{CL15}, rough differential equations with mean-field interaction in the drift term) for suitable classes of random rough paths $\bfB(\omega)$.  When it comes to propagation of chaos (see \cite[Section~5]{CL15}) they are able to consider interacting particle dynamics of the form
\begin{align}\label{CL_POC}
  \md \bfX^{i,n}(\omega)
  \;=\;
  \frac{1}{n}\,
  \sum_{j=1}^{n} b\big( X_{t}^{i,n}(\omega), X_{t}^{j,n}(\omega) \big)\, \md t
  \,+\, \si\big( X_{t}^{i}(\omega) \big)\, \md \bfB^{i}(\omega),  
\end{align}
with i.i.d.\ initial data and driving noise, $(X_{0}^{i,n}, \bfB^{i})$, and show (Theorem 5.2) that
\begin{align*}
  \bfL_{n}^{\bfX}
  \;\ldef\;
  \frac{1}{n}\, \sum_{j=1}^{n} \de_{\bfX^{i,n}}
  \;\longrightarrow\;
  \mathop{\mathrm{Law}}(\bar{X})
  \qquad \text{ a.s.}
\end{align*}
In the scale of $k$-layer enhanced empirical measure, $\bfL_{n}^{\bfX} \equiv \bfL_{n}^{\bfX;\{k\}}\big|_{k=1}$.  Furthermore, it is conjectured (see \cite[page~25]{CL15}) that their approach will be useful to establish Sanov-type theorem \`a la Dawson--G\"{a}rtner for \eqref{CL_POC}.  Although related, our work is \emph{not} a proof of this conjecture.  That said, such a result will not imply our main result, robust propagation of chaos (Theorem~\ref{EPOC}, and then e.g.\ Corollary~\ref{intro:cor}). To be more specific, in our work no mean-field RDE theory is required, and in fact we have taken the noise to be additive Brownian noise, that is $\md B^{i}(\omega)$ versus $\sigma( X_{t}^{i}(\omega))\, \md\bfB^{i}(\omega)$. (We note that including non-interacting\ diffusion coefficients $\md B^{i} \leadsto \sigma(X_{t}^{i})\, \md B^{i}$ would have been possible, as long as the Girsanov argument we use, cf.\ the proof of Theorem~\ref{generalres}, remains feasable, which amounts to an ellipticity assumption on $\sigma $.)  In the cases where our setting overlaps with \cite{CL15}, we indeed quantify the above with a large deviation principle, but then we also obtain (Theorem~\ref{intro:generalres}) a Sanov-type \`a la Dawson--G\"{a}rtner for the general $k$-layer enhanced empirical measure $\bfL_{n}^{\bfX;\{k\}}$.  This is in fact out of reach reach of \cite{CL15} as can be trivially seen noting that $\bfL_{n}^{\bfX;\{k\} }$ necessarily involves information of $( \bfX^{1,n}, \dots ,\bfX^{n,n})$, and hence (take e.g. $b\equiv 0$) of $(\bfB^{1}, \dots, \bfB^{n})$, as \emph{joint} rough path, rather then a collection of $n$ rough paths.  But no such information is assumed in \cite{CL15}, making $\bfL_{n}^{\bfX;\{k\} }$, $k \geq 2$, effectively an ill-defined object.  In contrast, for us, by working directly with Brownian motion, we always have the Stratonovich lift at our disposal, so this is not an issue.  For the same reason, our \emph{robust} propagation of chaos (Theorem~\ref{EPOC}, and then
e.g. Corollary~\ref{intro:cor}) can not possibly be obtained in the framework of
Cass--Lyons.  We finally note that forthcoming work of Bailleul--Catellier deals with Sanov-type theorem a la Dawson--G\"{a}rtner for \eqref{CL_POC}, again in the spirit of Cass--Lyons.
\medskip

\textbf{Acknowledgements:} P.F. and M.M. acknowledge funding from the European Research Council under the European UnionÕs Seventh Framework Program (FP7/2007-2013) / ERC grant agreement nr. 258237 and ECMath (Project SE8). P.F. und J.-D.D. acknowledge support from the DFG within Research Unit FOR 2402.

\section{Basic notation and results on rough paths}\label{sec:not}
We introduce the space of rough paths and the space where our empirical measures live. Most of this section is taken from \cite{FrHa14} or \cite{FrVi10}. Before going into the theory, let us recall the basis of $\alpha$-H\"older continuous functions. Given a Polish space $(E,d)$ with a compatible structure of Lie group (it will be $\bbR^e$ or $G^2(\bbR^e)$) and given $\al$ in $(0,1)$, we define the space $C^\al([0,T];E)$ of the $\al$-H\"older continuous paths from $[0,T]$ to $E$.  This is a complete metric space, endowed with the distance
\begin{align}
  d_{\al}(\ga^1, \ga^2)
  \;=\;
  \sup_{t \in [0,T]} d(\ga^1(t), \ga^2(t))
  \,+\,
  \sup_{s,t \in [0,T], s \neq t}
  \frac{d(\ga^1(s)^{-1} \ga^1(t), \ga^2(s)^{-1} \ga^2(t))}{|t-s|^\al}.
\end{align}
This space is not separable in general. However, the subspace $C^{0,\al}([0,T];E)$ given by the closure, with respect of $d_\al$, of the smooth ($C^\infty$) paths is separable, hence Polish. Furthermore, for any $\be > \al$, $C^\be([0,T]; E)$ is included in $C^{0,\al}([0,T];E)$ and the inclusion is compact.

When dealing with rough paths, we will always assume $\al$ in $(1/3, 1/2]$. An $\al$-H\"older rough path on $\bbR^e$ is a triple $\bfX = (X_0, X, \bbX)$, with $X_0$ point in $\bbR^e$, $X = (X_{s,t})_{s<t}$ two-index $\bbR^e$-valued map and $\bbX = (\bbX_{s,t})_{s<t}$ two-index $\bbR^{e\times e}$-valued map (we always suppose $0\le s,t\le T$ when not specified), satisfying the following conditions (here $v\otimes w$ denotes the tensor product $vw^T$):
\begin{enumerate}
\item algebraic conditions (Chen's relation): for any $s < u < t$,
  \begin{align}
    X_{s,t} \;=\; X_{s,u} \,+\, X_{u,t}
    \qquad \text{and} \qquad
    \bbX_{s,t} \;=\; \bbX_{s,u} \,+\, \bbX_{u,t} \,+\, X_{s,u} \otimes X_{u,t};
  \end{align}
\item analytic conditions:
  \begin{align}
    \sup_{0 \leq s < t \leq T} \frac{|X_{s,t}|}{|t-s|^\al} \;<\; \infty
    \qquad \text{and} \qquad
     \sup_{0 \leq s < t \leq T} \frac{|\bbX_{s,t}|}{|t-s|^{2\al}} \;<\; \infty.
  \end{align}
\end{enumerate}
Here $X_0$ represents the initial condition; it is not included in the standard definition (Definition 2.1 in \cite{FrHa14}, Chapter 2), but we need to keep track of it because we will work with paths starting from a generic probability measure (and not just from a single point). However, with some abuse of notation, we will usually write $\bfX=(X,\bbX)$, without $X_0$, when this is not relevant for our purposes, as for example when the initial point is fixed (this was the case for the main result \ref{thm:main_res_intro}).

The space of $\al$-H\"older rough paths on $\bbR^e$ is denoted by $\cC^{\al}([0, T]; \bbR^e)$. It is not a vector space (since the sum of two rough paths does not respect Chen's relation), but it is a complete metric space, endowed with the distance
\begin{align*}
  \tilde{\rho}_{\al}(\bfX, \bfY)
  \;=\;
  |X_0-Y_0| \,+\, \rho_{\al}(\bfX, \bfY)
  \;=\;
  |X_0-Y_0| \,+\,
  \sup_{0 \leq s < t \leq T} \frac{|X_{s,t}-Y_{s,t}|}{|t-s|^\al}
  \,+\, \sup_{0 \leq s < t \leq T} \frac{|\bbX_{s,t}-\bbY_{s,t}|}{|t-s|^{2\al}}.
\end{align*}
For convenience, we also introduce a ``norm'' on rough paths; this is actually not a norm, but it has some good homogeneity property. We define
\begin{align}
  \|\bfX\|_\al
  \;=\;
  \sup_{0 \leq s < t \leq T}\, \frac{|X_{s,t}|}{|t-s|^\al}
  \,+\,
  \sup_{0 \leq s < t \leq T}\, \frac{|\bbX_{s,t}|^{1/2}}{|t-s|^{\al}}.
\end{align}
A problem with the space $\cC^\al([0,T]; \bbR^e)$ is that it is not separable. That is why we introduce also the space $\cC^{0,\al}_g([0, T]; \bbR^e)$ of geometric rough paths. This is the subspace of $\cC^\al([0,T]; \bbR^e)$ obtained as the closure, with respect to the $\tilde{\rho}_\al$ distance, of the space of smooth $\bbR^e$-valued paths and their iterated integrals (see \cite{FrHa14}, Section 2.2). Now the space $\cC^{0,\al}_g([0, T];\bbR^e)$, endowed with the distance $\tilde{\rho}_{\al}$ is a Polish space. This will be the space of interest for us.

The space of geometric rough paths has also the following geometrical interpretation (taken for example from \cite[Section~2.3]{FrHa14}): it can be identified with the space $C^{0,\al}([0,T];G^2(\bbR^e)$ of the closure of smooth paths, with respect to the $\al$-H\"older topology, over a the (free step-$2$ nilpotent) Lie group $G^2(\bbR^e))$. In particular, we can consider the $\al$-H\"older distance $d_\al$ associated with the (Carnot-Caratheodory) distance in $G^2(\bbR^e))$, as explained at the beginning of this section, and we have, for a constant $C>0$,
\begin{align}\label{eq:equiv_dist_RP}
  C^{-1}(|X_0|+\|\bfX\|_\al)
  \leq
  d_{\al}(\bfX, 0)
  \leq 
  C(|X_0|+\|\bfX\|_\al)
\end{align}
for every geometric rough path $\bfX$.  We call this distance the homogeneous distance.  Unless otherwise stated, we will always use the homogeneous distance for geometric rough paths.  Notice however that, for the purpose of this paper, only the asymptotic behaviour of $d_{\al}(\bfX, 0)$, as $|X_0| + \|\bfX\|_{\al} \to \infty$, is of interest for us (see Sections \ref{W_section} and \ref{mod_W_subsection} on the link between this behaviour and the Wasserstein topology), therefore one can use $|X_0|+\|\bfX\|_\al$ instead of $d_\al(\bfX,0)$.

A consequence of this geometrical interpretation is that, for any $\alpha<\beta$, we have the continuous embedding for rough path spaces,
\begin{align}
  \cC^{\be}_g([0,T]; \bbR^e)
  \;\hookrightarrow\;
  \cC^{0,\al}_g([0,T]; \bbR^e)
  \;\hookrightarrow\;
  \cC^{\al}_g([0,T]; \bbR^e),
\end{align}
where the first embedding is compact.

A basic result in Lyons' rough paths theory is that, given a function $f$ regular enough, the integral $\int^t_0 f(Y)\, \md \bfY$ is well defined and continuous with respect to $Y$ in the rough paths topology.  We have (e.g. Theorem 4.4 in \cite{FrHa14}, Chapter 4):
\begin{theorem}\label{thm:cont_rough_int}
  Let $f$ be a function in $C^2_b(\bbR^e)$ and let $\bfX$ be a geometric $\al$-H\"older rough path on $\bbR^e$. Given a partition $\De$ of the interval $[0,T]$, define the approximated integral on $\De$ as
  \begin{align}
    I_{\De} f(\bfX)
    \;=\;
    \sum_{[s,t] \in \De} f(X_{s}) X_{s,t} \,+\, Df(X_{s})\, \bbX_{s,t}.
  \end{align}
  Then, the limit
  \begin{align}
    \int^T_0 f(X)\, \md\bfX
    \;\ldef\;
    \lim_{n \to \infty} I_{\De_n}f(X)
  \end{align}
  exists for every sequence $(\De_n: n\in \bbN)$ with infinitesimal size $|\De_n| = \sup_{[s,t] \in \De_n}(t-s)$ and is independent of the sequence itself.  Furthermore, the application
  \begin{align}
    \cC^{0,\al}_g([0,T];\bbR^e) \;\ni\; \bfX
    \;\mapsto\;
    \int^T_0 f(X)\,\md\bfX \;\in\; \bbR
  \end{align}
  is continuous and it holds, for some constant $C_f$ depending on $f$,
  \begin{align}
    \left|\int^T_0 f(X)\, \md\bfX\right|
    \;\leq\;
    C_f\, \big(\|\bfX\|_\al \vee \|\bfX\|_\al^{1/\al} \big).
  \end{align}
\end{theorem}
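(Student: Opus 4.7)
The plan is to realize $\int_0^T f(X)\, d\bfX$ via Gubinelli's sewing lemma applied to the two-index germ
\begin{align*}
\Xi_{s,t} \;\ldef\; f(X_s)\, X_{s,t} \,+\, Df(X_s)\, \bbX_{s,t},
\end{align*}
so that $I_\De f(\bfX) = \sum_{[s,t] \in \De} \Xi_{s,t}$ is exactly the Riemann-type sum in the statement. The first task is to quantify the failure of additivity of $\Xi$. Using Chen's relations $X_{s,t} = X_{s,u} + X_{u,t}$ and $\bbX_{s,t} = \bbX_{s,u} + \bbX_{u,t} + X_{s,u} \otimes X_{u,t}$, a direct expansion yields
\begin{align*}
\Xi_{s,t} - \Xi_{s,u} - \Xi_{u,t}
\;=\; -\,\bigl[f(X_u) - f(X_s) - Df(X_s)\, X_{s,u}\bigr]\, X_{u,t}
\;-\; \bigl[Df(X_u) - Df(X_s)\bigr]\, \bbX_{u,t}.
\end{align*}
Since $f\in C^2_b$, Taylor's theorem together with the analytic estimates $|X_{s,u}| \le \|\bfX\|_\al|u-s|^\al$ and $|\bbX_{u,t}| \le \|\bfX\|_\al^2 |t-u|^{2\al}$ gives the almost-additivity bound $|\Xi_{s,t} - \Xi_{s,u} - \Xi_{u,t}| \le C \|f\|_{C^2}\, \|\bfX\|_\al^3\, |t-s|^{3\al}$, where the exponent $3\al > 1$ is precisely what the sewing lemma requires.

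Invoking the sewing lemma then furnishes a unique additive two-index map $I_{s,t}$ with $|I_{s,t} - \Xi_{s,t}| \le C \|f\|_{C^2} \|\bfX\|_\al^3 |t-s|^{3\al}$, realized as the limit of $I_{\De_n}f(\bfX)$ for any mesh-vanishing sequence $\De_n$ and independent of the sequence; this is the existence assertion. For continuity in $\bfX$ I would re-run the sewing argument on the difference $\Xi^{\bfX} - \Xi^{\bfY}$, decomposing it into pieces such as $[f(X_s)-f(Y_s)] X_{s,t}$ and $f(Y_s)[X_{s,t}-Y_{s,t}]$ (and the analogues for the $\bbX$-term). Using the $C^2_b$-regularity of $f$ one obtains that both the germ difference and its almost-additivity defect are controlled by $\tilde\rho_\al(\bfX,\bfY)$ times a polynomial in $1+\|\bfX\|_\al+\|\bfY\|_\al$. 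Lipschitz dependence of the sewing map on its germ then transfers this bound to the integral itself, giving continuity on the separable space $\cC^{0,\al}_g([0,T];\bbR^e)$.

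The growth estimate is the place where a little care is needed: specializing the sewing bound to $(0,T)$ together with $|\Xi_{0,T}| \le \|f\|_\infty \|\bfX\|_\al T^\al + \|Df\|_\infty \|\bfX\|_\al^2 T^{2\al}$ gives only a polynomial in $\|\bfX\|_\al$ of degree $3$, slightly worse than the stated $\|\bfX\|_\al \vee \|\bfX\|_\al^{1/\al}$. The sharp exponent is recovered by a greedy stopping-time argument of Lyons type: one partitions $[0,T]$ into $N$ consecutive subintervals on each of which the local $\al$-Hölder ``norm'' of $\bfX$ is of unit size, obtains $N \lesssim 1 + \|\bfX\|_\al^{1/\al}$ (since these norms scale like $\text{length}^\al$), and sums the $O(1)$ local bounds to produce the claimed estimate.

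The principal technical obstacle, which I would cite rather than reprove, is the sewing lemma itself: every almost-additive germ with defect $O(|t-s|^{1+\th})$, $\th>0$, admits a unique additive companion within $O(|t-s|^{1+\th})$, obtainable as the limit of Riemann-type sums via dyadic refinement. Once this is taken as a black box, as in Chapter~4 of \cite{FrHa14}, everything else reduces to the Chen-plus-Taylor computation above together with bookkeeping.
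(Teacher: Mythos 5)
The paper does not prove this statement; it quotes it directly from \cite{FrHa14} (Theorem 4.4 and the surrounding material of Chapter 4). Your sketch — Chen's relation plus a Taylor expansion to get the $3\al$-order almost-additivity defect, the sewing lemma for existence and continuity, and the greedy-partition scaling argument to sharpen the naive cubic bound to $\|\bfX\|_\al \vee \|\bfX\|_\al^{1/\al}$ — is correct and is essentially the proof given in that reference.
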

Recall that Theorem~\ref{thm:main_res_intro}, through definition of $F^{\{k\}}$ given in (\ref{defFk}), involves a (measurable) ``rough path lifting map''
\begin{align}
  S \equiv S^{\{e\}} \!: C^{0,\al}([0,T],\bbR^e ) \;\rightarrow\; \cC^{0,\al}_g([0,T], \bbR^e) .
\end{align}
Here is the precise definition.  Consider piecewise linear approximation $\{X^k\ : k \in \bbN\}$ of $X$ based on dyadic partitions $\De_k \ldef \{[2^{-k}i, 2^{-k}(i+1)] \,:\, i \in \bbN\}$, $k \in \bbN$, and set
\begin{align}
  A^k_{s,t} \;\equiv\; \lim_{k \to \infty}\, \int^t_s X^k_{s,r} \otimes \md X^k_r.
\end{align}
Whenever $S^k \ldef (X^k,A^k)$ is Cauchy in $\al$-H\"older rough path metric, set
\begin{align}\label{eq:def:S}
  S(X) \;\equiv\; (X,A(X)) \;\ldef\; \lim_{k \to \infty} (X^k,A^k)
\end{align}
and zero elsewhere.  By construction, $S(X)$ is in $\cC^\al$ and actually in $\cC^{0,\al}_g$ (since $X^k$ is a Lipschitz path and so $S^k$ is in $\cC^{0,\al}_g$) and $X \mapsto S(X)$ is a well-defined measurable (but in general discontinuous!) map on path space. \\

We now recall the basic relations between rough and stochastic integration, see \cite[Proposition 3.5, 3.6 and Corollary 5.2]{FrHa14}.  We allow $B$ to start from a generic initial probability measure $\lambda$ with finite second moment.
\begin{prop}\label{prop:conv_S}
  Let $B$ be an $e$-dimensional standard Brownian motion over a filtered probability space $(\Omega, (\cF_t)_t, \prob)$, with initial measure $\la$ with finite second moment.  For any $i, j = 1, \ldots, e$, let $(\bbB^{\mathrm{Strat}}_{s,t})^{ij} = \int^t_0 B^i_{s,r} \circ \md B^j_r$ be its Stratonovich iterated integral.  Then,
  \begin{enumerate}
  \item[(i)] $\prob$-a.s.,\ $\bfB^{\mathrm{Strat}} \ldef (B, \bbB^{\mathrm{Strat}})$ is a geometric $\al$-H\"older rough path for any $\al < 1/2$;  \item[(ii)] there exists a null-set $N$ with respect to the $e$-dimensional Wiener measure $P = P^{\{e\}}$ (and hence to every $Q$ absolutely continuous with respect to $P^{\{e\}}$), such that, away from this null-set, $S^k$ is a Cauchy sequence in the rough path metric and $S(B) = \bfB = (B, \bbB)$ $\prob$-a.s.
  \end{enumerate}
\end{prop}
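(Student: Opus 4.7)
The plan is to tackle (i) and (ii) separately, using a Kolmogorov-type criterion for rough paths plus a standard Wong--Zakai style convergence of piecewise linear approximations.

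For (i), first establish Chen's relation algebraically. Writing $B^i_{s,r} = B^i_{s,u} + B^i_{u,r}$ for $s < u < r$, the additivity of the Stratonovich integral together with constancy of $B^i_{s,u}$ in the second integrand yields
\[
  (\bbB^{\mathrm{Strat}}_{s,t})^{ij}
  \;=\;
  (\bbB^{\mathrm{Strat}}_{s,u})^{ij} + (\bbB^{\mathrm{Strat}}_{u,t})^{ij} + B^i_{s,u}\, B^j_{u,t},
\]
exactly as required. The analytic (H\"older) conditions are then obtained via a two-parameter Kolmogorov continuity argument for rough paths (cf.\ \cite[Thm.~3.1]{FrHa14}): it suffices to bound
\[
  \mean\bigl[|B_{s,t}|^q\bigr] \,\leq\, C_q\, |t-s|^{q/2},
  \qquad
  \mean\bigl[|\bbB^{\mathrm{Strat}}_{s,t}|^q\bigr] \,\leq\, C_q\, |t-s|^q
\]
for all $q \geq 1$. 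The first is immediate; the second follows by decomposing $\bbB^{\mathrm{Strat},ij}_{s,t} = \bbB^{\mathrm{It\hat o},ij}_{s,t} + \tfrac{1}{2}(t-s)\,\de_{ij}$ and applying the Burkholder--Davis--Gundy inequality to the It\^o iterated integral. Kolmogorov's criterion then yields an $\al$-H\"older rough path for every $\al < 1/2$. That $\bfB^{\mathrm{Strat}}$ is \emph{geometric} follows from the Leibniz rule for Stratonovich integration, which gives $(\bbB^{\mathrm{Strat}}_{s,t})^{ij} + (\bbB^{\mathrm{Strat}}_{s,t})^{ji} = B^i_{s,t} B^j_{s,t}$; this is exactly the algebraic condition pinning $\bfB^{\mathrm{Strat}}_{s,t}$ into the subgroup $G^{(2)}(\bbR^e)$ populated by lifts of smooth paths.

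For (ii), carry out a Wong--Zakai argument for dyadic piecewise linear approximations. Let $B^k$ denote the linear interpolation of $B$ on $\De_k$ and $A^k$ its canonical (Riemann--Stieltjes) iterated integral. The strategy is to establish, for some $\vartheta > 0$ and suitably large $q$,
\[
  \mean\!\left[ \rho_\al\!\bigl((B^k, A^k), (B^{k+1}, A^{k+1})\bigr)^q \right]
  \;\leq\;
  C_q\, 2^{-k\vartheta q}.
\]
These bounds stem again from BDG applied to Brownian increments together with a two-parameter Kolmogorov argument, exploiting the classical $L^q$-convergence $A^k_{s,t} \to \bbB^{\mathrm{Strat}}_{s,t}$. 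Borel--Cantelli then produces a single null set $N$ under the Wiener measure $P = P^{\{e\}}$, off which $(B^k, A^k)$ is Cauchy in the $\al$-H\"older rough path metric; by uniqueness of limits this Cauchy limit must coincide with $(B, \bbB^{\mathrm{Strat}})$, yielding $S(B) = \bfB^{\mathrm{Strat}}$. Since $N$ is expressed purely in terms of path properties of Brownian motion, it pulls back to a $P$-null subset of pathspace, and any $Q \ll P$ inherits this negligibility, transferring the conclusion back to the original $(\Om, \cA, \prob)$.

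The main obstacle I anticipate is the sharp two-parameter moment control on $A^{k+1}_{s,t} - A^k_{s,t}$, uniformly in $(s,t)$, needed for the Kolmogorov step to actually deliver H\"older continuity with an exponent arbitrarily close to $1/2$. Here one crucially exploits the cancellation between the Stratonovich correction $\tfrac{1}{2}(t-s)\de_{ij}$ and the piecewise linear Riemann sums on dyadic subintervals, together with the independent-increment structure of $B$. All of these are by now classical ingredients contained in \cite{FrHa14}, and the proposition is essentially a bookkeeping exercise once the two moment estimates are assembled.
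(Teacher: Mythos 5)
Your sketch is correct, but note that the paper does not actually prove Proposition \ref{prop:conv_S}: it imports it wholesale from \cite{FrHa14} (Propositions 3.5, 3.6 and Corollary 5.2 there, together with Corollary 13.21 of \cite{FrVi10} for the dyadic Wong--Zakai statement), and your argument is precisely the standard proof of those cited results --- Chen's relation from additivity of the Stratonovich integral, the two-parameter Kolmogorov criterion fed by BDG/Gaussian-chaos moment bounds, and Borel--Cantelli on the dyadic piecewise-linear lifts. The only point worth flagging is that the Stratonovich product rule by itself yields \emph{weak} geometricity (membership of $\bfB_{s,t}$ in $G^{(2)}(\bbR^e)$) rather than membership in the closure $\cC^{0,\al}_g$; this is repaired either by the embedding of weakly geometric $\be$-H\"older rough paths into $\cC^{0,\al}_g$ for $\al<\be<1/2$, or more directly by your part (ii), which exhibits $\bfB$ as an a.s.\ limit of smooth-path lifts.
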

\begin{prop}\label{rough_Strat_int}
  Let $B$ be as before and let $f$ be a function in $C^2_b(\bbR^e)$. Then the Stratonovich integral
  \begin{align}
    \int^T_0f(B) \circ \md B
  \end{align}
  and the rough integral
  \begin{align}
    \int^T_0 f(B)\, \md\bfB^{\mathrm{Strat}}
  \end{align}
  coincide $\bbP$-a.s..
\end{prop}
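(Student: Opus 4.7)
My plan is to reduce the identification of the two integrals to a Riemann-sum argument together with the standard Itô–Stratonovich correction formula for the iterated integral $\mathbb{B}$. Since everything is defined path-by-path (and, after conditioning on $B_0 = x$, $B-x$ is a standard Brownian motion from $0$), I can start from deterministic initial point and recover the general $\la$ case by averaging over $\la$; the assumption that $\la$ has a finite second moment is used only to guarantee that the constant $C_f$ appearing in Theorem~\ref{thm:cont_rough_int} gives an integrable bound.

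Using the definition in Theorem~\ref{thm:cont_rough_int}, the rough integral is
\[
  \int_0^T f(B)\, \md\bfB^{\mathrm{Strat}}
  \;=\;
  \lim_{|\De|\to 0}\,
  \sum_{[s,t]\in\De} \Bigl( f(B_s)\, B_{s,t} \;+\; Df(B_s)\, \bbB^{\mathrm{Strat}}_{s,t} \Bigr).
\]
The first step is to invoke the standard Itô–Stratonovich relation $\bbB^{\mathrm{Strat}, ij}_{s,t} = \bbB^{\mathrm{It\hat o}, ij}_{s,t} + \tfrac12 (t-s)\, \de_{ij}$, where $\bbB^{\mathrm{It\hat o},ij}_{s,t} = \int_s^t B^i_{s,r}\, \md B^j_r$. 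Substituting this, the Riemann sum splits into three contributions:
\[
  \sum f(B_s)\, B_{s,t}
  \;+\;
  \sum Df(B_s)\, \bbB^{\mathrm{It\hat o}}_{s,t}
  \;+\;
  \tfrac12\, \sum \mathrm{tr}\bigl(Df(B_s)\bigr)\,(t-s).
\]

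The second step is to identify the three limits. By the usual left-point Riemann approximation of the Itô integral, the first sum tends in probability (and $L^2$) to $\int_0^T f(B)\, \md B$. The third sum is a Riemann sum for a continuous integrand and converges to $\tfrac12 \int_0^T \mathrm{tr}(Df(B_s))\, \md s$. The key step --- and the one requiring a genuine estimate --- is showing that the middle sum vanishes in $L^2$. Because each $\bbB^{\mathrm{It\hat o}}_{s,t}$ is a martingale increment with respect to $\cF_s$ and $Df(B_s)$ is $\cF_s$-measurable, the summands are orthogonal in $L^2(\prob)$, so by $\mathrm{It\hat o}$ isometry
\[
  \mean\Bigl[\,\Bigl|\sum Df(B_s)\, \bbB^{\mathrm{It\hat o}}_{s,t}\Bigr|^2\Bigr]
  \;=\; \sum \mean\bigl[|Df(B_s)\,\bbB^{\mathrm{It\hat o}}_{s,t}|^2\bigr]
  \;\le\; \|Df\|_\infty^2 \sum \mean[|\bbB^{\mathrm{It\hat o}}_{s,t}|^2]
  \;\le\; C\,|\De|\, T,
\]
using $\mean[|\bbB^{\mathrm{It\hat o}}_{s,t}|^2] \le C(t-s)^2$. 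This is $o(1)$ as $|\De|\to 0$.

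Combining the three limits gives
\[
  \int_0^T f(B)\, \md\bfB^{\mathrm{Strat}}
  \;=\;
  \int_0^T f(B)\, \md B \;+\; \tfrac12 \int_0^T \mathrm{tr}\bigl(Df(B_s)\bigr)\, \md s,
\]
and the right-hand side is precisely the Stratonovich integral $\int_0^T f(B)\circ \md B$ by the classical It\^o-to-Stratonovich conversion formula. The main obstacle, as highlighted above, is just the orthogonal-sum estimate for the middle term; everything else is bookkeeping on the decomposition of $\bbB^{\mathrm{Strat}}$ into a martingale part plus a correction and the passage to the limit along an arbitrary refining sequence of partitions (which is legitimate because the rough integral is defined as a partition-independent limit by Theorem~\ref{thm:cont_rough_int}).
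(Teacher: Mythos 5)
Your argument is correct and is essentially the standard proof of this identification: the paper itself does not prove the proposition but refers to \cite{FrHa14} (Propositions 3.5, 3.6 and Corollary 5.2), where the same route is taken — split $\bbB^{\mathrm{Strat}}_{s,t}$ into the Itô iterated integral plus the bracket correction $\tfrac12(t-s)\,\mathrm{Id}$, identify the left-point sum with the Itô integral and the deterministic sum with the $\tfrac12\int\mathrm{div}f(B)\,\md s$ correction, and kill the remaining term $\sum Df(B_s)\,\bbB^{\mathrm{It\hat o}}_{s,t}$ by $L^2$-orthogonality of the conditionally centred summands. The concluding step (a.s.\ limit of the compensated sums equals the in-probability limit, hence the two integrals agree a.s.) is also handled correctly.
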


\section{The enhanced Sanov theorem}\label{section_enh_Sanov} 
The main objective in this section is to prove an LDP for the enhanced empirical measures $\bfL^\bfB_n = \bfL^{\bfB;\{k\}}_n$ in the $1$-Wasserstein topology, in the double layer case ($k=2$ will be fixed, and often omitted, throughout this section).  For this purpose, consider a sequence of independent $d$-dimensional Brownian motions $\{B^i : i \in \bbN\}$ each starting with initial distribution $\la$, defined on some filtered probability space $(\Om, \cA, (\cF_t)_t, \prob)$. In the sequel, for fixed $\alpha \in (1/3, 1/2)$, we use the convention to denote a generic measure on $C^{0,\al}([0,T];\bbR^d)$ by $Q$, and we write $\prob^Y$ to denote the law on this space of a process $Y$; $P^{\{d\}}=\bbP^B$ is the Wiener measure on $C^{0,\al}([0,T];\bbR^d)$ with initial distribution $\lambda$ unless differently specified.

The empirical measure $L^B_n$ is defined as
\begin{align}
  L^B_n \;=\; \frac{1}{n} \sum^n_{i=1} \de_{B_i}.
\end{align}
We use the $1$-Wasserstein metric as the topology on the space of probability measures (with finite first moment) on the spaces $C^\alpha$ and $\cC^{0,\al}_g$. In this topology, all the maps of the form
\begin{align}
  \mu \;\mapsto\; \int \varphi\; \md\mu,
\end{align}
for $\varphi$ continuous with at most linear growth, are continuous; on the contrary, in the $C_b$-weak topology we could only allow for continuous bounded $\varphi$. The reason why we consider the $1$-Wasserstein metric is mainly because it is more convenient in the proof: first it gives an easy-to-handle distance between probability measure, then it makes the map $C^{0,\al}(\bbR^e)\ni\mu \mapsto \int\bbX^{(m)}\mu(\md X)$ (where $\bbX^{(m)}$ will be a suitable approximation of the stochastic integral $\int^t_0X_r\otimes\circ\md X_r$) continuous for $m$ fixed ($X\mapsto\bbX^{(m)}$ has linear growth with respect to $d_\al$, so the $C_b$-weak topology would not fit into this scheme).

The Section is organized as follows.  We start with proving Sanov theorem in the $1$-Wasserstein metric. Then, as an intermediate result, we prove an LDP for the double-layer empirical measures which is a consequence of Sanov theorem (in $1$-Wasserstein metric) and the contraction principle. Finally, we show an LDP for the enhanced empirical measures, whose proof uses the idea for the double-layer empirical measures but exploits the extended contraction principle, together with approximation lemmata coming from rough paths theory.

\subsection{Sanov theorem in $1$-Wasserstein metric}
We quickly review Sanov theorem in $1$-Wasserstein metric on a general Polish space. A necessary and sufficient condition for Sanov theorem in $p$-Wasserstein metric was in fact given in \cite{WWW10}, but as the argument is short we include it in a form convenient to us.

Given a Polish space $(E, d_E)$, we denote by $\cP_1(E)$ the space of probability measures on $E$ with finite first moment, i.e.\ the probability measures $\mu$ satisfies $\int_E d_E(x,x_0)\, \mu(\md x) <+\infty$ for some (equivalently for all) $x_0\in E$. It is a Polish space endowed with the $1$-Wasserstein distance $d_{W}$, namely
\begin{align}\label{eq:def:Wasserstein}
  d_{W}(\mu,\nu)
  \;=\;
  \inf_{\pi \in \Ga(\mu,\nu)}
  \left\{\int_{E\times E}d_E(x^1,x^2)\, \pi\big(\md(x^1, x^2)\big)\right\},
\end{align}
where $\Ga(\mu, \nu)$ is the set of all probability measures on $E \times E$ with the first marginal and the second marginal equal resp.\ to $\mu$ and $\nu$. Whenever $E$ is some (Polish) space of $\alpha$-H\"older continuous (rough) paths, cf.\ beginning of Section \ref{sec:not}, we shall write $d_{W,\alpha}$ for the corresponding $1$-Wasserstein distance. Some basic facts on $1$-Wasserstein metric will be specified later in the Appendix.

We also recall that the relative entropy between two probability measures $\mu$ and $\nu$ on $F$ is defined as
\begin{align}
  H(\mu | \nu)
  \;=\;
  \begin{cases}
    \int_F\rho\log\rho\md\nu, \quad
    &\text{if }\; \mu\ll\nu\;\text{ and }\; \frac{\md\mu}{\md\nu}=\rho,    \\[1ex]
    +\infty,
    &\text{otherwise}. 
  \end{cases}
\end{align}
\begin{theorem}[Sanov theorem in Wasserstein metric]\label{SanovWasserstein}
  Let $E$ be a Polish space and let $(X^i)_i$ be a sequence of $E$-valued i.i.d.\ random variables, with law $\mu$. Assume that $\mu$ satisfies the following condition: there exists a function $G:E\rightarrow[0,+\infty]$, with compact sublevel sets (in particular lower semi-continuous), with more than linear growth (i.e.,\ for some $x_0$, $|G(x)|/d(x,x_0)\rightarrow+\infty$ as $d(x,x_0)\rightarrow+\infty$), such that
  \begin{align}\label{eq:morethanlin}
    \int_E \me^{G}\, \md\mu \;<\; +\infty.
  \end{align}
  Then the sequence of laws of the empirical measures
  \begin{align}
    L^X_n \;=\; \frac{1}{n} \sum^n_{i=1} \de_{X_i}
  \end{align}
  satisfies a large deviation principle on $\cP_1(E)$, endowed with the $1$-Wasserstein metric, with rate $n$ and good rate function $H(\cdot|\mu)$.
\end{theorem}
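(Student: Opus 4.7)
The plan is to lift the classical (weak-topology) Sanov theorem to the $1$-Wasserstein topology via an exponential-tightness argument for the upper bound and a direct change-of-measure computation for the lower bound. The new input in both cases is a $1$-Wasserstein law of large numbers carried by the integrability hypothesis on $G$.

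\textit{Step 1 (exponential tightness).} Set $\cK_R \ldef \{\nu\in\cP_1(E) : \int G\,\md\nu \le R\}$. The compact sublevel sets of $G$ give tightness of $\cK_R$ via Markov's inequality, while the more-than-linear growth of $G$ --- in the form $d_E(x, x_0) \le \ve\, G(x)$ once $d_E(x, x_0)$ is large enough --- gives uniform integrability of $d_E(\cdot, x_0)$ across $\cK_R$. By the standard compactness criterion in $\cP_1$, $\cK_R$ is $1$-Wasserstein compact. A Chernoff bound using \eqref{eq:morethanlin} yields
\begin{align*}
\prob(L^X_n \notin \cK_R) \;\le\; \me^{-nR}\, \bigl(\mean[\me^{G(X^1)}]\bigr)^{n},
\end{align*}
so $\limsup_n n^{-1}\log\prob(L^X_n \notin \cK_R) \le -R + \log\mean[\me^{G(X^1)}] \to -\infty$ as $R\to\infty$.

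\textit{Step 2 (upper bound).} On any $d_E(\cdot, x_0)$-uniformly integrable family the weak and $1$-Wasserstein topologies coincide. Hence for any $1$-Wasserstein closed $F$, $F\cap\cK_R$ is weakly closed inside the weakly compact set $\cK_R$, and the classical Sanov theorem gives $\limsup_n n^{-1}\log\prob(L^X_n \in F\cap\cK_R) \le -\inf_{F\cap\cK_R} H(\cdot|\mu)$. Combining with the decomposition $\{L^X_n\in F\}\subseteq\{L^X_n\in F\cap\cK_R\}\cup\{L^X_n\notin\cK_R\}$ and Step~1, and letting $R\to\infty$, yields the Wasserstein upper bound with rate function $H(\cdot|\mu)$.

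\textit{Step 3 (lower bound).} Fix $\nu$ in a $1$-Wasserstein open $\cO$ with $H(\nu|\mu)<\infty$; the case $H(\nu|\mu)=\infty$ is trivial. The hypothesis together with more-than-linear growth of $G$ implies $\int\me^{\la\, d_E(\cdot, x_0)}\,\md\mu < \infty$ for every $\la > 0$, and the Donsker--Varadhan variational formula then forces $\int d_E(\cdot, x_0)\,\md\nu < \infty$, so $\nu\in\cP_1(E)$. Under $\nu^{\otimes\infty}$, Varadarajan's theorem gives $L^X_n\to\nu$ weakly a.s.\ and the scalar strong law of large numbers gives $\int d_E(\cdot, x_0)\,\md L^X_n \to \int d_E(\cdot, x_0)\,\md\nu$ a.s., so in fact $L^X_n\to\nu$ in $\cP_1(E)$ almost surely. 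Hence $\prob^{\nu^{\otimes n}}(L^X_n \in U) \to 1$ for any $1$-Wasserstein neighborhood $U\subseteq\cO$ of $\nu$, and the standard entropy change-of-measure inequality, combined with $H(\nu^{\otimes n}|\mu^{\otimes n}) = n\, H(\nu|\mu)$, yields $\liminf_n n^{-1}\log\prob(L^X_n\in U) \ge -H(\nu|\mu)$. Optimizing over $\nu\in\cO$ concludes the proof. The key subtlety underlying both Steps~2 and 3 is the collapse of weak and $1$-Wasserstein topologies on $d_E(\cdot,x_0)$-uniformly integrable families, the main obstacle being to show that the hypothesis on $G$ is strong enough both to guarantee exponential tightness in the stronger topology and to propagate finite-moment integrability from $\mu$ to every $\nu$ with $H(\nu|\mu)<\infty$.
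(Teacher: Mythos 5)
Your proposal is correct, and its essential content coincides with the paper's: the only genuinely new verification in either argument is exponential tightness of $\{\mathop{\mathrm{Law}}(L^X_n)\}$ in the $1$-Wasserstein topology, obtained from the compacts $\cK_R=\{\nu:\int_E G\,\md\nu\le R\}$ (the paper's Lemma~\ref{criterion_compactness}) together with the Chernoff bound $\prob(L^X_n\notin\cK_R)\le \me^{-nR}\,\mean[\me^{G(X^1)}]^n$, which is word for word your Step~1 and the paper's entire displayed computation. Where you diverge is in how the classical (weak-topology) Sanov theorem is then upgraded: the paper simply cites the inverse contraction principle \cite[Theorem~4.2.4]{DeZe10}, which converts ``LDP in a coarser topology plus exponential tightness in a finer one'' into the LDP in the finer topology, whereas you unfold that black box by hand — the upper bound via the decomposition $\{L^X_n\in F\}\subseteq\{L^X_n\in F\cap\cK_R\}\cup\{L^X_n\notin\cK_R\}$ and the coincidence of the weak and Wasserstein topologies on uniformly integrable families, and the lower bound via the tilting/entropy change-of-measure argument, for which you correctly check (via $\int\me^{\la d_E(\cdot,x_0)}\md\mu<\infty$ and the Donsker--Varadhan inequality) that finite entropy forces a finite first moment and that $L^X_n\to\nu$ in $\cP_1(E)$ a.s.\ under $\nu^{\otimes\infty}$. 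Your route is longer but more self-contained and makes visible exactly where the hypothesis on $G$ enters the lower bound; the paper's is shorter and gets goodness of the rate function in the Wasserstein topology for free from the cited theorem, a point you leave implicit (it does follow from your exponential tightness by the standard argument, but is worth a sentence).
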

This result differs from the classical Sanov theorem by the fact that it involves the $1$-Wasserstein metric, while classical Sanov theorem involves $C_b$-weak topology. In this, the statement above is stronger, but does need the additional condition on the measure $\lambda$.
\begin{remark}\label{SanovWiener}
  In the case $E = C^{0,\al}([0,T];\bbR^d)$, $\al < 1/2$, the assumption above is satisfies by $\{B^i : i \in \bbN\}$ (independent Brownian motions starting from $\la$), if $\la$ verifies Condition \eqref{eq:exp:intcond}.  Indeed one can take
  \begin{align}
    G(\ga)
    \;=\;
    c \bigg(
      \sup_{0 \leq s < t \leq T} \frac{d(\gamma(t),\gamma(s))}{|t-s|^\beta}
    \bigg)^{\!1+\ve} \,+\, c |\ga(0)|^{1+\ve},
  \end{align}
  where $\beta$ is in $(\al, 1/2)$ and $c$, $\ve$ are the same of Condition \eqref{eq:exp:intcond}. This $G$ has compact sublevel sets and more than linear growth; Condition \eqref{eq:morethanlin} is verified since ($B^1(x=0)$ is the Brownian motion starting at $0$)
  \begin{align}
    \mean\!\big[\me^{G(B^1)}\big]
    \;=\;
    \mean\!\big[\exp(c\|B^1(x=0)\|_{C^\be}^{1+\ve})\big]\,
    \int_{\bbR^d}\me^{c|x|^{1+\ve}}\la(dx)
    \;<\;
    \infty,
  \end{align}
  by Condition \eqref{eq:exp:intcond} and exponential integrability of $c\|B^1\|_{C^\be}^{1+\ve}$ (a consequence for example of Corollary 13.15 in \cite{FrVi10}).
\end{remark}
\begin{proof}[Proof of Theorem \ref{SanovWasserstein}]
  The assertion is a consequence of classical Sanov theorem (in the weak convergence topology, see for example \cite[Theorem~3.2.17]{DeSt89}) and the inverse contraction principle, see \cite[Theorem~4.2.4]{DeZe10}, provided we prove exponential tightness, in $1$-Wasserstein metric, of the laws of the empirical measures $L^X_n$. We need to prove that, for any $M>0$, there exists a compact set $K = K_M$ in $\cP_1(E)$ (with the $1$-Wasserstein metric) such that
  \begin{align}\label{exptight}
    \limsup_n\frac{1}{n}\log\mu_n[K_M^c] \;<\; -M.
  \end{align}
  We take $K_M$ as in Lemma \ref{criterion_compactness}.  By Markov inequality and i.i.d.\ hypothesis on $X^i$, for any $C_M$, we have
  \begin{align}
    \prob\!\big[L^X_n \in K_M^c\big]
    \;\leq\;
    \me^{-nC_M}\, \mean\!\bigg[\exp\Big(\int_E n G\, \md L^X_n\Big)\bigg]    \;=\;
    \me^{-nC_M}\, \mean\!\big[\exp\!\big(G(X_1)\big)\big]^n.
  \end{align}
  The assumption implies that $A \ldef \mean[\exp(G(X_1))] < \infty$.  Hence, by taking $C_M = M +\log A + 1$, we obtain \eqref{exptight} which completes the proof.
\end{proof}

\subsection{The LDP for the double-layer empirical measure}
As a warm-up example, we investigate what happens with the double layer empirical measure
\begin{align}
  L^{B,\{2\}}_n
  \;=\;
  \frac{1}{n^2}\sum^{n}_{i,j=1}\delta_{(B^i,B^j)}
  \;\in\;
  \cP_1(C^{0,\al}([0,T];\bbR^{2d})),
\end{align}
where $\cP_1(C^{0,\al}([0,T];\bbR^{2d}))$ denotes the space of probability measures on $C^{0,\al}([0,T];\bbR^{2d})$ endowed with the $1$-Wasserstein metric. In the following, we identify $C^{0,\al}([0,T];\bbR^{2d})$ with $C^{0,\al}([0,T];\bbR^d)^2$ (they are equivalent as metric spaces) and we call $\pi_1$ the canonical projection in $C^{0,\al}([0,T];\bbR^d)^2$ on the first $d$ components.
\begin{lemma}
  The double layer empirical measure $L^{B,\{2\}}_n$ is the image of the empirical measure $L^B_n$ under the map $Q \mapsto Q \otimes Q$.
\end{lemma}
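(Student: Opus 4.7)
The plan is to verify the identity $L_n^B \otimes L_n^B = L_n^{B,\{2\}}$ by a direct computation, exploiting the bilinearity of the tensor product of finite signed measures and the fact that the tensor product of Dirac masses is the Dirac mass at the product point.

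First I would recall that for probability measures $\mu, \nu$ on a Polish space $E$, the map $(\mu,\nu) \mapsto \mu \otimes \nu$ is bilinear on the cone of finite positive measures (this follows immediately from the defining relation $(\mu \otimes \nu)(A \times B) = \mu(A)\nu(B)$ by extension). Applying this to the convex combinations
\begin{align*}
  L_n^B \;=\; \frac{1}{n}\sum_{i=1}^n \delta_{B^i},
\end{align*}
I would expand
\begin{align*}
  L_n^B \otimes L_n^B
  \;=\;
  \Big(\frac{1}{n}\sum_{i=1}^n \delta_{B^i}\Big) \otimes \Big(\frac{1}{n}\sum_{j=1}^n \delta_{B^j}\Big)
  \;=\;
  \frac{1}{n^2}\sum_{i,j=1}^n \delta_{B^i} \otimes \delta_{B^j}.
\end{align*}

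Next, I would note that for any $x \in C^{0,\al}([0,T];\bbR^d)$ and $y \in C^{0,\al}([0,T];\bbR^d)$, one has $\delta_x \otimes \delta_y = \delta_{(x,y)}$ under the identification $C^{0,\al}([0,T];\bbR^d)^2 \cong C^{0,\al}([0,T];\bbR^{2d})$; this is immediate from testing against rectangles $A \times B$. Substituting this into the previous display yields
\begin{align*}
  L_n^B \otimes L_n^B
  \;=\;
  \frac{1}{n^2}\sum_{i,j=1}^n \delta_{(B^i,B^j)}
  \;=\;
  L_n^{B,\{2\}},
\end{align*}
which is precisely the claim that $L_n^{B,\{2\}}$ is the image of $L_n^B$ under the map $Q \mapsto Q \otimes Q$.

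There is essentially no obstacle here: the whole content of the lemma is the (well-known) compatibility of the empirical measure construction with products. The only point that deserves a brief remark is the identification of the two metric spaces $C^{0,\al}([0,T];\bbR^d)^2$ and $C^{0,\al}([0,T];\bbR^{2d})$, which the authors invoke just before the statement, and under which the projection $\pi_1$ recovers $L_n^B$ from $L_n^{B,\{2\}}$ as a consistency check.
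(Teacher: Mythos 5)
Your argument is correct and is exactly the paper's (one-line) proof, merely spelled out: the paper also reduces the claim to the identification $\de_{(B^i,B^j)} = \de_{B^i} \otimes \de_{B^j}$ together with bilinearity of the tensor product. Nothing to add.
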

\begin{proof}
  Obvious via the identification $\de_{(B^i,B^j)} = \de_{B^i} \otimes \de_{B^j}$.
\end{proof}
\begin{prop}
  The family $\{\mathop{\mathrm{Law}}(L^{B,\{2\}}_n) : n \in \bbN\}$ satisfies a LDP on $\cP_1(C^{0,\al}([0,T];\bbR^{2d}))$ endowed with the $1$-Wasserstein metric, with scale $n$ and good rate function $I^{\{2\}}$, given by
  \begin{align}
    I(Q^{\{2\}})
    \;=\;
    \begin{cases}
      H( Q \,|\, P^{\{d\}}), \quad
      &\text{if } Q^{\{2\}} = (Q \otimes Q)\text{  with } Q=Q^{\{2\}}\circ\pi_1^{-1}
      \\[1ex] \infty,
      &\text{otherwise}. 
    \end{cases}
  \end{align}
\end{prop}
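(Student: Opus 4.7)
The strategy is to invoke the contraction principle, leveraging the lemma immediately above which identifies $L^{B,\{2\}}_n$ as the image of $L^B_n$ under the map $F\!:Q\mapsto Q\otimes Q$. Theorem~\ref{SanovWasserstein}, applied to the i.i.d.\ Brownian paths $\{B^i\}$ whose law $P^{\{d\}}$ satisfies the required exponential-integrability hypothesis (see Remark~\ref{SanovWiener} under Condition~\eqref{eq:exp:intcond} on $\lambda$), furnishes an LDP for $\{\mathop{\mathrm{Law}}(L^B_n)\}$ on $\cP_1(C^{0,\al}([0,T];\bbR^d))$ endowed with the $1$-Wasserstein metric, with scale $n$ and good rate function $H(\,\cdot\,|\,P^{\{d\}})$.

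The technical point of the argument is Lipschitz continuity of $F\!:\cP_1(C^{0,\al}([0,T];\bbR^d))\to\cP_1(C^{0,\al}([0,T];\bbR^{2d}))$ for the respective $1$-Wasserstein metrics. This is handled by a product-coupling trick: given $Q,\tilde Q\in\cP_1$ and any $\pi\in\Ga(Q,\tilde Q)$, the product measure $\pi\otimes\pi$ belongs to $\Ga(Q\otimes Q,\tilde Q\otimes \tilde Q)$. Using the identification of $C^{0,\al}([0,T];\bbR^{2d})$ with $C^{0,\al}([0,T];\bbR^d)^{\times 2}$ as (bi-Lipschitz) metric spaces, together with the elementary path-space bound
\begin{align*}
  d_{E^{2}}\big((x_1,y_1),(x_2,y_2)\big)
  \;\leq\;
  d_{E}(x_1,x_2) + d_{E}(y_1,y_2),
\end{align*}
integration against $\pi\otimes\pi$ immediately yields $d_{W,\al}(F(Q),F(\tilde Q))\leq 2\, d_{W,\al}(Q,\tilde Q)$. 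In particular $F$ maps $\cP_1$ into $\cP_1$ and is continuous.

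The contraction principle (e.g.\ \cite[Thm.~4.2.1]{DeZe10}) then transfers the LDP for $L^B_n$ to an LDP for $L^{B,\{2\}}_n=F(L^B_n)$, with good rate function $I^{\{2\}}(\mu)=\inf\{H(Q\,|\,P^{\{d\}}):\,F(Q)=\mu\}$. The constraint $Q\otimes Q=\mu$ forces $Q=\mu\circ\pi_1^{-1}$ (the common marginal), so that the infimum is attained at this unique $Q$ whenever $\mu$ is of product form with identical marginals, and is the infimum over the empty set ($=+\infty$) otherwise; this matches the formula in the statement. I do not anticipate any serious obstacle: the only non-automatic step is the $1$-Wasserstein continuity of the product-measure map, which is dispatched by the above coupling estimate.
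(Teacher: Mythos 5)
Your argument is correct and follows essentially the same route as the paper: Sanov's theorem in the $1$-Wasserstein metric (Theorem~\ref{SanovWasserstein} with Remark~\ref{SanovWiener}) plus the contraction principle, with the only substantive step being continuity of $Q\mapsto Q\otimes Q$, which the paper establishes in Lemma~\ref{continuity_doubling} via exactly your product-coupling estimate $d_{W,\al}(Q\otimes Q,\tilde Q\otimes\tilde Q)\leq 2\,d_{W,\al}(Q,\tilde Q)$. Your identification of the resulting rate function, including the observation that $Q\otimes Q=\mu$ forces $Q=\mu\circ\pi_1^{-1}$, matches the statement.
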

\begin{proof}
  The result is a consequence of Sanov theorem in the $1$-Wasserstein metric \ref{SanovWasserstein} (together with Remark \ref{SanovWiener} for our context) and the contraction principle, cf.\ \cite[Theorem~4.2.1]{DeZe10}, provided that the map
  \begin{align}
    \cP_1(C^{0,\al}([0,T];\bbR^d))
    \;\ni\;
    Q
    \;\longmapsto\;
    Q\otimes Q
    \;\in\;
    \cP_1(C^{0,\al}([0,T];\bbR^{2d}))
  \end{align}
  is continuous.  This continuity result is provided in Lemma~\ref{continuity_doubling} in the Appendix.
\end{proof}

\subsection{The LDP for the enhanced empirical measure}
We are ready to prove the large deviation result for sequence of the enhanced empirical measure $\{\bfL^{\bfB}_n : n \in \bbN\}$.

Probability measures on $\cC^{0,\al}_g([0,T];\bbR^{2d})$ are denoted by greek letters $\mu$ or $\nu$.  Further, we write $d_{W,\al}$ to denote the $1$-Wasserstein distance on $\cP_1(\cC^{0,\al}_g([0,T];\bbR^{2d}))$.  We call $B^{ij}$ the path $(B^i,B^j)$ and $\bfB^{ij}=(B^{ij},\bbB^{ij})$ the corresponding rough paths.  We define it as $\bfB=S(B)$ (this ensures we can apply the extended contraction principle on the whole space), but, as far as the law is concerned, it is equivalent to define $\bfB$ via Statonovich integral (see the section on rough paths). The enhanced empirical measure $\bfL^B_n$ is defined as
\begin{align*}
  \bfL_n^\bfB
  \;=\;
  \frac{1}{n^2} \sum^n_{i,j=1}\de_{(B^{ij}, \bbB^{ij})}.
\end{align*}
Recall the definition of $S$ given in \eqref{eq:def:S} and of $F:\cP_1(C^{0,\al}([0,T];\bbR^d))\rightarrow \cP_1(\cC^{0,\al}_g([0,T];\bbR^{2d}))$ (formula \eqref{defFk} in the case $k=2$) as the map
\begin{align}\label{def_F}
  F: Q\mapsto (Q \otimes Q) \circ S^{-1}.
\end{align}
Recall also the definition of the projection $\pi_1$ as $\pi_1(\bfX) = X^1$ for any element $\bfX = ((X^1, X^2), \bbX)$ in $\cC_g^{0,\al}([0, T]; \bbR^{2d})$.
\begin{theorem}
  \label{thm:Sanov_ebm}
  Let $\{B^i : i \in \bbN\}$ be a family of independent $d$-dimensional Brownian motion, with initial measure $\la$ and assume that there exists $c, \ve > 0$ such that
  \begin{align}\label{eq:exp:intcond}
    \int_{\bbR^d} \me^{c|x|^{1+\ve}}\, \la(\md x) \;<\; \infty.
  \end{align}
  The family $\{\mathop{\mathrm{Law}}(\bfL^\bfB_n) : n \in \bbN\}$ satisfies a LDP on $\cP_1(\cC^{0,\al}_g([0, T]; \bbR^{2d}))$ endowed with the 1-Wasserstein metric, with scale $n$ and good rate function $\bfI$ given by
  \begin{align}
    \bfI(\mu)
    \;=\;
    \begin{cases}
      H(\mu\circ\pi_1^{-1}\,|\, P^{\{d\}}), \quad
      &\text{if }\; \mu=F(\mu\circ\pi_1^{-1}),\\[1ex]
      \infty,
      &\text{otherwise}. 
    \end{cases}
  \end{align}
\end{theorem}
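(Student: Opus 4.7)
The plan is to push the $1$-Wasserstein Sanov theorem for $L^B_n$ (Theorem~\ref{SanovWasserstein}, applicable here by Remark~\ref{SanovWiener} thanks to \eqref{eq:exp:intcond}) forward through the map $F = F^{\{2\}}$ of \eqref{def_F}. The key identity, immediate from $\bfB^{ij} = S(B^i, B^j)$ and Proposition~\ref{prop:conv_S}(ii), is
\begin{align*}
\bfL^\bfB_n \;=\; F(L^B_n) \;=\; (L^B_n \otimes L^B_n)\circ S^{-1} \qquad \prob\text{-a.s.}
\end{align*}
However $S$ is only measurable (cf.\ the discussion following \eqref{eq:def:S}), so the ordinary contraction principle does not apply and one must invoke the extended contraction principle \cite[Theorem~4.2.23]{DeZe10}. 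The continuous approximations I would use are $F^{(m)}(Q) \ldef (Q\otimes Q)\circ (S^{(m)})^{-1}$, where $S^{(m)}$ is the deterministic rough-path lift obtained by replacing $X \in C^{0,\al}([0,T]; \bbR^{2d})$ by its piecewise linear interpolation $X^{(m)}$ along the dyadic partition $\De_m$ and then taking the classical iterated integral of $X^{(m)}$. Elementary H\"older estimates show that $S^{(m)}$ is continuous with at most linear growth in $\|X\|_{C^\al}$ (via \eqref{eq:equiv_dist_RP}); combined with the continuity of the doubling map $Q \mapsto Q\otimes Q$ (Lemma~\ref{continuity_doubling}) and the fact that pushforward by a continuous map of linear growth is continuous in $1$-Wasserstein, this yields continuity of $F^{(m)}$ on $\cP_1$.

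The heart of the argument is the uniform approximation condition
\begin{align*}
\lim_{m\to\infty}\; \sup_{Q:\, H(Q|P^{\{d\}}) \leq M}\; d_{W,\al}\bigl(F^{(m)}(Q), F(Q)\bigr) \;=\; 0
\end{align*}
for every $M < \infty$. Bounding the Wasserstein distance by the coupling obtained by pushing $Q\otimes Q$ forward through $X \mapsto (S^{(m)}(X), S(X))$, this reduces to the uniform integral estimate
\begin{align*}
\lim_{m\to\infty}\; \sup_{Q:\, H(Q|P^{\{d\}}) \leq M}\; \int d_\al\bigl(S^{(m)}(X), S(X)\bigr)\, (Q\otimes Q)(\md X) \;=\; 0.
\end{align*}
For this I would use the Donsker--Varadhan variational inequality
\begin{align*}
\int f\, \md\nu \;\leq\; \tfrac{1}{\la}\Bigl(H(\nu\,|\,P^{\{2d\}}) + \log \int e^{\la f}\, \md P^{\{2d\}}\Bigr),
\end{align*}
with $\nu = Q\otimes Q$ (so that $H(\nu\,|\,P^{\{2d\}}) = 2H(Q\,|\,P^{\{d\}}) \leq 2M$) and $f = d_\al(S^{(m)}(X), S(X))$. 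Proposition~\ref{prop:conv_S}(ii) gives $f \to 0$ $P^{\{2d\}}$-a.s., while Fernique-type exponential moment bounds for the rough-path norm (Corollary~13.15 in \cite{FrVi10}) together with the Lipschitz nature of $S^{(m)}$ yield uniform-in-$m$ exponential integrability of $f$, so that $\int e^{\la f}\, \md P^{\{2d\}} \to 1$ for each $\la > 0$. The variational inequality therefore gives $\limsup_m \sup_{Q: H \leq M} \int f\, \md(Q\otimes Q) \leq 2M/\la$, and letting $\la \to \infty$ completes the step.

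The extended contraction principle now produces the asserted LDP on $\cP_1(\cC^{0,\al}_g([0,T]; \bbR^{2d}))$ with good rate function $\bfI(\mu) = \inf\{H(Q\,|\,P^{\{d\}}) : F(Q) = \mu\}$. To obtain the explicit form stated, observe that when $H(Q\,|\,P^{\{d\}}) < \infty$ we have $Q \ll P^{\{d\}}$, hence $Q\otimes Q \ll P^{\{2d\}}$, so Proposition~\ref{prop:conv_S}(ii) ensures that $\pi_1 \circ S$ coincides $Q\otimes Q$-a.s.\ with the trivial projection $(X^1, X^2) \mapsto X^1$; pushing $\mu = (Q\otimes Q)\circ S^{-1}$ by $\pi_1$ recovers $Q = \mu\circ\pi_1^{-1}$, so the infimum is a unique minimizer and $\bfI(\mu) = +\infty$ whenever $\mu \neq F(\mu\circ\pi_1^{-1})$. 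The main obstacle is the uniform approximation step above: upgrading the pointwise $P^{\{2d\}}$-a.s.\ convergence of the dyadic lifts to $L^1$-convergence under the full family $\{Q\otimes Q : H(Q\,|\,P^{\{d\}}) \leq M\}$, which is Wasserstein-unbounded and indexed only through an entropy bound, is exactly what forces the combination of Donsker--Varadhan duality with the uniform Fernique-type integrability of the Wiener rough-path lift.
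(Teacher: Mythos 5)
Your overall architecture matches the paper's: reduce to the $1$-Wasserstein Sanov theorem for $L^B_n$, observe $\bfL^\bfB_n=F(L^B_n)$ a.s., and apply the extended contraction principle with the continuous lifts $F^{(m)}$ built from piecewise linear interpolation. Your treatment of the uniform approximation on entropy sublevel sets is also essentially the paper's Lemma~\ref{lemma:LDP_approx_2}: the Donsker--Varadhan inequality you invoke is exactly the entropy/exponential-moment duality that the paper implements via the Orlicz pair $(L_{\exp},L\log L)$, fed by the uniform exponential bound of Lemma~\ref{lemma:Strat_approx}; either formulation works.

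However, there is a genuine gap: the extended contraction principle has \emph{two} hypotheses, and you have verified only one. Besides the uniform convergence $\sup_{Q:H(Q|P^{\{d\}})\le M}d_{W,\al}(F^{(m)}(Q),F(Q))\to 0$, you must show that the laws of $F^{(m)}(L^B_n)=\bfL^{\bfB^{(m)}}_n$ are \emph{exponentially good approximations} of the laws of $\bfL^\bfB_n$, i.e.
\begin{align*}
\lim_{m\to\infty}\limsup_{n\to\infty}\frac{1}{n}\log\prob\Big[d_{W,\al}\big(\bfL^{\bfB}_n,\bfL^{\bfB^{(m)}}_n\big)>\de\Big]=-\infty .
\end{align*}
This is not implied by the sublevel-set estimate: that estimate concerns deterministic $Q$ of finite entropy, whereas $L^B_n$ is atomic and has $H(L^B_n\,|\,P^{\{d\}})=+\infty$ a.s., so it says nothing about $d_{W,\al}(F^{(m)}(L^B_n),F(L^B_n))$. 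The needed bound is the paper's Lemma~\ref{lemma:LDP_approx_1}, and it is not routine: the natural coupling gives $d_{W,\al}(\bfL^{\bfB}_n,\bfL^{\bfB^{(m)}}_n)\le n^{-2}\sum_{i,j}d_\al(\bfB^{ij},\bfB^{(m),ij})$, a U-statistic whose $n^2$ summands are dependent (each Brownian motion appears in $2n-1$ pairs), so a direct Chernoff bound at scale $n$ fails. The paper handles this by Hoeffding's decomposition, rewriting the U-statistic as an average over permutations of sums of $\lfloor n/2\rfloor$ \emph{independent} terms, then applying Jensen and the exponential moment bound of Lemma~\ref{lemma:Strat_approx} with the tuning $C\sim m^{\eta}n$. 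Without this (or an equivalent concentration argument for the dependent array), your proof does not close.
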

The basic fact, that invites us to use the extended contraction principle, is the following lemma.
\begin{lemma}\label{lemma:enh_true}
  The enhanced empirical measure $\bfL^\bfB_n$ is a.s.\ the image of the (true) empirical measure $L^B_n$ under the map $F\!: Q \mapsto (Q \otimes Q) \circ S^{-1}$.
\end{lemma}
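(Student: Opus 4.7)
I would prove this by directly unwinding the definitions, with a small measure-theoretic check to handle the a.s.\ identification. First, rewrite
\begin{align*}
L^B_n \otimes L^B_n \;=\; \frac{1}{n^2}\sum_{i,j=1}^n \delta_{(B^i,B^j)} \;=\; \frac{1}{n^2}\sum_{i,j=1}^n \delta_{B^{ij}}
\end{align*}
as a random measure on $C^{0,\alpha}([0,T];\bbR^{2d})$, under the canonical identification with $C^{0,\alpha}([0,T];\bbR^d)^{\times 2}$. Pushing forward by the measurable lift $S = S^{\{2d\}}$ yields
\begin{align*}
F(L^B_n) \;=\; (L^B_n\otimes L^B_n)\circ S^{-1} \;=\; \frac{1}{n^2}\sum_{i,j=1}^n \delta_{S(B^{ij})}.
\end{align*}
Comparing with $\bfL^\bfB_n = n^{-2}\sum_{i,j=1}^n \delta_{\bfB^{ij}}$, the lemma reduces to the pointwise identity $S(B^{ij})(\omega)=\bfB^{ij}(\omega)$ for every pair $(i,j)$, outside a single $\prob$-null set.

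For the off-diagonal case $i\ne j$, the identification is immediate from Proposition~\ref{prop:conv_S} applied with $e=2d$: $B^{ij}$ is a $2d$-dimensional Brownian motion (with initial law $\lambda^{\otimes 2}$, which inherits the exponential integrability \eqref{eq:exp:intcond}), so $S(B^{ij})=\bfB^{ij}$ a.s. The only mild subtlety is the diagonal case $i=j$, where $B^{ii}=(B^i,B^i)$ is degenerate and not a $2d$-Brownian motion. Here I would observe that dyadic piecewise-linear approximation commutes with the diagonal embedding $x\mapsto(x,x)$, so the lift $S(B^{ii})$ is computed from the iterated integrals of $(B^{i,k},B^{i,k})$, whose entries reduce to integrals of $B^{i,k}$ against itself; applying Proposition~\ref{prop:conv_S} to the $d$-dimensional Brownian motion $B^i$ then gives convergence a.s.\ to the Stratonovich iterated integrals, i.e.\ $S(B^{ii})=\bfB^{ii}$ a.s.

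Intersecting the $n^2$ full-measure events obtained in the previous step yields a single $\prob$-null exceptional set, outside of which $F(L^B_n)(\omega) = \bfL^\bfB_n(\omega)$ holds as equality of probability measures. I do not foresee a hard step; the only delicate point is the diagonal case, which is resolved by the commutation of dyadic approximation with the map $x\mapsto(x,x)$.
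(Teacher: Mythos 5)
Your proof is correct and follows essentially the same route as the paper: write $F(L^B_n)=n^{-2}\sum_{i,j}\delta_{S(B^{ij})}$ and identify $S(B^{ij})=\bfB^{ij}$ a.s.\ via Proposition~\ref{prop:conv_S}. Your explicit treatment of the diagonal terms $i=j$ (where $B^{ii}$ is not a $2d$-dimensional Brownian motion, so the proposition does not apply verbatim and one must reduce to the lift of the single $d$-dimensional path $B^i$) is a genuine detail that the paper's one-line proof leaves implicit, and your resolution of it is valid.
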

\begin{proof}
  The image measure of $L^B_n$ under $F$ is given by
  \begin{align}
    \frac{1}{n^2} \sum^n_{i,j=1} \de_{S(B^{ij})}.
  \end{align}
  By Proposition \ref{prop:conv_S}, the Stratonovich rough paths $\bfB^{ij}$ coincides a.s.\ with $S(B^{ij})$, hence the image measure of $L^B_n$ under $F$ coincides a.s.\ with $\bfL^\bfB_n$.
\end{proof}
In order to apply the extended contraction principle, we introduce a continuous approximation $F_m$ to the map $F$, defined in this way. Given a continuous trajectory $Y$, we define its piecewise linear approximation $Y^{(m)}$ as
\begin{align*}
  Y^{(m)}(t)
  \;=\;
  Y\left(\frac{[mt]}{m}\right) \,+\,
  m \left(%
    Y\left(\frac{[mt]+1}{m}\right) \,-\, Y\left(\frac{[mt]}{m}\right)
  \right)\, \left(t-\frac{[mt]}{m}\right).
\end{align*}
The iterated integral of $Y^{(m)}$ is classically defined as Riemann integral, precisely
\begin{align*}
  \big(\bbY^{(m)}_t\big)^{ij}
  \;=\;
  \int^t_0 Y^{(m),i}_s\; \md Y^{(m),j}_s.
\end{align*}
Now we set $F_m$ as
\begin{align*}
  F_m\!: \cP_1\big(C^{0,\al}([0,T]; \bbR^d)\big) 
  \;\longrightarrow\;
  \cP_1\big(\cC^{0,\al}_g([0,T]; \bbR^{2d})\big),
  \qquad
  Q \;\longmapsto\; \big(Q \otimes Q\big) \circ (S^{(m)})^{-1}
\end{align*}
where 
\begin{align*}
  C^{0,\al}([0,T]; \bbR^{2d})
  \;\ni\;
  Y
  \;\longmapsto\;
  S^{(m)}(Y)
  \;\ldef\;
  \big(Y^{(m)}, \bbY^{(m)})\big)
  \:=\;S(Y^{(m)})
  \;\in\;
  \cC^{0,\al}_g([0,T]; \bbR^{2d}).
\end{align*}
Note that this $S^{(m)}$ is defined as $S^k$, but replacing the dyadic approximation with the approximation at step $1/m$. We denote by $\bfL^{\bfB^{(m)}}_n$ the enhanced empirical measure associated with $B^{(m)}$, namely $\bfL^{\bfB^{(m)}}_n=F_m(L^B_n)$. Notice that, for each $m$, $S^{(m)}$ is continuous with at most linear growth (this is due to the use of the homogeneous distance $d_\al$) and the map $Q \mapsto Q \otimes Q$ is continuous with respect to the $1$-Wasserstein metrics on $\cP_1(C^{0,\al}([0,T], \mathbb{R}^d))$ and $\cP_1(C^{0,\al}([0,T], \mathbb{R}^{2d}))$ (Lemma \ref{continuity_doubling} in the Appendix). So $F_m$ is continuous in the $1$-Wasserstein metric (by Corollary \ref{cont_lingrowth} in the Appendix).

In the proceeding lemmata, we show that the approximation given by $F_m$ is indeed exponentially good, in the sense of the extended contraction principle (as in \cite[Lemma~2.1.4]{DeSt89}).  The main tool is the following lemma, which follows from \cite{FrVi10} (see Corollary 13.21 and Exercise 13.22, a proof is given in the Appendix), which gives an exponential bound for the approximation.
\begin{lemma}\label{lemma:Strat_approx}
  Let $\bfB$ the Stratonovich enhanced Brownian motion on $\bbR^e$, let $\bfB^{(m)}$ be its piecewise linear approximation, defined as before. Fix $\al < 1/2$. Then, for every $\eta$ in $(0, 1/2-\al)$, there exists $c > 0$ such that
  \begin{align}
    \sup_{m \geq 1}
    \mean\!\Big[\exp\Big(c m^{\eta/2}\, d_{\al}\big(\bfB, \bfB^{(m)}\big)\Big)\Big]
    \;<\;
    \infty
  \end{align}
\end{lemma}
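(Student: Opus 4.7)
The approach is Hölder interpolation combined with quantitative Gaussian concentration. Fix an auxiliary exponent $\beta \in (\alpha, 1/2)$, to be chosen later. The first ingredient is the standard interpolation inequality for the homogeneous Hölder distance on the step-$2$ nilpotent group (cf.\ Friz-Victoir, Chapter 8): there is a constant $C = C(\alpha, \beta)$ with
\begin{align*}
  d_\alpha(\bfB, \bfB^{(m)})
  \;\leq\;
  C \cdot d_0(\bfB, \bfB^{(m)})^{1-\alpha/\beta}
  \cdot d_\beta(\bfB, \bfB^{(m)})^{\alpha/\beta},
\end{align*}
where $d_0$ denotes the uniform homogeneous distance.

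The second ingredient is two quantitative Gaussian estimates for enhanced Brownian motion and its Wong--Zakai approximation. First, by Fernique-type theorems in $\beta$-Hölder rough-path norm (Friz-Victoir, Theorem 13.14), together with the triangle inequality $d_\beta(\bfB, \bfB^{(m)}) \leq d_\beta(\bfB, 0) + d_\beta(\bfB^{(m)}, 0)$ and the uniform-in-$m$ version for piecewise linear approximations, there exists $c_0 > 0$ such that
\begin{align*}
  \sup_m\, \mean\!\bigl[\exp\bigl(c_0\, d_\beta(\bfB, \bfB^{(m)})^2\bigr)\bigr] \;<\; \infty.
\end{align*}
Second, the uniform distance vanishes at the sharp Wong--Zakai rate: by Corollary 13.21 of Friz-Victoir, for every $\eta' \in (0,1)$ there exists $c_1 > 0$ with
\begin{align*}
  \sup_m\, \mean\!\bigl[\exp\bigl(c_1\, m^{1-\eta'}\, d_0(\bfB, \bfB^{(m)})^2\bigr)\bigr] \;<\; \infty.
\end{align*}

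To conclude, set $\theta = 1-\alpha/\beta$ and apply weighted AM--GM ($a^\theta b^{1-\theta} \leq \theta a + (1-\theta) b$) to the variables $a = m^{\eta/(2\theta)}\, d_0(\bfB,\bfB^{(m)})$ and $b = d_\beta(\bfB,\bfB^{(m)})$. This yields
\begin{align*}
  c\, m^{\eta/2}\, d_\alpha(\bfB,\bfB^{(m)})
  \;\leq\;
  c\,C\,\theta\, m^{\eta/(2\theta)}\, d_0(\bfB,\bfB^{(m)})
  \,+\, c\,C\,(1-\theta)\, d_\beta(\bfB,\bfB^{(m)}).
\end{align*}
After exponentiating and applying Cauchy--Schwarz, the $d_\beta$ term has finite moment by the first estimate, while the $d_0$ term has finite moment whenever $\eta/(2\theta) \leq (1-\eta')/2$, that is $\eta \leq (1-\eta')(1-\alpha/\beta)$, because a sub-Gaussian random variable of variance proxy $m^{-(1-\eta')}$ has all exponential moments of order $m^{\eta/(2\theta)}$ bounded under this condition. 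Since $\alpha < 1/2$, we can pick $\beta \in (\alpha, 1/2)$ close enough to $1/2$ so that $1 - \alpha/\beta$ is close to $1 - 2\alpha$, and then $\eta' > 0$ small enough so that $(1-\eta')(1-\alpha/\beta) > 1/2 - \alpha$; this accommodates any $\eta \in (0, 1/2-\alpha)$.

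The main technical point is the Fernique-type bound on $d_\beta(\bfB^{(m)},0)$ uniform in $m$; this requires exploiting the Gaussian scaling/Brownian-bridge representation of the piecewise linear reconstruction to reduce to the classical Fernique statement for Brownian rough paths. Once that is in hand, the exponent bookkeeping between $\beta$, $\eta$, $\eta'$ is the only delicate step, and it is precisely the constraint $\alpha < 1/2$ that creates the margin needed for the full range $\eta \in (0, 1/2-\alpha)$ allowed in the statement.
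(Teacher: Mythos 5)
Your argument is correct, and the exponent bookkeeping checks out: with $\theta = 1-\alpha/\beta$ you need $\eta \le \theta(1-\eta')$, and since $\theta \to 1-2\alpha = 2(1/2-\alpha)$ as $\beta \uparrow 1/2$, every $\eta < 1/2-\alpha$ is indeed accommodated. Your route is, however, not the one the paper takes. The paper black-boxes the quantitative Wong--Zakai estimate in $\alpha$-H\"older rough path metric directly, citing \cite[Corollary~13.22]{FrVi10} for the moment bound $\mean\big[d_\al(\bfB^{(m)},\bfB)^q\big] \le (Cq^{1/2}m^{-\eta/2})^q$, and then obtains the exponential moment by summing the power series term by term using $q^q \le \me^{q-1}q!$ and choosing $\rho \sim m^{\eta/2}$. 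You instead re-derive that input from its own ingredients --- H\"older interpolation between the uniform and $\beta$-H\"older homogeneous distances, a Fernique-type bound uniform in $m$ for $\|\bfB^{(m)}\|_\beta$, and the sup-norm convergence rate of \cite[Corollary~13.21]{FrVi10} --- and then conclude by weighted AM--GM and Cauchy--Schwarz rather than by series summation. This is essentially the content of the exercise in \cite{FrVi10} that the paper alludes to, so the two proofs rest on the same underlying Gaussian rough path estimates; yours is more self-contained about \emph{why} the $L^q$ bound holds (at the price of having to justify the uniform-in-$m$ Fernique estimate, which you correctly flag as the main technical point and which is available in \cite{FrVi10}), while the paper's is shorter and makes transparent how a Gaussian-type $L^q$ growth $\sqrt{q}$ converts into an exponential moment. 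Either way the conclusion and the admissible range of $\eta$ agree.
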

As a first step, we establish the exponential tightness of the approximation $\bfL_n^{\bfB^{(m)}}$ of $\bfL_m^{\bfB}$.
\begin{lemma}\label{lemma:LDP_approx_1}
  For any $\de > 0$, it holds
  \begin{align}
    \lim_{m \to \infty} \limsup_{n\to \infty}\,
    \frac{1}{n}\, \log
    \prob\!\Big[d_{W, \al}\big(\bfL^{\bfB}_n,\bfL^{\bfB^{(m)}}_n\big) > \de \Big] 
    \;=\;
    -\infty.
  \end{align}
\end{lemma}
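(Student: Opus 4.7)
The plan is to pointwise bound $d_{W,\al}(\bfL^{\bfB}_n, \bfL^{\bfB^{(m)}}_n)$ by an empirical average of pairwise rough-path distances, extract uniform exponential moments via Lemma~\ref{lemma:Strat_approx}, and then handle the resulting $V$-statistic by splitting it into a diagonal piece (Markov plus union bound) and an off-diagonal piece treated through Hoeffding's classical moment comparison for $U$-statistics. First I would observe that the empirical measure
\[
  \pi_n \ldef \frac{1}{n^2}\sum_{i,j=1}^n \de_{(\bfB^{ij},\bfB^{(m),ij})}
\]
on $\cC^{0,\al}_g([0,T];\bbR^{2d})\times\cC^{0,\al}_g([0,T];\bbR^{2d})$ is a coupling of $\bfL^{\bfB}_n$ and $\bfL^{\bfB^{(m)}}_n$, so
\[
  d_{W,\al}\bigl(\bfL^{\bfB}_n,\bfL^{\bfB^{(m)}}_n\bigr)
  \;\leq\;
  \frac{1}{n^2}\sum_{i,j=1}^n X^m_{ij},
  \qquad
  X^m_{ij} \ldef d_\al\bigl(\bfB^{ij},\bfB^{(m),ij}\bigr).
\]
Since $(B^i,B^j)$ is a $2d$-dimensional Brownian motion for $i\ne j$, Lemma~\ref{lemma:Strat_approx} applied in dimension $2d$ (and in dimension $d$ when $i=j$) produces constants $c>0$ and $C<\infty$ such that $\sup_m\sup_{i,j}\mean\bigl[\exp(cm^{\eta/2}X^m_{ij})\bigr]\leq C$.

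Next I would write $\tfrac{1}{n^2}\sum_{i,j}X^m_{ij} = A^m_n + \tfrac{n-1}{n}U^m_n$, where
\[
  A^m_n \ldef \frac{1}{n^2}\sum_{i=1}^n X^m_{ii},
  \qquad
  U^m_n \ldef \frac{1}{n(n-1)}\sum_{i\ne j} X^m_{ij}.
\]
The diagonal piece is harmless: $A^m_n>\de/2$ forces $\max_i X^m_{ii}>n\de/2$, so a union bound followed by Markov gives
\[
  \prob[A^m_n>\de/2] \;\leq\; n\,\prob[X^m_{11}>n\de/2] \;\leq\; nC\me^{-cm^{\eta/2}n\de/2},
\]
and therefore $\limsup_n \tfrac{1}{n}\log\prob[A^m_n>\de/2] \leq -cm^{\eta/2}\de/2 \to -\infty$ as $m\to\infty$.

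The main obstacle is the off-diagonal statistic $U^m_n$, whose terms share the Brownian motions $B^1,\ldots,B^n$ and so are far from independent. To overcome this I would first symmetrise the kernel by setting $\tilde{h}^m(B^i,B^j)\ldef\tfrac12[X^m_{ij}+X^m_{ji}]$, so that $U^m_n=\binom{n}{2}^{-1}\sum_{i<j}\tilde{h}^m(B^i,B^j)$ becomes a genuine symmetric $U$-statistic of order $2$. Hoeffding's classical moment comparison inequality then yields, for any convex $\ph$,
\[
  \mean[\ph(U^m_n)]\;\leq\;\mean[\ph(V^m_n)],
  \qquad
  V^m_n \ldef \frac{1}{\lfloor n/2\rfloor}\sum_{k=1}^{\lfloor n/2\rfloor}\tilde{h}^m\bigl(B^{2k-1},B^{2k}\bigr),
\]
where the blocks in $V^m_n$ are i.i.d. Applied to $\ph(u)=\me^{\la u}$ with $\la\ldef cm^{\eta/2}\lfloor n/2\rfloor$, together with tensorisation and the Cauchy--Schwarz estimate $\mean[\me^{cm^{\eta/2}\tilde{h}^m}]\leq\mean[\me^{cm^{\eta/2}X^m_{12}}]\leq C$, this gives $\mean[\me^{\la V^m_n}]\leq C^{\lfloor n/2\rfloor}$. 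A Chernoff bound then yields
\[
  \prob[U^m_n>\de/2]
  \;\leq\;
  \exp\!\Bigl(\lfloor n/2\rfloor\bigl[\log C - cm^{\eta/2}\de/2\bigr]\Bigr),
\]
so $\limsup_n \tfrac{1}{n}\log\prob[U^m_n>\de/2]\leq\tfrac12[\log C-cm^{\eta/2}\de/2]\to-\infty$ as $m\to\infty$. Since $\tfrac{n-1}{n}U^m_n>\de/2$ implies $U^m_n>\de/2$, combining the diagonal and off-diagonal estimates concludes the proof.
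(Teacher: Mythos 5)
Your proof is correct and follows essentially the same route as the paper: the same coupling bound on $d_{W,\al}$, Hoeffding's $U$-statistic comparison to reduce to an average of i.i.d.\ blocks, and an exponential Chernoff bound fed by the uniform exponential moments of Lemma~\ref{lemma:Strat_approx}. The only cosmetic difference is that you peel off the diagonal terms and dispose of them by a union bound, whereas the paper absorbs them into the Hoeffding kernel $H_m(i,j)$ with weight $1/n$.
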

\begin{proof}
  Consider the coupling measure $\frac{1}{n^2}\sum^n_{i,j=1} \de_{(\bbB^{ij},\bbB^{(m),ij})}$ with marginals $\bfL^{\bfB^{(m)}}_n$ and $\bfL^{\bfB}_n$.  Then, in view of \eqref{eq:def:Wasserstein}, we obtain that
  \begin{align}\label{eq:Wasserstein:ub}
    d_{W,\al}\big(\bfL^{\bfB}_n,\bfL^{\bfB^{(m)}}_n\big)
    \;\leq\;
    \frac{1}{n^2} \sum^n_{i,j=1}\, d_{\al}(\bfB^{ij}, \bfB^{(m),ij}),
  \end{align}
  where we used the fact that the map $(\bfX,\bfX')\mapsto d_\al(\bfX,\bfX')$ is Lipschitz continuous.  By means of Hoeffding's decomposition \cite{Ho63}, the right-hand side of \eqref{eq:Wasserstein:ub} can be rewritten as
  \begin{align*}
    \frac{1}{n(n-1)}\, \sum_{\substack{i,j = 1\\i\,\ne\,j}}^n H_m(i,j)
    \;=\;
    \frac{1}{n!}\, \sum_{\si \in \cS_n}\,
    \frac{1}{\lfloor n/2 \rfloor}\, \sum_{i=1}^{\lfloor n/2 \rfloor}
    H_m\big(\si(2i-1),\si(2i)\big),
  \end{align*}
  where $\cS_n$ denotes the set of all permutations of $\{1, \ldots, n\}$ and
  \begin{align*}
    H_{m,n}(i, j)
    \;\equiv\;
    H_m(i,j)
    \;\ldef\;
    \frac{n-1}{n}\, 
    d_{\al}\,(\bfB^{ij}, \bfB^{(m),ij})
    \,+\, \frac{1}{n}\, d_{\al}(\bfB^{ii}, \bfB^{(m),ii}).
  \end{align*}
  Hence, an application of the Markov inequality and Jensen's inequality gives, for any $C > 0$ and any $n$ and $m$,
  \begin{align}
    \prob\! \Big[d_{W,\al}\big(\bfL_n^{\bfB},\bfL_n^{\bfB^{(m)}}\big) > \de \Big]
    &\;\leq\;
    \prob\!%
    \bigg[
      \frac{1}{n!} \sum_{\sigma\in\cS_n} \frac{1}{\lfloor n/2\rfloor}
      \sum^{\lfloor n/2\rfloor}_{i=1} H_m\big(\si(2i-1),\si(2i)\big) > \de
    \bigg]
    \nonumber\\[.5ex]
    &\;\leq\;
    \me^{-C\de}\,
    \mean\!%
    \bigg[
      \exp\!%
      \bigg(
        \frac{C}{n!}\, \sum_{\si \in \cS_n} \frac{1}{\lfloor n/2\rfloor}
        \sum_{i=1}^{\lfloor n/2\rfloor} H_m\big(\si(2i-1),\si(2i)\big)
      \bigg)
    \bigg]
    \nonumber\\[.5ex]
    &\;\leq\;
    \me^{-C \de}\,
    \frac{1}{n!} \sum_{\si \in \cS_n}
    \mean\!%
    \bigg[
      \exp\!%
      \bigg(
      \frac{C}{\lfloor n/2\rfloor}
      \sum^{\lfloor n/2\rfloor}_{i=1} H_m\big(\si(2i-1),\si(2i)\big)
      \bigg)
    \bigg].
    \nonumber
  \end{align}
  Here, we see the advantage of Hoeffding's decomposition: by using the mutually independence of $\{H\big(\si(2i-1), \si(2i)\big) : i = 1, \ldots, \lfloor n/2\rfloor\}$ we finally get that
  \begin{align}\label{eq:Wasserstein:exp:ub}
    \prob\!\Big[d_{W,\al}\big(\bfL_n^{\bfB}, \bfL_n^{\bfB^{(m)}}\big) > \de \Big]
    \;\leq\;
    \me^{-C \de}\,
    \mean\!%
    \bigg[
      \exp\!\bigg(\frac{C}{\lfloor n/2\rfloor}\, H_m(1,2)\bigg)
    \bigg]^{\lfloor n/2\rfloor}.
  \end{align}
  On the other hand, by choosing $C = c m^{\eta} n / (6(c'\vee 1))$ for some $c' < \infty$ such that $d_\al(\bfB^{11},\bfB^{(m),11}) \leq c'\, d_\al(\bfB^1,\bfB^{(m),1})$, Lemma~\ref{lemma:Strat_approx} implies that, for any $\eta \in (0, 1/2 - \al)$ and any $n \geq 2$,
  \begin{align*}
    &\sup_{m \geq 1}
    \mean\!%
    \bigg[
      \exp\!%
      \bigg(
        \frac{c}{6(c'\vee 1)}\,\frac{m^{\eta} n}{\lfloor n/2\rfloor}\, H_m(1,2)
      \bigg)
    \bigg]
    \\[1ex]
    &\mspace{36mu}\leq\;
    \sup_{m \geq 1}
    \mean\!%
    \Big[
      \exp\!\Big(cm^{\eta}\, d_\al(\bfB^{12},\bfB^{(m),12}) \Big)
    \Big]^{1/2}
    \mean\!%
    \Big[
      \exp\!\Big(cm^{\eta}\, d_\al(\bfB^1,\bfB^{(m),1}) \Big)
    \Big]^{1/2}
    \;<\;
    \infty,
  \end{align*}
  By combining this estimate with \eqref{eq:Wasserstein:exp:ub}, the assertion follows.
\end{proof}
\begin{lemma}\label{lemma:LDP_approx_2}
  For every $a < \infty$, it holds
  \begin{align}
    \lim_{m \to \infty}\, \sup_{Q: H(Q\mid P^{\{d\}}) \leq a}\, 
    d_{W,\al}\big(F_m(Q), F(Q)\big)
    \;=\;
    0.
  \end{align}
\end{lemma}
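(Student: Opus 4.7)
The plan is to dominate the Wasserstein distance by the natural coupling induced by $Q\otimes Q$, and then turn the resulting expectation under $Q\otimes Q$ into one under $P^{\{2d\}}$ via the entropy inequality, so that Lemma \ref{lemma:Strat_approx} can be applied.

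First, I observe that both $F(Q)$ and $F_m(Q)$ are push-forwards of the same measure $Q\otimes Q$ on $C^{0,\al}([0,T];\bbR^{2d})$, under $S$ and $S^{(m)}$ respectively. Consequently, the push-forward of $Q\otimes Q$ under the joint map $Y\mapsto(S(Y),S^{(m)}(Y))$ is a coupling of $F(Q)$ and $F_m(Q)$, which yields
\begin{align*}
d_{W,\al}\big(F(Q),F_m(Q)\big)
\;\leq\;
\int_{C^{0,\al}([0,T];\bbR^{2d})}
d_{\al}\!\big(S(Y),S^{(m)}(Y)\big)\;(Q\otimes Q)(\md Y).
\end{align*}

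Next, by tensorization of relative entropy, $H(Q\otimes Q \,|\, P^{\{d\}}\otimes P^{\{d\}})=2H(Q\,|\,P^{\{d\}})\leq 2a$, and $P^{\{d\}}\otimes P^{\{d\}}=P^{\{2d\}}$. The Donsker--Varadhan variational inequality then gives, for every $\la>0$,
\begin{align*}
\int d_{\al}\!\big(S(Y),S^{(m)}(Y)\big)\,(Q\otimes Q)(\md Y)
\;\leq\;
\frac{1}{\la}\left(
\log \int \me^{\la\, d_{\al}(S(Y),S^{(m)}(Y))}\,P^{\{2d\}}(\md Y)\;+\;2a
\right).
\end{align*}
Under $P^{\{2d\}}$, the path $Y$ is a $2d$-dimensional Brownian motion (started from $\la\otimes\la$), so by Proposition~\ref{prop:conv_S} the lift $S(Y)$ coincides a.s.\ with the Stratonovich enhanced Brownian motion $\bfB^{12}$, while $S^{(m)}(Y)=S(Y^{(m)})$ is its piecewise linear lift $\bfB^{(m),12}$. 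Fixing $\eta\in(0,1/2-\al)$ and choosing $\la=cm^{\eta/2}$ with the constant $c$ of Lemma \ref{lemma:Strat_approx} (applied on $\bbR^{2d}$), the exponential moment is bounded by some $K<\infty$ uniformly in $m$.

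Combining these estimates, for every $Q$ with $H(Q\,|\,P^{\{d\}})\leq a$ we obtain
\begin{align*}
d_{W,\al}\!\big(F(Q),F_m(Q)\big)
\;\leq\;
\frac{\log K + 2a}{c\,m^{\eta/2}},
\end{align*}
which tends to $0$ as $m\to\infty$, uniformly over the sublevel set $\{H(\cdot\,|\,P^{\{d\}})\leq a\}$. The only delicate point is checking that Lemma \ref{lemma:Strat_approx} applies to the specific sequence $S^{(m)}$ defined on the $2d$-dimensional path (rather than the dyadic $S^k$ from \eqref{eq:def:S}); this follows because the Lemma is stated for exactly the piecewise linear approximation at mesh $1/m$, and the rough-path lifting commutes with the permutation of coordinates.
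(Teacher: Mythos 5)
Your proof is correct and follows essentially the same route as the paper: the same coupling bound $d_{W,\al}(F_m(Q),F(Q))\le\int d_\al(S^{(m)}(Y),S(Y))\,(Q\otimes Q)(\md Y)$, the same transfer to the reference measure $P^{\{d\}}\otimes P^{\{d\}}$ via the entropy bound $H(Q\otimes Q\,|\,P^{\{d\}}\otimes P^{\{d\}})=2H(Q|P^{\{d\}})\le 2a$, and the same uniform exponential moment from Lemma \ref{lemma:Strat_approx} with the scaling $k=c\,m^{\eta/2}$. The only (cosmetic) difference is that you invoke the Donsker--Varadhan variational inequality where the paper uses the equivalent Orlicz--Birnbaum $L\log L$ / $L_{\exp}$ H\"older estimate; both express the same duality between relative entropy and exponential integrability and yield the same $O(m^{-\eta/2})$ rate uniformly on the sublevel set.
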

\begin{proof}
  Using the coupling $(Q\otimes Q)\circ (S^{(m)}(X),S(X))^{-1}$, we get
  \begin{align*}
    d_{W,\al}\big(F_m(Q), F(Q)\big)
    &\;\leq\;
    \int_{C^{0,\al}([0,T], \bbR^{2d})} d_\al\big(S^{(m)}(X), S(X)\big)\,
    Q\otimes Q(\md X)
    \\[.5ex]
    &\;=\;
    \int_{C^{0,\al}([0,T],\bbR^{2d})}
      d_\al\big(S^{(m)}(X), S(X)\big)\,
      \bigg(\frac{\md Q}{\md P^{\{d\}}}\otimes\frac{\md Q}{\md P^{\{d\}}}\bigg)(X)\,
    \big(P^{\{d\}} \otimes P^{\{d\}}\big)(\md X).
  \end{align*}
  The idea is the following: For any $Q$ with bounded entropy, $\frac{\md Q}{\md P^{\{d\}}}\otimes\frac{\md Q}{\md P^{\{d\}}}$ has a uniform $L\log L$ bound with respect to the Wiener measure $P^{\{d\}}$.  Hence, the lemma is proven if the norm of $d_\al(S^{(m)}(X),S(X))$ in the dual space of $L\log L$, again with respect to $P^{\{d\}}$, converges to $0$. This convergence follows by an exponential control of $d_\al(S^{(m)}(X),f(X))$ under $P^{\{d\}}$, which is a consequence of Lemma \ref{lemma:Strat_approx}.

  To make this argument work, we use the theory of Orlicz space. Let $\Phi$, $\Psi\!: [0, \infty) \to [0, \infty)$ be a complementary Young pair of $N$-functions defined by
  \begin{align*}
    \Phi(r)
    \;=\;
    \frac{1}{2}\, r^2\, 1_{r \leq 1} \,+\,
    \Big(\me^{r-1} -\frac{1}{2}\Big)\, 1_{r > 1},
    \qquad \text{and} \qquad
    \Psi(r)
    \;=\;
    \frac{1}{2}\, r^2\, 1_{r\le 1} \,+\,
    \Big( r \log r + \frac{1}{2} \Big)\, 1_{r > 1}.
  \end{align*}
  Further, on a given measure space $(\La, \Si, \mu)$, introduce for any $g, h\!: \La \to [0, \infty)$ measurable
  \begin{align*}
    \|g\|_{L_{\exp}}
    \;\ldef\;
    \inf_{k>0}
    \left\{
      \frac{1}{k}\, \Big(1 + \int_{\La}\Phi(kg)\, \md \mu \Big)
    \right\}
    \qquad \text{and} \qquad
    \|h\|_{L\log L}
    \;\ldef\;
    \inf_{k>0}
    \left\{
      \frac{1}{k}\Big(1+\int_{\La} \Psi(kh)\, \md \mu\Big)
    \right\}.
  \end{align*}
  Then, the classical Orlicz-Birnbaum estimate, see \cite[Section~3.3]{RaRe91}m implies that for any measurable, nonnegative functions $g$ and $h$, it holds
  \begin{align}\label{eq:Holder_Orlicz}
    \int_{\La} g h\, \md\mu
    \;\leq\;
    4\, \|g\|_{L\log L}\, \|h\|_{L_{\exp}}.
  \end{align}
  In particular, by using the explicit form of the Orlicz pair $(\Phi, \Psi)$ the follwing estimates holds for any measurable, nonnegative functions $g, h$ and $k > 0$
  \begin{align}\label{eq:Holder_Orlicz_2}
    \int_{\La} g h\, \md\mu
    \;\leq\;
    \frac{4}{k}\,
    \Big(1 + \int_{\La} \exp(kg)\, \md\mu\Big)\,
    \Big(2 + \int_{\La} h \log h\,, \md\mu\Big).
  \end{align}
  By applying \eqref{eq:Holder_Orlicz_2} with $\La = C^{0,\al}(\bbR^{2d})$, $\mu = P^{\{d\}}\otimes P^{\{d\}}$, $g = d_{\al}(S^{(m)}(X),S(X))$, $h = \frac{\md Q}{\md P^{\{d\}}}\otimes\frac{\md Q}{\md P^{\{d\}}}$ we get
  \begin{align}
    d_{W,\al}\big(F_m(Q), F(Q)\big)
    \;\leq\;
    \frac{4}{k}(2+2a)
    \bigg(
      1 \,+\,
      \mean\!\Big[\exp\!\big(k\, d_{\al}\big(\bfB^{12}, \bfB^{12,(m)}\big)\Big]
    \bigg),
  \end{align}
  where we used that $\int_\Lambda h \log h\, \md\mu = 2H(Q|P^{\{d\}})\leq 2a$.  Finally, by choosing $k = c m^{\eta/2}$, a further application of Lemma~\ref{lemma:Strat_approx} yields
  \begin{align*}
    d_{W,\al}\big(F_m(Q), F(Q)\big) \;\leq\; \frac{4}{cm^{\eta/2}}(2+2a)(1+C),
  \end{align*}
  which completes the proof.
\end{proof}
\begin{proof}[Proof of Theorem \ref{thm:Sanov_ebm}]
  By Sanov theorem~\ref{SanovWasserstein} and Remark~\ref{SanovWiener}, the extended contraction principle (\cite{DeSt89}, Lemma 2.1.4) together with the Lemma \ref{lemma:LDP_approx_1} and \ref{lemma:LDP_approx_2} show that $\{\mathop{\mathrm{Law}}(\bfL^{\bfB^{(m)}}_n): n \in \bbN\}$ satisfies an LDP with scale $n$ and good rate function given by
  \begin{align*}
    \mu\mapsto \inf\Big\{
      H(Q \,|\, P) \mspace{12mu}\Big|\mspace{12mu}
      Q \in \cP_1\big(C^{0,\al}([0,T],\bbR^d)\big)
      \; \text{ and } \;
      F(Q) = \mu
    \Big\}.
  \end{align*}
  It is easy to see that this rate function coincides with the $\bfI$ defined in Theorem \ref{thm:Sanov_ebm}.
\end{proof}
We close the section with the convergence (in probability) of the enhanced empirical measures, which follows from the LDP (as well known in large deviations theory).
\begin{corro}
  The sequence of $\cP_1(\cC^{0,\al}_g([0,T];\bbR^{2d}))$-valued random variables $\{\bfL^\bfB_n : n \in \bbN\}$ converges in probability (and in law) to the constant random variable $\bfP^{\{2d\}}$, the enhancement of the $2d$-Wiener measure, that is the law on $\cC^{0,\al}_g([0,T];\bbR^{2d})$ of $(B^{12},\bbB^{12})$.
\end{corro}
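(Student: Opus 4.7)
The plan is to deduce the convergence from the LDP established in Theorem~\ref{thm:Sanov_ebm} by showing that the rate function $\bfI$ has a unique zero, located precisely at $\bfP^{\{2d\}}$. This is the standard route from a large deviation principle with good rate function to a weak law of large numbers.

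First, I would identify the zero set of $\bfI$. If $\bfI(\mu)=0$, then $\mu=F(\mu\circ\pi_1^{-1})$ and $H(\mu\circ\pi_1^{-1}\,|\,P^{\{d\}})=0$. Since relative entropy vanishes only on the reference measure, $\mu\circ\pi_1^{-1}=P^{\{d\}}$. Hence $\mu=F(P^{\{d\}})=(P^{\{d\}}\otimes P^{\{d\}})\circ S^{-1}$, which by Lemma~\ref{lemma:enh_true} (or directly by Proposition~\ref{prop:conv_S}) is exactly the law $\bfP^{\{2d\}}$ of the Stratonovich lift $(B^{12},\bbB^{12})$. So $\{\bfI=0\}=\{\bfP^{\{2d\}}\}$.

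Next I would fix an arbitrary open neighbourhood $U$ of $\bfP^{\{2d\}}$ in $\cP_1(\cC^{0,\al}_g([0,T];\bbR^{2d}))$ (with $1$-Wasserstein topology) and consider the closed complement $C=U^c$. Since $\bfI$ is a good rate function (sublevel sets are compact), and $\bfP^{\{2d\}}\notin C$, the infimum $\iota\ldef\inf_{\mu\in C}\bfI(\mu)$ is attained at some point of $C$, hence $\iota>0$. The LDP upper bound then yields
\begin{align*}
\limsup_{n\to\infty}\frac{1}{n}\log\prob\!\big[\bfL^\bfB_n\in C\big]\;\leq\;-\iota\;<\;0,
\end{align*}
so $\prob[\bfL^\bfB_n\notin U]\to 0$ (in fact exponentially fast). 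This is convergence in probability to the (deterministic) element $\bfP^{\{2d\}}$, and since the limit is a constant random variable, convergence in probability is equivalent to convergence in law.

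There is no serious obstacle here; the only point to verify carefully is the characterisation of the zero set, which reduces to the well-known strict positivity of relative entropy away from the reference measure, together with the identity $F(P^{\{d\}})=\bfP^{\{2d\}}$ coming from the a.s.\ identification of $S(B^{ij})$ with the Stratonovich rough path provided by Proposition~\ref{prop:conv_S}. Everything else is a routine application of the contraction-free LDP-to-LLN argument.
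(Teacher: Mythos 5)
Your argument is correct and is exactly the reasoning the paper compresses into its one-line proof: the LDP of Theorem~\ref{thm:Sanov_ebm} plus the fact that the good rate function $\bfI$ has its unique zero at $\bfP^{\{2d\}}=F(P^{\{d\}})$, identified via Proposition~\ref{prop:conv_S}. Your expanded version (strict positivity of $\inf_{U^c}\bfI$ by goodness, then the LDP upper bound) is the standard LDP-to-LLN argument the authors intend, so there is nothing to add.
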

\begin{proof}
   The result is a consequence of the LDP for the laws of $\bfL^\bfB_n$ and of the fact that the good rate function has a unique zero in $\bfP^{\{2d\}}$.
\end{proof}

\section{The modified Wasserstein space}    \label{section_modW}
As already mentioned in the Introduction, in view of our application (Theorem \ref{mainres}), we will have to deal with maps of the form
\begin{align}
  \mu
  \;\longmapsto\;
  \int_{\cC^{0,\al}_g} \int_0^T f(X)\, \md\bfX\, \mu(\md\bfX)
\end{align}
and we would like these maps to be continuous (to apply standard tools of large deviations theory). On one side, we know that a map $\mu \mapsto \int G\, \md\mu$ is continuous in the $1$-Wasserstein metric if $G$ is continuous with at most linear growth. But on the other side, by Theorem~\ref{thm:cont_rough_int}, the rough path integral has a growth of order at most $1/\al$, in particular a more than linear growth (with respect to the homogeneous rough paths norm).\footnote{The path-by-path estimate in Theorem~\ref{thm:cont_rough_int} is optimal.}  This creates a problem.  Following \cite{CLL13}, we introduce a new function $N$ of $\bfX$ with good concentration properties (w.r.t. to Brownian rough paths) such that the rough integral has at most linear growth with respect to $N$. We then device a strengthened topology, on a restriction of the space $\cP_1(\cC^{0,\al}_g([0,T];\bbR^e))$, which allows us to use as test functions also functions with linear growth with respect to such $N$.

In this new topology we prove the large deviation principle for the enhanced empirical measures, as a consequence of the LDP in the $1$-Wasserstein metric, via inverse contraction principle. This amounts to verify exponential tightness in the new topology, which can be proved using again Hoeffding decomposition and also Gaussian estimates for Brownian rough paths.
\begin{remark}
  One may ask why we do not take simply $N(\bfX) = \|X\|_\al^{1/\al}$, or allow for $p$-Wasserstein distance, for $p=1/\al$. The reason is that, with this choice of $N$, we are not able to prove a Sanov-type theorem for the enhanced empirical measure. Actually, in \cite{WWW10}, it is proved that a large deviation result in the $p$-Wasserstein distance does not hold for any $p>2$ (and actually also for $p=2$), as a consequence of the lack of exponential integrability of $\|X\|_\al^p$.
\end{remark}

\subsection{A modified Wasserstein topology}\label{mod_W_subsection}
For the definition of $N$, consider the following sequence of stopping times: given $\bfX$ in $\cC^{0,\al}_g([0,T];\bbR^e)$, we define
\begin{align}
  \tau^\al_0(\bfX) \;=\;0,
  \qquad
  \tau_{i+1}^{\al}(\bfX)
  \;=\;
  \inf\big\{%
    t > \tau_i^{\al}(\bfX)
    \,:\,
    \|\bfX\|_{(1/\al)-\mathrm{var},[\tau^\al_i(\bfX),t]} \geq 1
  \big\},
  \quad i \in \bbN.
\end{align}
Here $\|X\|_{(1/\al)-\mathrm{var},[s,t]}$ is the $(1/\al)$-variation of $\bfX$, as group-valued path, in the interval $[s, t]$, see \cite{FrVi10} for precise definition. What we need here is that the norm $\|X\|_{(1/\al)-\mathrm{var},[s, t]}$ is a continuous function of $\bfX$, in the space $\cC^{0,\al}_g([0,T];\bbR^e)$, for fixed $s$, $t$, and it is independent of the initial datum $X_0$.  Notice, that it is also a continuous function of $s$, $t$, for fixed $\bfX$, and it is monotone in $s$ and $t$, in the sense that $\|X\|_{(1/\al)-\mathrm{var},[s',t']} \leq \|X\|_{(1/\al)-\mathrm{var},[s,t]}$ for any $s \leq s' < t' \leq t$.  We define $N = N_{\al}$ as
\begin{align} \label{def:N}
  N_\al(\bfX) \;\ldef\; \sup\big\{i \in \bbN \,:\, \tau_i^{\al}(\bfX) < T \big\}.
\end{align}
We omit $\al$ when not necessary. The following lower-semicontinuity property of $N$ will be proved in the Appendix.
\begin{lemma}\label{Nlsc}
  The function $N$ is lower semi-continuous on $\cC^{0,\al}_g([0,T];\bbR^e)$.
\end{lemma}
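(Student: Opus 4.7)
The plan is to show the equivalent statement that the super-level set $\{\bfX \in \cC^{0,\al}_g([0,T]; \bbR^e) : N_\al(\bfX) \geq k\}$ is open for every integer $k \geq 1$. Since $N_\al$ takes values in $\{0, 1, 2, \ldots\} \cup \{\infty\}$, openness of every such super-level set is equivalent to lower semi-continuity.

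Fix $k \geq 1$ and $\bfX^0$ with $N_\al(\bfX^0) \geq k$. Write $\tau_i := \tau_i^\al(\bfX^0)$, so that $0 = \tau_0 < \tau_1 < \cdots < \tau_k < T$; the strict inequalities between consecutive $\tau_i$'s come from the continuity in $t$ of the $(1/\al)$-variation of the continuous path $\bfX^0$, which forces some positive time to accumulate each new unit of variation. The heart of the argument is to produce an \emph{inner} partition $0 = t_0 < t_1 < \cdots < t_k < T$ with
\begin{align*}
  \|\bfX^0\|_{(1/\al)-\mathrm{var}, [t_{i-1}, t_i]} \;>\; 1
\end{align*}
strictly for every $i = 1, \ldots, k$. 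Concretely, one picks each $t_i$ slightly to the right of $\tau_i$, staying less than $\tau_{i+1}$ for $i < k$ and less than $T$ for $i = k$; the assumption $\tau_k < T$ provides the room on the right-hand side needed for the last sub-interval. By the definition of $\tau_i$ as the infimum of times at which the variation from $\tau_{i-1}$ reaches $1$, pushing past $\tau_i$ makes the variation strictly larger than $1$.

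Once such a partition is in hand, the argument closes quickly. The paper has noted that $\bfX \mapsto \|\bfX\|_{(1/\al)-\mathrm{var},[s,t]}$ is continuous on $\cC^{0,\al}_g$ for each fixed $s, t$; applied to the $k$ sub-intervals $[t_{i-1}, t_i]$, this provides a neighborhood $U$ of $\bfX^0$ on which $\|\bfX\|_{(1/\al)-\mathrm{var},[t_{i-1}, t_i]} \geq 1$ for every $i$. An induction on $i$, using the monotonicity of $p$-variation in its left endpoint, then gives $\tau_i^\al(\bfX) \leq t_i$ for all $\bfX \in U$: assuming $\tau_{i-1}^\al(\bfX) \leq t_{i-1}$, the monotonicity yields
\begin{align*}
  \|\bfX\|_{(1/\al)-\mathrm{var},[\tau_{i-1}^\al(\bfX), t_i]}
  \;\geq\; \|\bfX\|_{(1/\al)-\mathrm{var},[t_{i-1}, t_i]}
  \;\geq\; 1,
\end{align*}
forcing $\tau_i^\al(\bfX) \leq t_i$. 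In particular $\tau_k^\al(\bfX) \leq t_k < T$, so $N_\al(\bfX) \geq k$ on $U$, and the super-level set is open.

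The main obstacle is the strict inequality in the inner-partition construction: without strictness, continuity in $\bfX$ cannot be invoked to pass from $\bfX^0$ to a neighborhood. This is where the strict separation $\tau_{i-1} < \tau_i$, the continuity of the $(1/\al)$-variation both in $(s,t)$ and in $\bfX$, and the hypothesis $\tau_k < T$ must play together; the rest of the proof is then a routine propagation of the inequalities $\tau_i^\al(\bfX) \leq t_i$ along the partition.
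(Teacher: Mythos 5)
You take a genuinely different route from the paper: you prove openness of the super-level sets $\{N_\al \ge k\}$ directly, by constructing a strict ``inner partition'' and propagating $\tau_i^\al(\bfX)\le t_i$ by induction, whereas the paper proves closedness of the sub-level sets $\{N_\al\le i\}=\{\tau_{i+1}^\al\ge T\}$ via an induction showing that each stopping time $\tau_i^\al$ is upper semi-continuous. Your closing induction (monotonicity of the $(1/\al)$-variation in the left endpoint, giving $\tau_i^\al(\bfX)\le t_i$ on a neighborhood) is correct. The gap is exactly in the step you flag as the main obstacle, and it is not filled: the existence of $0=t_0<t_1<\dots<t_k<T$ with $\|\bfX^0\|_{(1/\al)\text{-var},[t_{i-1},t_i]}>1$ \emph{strictly}. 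You assert that ``pushing past $\tau_i$ makes the variation strictly larger than $1$.'' With the definition $\tau_i=\inf\{t:\|\cdot\|_{(1/\al)\text{-var},[\tau_{i-1},t]}\ge 1\}$ as written in the paper, this is false: the nondecreasing continuous function $t\mapsto\|\bfX^0\|_{(1/\al)\text{-var},[\tau_{i-1},t]}$ may plateau at the value $1$ on a nondegenerate interval to the right of $\tau_i$, and on the last block it may never exceed $1$ at all. Concretely, take $\bfX^0$ whose $(1/\al)$-variation over $[0,T]$ equals exactly $1$, is attained already at $\tau_1=T/2$, and which is constant on $[T/2,T]$: then $N_\al(\bfX^0)=1$ but no $t_1\le T$ yields variation $>1$ on $[0,t_1]$, so your construction cannot start. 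This is not a removable blemish of your write-up: for that same $\bfX^0$ the dilations $\de_{1-1/m}\bfX^0$ converge to $\bfX^0$ in $\cC^{0,\al}_g$ and satisfy $N_\al=0$, so the strict-partition step fails precisely at the points where lower semi-continuity of $N_\al$ is itself borderline. (The paper's proof buries the same boundary issue in the claimed identity $\{\tau_i\ge t\}=\{\|\bfX\|_{(1/\al)\text{-var},[\tau_{i-1}(\bfX),t]}\le 1\}$, which with the ``$\ge 1$'' convention is only an inclusion; under the convention $\tau_i=\inf\{t:\|\cdot\|_{(1/\al)\text{-var},[\tau_{i-1},t]}> 1\}$ both that identity and your claim become true.)

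If one adopts the strict-inequality convention, your argument can be completed, but two points still need to be written out. First, the order of the choices matters: you must pick $t_k\in(\tau_k,T)$ first, which gives $\|\bfX^0\|_{(1/\al)\text{-var},[\tau_{k-1},t_k]}>1$ with some positive margin, then use continuity of the variation in the \emph{left} endpoint to choose $t_{k-1}>\tau_{k-1}$ close enough to $\tau_{k-1}$ that the margin on $[t_{k-1},t_k]$ survives, and descend; ``slightly to the right of $\tau_{i-1}$'' is quantified by the margin already secured on block $i$, which depends on the later choice $t_i$. Second, the neighborhood $U$ must be taken so that the variations on the blocks stay strictly above $1$ (not merely $\ge 1$) if the strict convention is used in the definition of the $\tau_i$; this costs nothing since you start from a strict inequality at $\bfX^0$. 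With these repairs your proof is a valid, and arguably more geometric, alternative to the paper's induction on upper semi-continuity of the stopping times.
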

The next lemma gives the desired sublinear growth of the rough integral in terms of $N$, see \cite{FrHa14} for a proof.
\begin{lemma}
  Let $f$ be a function in $C^2_b(\bbR^e)$ and let $\bfX$ be in $\cC^{0,\al}_g([0,T];\bbR^e)$. Then it holds, for some constant $C_f$ depending on $f$,
  \begin{align}\label{eq:lineargrowthRP}
    \left| \int^T_0 f(X)\, \md\bfX \right|
    \;\leq\;
    C_f\, \big(1 + N(\bfX) \big).
  \end{align}
\end{lemma}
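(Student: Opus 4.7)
The plan is to exploit the fact that between consecutive stopping times $\tau_i^\al(\bfX)$ and $\tau_{i+1}^\al(\bfX)$, the $(1/\al)$-variation of $\bfX$ is, by construction, bounded by $1$. On such an interval the rough integral is bounded by a constant depending only on $f$; summing these uniform bounds over the $N(\bfX)+1$ sub-intervals then produces the linear-in-$N(\bfX)$ estimate.

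More concretely, first I would additivity-decompose the integral as
\begin{align*}
  \int_0^T f(X)\, \md\bfX
  \;=\;
  \sum_{i=0}^{N(\bfX)}
  \int_{\tau_i^\al(\bfX)}^{\tau_{i+1}^\al(\bfX) \wedge T} f(X)\, \md\bfX,
\end{align*}
which is legitimate by the local definition of the rough integral (cf.\ Theorem~\ref{thm:cont_rough_int}) applied to any partition refining the $\tau_i^\al(\bfX)$.  Next I would invoke the standard sewing/rough-integral estimate in $p$-variation form with $p = 1/\al$: on an interval $[s,t]$ one has the Euler-type local bound
\begin{align*}
  \left| \int_s^t f(X)\, \md\bfX \right|
  \;\leq\;
  \|f\|_\infty\, \|X\|_{p\mathrm{-var},[s,t]}
  \,+\,
  \|Df\|_\infty\, \|\bbX\|_{(p/2)\mathrm{-var},[s,t]}
  \,+\,
  C_f\, \|\bfX\|_{p\mathrm{-var},[s,t]}^{\,3\al},
\end{align*}
the last term arising from the sewing remainder (this estimate is the $p$-variation counterpart of the Hölder bound in Theorem~\ref{thm:cont_rough_int} and is the $p$-variation analogue which appears, for instance, in \cite{FrHa14}).

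Applying this bound on $[\tau_i^\al(\bfX), \tau_{i+1}^\al(\bfX) \wedge T]$, where by definition of the stopping times $\|\bfX\|_{p\mathrm{-var},[\tau_i^\al,\tau_{i+1}^\al]} \leq 1$, each summand is dominated by a constant $\widetilde{C}_f$ depending only on $\|f\|_{C_b^2}$. Thus
\begin{align*}
  \left|\int_0^T f(X)\, \md\bfX\right|
  \;\leq\;
  \sum_{i=0}^{N(\bfX)} \widetilde{C}_f
  \;=\;
  \widetilde{C}_f\,\big(1 + N(\bfX)\big),
\end{align*}
which is the claimed \eqref{eq:lineargrowthRP} upon renaming the constant.

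The only real obstacle is the passage from the $\al$-H\"older rough-integral estimate of Theorem~\ref{thm:cont_rough_int} to its $(1/\al)$-variation version, since the stopping times $\tau_i^\al$ are defined via $(1/\al)$-variation (well-suited to the telescoping argument) whereas the rough integral was originally stated in H\"older form. Once one has the $p$-variation sewing estimate, everything else is a simple telescoping; since the $p$-variation form of the rough integration estimate is standard in the rough-paths literature, I would simply cite it (e.g.\ \cite{FrVi10, FrHa14}) rather than reprove it here.
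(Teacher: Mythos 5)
Your proof is correct and is exactly the standard argument behind this estimate: the paper gives no proof of its own but defers to \cite{FrHa14} (Section~11.2), where the bound is obtained precisely as you describe --- splitting $[0,T]$ at the greedy stopping times $\tau_i^\al(\bfX)$, applying the local $p$-variation rough-integral estimate on each of the $N(\bfX)+1$ subintervals, on which the homogeneous $(1/\al)$-variation is by construction at most $1$, and summing the resulting uniform bounds. The only blemish is the exponent $3\al$ on the sewing remainder (it should be the cube of the homogeneous $p$-variation norm, i.e.\ exponent $3$, or equivalently the control raised to the power $3\al$), but since that norm is $\leq 1$ on each subinterval the exact positive exponent is immaterial and the per-interval constant $\widetilde{C}_f$ is unaffected.
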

Now we introduce a modified topology, on a restriction of $\cP_1(\cC^{0,\al}_g([0,T];\bbR^e))$, in order to deal with functionals of the form $\mu \mapsto \int G\, \md\mu$ for some continuous $G$ with $G(\bfX) \leq C(1+N(\bfX))$.

First, for given $\ve > 0$, we introduce the space
\begin{align}
  &\cP_{(\|\cdot\|+N)^{1+\ve}}\big(\cC^{0,\al}_g([0,T];\bbR^e)\big)
  \nonumber\\[1ex]
  &\mspace{36mu}\ldef\;
  \Big\{
    \mu \in \cP_1(\cC^{0,\al}_g([0,T];\bbR^e))
    :
    \int_{\cC^{0,\al}_g}
      (|X_0| + \|\bfX\|_{\al} + N_{\al}(\bfX))^{1+\ve}\,
    \mu(\md\bfX)
    <
    \infty
  \Big\}
\end{align}
of probability measures with finite $(|X_0|+\|\cdot\|_\al+N_\al)^{1+\ve}$.
\begin{definition}
  Let $\mu_n$, $n \in \bbN$, $\mu$ be in $\cP_{(\|\cdot\|+N)^{1+\ve}}\big(\cC^{0,\al}_g([0,T];\bbR^e)\big)$. We say that $\{\mu_n : n \in \bbN\}$ converges to $\mu$ in the $(\|\cdot\|+N)^{1+\ve}$-Wasserstein topology if the following two conditions hold:
  \begin{enumerate}
  \item $\{\mu_n : n\in \bbN\}$ converges to $\mu$ in the weak topology, i.e.\ wrt. any test function in $C_b(\cC^{0,\al}_g([0,T];\bbR^e))$;
  \item we have
    \begin{align}
      \sup_{n \in \bbN}\,
      \int_{\cC^{0,\al}_g} (|X_0|+\|\bfX\|_\al+N_\al(\bfX))^{1+\ve}\, \mu_n(\md\bfX)
      \;<\;
      \infty.
    \end{align}
  \end{enumerate}
  We say that a subset $C$ of $\cP_{(\|\cdot\|+N)^{1+\ve}}(\cC^{0,\al}_g([0,T];\bbR^e))$ is closed in the $(\|\cdot\|+N)^{1+\ve}$-Wasserstein topology if it is closed under convergence of sequences.
\end{definition}
Due to \eqref{eq:equiv_dist_RP}, $|X_0|+\|\bfX\|_\al$ is equivalent to $d_{\al}(\bfX,0)$.  Thus, every sequence converging in the $(\|\cdot\|+N)^{1+\ve}$-Wasserstein topology converges also in the $1$-Wasserstein metric (remind the characterization of $1$-Wasserstein distance in Lemma \ref{char_1Wasserstein}). Hence the $(\|\cdot\|+N)^{1+\ve}$-Wasserstein topology is stronger than the $1$-Wasserstein topology. Note that the $(\|\cdot\|+N)^{1+\ve}$-Wasserstein topology might not be metrizable. However, the following result, proved in the Appendix, gives the properties needed for large deviations analysis.
\begin{lemma}\label{Mod_regular}
  The space $\cP_{(\|\cdot\|+N)^{1+\ve}}(\cC^{0,\al}_g([0,T];\bbR^e))$ , with the $(\|\cdot\|+N)^{1+\ve}$-Wasserstein topology, is a regular Haussdorff space.
\end{lemma}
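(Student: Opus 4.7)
\emph{Plan.} I would recognise the modified Wasserstein topology $\tau$ as a strengthening of the $1$-Wasserstein topology by the lower-semicontinuous ``moment'' functional
\begin{align*}
M(\mu) \;\ldef\; \int_{\cC^{0,\al}_g} \big(|X_0|+\|\bfX\|_{\al}+N_{\al}(\bfX)\big)^{1+\ve}\, \mu(\md\bfX),
\end{align*}
and then carry out the separation of points from closed sets by combining $d_{W,\al}$-balls with sub-level sets of $M$. Throughout, write $\cX \ldef \cP_{(\|\cdot\|+N)^{1+\ve}}(\cC^{0,\al}_g([0,T];\bbR^e))$ and $K_R \ldef \{M \leq R\} \subset \cX$.

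\emph{Equivalent description of $\tau$ and basic properties.} I would first verify that $\mu_n \to \mu$ in $\tau$ if and only if $d_{W,\al}(\mu_n,\mu) \to 0$ and $\sup_n M(\mu_n)<\infty$. Since $d_{\al}(\bfX,0)$ and $|X_0|+\|\bfX\|_{\al}$ are equivalent up to multiplicative constants by \eqref{eq:equiv_dist_RP}, the uniform bound on the $(1+\ve)$-moment of $d_{\al}(\bfX,0)$ gives uniform integrability of $d_{\al}(\bfX,0)$ under the $\mu_n$, which upgrades weak convergence to $d_{W,\al}$-convergence. As a consequence every $d_{W,\al}$-closed set is sequentially closed under modified convergence, so $\tau$ is finer than the $d_{W,\al}$-topology; in particular $d_{W,\al}$-open balls are $\tau$-open and $\tau$ inherits Hausdorffness from the metric topology. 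Next, the integrand in $M$ is non-negative and lsc on $\cC^{0,\al}_g$ (continuity of $X_0\mapsto|X_0|$, lsc of the $\al$-H\"older seminorm as a supremum of continuous maps, and Lemma~\ref{Nlsc}), so $M$ is lsc on $\cX$ in the weak topology and hence in $\tau$; therefore $K_R$ is $\tau$-closed and $\{M<R\}$ is $\tau$-open for every $R>0$.

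\emph{Regularity via a two-part decomposition.} Let $C$ be $\tau$-closed and $\mu\notin C$; set $m_0 \ldef M(\mu)$ and pick $R > m_0$. I would first show that $C\cap K_R$ is $d_{W,\al}$-closed in $\cX$: if $\mu_n \in C \cap K_R$ and $d_{W,\al}(\mu_n,\nu)\to 0$, then $\sup_n M(\mu_n) \leq R$, so $\mu_n \to \nu$ in $\tau$, whence $\nu \in C$ by sequential closedness of $C$; the $d_{W,\al}$-lsc of $M$ further gives $\nu\in K_R$. Consequently one can choose $r>0$ with $B^{d_{W,\al}}_{r}(\mu) \cap (C\cap K_R)=\emptyset$ and some $R'\in(m_0,R)$, and set
\begin{align*}
U \;\ldef\; B^{d_{W,\al}}_{r/2}(\mu)\cap\{M<R'\}, \qquad V \;\ldef\; \{M>R'\} \,\cup\, \bigcup_{\nu\in C\cap K_{R'}} B^{d_{W,\al}}_{r/2}(\nu).
\end{align*}
Both sets are $\tau$-open by the previous paragraph; $\mu \in U$ (as $M(\mu)<R'$); and $C\subset V$ upon writing $C = (C \cap \{M>R'\}) \cup (C \cap K_{R'})$. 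Finally $U\cap V=\emptyset$, since $U\subset\{M<R'\}$ is disjoint from $\{M>R'\}$, and the triangle inequality (together with $d_{W,\al}(\mu,\nu)\geq r$ for every $\nu\in C\cap K_{R'}\subset C\cap K_R$) forces $B^{d_{W,\al}}_{r/2}(\mu)\cap B^{d_{W,\al}}_{r/2}(\nu)=\emptyset$. This yields the regularity axiom.

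\emph{Main obstacle.} The key subtlety is that $\tau$ is specified only indirectly via sequential convergence, with no immediate neighbourhood base or metric at one's disposal. The central bootstrapping step is therefore to promote the abstract sequential closedness of $C$ into the genuine $d_{W,\al}$-closedness of each trace $C\cap K_R$; once this is available, the separation becomes a routine metric-space argument, aided by the $\tau$-openness of the level sets $\{M < R'\}$ and $\{M > R'\}$ produced by the lower semi-continuity of $M$.
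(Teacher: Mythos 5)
Your preliminary observations are fine: the modified topology is finer than the $1$-Wasserstein topology (hence Hausdorff), $M$ is sequentially lower semi-continuous, the sublevel sets $K_R=\{M\le R\}$ are $\tau$-closed, and each trace $C\cap K_R$ of a $\tau$-closed set is $d_{W,\al}$-closed (this is essentially part (1) of Lemma~\ref{char_bdd_cpt}). The separation step, however, contains a fatal error. You assert that $\{M<R'\}$ is $\tau$-open ``by lower semi-continuity of $M$''. Lower semi-continuity makes the sets $\{M\le R\}$ closed and hence $\{M>R\}$ open; it says nothing about $\{M<R\}$, whose openness would require $\{M\ge R\}$ to be sequentially closed, i.e.\ $M$ to be upper semi-continuous along $\tau$-convergent sequences. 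That fails: for any $\mu$ and any $R>0$, pick rough paths $\bfY_n$ with $\big(|Y_{n,0}|+\|\bfY_n\|_\al+N_\al(\bfY_n)\big)^{1+\ve}=nR$ and set $\mu_n=(1-n^{-1})\mu+n^{-1}\de_{\bfY_n}$. Then $\mu_n\to\mu$ weakly, $\sup_n M(\mu_n)\le M(\mu)+R<\infty$, so $\mu_n\to\mu$ in the modified topology, yet $M(\mu_n)\ge R$ for all $n$. Consequently $\{M\ge R\}$ is not sequentially closed and your set $U=B^{d_{W,\al}}_{r/2}(\mu)\cap\{M<R'\}$ is not open. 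The same example shows something worse: the sequential closure of $\{M>R'\}$ is the whole space, so $\{M>R'\}$ is a \emph{dense} open set and $K_{R'}$ has empty interior. Since your $V$ contains $\{M>R'\}$, $V$ is dense, and no nonempty open set --- in particular no open neighbourhood of $\mu$ --- can be disjoint from it. The decomposition $C=(C\cap\{M>R'\})\cup(C\cap K_{R'})$ therefore cannot be the basis of a separation argument, and this is not a repairable detail but the collapse of the strategy.

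For comparison, the paper sidesteps the problem of exhibiting explicit separating neighbourhoods altogether: it embeds $\cP_{(\|\cdot\|+N)^{1+\ve}}(\cC^{0,\al}_g)$ as a closed subset of the space $V$ of finite signed measures with finite $(|X_0|+\|\cdot\|_\al+N_\al)^{1+\ve}$ moment, equipped with the analogous ``signed'' topology, checks that $V$ is a Hausdorff topological vector space, and invokes the general fact that every Hausdorff topological vector space is regular together with the stability of regularity under passage to subspaces. If you want a direct argument you would need an actual neighbourhood base at $\mu$ for the sequentially defined topology, which is precisely what the topological-vector-space structure supplies for free.
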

Furthermore, since $N$ is lower semi-continuous, by Corollary \ref{lsc_functional} the functional
\begin{align*}
  \mu
  \;\longmapsto\;
  \int_{\cC^{0,\al}_g} (|X_0|+\|\bfX\|_\al+N_\al(\bfX))^{1+\ve}\, \mu(\md\bfX)
\end{align*}
is sequentially lower semi-continuous with respect to the $1$-Wasserstein topology: if $\{\mu_n : n \in \bbN\}$ converges to $\mu$ in the $1$-Wasserstein or in the $(\|\cdot\|+N)^{1+\ve}$-Wasserstein topology, then
\begin{align}\label{eq:lsc}
  \int_{\cC^{0,\al}_g} (|X_0| + \|\bfX\|_\al + N_\al(\bfX))^{1+\ve}\, \mu(\md\bfX)
  \;\leq\;
  \liminf_n\,
  \int_{\cC^{0,\al}_g} (|X_0| + \|\bfX\|_\al + N_\al(\bfX))^{1+\ve}\, \mu_n(\md\bfX).
\end{align}
\begin{prop}\label{Nlingrowth}
  Assume that $\{\mu_n : n \in \bbN\}$ converges to $\mu$ in the $(\|\cdot\|+N)^{1+\ve}$-Wasserstein topology. Let $G$ be a continuous function on $\cC^{0,\al}_g([0,T];\bbR^e)$ such that $G(\bfX)\le C(1+|X_0|+\|\bfX\|_\al+N_\al(\bfX))$, as for example the rough integral.  Then,
  \begin{align}
    \lim_{n \to \infty}\, \int_{\cC^{0,\al}_g} G(\bfX)\, \mu_n(\md\bfX)
    \;=\;
    \int_{\cC^{0,\al}_g} G(\bfX)\,\mu(\md\bfX).
  \end{align}
\end{prop}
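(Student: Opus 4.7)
The plan is to run the standard Vitali / uniform-integrability splitting, but with the truncation performed on the values of $G$ rather than on the growth weight $w(\bfX)\ldef 1+|X_0|+\|\bfX\|_\al+N_\al(\bfX)$. Throughout I read the hypothesis as the two-sided bound $|G|\le C\,w$, which is the natural interpretation in view of the main application to rough integrals, cf.\ \eqref{eq:lineargrowthRP}. First I would check that $w$ is lower semi-continuous on $\cC^{0,\al}_g$: the map $\bfX\mapsto|X_0|+\|\bfX\|_\al$ is continuous by \eqref{eq:equiv_dist_RP}, and $N_\al$ is lower semi-continuous by Lemma~\ref{Nlsc}. Then \eqref{eq:lsc} transfers the uniform $(1+\ve)$-moment bound from $\{\mu_n\}$ to $\mu$: writing $M\ldef\sup_n\int w^{1+\ve}\,\md\mu_n$, both $M<\infty$ and $\int w^{1+\ve}\,\md\mu\le M$.

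Next, define $G_R\ldef(-R)\vee G\wedge R$, which is bounded and continuous on $\cC^{0,\al}_g$. For each fixed $R$, condition~(1) in the definition of $(\|\cdot\|+N)^{1+\ve}$-Wasserstein convergence (weak convergence against $C_b$ test functions) yields $\int G_R\,\md\mu_n\to\int G_R\,\md\mu$. For the tail I would use the chain
\begin{align*}
  |G-G_R|
  \;\le\;
  |G|\,\mathbf{1}_{\{|G|>R\}}
  \;\le\;
  C\,w\,\mathbf{1}_{\{w>R/C\}}
  \;\le\;
  C^{1+\ve}\,R^{-\ve}\,w^{1+\ve},
\end{align*}
which gives $\int|G-G_R|\,\md\nu\le C^{1+\ve}M\,R^{-\ve}$ for $\nu\in\{\mu_n,\mu\}$, uniformly in $n$. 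Inserting this into the triangle inequality $|\!\int G\,\md\mu_n-\!\int G\,\md\mu|\le\int|G-G_R|\,\md\mu_n+|\!\int G_R\,\md\mu_n-\!\int G_R\,\md\mu|+\int|G-G_R|\,\md\mu$ and letting first $n\to\infty$ and then $R\to\infty$ closes the argument.

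I do not anticipate a real obstacle. The one subtlety worth flagging is why the truncation must be applied to $G$ and not to $w$: the tempting alternative of multiplying $G$ by a continuous cutoff $\phi_R(w)$ of $w$ would fail, because $N_\al$ is only lower semi-continuous (not continuous), so $\phi_R\circ w$ is not continuous in general, and weak convergence cannot be invoked on $G\cdot\phi_R(w)$. Truncating the values of $G$ directly sidesteps this issue and preserves continuity of the bounded piece; beyond that the argument is mechanical.
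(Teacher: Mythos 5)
Your proposal is correct and follows essentially the same route as the paper's proof: truncate $G$ at level $R$, control the tail $\int|G-G_R|\,\md\nu$ uniformly via the linear-growth bound and the uniform $(1+\ve)$-moment (transferred to $\mu$ by the lower semi-continuity \eqref{eq:lsc}), and handle the bounded piece by weak convergence. Your explicit reading of the hypothesis as a two-sided bound $|G|\le Cw$ matches what the paper's argument implicitly uses as well, so there is nothing to add.
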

\begin{proof}
  For any $m$ positive integer, let $G_m$ be the continuous bounded function defined from $G$ with truncation at level $m$, that is $G_m= G 1_{|G| \leq m} + m 1_{G>m} - m 1_{G<-m}$.  We have for every $m$, $n$,
  \begin{align*}
    \bigg| \int_{\cC^{0,\al}_g} G\, \md(\mu_n-\mu) \bigg|
    \;\leq\;
    \bigg| \int_{\cC^{0,\al}_g} G_m\, \md(\mu_n-\mu) \bigg|
    \,+\,
    \int_{\cC^{0,\al}_g} \big| G_m - G \big|\, \md(\mu_n+\mu)
  \end{align*}
  Notice that the condition $G(\bfX) \leq C(1+|X_0|+\|\bfX\|+N(\bfX))$ implies (for $m$ with $m/C > 4$) that
  \begin{align*}
    \big| G(\bfX) - G_m(\bfX) \big|
    \;\leq\;
    2C (|X_0| + \|\bfX\| + N(\bfX))\,1_{\|\bfX\|+N(\bfX)>m/(2C)}.
  \end{align*}
  So it holds for any $n$
  \begin{align*}
    \int_{\cC^{0,\al}_g} \big| G_m - G \big|\, \md\mu_n
    &\;\leq\;
    2C\,
    \int_{\cC^{0,\al}_g}
      (|X_0|+\|\bfX\|+N(\bfX))\, 1_{\|\bfX\|+N(\bfX)>m/(2C)}\,
    \md\mu_n
    \\[.5ex]
    &\;\le\;
    m^{-\ve} (2C)^{1+\ve}\,
    \int_{\cC^{0,\al}_g} (|X_0|+\|\bfX\|+N(\bfX))^{1+\ve}\, \md\mu_n
    \;\leq\;
    m^{-\ve} (2C)^{1+\ve} D,
  \end{align*}
  where $D \ldef \sup_n\int_{\cC^{0,\al}_g}(|X_0|+\|\bfX\|+N(\bfX))^{1+\ve}\, \mu_n(\md\bfX)$ is bounded by assumption.  The same estimates holds also for $\mu$ in place of $\mu_n$, by the lower semi-continuity property \eqref{eq:lsc}. Hence, for any $\rho>0$, we can find $m_\rho$ such that
  \begin{align*}
    \int_{\cC^{0,\al}_g} \big| G_{m_\rho} - G \big|\, \md(\mu_n+\mu) \;<\; \rho.
  \end{align*}
  Fix such $m_{\rho}$.  Since $G_{m_\rho}$ is continuous bounded, there exists $n_{\rho} < \infty$ such that, for every $n \geq n_{\rho}$,
  \begin{align*}
    \bigg| \int_{\cC^{0,\al}_g} G_{m_\rho}\, \md(\mu_n-\mu) \bigg| \;<\; \rho.
  \end{align*}
  So we conclude that, for every $n \geq n_{\rho}$,
  \begin{align*}
    \bigg| \int_{\cC^{0,\al}_g} G\, \md(\mu_n-\mu) \bigg| \;<\; 2\rho.
  \end{align*}
  The proof is complete.
\end{proof}
We conclude this subsection with a lemma on compact sets on this space. This will be useful in view of exponential tightness on $\cP_{(\|\cdot\|+N)^{1+\ve}}(\cC^{0,\al}_g([0,T];\bbR^e))$. Recall that, given a topology $\tau$ (i.e.\ the set of all open sets), its restriction $\tau_A$ to a set $A$ is given by $\{B \cap A : B \in \tau\}$.
\begin{lemma}\label{char_bdd_cpt}
  \begin{enumerate}
  \item For any $R > 0$, the $(\|\cdot\|+N)^{1+\ve}$-Wasserstein topology restricted on the set
    \begin{align}
      \bar{B}(R)
      \;=\;
      \Big\{
        \mu \in \cP_{(\|\cdot\|+N)^{1+\ve}}\big(\cC_g^{0, \al}([0,T];\bbR^e)\big)
        :
        \int_{\cC^{0,\al}_g} (|X_0|+\|\cdot\|+N)^{1+\ve}\, \md\mu \leq R
      \Big\}
    \end{align}
    coincides with the $1$-Wasserstein topology restricted there.
  \item Let $H$ be a subset of $\cP_{(\|\cdot\|+N)^{1+\ve}}(\cC_g^{0, \al}([0,T];\bbR^e))$, which is compact in the $1$-Wasserstein metric and is ``bounded'' in $\cP_{(\|\cdot\|+N)^{1+\ve}}(\cC_g^{0, \al}([0,T];\bbR^e))$, in the sense that
    \begin{align}\label{eq:bdd_mod_W}
      \sup_H\,
      \int_{\cC^{0,\al}_g} (|X_0|+\|\bfX\|+N(\bfX))^{1+\ve}\, \mu(\md\bfX)
      \;<\;
      \infty.
    \end{align}
    Then, $H$ is compact in $\cP_{(\|\cdot\|+N)^{1+\ve}}(\cC_g^{0, \al}([0,T];\bbR^e))$ (with its $(\|\cdot\|+N)^{1+\ve}$-Wasserstein topology).
  \end{enumerate}
\end{lemma}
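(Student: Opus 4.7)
The plan is to establish (i) first and then to deduce (ii) almost for free from it. For (i), my strategy is to show that a sequence $(\mu_n)\subset\bar B(R)$ converges to $\mu$ in the $1$-Wasserstein topology if and only if it converges to $\mu$ in the $(\|\cdot\|+N)^{1+\ve}$-Wasserstein topology. Since the $1$-Wasserstein topology is metrizable (hence determined by its convergent sequences) and the modified topology is, by definition, specified by its sequentially closed sets, equality of the respective notions of sequential convergence restricted to $\bar B(R)$ will translate into equality of the two induced topologies on $\bar B(R)$.

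To carry this out, I would first observe that for sequences staying in $\bar B(R)$ the uniform bound $\sup_n\int(|X_0|+\|\bfX\|+N(\bfX))^{1+\ve}\,\mu_n(\md\bfX)\le R$ is automatic, so modified convergence inside $\bar B(R)$ reduces to plain weak convergence. On the other side, a standard characterisation of $1$-Wasserstein convergence says that $d_{W,\al}(\mu_n,\mu)\to 0$ is equivalent to weak convergence together with convergence of the first moments $\int d_\al(\bfX,0)\,\mu_n(\md\bfX)\to\int d_\al(\bfX,0)\,\mu(\md\bfX)$, and the latter follows from weak convergence plus uniform integrability of $d_\al(\cdot,0)$ under $(\mu_n)$. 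Uniform integrability in turn is immediate from the $\bar B(R)$ bound: using \eqref{eq:equiv_dist_RP} we get $\sup_n\int d_\al(\bfX,0)^{1+\ve}\,\mu_n(\md\bfX)<\infty$, whence $\int_{d_\al(\bfX,0)>K}d_\al(\bfX,0)\,\mu_n(\md\bfX)\le K^{-\ve}\cdot\mathrm{const}$ uniformly in $n$. The lower semicontinuity \eqref{eq:lsc} ensures that any limit of such a sequence still lies in $\bar B(R)$. The reverse direction (1-Wasserstein $\Rightarrow$ modified) is trivial, since 1-Wasserstein convergence implies weak convergence and the bound is built into $\bar B(R)$.

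For (ii), the hypothesis \eqref{eq:bdd_mod_W} places $H$ inside some $\bar B(R)$. By (i), the $(\|\cdot\|+N)^{1+\ve}$-Wasserstein topology and the $1$-Wasserstein topology coincide on $\bar B(R)$, so compactness of $H$ in the $1$-Wasserstein metric transfers to compactness of $H$ in the induced modified topology on $\bar B(R)$. Since compactness of a subset is an intrinsic property of its subspace topology, $H$ is then compact in the ambient space $\cP_{(\|\cdot\|+N)^{1+\ve}}\big(\cC^{0,\al}_g([0,T];\bbR^e)\big)$ itself.

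The main obstacle I expect is the book-keeping step of translating equality of sequentially convergent sequences on $\bar B(R)$ into genuine equality of induced topologies, since the ambient modified topology is not a priori first countable and is only described through its sequential closure operation. Concretely, one must check that a set $C\subset\bar B(R)$ is closed in the subspace modified topology iff $C$ is sequentially closed in $\bar B(R)$ under modified convergence; the non-obvious direction reduces to showing that any such $C$ is actually sequentially closed in the \emph{ambient} modified topology, which follows because a modified-convergent sequence from $C\subset\bar B(R)$ has its limit in $\bar B(R)$ by \eqref{eq:lsc} and hence in $C$ by hypothesis. Beyond this bookkeeping, the only substantive analytic input is the uniform integrability argument sketched above, which is entirely standard.
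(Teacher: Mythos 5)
Your proof is correct and follows essentially the same route as the paper's: both directions of part (1) rest on the modified topology being finer, on the uniform moment bound being automatic inside $\bar B(R)$ so that $1$-Wasserstein (indeed weak) convergence upgrades to modified convergence there, and on $\bar B(R)$ being closed via the lower semicontinuity \eqref{eq:lsc}; part (2) then follows because compactness is intrinsic to the subspace topology (the paper merely spells out the corresponding open-cover argument "for completeness").
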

\begin{proof}
  For the first part, every closed set in $\bar{B}(R)$ with respect to the (restricted) $1$-Wasserstein topology is also closed with respect to the (restricted) $(\|\cdot\|+N)^{1+\ve}$-Wasserstein topology, this being stronger. Conversely, let $C$ be a closed subset of $\bar{B}(R)$ in the restricted $(\|\cdot\|+N)^{1+\ve}$-Wasserstein topology; notice that $C$ is closed also in the (not restricted) $(\|\cdot\|+N)^{1+\ve}$-Wasserstein topology, since $\bar{B}(R)$ is closed in this topology (by the lower semicontinuity property \eqref{eq:lsc}). Let $(\mu_n)_n$ be a sequence in $C$, converging to $\mu$ in the $1$-Wasserstein metric. Since $C$ is in $\bar{B}(R)$, the uniform bound
  \begin{align}
    \sup_{n \in \bbN}\,
    \int_{\cC^{0,\al}_g} (|X_0|+\|\bfX\|+N(\bfX))^{1+\ve}\, \mu_n(\md\bfX)
    \;<\;
    \infty
  \end{align}
  holds, hence $\mu_n$ converges to $\mu$ also in the $(\|\cdot\|+N)^{1+\ve}$-Wasserstein topology.  Furthermore $\mu$ is also in $\bar{B}(R)$, by the lower semicontinuity property \eqref{eq:lsc}. Since $C$ is closed in the $(\|\cdot\|+N)^{1+\ve}$-Wasserstein topology, $\mu$ must be in $C$ and so $C$ is closed also in the $1$-Wasserstein topology. The first statement is proved.

  The second part follows from the first one (as a general fact in topology), we give a proof for completeness.  Let $(A_i)_{i\in I}$ be a family of open sets, in the $(\|\cdot\|+N)^{1+\ve}$-Wasserstein topology, whose union contains $H$ and take $R>0$ such that $H$ is contained in $\bar{B}(R)$. Consider $\tilde{A}_i:=A_i\cap\bar{B}(R)$, which are open sets in the $(\|\cdot\|+N)^{1+\ve}$-Wasserstein topology restricted on $\bar{B}(R)$.  By the first statement, they are open also in the restricted $1$-Wasserstein topology on $\bar{B}(R)$.  That is, there exist $B_i$ (subsets of $\cP_1(\cC^{0,\al}_g)$), open sets in the $1$-Wasserstein topology, such that $\tilde{A}_i=B_i\cap \bar{B}(R)$. Actually, since $\bar{B}(R)$ is closed in every topology under consideration, one can choose $B_i=A_i\cup \cP_1(\cC^{0,\al}_g)\setminus \bar{B}(R)$.  In particular $(B_i)_{i\in I}$ is a family of open sets, in the $1$-Wasserstein metric, covering $H$. By the compactness of $H$ in the $1$-Wasserstein metric, we can extract a finite subset $\{i_1, \ldots, i_m\}$ of $I$ such that $\bigcup_{1 \leq k \leq m} B_{i_k}$ contains $H$. Since $\tilde{A}_i=B_i\cap \bar{B}(R)$ and $H$ is in $\bar{B}(R)$, also $\bigcup_{1 \leq k \leq m} A_{i_k}$ contains $H$. The proof is complete.
\end{proof}

\subsection{The LDP in the modified Wasserstein space}
In this section we prove the LDP for the enhanced empirical space, in the stronger $(\|\cdot\|+N)^{1+\ve}$-Wasserstein topology, again for the double layer case ($k=2$ will be fixed and often omitted in the notation). Recall the definition of $F$ and $S$ in \eqref{def_F}, \eqref{eq:def:S}. Recall that the Brownian motions $B^i$ (and their corresponding rough paths) start from measure $\lambda$ satisfying \eqref{eq:exp:intcond}. Here we assume $\ve$ to be the one appearing in condition \eqref{eq:exp:intcond}.  Mind that we need large deviation tools on a regular Haussdorff spaces (as in \cite{DeZe10}) and not just on metric spaces.
\begin{theorem}\label{thm:Sanov_strong_W}
  The sequence $\mathop{\mathrm{Law}}(\bfL^\bfB_n)_n$ satisfies a LDP on $\cP_{(\|\cdot\|+N)^{1+\ve}}(\cC^{0,\al}_g([0,T];\bbR^{2d}))$ (endowed with the $(\|\cdot\|+N)^{1+\ve}$-Wasserstein topology) with scale $n$ and good rate function
  \begin{align}
    \bfI(\mu)
    \;=\;
    \begin{cases}
      H(\mu\circ\pi_1^{-1}\,|\, P^{\{d\}}), \quad
      &\text{if }\; \mu = F(\mu \circ \pi_1^{-1}),
      \\[1ex]
      \infty,
      &\text{otherwise}. 
    \end{cases}
  \end{align}
\end{theorem}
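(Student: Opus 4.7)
The plan is to transfer the LDP of Theorem~\ref{thm:Sanov_ebm} from the $1$-Wasserstein topology to the stronger $(\|\cdot\|+N)^{1+\ve}$-Wasserstein topology by the inverse contraction principle applied to the identity inclusion. The inclusion is continuous, the target is regular Hausdorff by Lemma~\ref{Mod_regular}, and $\bfL_n^\bfB$ takes values in the modified space almost surely (as a discrete measure concentrated on a.s.\ finite-$N$ Brownian rough paths); hence it will suffice to verify exponential tightness of $\{\mathop{\mathrm{Law}}(\bfL_n^\bfB) : n \in \bbN\}$ in the modified topology. The rate function is then automatically inherited, provided one checks that its effective domain lies inside the modified space.

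For the compact sets, fix $M > 0$. First I would use that Theorem~\ref{thm:Sanov_ebm} is a good-rate LDP on a Polish space (hence exponentially tight) to produce a $1$-Wasserstein compact $K_M^{(1)} \subset \cP_1(\cC^{0,\al}_g)$ with $\limsup_n n^{-1}\log\prob[\bfL_n^\bfB \notin K_M^{(1)}] < -M$. Next, for $R > 0$, I would set
\begin{align*}
K_M^{(2)}(R) \;\ldef\; \Big\{ \mu \,:\, \int (|X_0|+\|\bfX\|_\al+N_\al(\bfX))^{1+\ve}\, \mu(\md\bfX) \,\leq\, R \Big\},
\end{align*}
which is $1$-Wasserstein closed by the lower semicontinuity \eqref{eq:lsc} of the $(\|\cdot\|+N)^{1+\ve}$-integral. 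The candidate compact is $K_M \ldef K_M^{(1)} \cap K_M^{(2)}(R_M)$: by Lemma~\ref{char_bdd_cpt}(ii) this is compact in the modified topology for every $R_M$, so only the selection of $R_M$ ensuring that $\prob[\bfL_n^\bfB \notin K_M^{(2)}(R_M)]$ decays exponentially at rate $M$ remains.

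The key estimate will be the exponential moment bound
\begin{align*}
A \;\ldef\; \mean\!\big[\exp\!\big(C(|B_0^1|+|B_0^2|+\|\bfB^{12}\|_\al+N_\al(\bfB^{12}))^{1+\ve}\big)\big] \;<\; \infty
\end{align*}
for some $C > 0$ and $\ve$ from \eqref{eq:exp:intcond} (possibly reduced), assembled from: (i) Condition~\eqref{eq:exp:intcond} for $|B_0|^{1+\ve}$; (ii) Fernique-type exponential square-integrability of $\|\bfB^{12}\|_\al$, as in Corollary 13.15 of \cite{FrVi10}; and (iii) Cass--Litterer--Lyons Weibull tail bounds giving exponential integrability of $N_\al(\bfB^{12})^{1+\ve}$ for small enough $\ve$; the four summands in the exponent are separated via $(a+b+c+d)^{1+\ve} \leq 4^\ve(a^{1+\ve}+b^{1+\ve}+c^{1+\ve}+d^{1+\ve})$ and H\"older. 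Writing $G(\bfX) \ldef (|X_0|+\|\bfX\|_\al+N_\al(\bfX))^{1+\ve}$, I would split $\int G\,\md\bfL_n^\bfB = n^{-2}\sum_i G(\bfB^{ii}) + n^{-2}\sum_{i \neq j} G(\bfB^{ij})$: the diagonal part is $1/n$ times an iid average, hence exponentially negligible by Cram\'er, and the off-diagonal part is handled by the Hoeffding-symmetrization device from Lemma~\ref{lemma:LDP_approx_1}. Rewriting $(n(n-1))^{-1}\sum_{i \neq j}G(\bfB^{ij}) = (n!)^{-1}\sum_{\sigma \in \cS_n} T_\sigma$ with $T_\sigma \ldef \lfloor n/2 \rfloor^{-1}\sum_{i=1}^{\lfloor n/2 \rfloor} G(\bfB^{\sigma(2i-1),\sigma(2i)})$ and combining Markov, Jensen in $\sigma$, and independence of the pairs $(\bfB^{\sigma(2i-1)},\bfB^{\sigma(2i)})_i$ for fixed $\sigma$, one obtains $\prob[\,\cdots > R\,] \leq (e^{-CR}A)^{\lfloor n/2 \rfloor}$; choosing $R_M$ so that $e^{-CR_M} A < e^{-2M}$ yields $\prob[\bfL_n^\bfB \notin K_M^{(2)}(R_M)] \leq 2e^{-nM}$ for $n$ large.

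The inverse contraction principle then delivers the LDP in the modified topology with rate function $\bfI$. To see that the formula from Theorem~\ref{thm:Sanov_ebm} remains intact, one observes that $\bfI(\mu) < \infty$ forces $Q \ldef \mu \circ \pi_1^{-1}$ to have finite relative entropy, so the Donsker--Varadhan variational inequality with reference measure $F(P^{\{d\}})$ applied to $f = CG$ gives $\int G\, \md\mu \leq C^{-1}(2H(Q \,|\, P^{\{d\}}) + \log A) < \infty$, placing $\mu$ in the modified space. The hard part will be ingredient (iii): the exponential integrability of $N_\al(\bfB^{12})^{1+\ve}$ leans on the subtle Weibull-type tail estimates of Cass--Litterer--Lyons; the remaining steps (Hoeffding decomposition, Markov, inverse contraction) reduce to machinery already used in the paper or available in standard references.
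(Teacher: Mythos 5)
Your proposal is correct and follows the paper's route: inverse contraction principle from the $1$-Wasserstein LDP of Theorem~\ref{thm:Sanov_ebm}, with the work concentrated in exponential tightness in the modified topology, proved via Markov, Hoeffding symmetrization over pairs, and Gaussian/Weibull integrability of $|B_0|+\|\bfB^{12}\|+N_\al(\bfB^{12})$ (the paper's Lemma~\ref{N_Gaussian}). The one place you diverge is the construction of the compacts: the paper defines $G$ with a $\be$-H\"older norm for some $\be\in(\al,1/2)$, so that $G$ has precompact sublevel sets in $\cC^{0,\al}_g$ by the compact embedding $\cC^{\be}_g\hookrightarrow\cC^{0,\al}_g$ and the single set $\{\mu:\int G\,\md\mu\le M\}$ is already $1$-Wasserstein compact by Lemma~\ref{criterion_compactness}; you instead keep the $\al$-norm (whose sublevel sets are not compact) and restore compactness by intersecting with a $1$-Wasserstein compact extracted from the exponential tightness that the good-rate LDP on the Polish space $\cP_1$ automatically provides. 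Both intersections land in Lemma~\ref{char_bdd_cpt}(ii); your variant spares you Gaussian integrability of the $\be$-norm at the price of invoking the standard ``good LDP on a Polish space implies exponential tightness'' fact, and your closing check that $\bfI(\mu)<\infty$ forces $\mu$ into the modified space is a sound (if not strictly required) supplement to the inverse contraction step.
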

Recall that the strategy is to use the inverse contraction principle starting from the previous Theorem \ref{thm:Sanov_ebm} and that, for this, we need the exponential tightness of the family $(\mathop{\mathrm{Law}}(\bfL^\bfB_n))_n$ in the $(\|\cdot\|+N)^{1+\ve}$-Wasserstein topology.

The main tool is the following lemma, which follows for example from \cite[Theorems~11.9 and 11.13]{FrHa14}, see also \cite[Theorem~6.3]{CLL13} adapted to our case in the Appendix, and gives an exponential bound for $N(\bfB)$.
\begin{lemma}\label{N_Gaussian}
  Let $\bfB$ the Stratonovich enhanced Brownian motion on $\bbR^e$, with initial measure $\tilde{\la}$ satisfying condition \eqref{eq:exp:intcond}. Then, for any $\al < 1/2$, $\be < 1/2$ the random variables $\|\bfB\|_{\be}$ and $N_{\al}(\bfB)$ have Gaussian tails, in particular, for some $c > 0$,
  \begin{align}
    \mean\!\Big[
      \exp\!\big(c(|B_0| + \|\bfB\|_{\be} + N_\al(\bfB))^{1+\ve}\big)
    \Big]
    \;<\;
    \infty.
  \end{align}
  The same result holds for $\bfB^{11}$ on $\bbR^{2d}$ (where $B^1$ is a Brownian motion on $\bbR^d$ starting from $\lambda$ and $\lambda$ satisfies \eqref{eq:exp:intcond}).
\end{lemma}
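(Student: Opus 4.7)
The plan is to reduce to two well-known Gaussian-type estimates in the rough path literature, then combine them with the exponential integrability of the initial measure. Concretely, write $B = B_0 + \tilde{B}$, where $\tilde{B}$ is a standard $e$-dimensional Brownian motion starting at $0$, independent of $B_0$. Both the H\"older norm $\|\bfB\|_{\beta}$ and the stopping-time-based quantity $N_{\alpha}(\bfB)$ depend only on the increments $B_{s,t}$ and the iterated integrals $\mathbb{B}_{s,t}$, both of which are translation-invariant in $B_0$. Hence the pair $(\|\bfB\|_{\beta}, N_{\alpha}(\bfB))$ is a functional of $\tilde{B}$ alone, and thus independent of $B_0$.

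Next, I would invoke the two main inputs. First, by the Fernique-type estimate for enhanced Brownian motion (\cite{FrHa14}, Theorem~11.9; or \cite{FrVi10}, Corollary~13.14), there is $c_1 > 0$ with $\mean[\exp(c_1 \|\bfB\|_{\beta}^{2})] < \infty$ for every $\beta < 1/2$. Second, by the Cass--Litterer--Lyons estimate (\cite{CLL13}, Theorem~6.3; cf.\ \cite{FrHa14}, Theorem~11.13), the integer $N_{\alpha}(\bfB)$ has Gaussian tails, i.e.\ $\mean[\exp(c_2 N_{\alpha}(\bfB)^{2})] < \infty$ for some $c_2 > 0$. Without loss of generality we may take the $\varepsilon$ appearing in \eqref{eq:exp:intcond} to satisfy $\varepsilon < 1$: if the original $\varepsilon$ is larger, decreasing it only weakens \eqref{eq:exp:intcond}, since $|x|^{1+\varepsilon'} \leq 1 + |x|^{1+\varepsilon}$ for $\varepsilon' \leq \varepsilon$. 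With $1+\varepsilon < 2$, the Gaussian bounds above immediately upgrade to
\begin{align*}
  \mean\!\big[\exp(c\,\|\bfB\|_{\beta}^{1+\varepsilon})\big] \,+\,
  \mean\!\big[\exp(c\,N_{\alpha}(\bfB)^{1+\varepsilon})\big]
  \;<\;\infty \quad \text{for every } c > 0,
\end{align*}
since for large $r$ one has $r^{1+\varepsilon} \leq r^{2}$.

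Finally I would combine these ingredients with the elementary inequality $(a+b+c)^{1+\varepsilon} \leq 3^{\varepsilon}(a^{1+\varepsilon}+b^{1+\varepsilon}+c^{1+\varepsilon})$. Using the independence of $B_0$ from $(\|\bfB\|_{\beta}, N_{\alpha}(\bfB))$ and Cauchy--Schwarz to split the remaining joint expectation, I obtain
\begin{align*}
  \mean\!\Big[\exp\!\big(c(|B_0|+\|\bfB\|_{\beta}+N_{\alpha}(\bfB))^{1+\varepsilon}\big)\Big]
  \;\leq\;
  \mean\!\big[e^{c'|B_0|^{1+\varepsilon}}\big]\,
  \mean\!\big[e^{2c'\|\bfB\|_{\beta}^{1+\varepsilon}}\big]^{1/2}
  \mean\!\big[e^{2c'N_{\alpha}(\bfB)^{1+\varepsilon}}\big]^{1/2},
\end{align*}
with $c' = 3^{\varepsilon} c$. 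The first factor is finite by \eqref{eq:exp:intcond} applied to $\tilde{\lambda}$; the remaining factors are finite for any $c' > 0$ by the Gaussian estimates above. Choosing $c > 0$ small enough (so that $c'$ is compatible with the finite constants $c_1, c_2$) concludes the case of $\bfB$ on $\bbR^{e}$.

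For the statement about $\bfB^{11}$ on $\bbR^{2d}$, I would observe that $(B^1, B^1)$ is a deterministic linear function of the single $d$-dimensional Brownian motion $B^1$. Hence its Stratonovich lift is determined by $\bfB^{1}$ via a fixed linear embedding $\bbR^{d} \hookrightarrow \bbR^{2d}$, and one gets $\|\bfB^{11}\|_{\beta} \leq C\|\bfB^{1}\|_{\beta}$ and $N_{\alpha}(\bfB^{11}) \leq C(1+N_{\alpha}(\bfB^{1}))$ for a universal constant $C$. Applying the first part to $\bfB^{1}$ then yields the same conclusion for $\bfB^{11}$. The main obstacle here is not the combination argument itself but the Gaussian tail bound for $N_{\alpha}$, which is the nontrivial content of \cite{CLL13}; this is cited and the relevant adaptation is deferred to the Appendix.
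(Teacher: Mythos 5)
Your proposal is correct and follows essentially the same route as the paper: independence of $(\|\bfB\|_{\be},N_\al(\bfB))$ from $B_0$, the Gaussian tail estimates of \cite{FrHa14} (Theorems 11.9 and 11.13, i.e.\ Fernique and Cass--Litterer--Lyons), the reduction to $\ve<1$, and elementary splitting of the exponent. Your treatment of $\bfB^{11}$ via a deterministic comparison with $\bfB^1$ is a slightly more explicit version of the paper's ``same proof applies'', but not a different argument.
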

Here is the exponential tightness result:
\begin{lemma}\label{lemma:exp_tight_mod_W}
  The sequence $\{\mathop{\mathrm{Law}}(\bfL^\bfB_n) : n \in \bbN \}$ is exponentially tight on $\cP_{(\|\cdot\|+N)^{1+\ve}}(\cC^{0,\al}_g([0,T];\bbR^{2d}))$ with respect to the $(\|\cdot\|+N)^{1+\ve}$-Wasserstein topology.
\end{lemma}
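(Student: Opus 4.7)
The plan is to invoke Lemma~\ref{char_bdd_cpt}(2), which reduces exponential tightness in the modified Wasserstein topology to two ingredients: exponential tightness in the weaker $1$-Wasserstein topology, and a uniform exponential tail bound on the random variable
\[
\int_{\cC^{0,\al}_g} \bigl(|X_0| + \|\bfX\|_\al + N_\al(\bfX)\bigr)^{1+\ve}\, \bfL_n^{\bfB}(\md\bfX) \;=\; \frac{1}{n^2}\sum_{i,j=1}^n G\bigl(\bfB^{ij}\bigr),
\]
where $G(\bfX) \ldef (|X_0|+\|\bfX\|_\al+N_\al(\bfX))^{1+\ve}$. Once both are in hand, for any $M > 0$ I pick a $1$-Wasserstein compact set $K_M^{(1)}$ with $\limsup_n n^{-1}\log\prob[\bfL_n^{\bfB} \notin K_M^{(1)}] < -M$ and a radius $R_M$ with $\limsup_n n^{-1}\log\prob\bigl[\tfrac{1}{n^2}\sum_{i,j} G(\bfB^{ij}) > R_M\bigr] < -M$. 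The intersection $K_M \ldef K_M^{(1)} \cap \bar{B}(R_M)$ is $1$-Wasserstein closed (hence $1$-Wasserstein compact) by the lower semicontinuity property~\eqref{eq:lsc}, and is bounded in the modified moment by construction; so by Lemma~\ref{char_bdd_cpt}(2) it is compact in the $(\|\cdot\|+N)^{1+\ve}$-Wasserstein topology. A union bound finishes the argument.

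The technical heart is the exponential tail bound on $\frac{1}{n^2}\sum_{i,j=1}^n G(\bfB^{ij})$. The difficulty is that the $n^2$ summands are not independent: $\bfB^{ij}$ and $\bfB^{ik}$ share the Brownian motion $B^i$. I resolve this exactly as in Lemma~\ref{lemma:LDP_approx_1} via Hoeffding's symmetrization: setting $H(i,j) \ldef \tfrac{n-1}{n}\, G(\bfB^{ij}) + \tfrac{1}{n}\, G(\bfB^{ii})$, one verifies
\[
\frac{1}{n^2}\sum_{i,j=1}^n G(\bfB^{ij}) \;=\; \frac{1}{n!}\sum_{\sigma \in \cS_n} \frac{1}{\lfloor n/2 \rfloor} \sum_{k=1}^{\lfloor n/2 \rfloor} H\bigl(\sigma(2k-1), \sigma(2k)\bigr).
\]
Markov's inequality, Jensen over $\sigma$, and the observation that for each fixed $\sigma$ the summands $H(\sigma(2k-1),\sigma(2k))$ involve disjoint index pairs (hence are mutually independent) yield, for $\theta > 0$ and $n \ge 2$,
\[
\prob\!\Bigl[\tfrac{1}{n^2}\sum_{i,j} G(\bfB^{ij}) > R\Bigr] \;\leq\; e^{-\theta n R}\, \mean\!\bigl[\exp\!\bigl(3\theta\, H(1,2)\bigr)\bigr]^{\lfloor n/2 \rfloor},
\]
with the factor $3$ absorbing the ratio $n/\lfloor n/2 \rfloor$. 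Since $H(1,2) \leq G(\bfB^{12}) + G(\bfB^{11})$, a Cauchy--Schwarz split combined with Lemma~\ref{N_Gaussian} applied to $\bfB^{12}$ on $\bbR^{2d}$ and to $\bfB^{11}$ on $\bbR^{2d}$ shows that, for $\theta$ smaller than a fixed multiple of the constant in Lemma~\ref{N_Gaussian}, the inner expectation is bounded by some $A < \infty$ uniformly in $n$. This gives the desired exponential tail bound of the form $\prob[\cdots > R] \leq e^{-n(\theta R - \log A)}$.

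For the preliminary $1$-Wasserstein exponential tightness of $\{\mathop{\mathrm{Law}}(\bfL_n^{\bfB})\}$, the same Hoeffding plus Gaussian-tail scheme applies, with $G$ replaced by the analogous functional built from $(|X_0| + \|\bfX\|_\be)^{1+\ve}$ for a fixed $\be \in (\al, 1/2)$: sublevels of $|X_0| + \|\bfX\|_\be$ are compact in $\cC^{0,\al}_g([0,T];\bbR^{2d})$ by the compact embedding $\cC^{\be}_g \hookrightarrow \cC^{0,\al}_g$, so the corresponding sublevel sets of $\mu \mapsto \int (|X_0|+\|\bfX\|_\be)^{1+\ve}\,\md\mu$ are $1$-Wasserstein compact by (the standard criterion of) Lemma~\ref{criterion_compactness}, and Lemma~\ref{N_Gaussian} again supplies the exponential bound on the integrand. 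The sole real obstacle throughout is the dependence structure of the $\bfB^{ij}$, and Hoeffding's decomposition combined with the Friz--Victoir/Cass--Litterer--Lyons Gaussian-tail estimate for $N_\al$ (Lemma~\ref{N_Gaussian}) is the tool that dissolves it.
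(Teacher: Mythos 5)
Your proof is correct and follows essentially the same route as the paper's: reduction to Lemma~\ref{char_bdd_cpt}, Hoeffding's decomposition to dissolve the dependence among the $\bfB^{ij}$, Markov's inequality, and the Gaussian tails of Lemma~\ref{N_Gaussian}. The only (harmless) difference is organizational: the paper builds the compact set in one stroke from the single functional $G(\bfX)=(|X_0|+\|\bfX\|_\beta+N_\al(\bfX))^{1+\ve}$ with $\beta\in(\al,1/2)$, whose sublevel sets are simultaneously $1$-Wasserstein compact and bounded in the modified moment, whereas you run two separate exponential bounds and intersect the resulting sets.
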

\begin{proof}
  For any $M > 0$, we have to find a set $K_M$, compact in the $(\|\cdot\|+N)^{1+\ve}$-Wasserstein topology, such that
  \begin{align}\label{eq:exp_tight_modW}
    \limsup_n\frac{1}{n}\log \bbP\{\bfL^\bfB_n\in K_M^c\} <-M.
  \end{align}
  Our candidate for $K_M$ is
  \begin{align*}
    K_M
    \;=\;
    \Big\{
      \mu \in\cP_{(\|\cdot\|+N)^{1+\ve}} : \int_{\cC^{0,\al}_g} G\, \md\mu \leq M
    \Big\},
  \end{align*}
  where $G\!: \cC^{0,\al}_g([0,T];\bbR^{2d}) \to [0,+\infty]$ is defined by $G(\bfX) \ldef (|X_0|+\|\bfX\|_\beta+N_\al(\bfX))^{1+\ve}$ for some fixed $\be$ with $\al < \be < 1/2$.

  The compactness of $K_M$ follows from Lemma~\ref{char_bdd_cpt}, since $K_M$ satisfies the hypotheses of that result. Indeed 1) $K_M$ is ``bounded'' in $\cP_{(\|\cdot\|+N)^{1+\ve}}$ (in the sense of \eqref{eq:bdd_mod_W}), since $\|\bfX\|_{\al} \leq C\|\bfX\|_{\be}$ for some $C > 0$; 2) it is also compact in the $1$-Wasserstein metric, as a consequence of Lemma \ref{criterion_compactness} (since $G$ has more than linear growth, is lower semi-continuous and has pre-compact sublevel sets for the compact inclusion of $\cC^{\be}_g$ in $\cC^{0,\al}_g$).

  Now we verify \eqref{eq:exp_tight_modW}. We use a strategy similar to that for Lemma \ref{lemma:LDP_approx_1}. By Markov inequality, we have for any $C > 0$,
  \begin{align*}
    \prob\!\big[ \bfL^\bfB_n\in K_M^c \big]
    &\;=\;
    \prob\!\bigg[ \int_{\cC^{0,\al}_g} G\, \md\bfL^\bfB_n > M \bigg]
    \\[.5ex]
    &\;\leq\;
    \me^{-C M}\,
    \mean\bigg[
      \exp\!\bigg(C \int_{\cC^{0,\la}_g} G\, \md\bfL^\bfB_n \bigg)
    \bigg]
    \;=\;
    \me^{-CM}\,
    \mean\!\bigg[
      \exp\!\bigg( \frac{C}{n^2} \sum^n_{i,j=1} G(\bfB^{ij}) \bigg)
    \bigg].
  \end{align*}
  Exploiting Hoeffding decomposition as in the proof of Lemma \ref{lemma:LDP_approx_1}, we get
  \begin{align*}
    \prob\!\big[ \bfL^\bfB_n \in K_M^c\big]
    \;\leq\;
    \me^{-CM}\,
    \mean\!\bigg[
      \exp\!\bigg( \frac{C}{[n/2]}\, H'(1,2) \bigg)
    \bigg]^{[n/2]},
  \end{align*}
  where now
  \begin{align*}
    H'(i,j)
    \;\ldef\;
    \frac{n-1}{n}\, G(\bfB^{ij}) \,+\, \frac{1}{n}\, G(\bfB^{ii}).
  \end{align*}
  By using that $H'(1,2) \leq G(\bfB^{12}) + G(\bfB^{11})$ and applying Lemma~\ref{N_Gaussian} to $\bfB^{12}$ and to $\bfB^{11}$ (with initial measure $\lambda\otimes\lambda$), we get that $\mean\!\big[\exp(cH'(1,2)) \big] \rdef D$ is finite for some constant $c > 0$.  Hence, choosing $C = c[n/2]$, we get
  \begin{align*}
    \prob\!\big[ \bfL^\bfB_n \in K_M^c\big] \;\leq\; \me^{-c[n/2]M}\, D^{[n/2]}.
  \end{align*}
  From this \eqref{eq:exp_tight_modW} follows (up to choosing $K_{3M/c}$ instead of $K_M$).
\end{proof}
\begin{proof}[Proof of Theorem \ref{thm:Sanov_strong_W}]
  The result follows applying the inverse contraction principle (in the version of \cite[Theorem~4.2.4]{DeZe10}), from the space $\cP_1$ to the space $\cP_{(\|\cdot\|+N)^{1+\ve}}$ (with the identity map), having the LDP on the former space (Theorem \ref{thm:Sanov_ebm}) and the exponential tightness on the latter space.
\end{proof}

\section{Extension to $k$-layer enhanced empirical measures}  \label{section:ext_k_layer}
So far we have considered the double layer enhanced empirical measure. We now deal with the extension of the LDP to the $k$-layer enhanced empirical measure, namely
\begin{align}
  \bfL^{B,\{k\}}_n
  \;=\;
  \frac{1}{n^k}\, \sum^n_{i_1, \ldots i_k = 1} \de_{S^{\{kd\}}(B^{i_1}, \ldots, B^{i_k})}.
\end{align}

Here is the extension of Theorem \ref{thm:Sanov_ebm}, that is Theorem \ref{thm:main_res_intro} in the Introduction, for the $1$-Wasserstein metric, extended to a general initial measure.

\begin{theorem}
  \label{thm:Sanov_ebm_k}
  Let $\{B^i : i \in \bbN\}$ be a family of independent $d$-dimensional Brownian motion, with initial measure $\la$ and assume Condition \ref{eq:exp:intcond} for some $c, \ve > 0$. The sequence $\{\mathop{\mathrm{Law}}(\bfL^{\bfB;\{k\}}_n) : n \in \bbN\}$ satisfies a LDP on $\cP_1(\cC^{0,\al}_g([0, T]; \bbR^{kd}))$ endowed with the 1-Wasserstein metric, with scale $n$ and good rate function $\bfI^{\{k\}}$ given by
  \begin{align}
    \bfI^{\{k\}}(\mu)
    \;=\;
    \begin{cases}
      H(\mu\circ\pi_1^{-1}\,|\, P^{\{d\}}), \quad
      &\text{if }\; \mu=F^{\{k\}}(\mu\circ\pi_1^{-1}),\\[1ex]
      \infty,
      &\text{otherwise}. 
    \end{cases}
  \end{align}
\end{theorem}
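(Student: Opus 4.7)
The plan is to mirror the proof of Theorem~\ref{thm:Sanov_ebm} with $k$-tuples replacing pairs throughout; no genuinely new ingredient is required, only combinatorial bookkeeping. I would introduce the continuous approximations
\begin{align*}
  F_m^{\{k\}}\!: \cP_1\bigl(C^{0,\al}([0,T];\bbR^d)\bigr)
  \;\longrightarrow\;
  \cP_1\bigl(\cC_g^{0,\al}([0,T];\bbR^{kd})\bigr),
  \qquad
  Q \;\longmapsto\; Q^{\otimes k} \circ \bigl(S^{\{kd\},(m)}\bigr)^{-1},
\end{align*}
obtained by composing piecewise-linearization (at step $1/m$) with the rough-path lift in dimension $kd$. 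The map $Q \mapsto Q^{\otimes k}$ is continuous in $1$-Wasserstein by iterating Lemma~\ref{continuity_doubling}, the lifted linearization $S^{\{kd\},(m)}$ is continuous with at most linear growth (in the homogeneous distance), and so Corollary~\ref{cont_lingrowth} yields continuity of $F_m^{\{k\}}$. The contraction principle applied to Sanov (Theorem~\ref{SanovWasserstein} together with Remark~\ref{SanovWiener}, valid for general initial law $\la$ satisfying \eqref{eq:exp:intcond}) then gives an LDP for $\{\mathop{\mathrm{Law}}(F_m^{\{k\}}(L_n^B))\}_n$.

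To pass from the approximate to the true enhanced empirical measure $\bfL_n^{\bfB;\{k\}} = F^{\{k\}}(L_n^B)$ (a.s., by the $k$-layer version of Lemma~\ref{lemma:enh_true}, which follows from Proposition~\ref{prop:conv_S}), I would invoke the extended contraction principle (\cite[Lemma~2.1.4]{DeSt89}), which requires two statements. First, for every $\de > 0$,
\begin{align*}
  \lim_{m \to \infty}\, \limsup_{n \to \infty}\,
  \tfrac{1}{n}\, \log\, \bbP\!\Bigl[
    d_{W,\al}\!\bigl(\bfL_n^{\bfB;\{k\}}, \bfL_n^{\bfB^{(m)};\{k\}}\bigr) > \de
  \Bigr] \;=\; -\infty.
\end{align*}
Using the diagonal coupling and Lipschitzness of $d_\al$, this Wasserstein distance is bounded by $\tfrac{1}{n^k}\sum_{(i_1,\ldots,i_k)} d_\al(\bfB^{i_1,\ldots,i_k}, \bfB^{(m),i_1,\ldots,i_k})$. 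I would generalize the Hoeffding decomposition as follows: split the $k$-tuples into the $n(n-1)\cdots(n-k+1)$ pairwise-distinct tuples $T_n^k$ and the remaining $O(n^{k-1})$ collision tuples $R_n^k$. Over $T_n^k$, averaging over $\sigma \in \cS_n$ produces $\lfloor n/k \rfloor$ mutually independent blocks of length $k$, each equidistributed with $d_\al(\bfB^{1,\ldots,k}, \bfB^{(m),1,\ldots,k})$; the collision part carries weight $O(1/n)$ and is absorbed by a suitably weighted symmetric function $H_m^{\{k\}}$ analogous to the case $k=2$. Markov and Jensen, together with Lemma~\ref{lemma:Strat_approx} applied in dimension $kd$, then close the estimate.

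For the second required statement,
\begin{align*}
  \lim_{m \to \infty}\, \sup_{Q:\, H(Q|P^{\{d\}}) \le a}\, d_{W,\al}\!\bigl(F_m^{\{k\}}(Q), F^{\{k\}}(Q)\bigr) \;=\; 0
  \qquad \forall\, a < \infty,
\end{align*}
I would couple via $Q^{\otimes k} \circ (S^{\{kd\},(m)}, S^{\{kd\}})^{-1}$ and bound the Wasserstein distance by the integral of $d_\al(S^{\{kd\},(m)}(X), S^{\{kd\}}(X))$ against the product Radon--Nikodym density $h = \tfrac{\md Q^{\otimes k}}{\md P^{\{kd\}}}$. The crucial observation is tensorization of relative entropy, $H(Q^{\otimes k}\mid P^{\{d\},\otimes k}) = k\, H(Q\mid P^{\{d\}}) \le ka$, so $h$ enjoys a uniform $L\log L$ bound under $P^{\{kd\}}$. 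Orlicz--Birnbaum \eqref{eq:Holder_Orlicz_2}, exactly as in Lemma~\ref{lemma:LDP_approx_2}, combined with Lemma~\ref{lemma:Strat_approx} in dimension $kd$, yields vanishing at rate $m^{-\eta/2}$ uniformly over the entropy sub-level set.

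The main technical hurdle is the Hoeffding decomposition of the first step: the single ``off-diagonal vs.\ diagonal'' weighting used for $k=2$ must be replaced by weights indexed over set partitions of $\{1,\ldots,k\}$ describing the collision patterns, so that the resulting symmetric function of distinct indices reproduces the full sum. This is notation-heavy but entirely routine. Once both statements are established, the extended contraction principle furnishes the LDP on $\cP_1(\cC_g^{0,\al}([0,T];\bbR^{kd}))$ with good rate function $\mu \mapsto \inf\{H(Q|P^{\{d\}}) : F^{\{k\}}(Q) = \mu\}$, which coincides with $\bfI^{\{k\}}$ because the identity $F^{\{k\}}(Q) = \mu$ forces $Q = \mu \circ \pi_1^{-1}$ (the first marginal of $Q^{\otimes k}$ is $Q$, and $S^{\{kd\}}$ preserves the underlying path).
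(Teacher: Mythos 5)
Your proposal follows the paper's own proof essentially verbatim: the same approximants $F_m^{\{k\}}$ built from step-$1/m$ piecewise linearization, the same $k$-block Hoeffding decomposition with collision tuples absorbed into a weighted symmetric function, the same Orlicz--Birnbaum argument using $H(Q^{\otimes k}\mid P^{\{d\},\otimes k}) = k\,H(Q\mid P^{\{d\}})$, and the same conclusion via the extended contraction principle. The only cosmetic difference is that the paper splits the exponential moment of $H_{(m);\{k\}}$ by H\"older's inequality over the $l$-layer pieces ($l=1,\dots,k$) before invoking Lemma~\ref{lemma:Strat_approx}, which is exactly the bookkeeping you describe.
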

The proof of this LDP goes like the proof in the double layer case ($k=2$). We recall the main steps and the main changes.

First Lemma~\ref{lemma:enh_true} is extended to the $k$ layer case; the map $Q\mapsto Q^{\otimes k}$ is continuous by Lemma B.4 in the Appendix. Therefore the strategy is still to apply the extended contraction principle. For this we define the approximants $S^{(m);\{k\}}$ as for the double layer case but on $\bbR^{kd}$, the approximation $\bfB^{(m);\{k\}}$ of the enhancement of the $kd$-dimensional Brownian motion and the maps $F^{(m);\{kd\}}$ are defined correspondingly. More in general, the space $\bbR^{2d}$ must be replaced in all the arguments by $\bbR^{kd}$. Then we need to extend Lemmata \ref{lemma:LDP_approx_1} and \ref{lemma:LDP_approx_2} to the $k$ layer case.

\begin{lemma}\label{lemma:LDP_approx_1k}
  For any $\de > 0$, it holds
  \begin{align}
    \lim_{m \to \infty} \limsup_{n\to \infty}\,
    \frac{1}{n}\, \log
    \prob\!\Big[
      d_{W, \al}\big(\bfL^{\bfB;\{k\}}_n,\bfL^{\bfB^{(m)};\{k\}}_n\big) > \de
    \Big] 
    \;=\;
    -\infty.
  \end{align}
\end{lemma}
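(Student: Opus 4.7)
The plan is to mimic the argument from the $k=2$ case but to handle the extra combinatorial complexity arising from the fact that a $k$-tuple of indices $(i_1,\dots,i_k)\in\{1,\dots,n\}^k$ may have many coincidence patterns. First, using the natural coupling $\frac{1}{n^k}\sum_{i_1,\dots,i_k}\delta_{(\bfB^{i_1,\dots,i_k},\bfB^{(m);i_1,\dots,i_k})}$ between $\bfL_n^{\bfB;\{k\}}$ and $\bfL_n^{\bfB^{(m)};\{k\}}$, combined with the Lipschitz character of $(\bfX,\bfY)\mapsto d_\al(\bfX,\bfY)$, one gets
\begin{align*}
  d_{W,\al}\bigl(\bfL_n^{\bfB;\{k\}},\bfL_n^{\bfB^{(m)};\{k\}}\bigr)
  \;\leq\;
  \frac{1}{n^k}\,\sum_{i_1,\dots,i_k=1}^n d_\al\bigl(\bfB^{i_1,\dots,i_k},\bfB^{(m);i_1,\dots,i_k}\bigr).
\end{align*}

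Next I would split the sum on the right-hand side according to the partition $\pi$ of $\{1,\dots,k\}$ that records which indices coincide: $a\sim_\pi b$ iff $i_a=i_b$. For a partition $\pi$ with $|\pi|=\ell$ blocks the number of tuples of type $\pi$ equals $n(n-1)\cdots(n-\ell+1)\le n^\ell$, so the corresponding block contributes at most $w_\pi\,M_\pi$ with $w_\pi\le n^{\ell-k}$ and
\begin{align*}
  M_\pi \;=\; \frac{1}{n(n-1)\cdots(n-\ell+1)}\sum_{\substack{j_1,\dots,j_\ell\\\text{distinct}}} d_\al\bigl(\bfB^{\,i(\pi,j)},\bfB^{(m);i(\pi,j)}\bigr),
\end{align*}
where $i(\pi,j)_a=j_{b_\pi(a)}$ denotes the tuple obtained from $j$ via the partition $\pi$. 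Since there are only finitely many partitions $\pi$ of $\{1,\dots,k\}$ (their number depending only on $k$), a union bound reduces the problem to showing $\limsup_m\limsup_n n^{-1}\log\prob[w_\pi M_\pi>\de']=-\infty$ for each fixed $\pi$ and $\de'>0$. At this point I would apply Hoeffding's decomposition exactly as in the proof of Lemma~\ref{lemma:LDP_approx_1}: average over $\si\in\cS_n$ to rewrite $M_\pi$ as an average, over permutations, of sums of $\lfloor n/\ell\rfloor$ mutually \emph{independent} summands (the summand for the block $r$ involving only the Brownian motions $B^{\si(\ell(r-1)+1)},\dots,B^{\si(\ell r)}$).

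With independence restored, Markov's and Jensen's inequalities give an upper bound of the form
\begin{align*}
  \prob[w_\pi M_\pi>\de']
  \;\leq\;
  e^{-C\de'}\,\mean\!\Bigl[\exp\Bigl(\tfrac{C w_\pi}{\lfloor n/\ell\rfloor}\,d_\al(\bfB^{\,i(\pi,j^0)},\bfB^{(m);i(\pi,j^0)})\Bigr)\Bigr]^{\lfloor n/\ell\rfloor},
\end{align*}
for any representative distinct tuple $j^0=(1,\dots,\ell)$. Choosing $C=c'\,m^{\eta/2}\,n$ makes the prefactor in the inner exponent of the form $c''m^{\eta/2}$ (uniformly in $n$ and in $w_\pi\le 1$), and the desired double limit will drop out, provided the inner exponential moment is uniformly bounded in $m$. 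The bound $\limsup_m\limsup_n n^{-1}\log\prob[\cdots]\le -c'\de'\,m^{\eta/2}\to-\infty$ then follows.

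The main obstacle is controlling the exponential moment when the coordinates of $\bfB^{\,i(\pi,j^0)}$ are \emph{repeated} (e.g.\ for the partition with only one block, $\bfB^{\,i(\pi,j^0)}$ is the rough path of $(B^1,B^1,\dots,B^1)$). Lemma~\ref{lemma:Strat_approx} as stated addresses a genuine $e$-dimensional Brownian motion. I would circumvent this by observing that the linear projection $\io:\bbR^{\ell d}\to\bbR^{kd}$ implementing the coordinate repetitions prescribed by $\pi$ is a bounded map whose natural lift to rough paths commutes both with the Stratonovich enhancement and with piecewise linear approximation; hence $d_\al(\bfB^{\,i(\pi,j^0)},\bfB^{(m);i(\pi,j^0)})\le C_\io\,d_\al(\tilde{\bfB},\tilde{\bfB}^{(m)})$, where $\tilde{\bfB}$ is the enhancement of the genuine Brownian motion in $\bbR^{\ell d}$ with components $(B^{j^0_1},\dots,B^{j^0_\ell})$. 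Lemma~\ref{lemma:Strat_approx} applied in dimension $\ell d$ then yields the required finite exponential moment, completing the proof.
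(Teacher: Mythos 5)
Your proposal is correct and follows essentially the same route as the paper: the same coupling bound for $d_{W,\al}$, Hoeffding's decomposition to restore independence, the exponential Markov/Jensen step, and Lemma~\ref{lemma:Strat_approx} with $C\propto m^{\eta/2}n$. The only organizational difference is that you split the sum by coincidence partitions up front and run a union bound plus one Hoeffding decomposition (with block size $\ell=|\pi|$) per partition, whereas the paper absorbs the repeated-index tuples into a single $H_{(m);\{k\}}$ defined on distinct $k$-tuples (formula \eqref{eq:expression_H}) and then decouples the resulting exponential moment via H\"older's inequality; your explicit treatment of repeated coordinates through the lifted duplication map $\io$ is precisely the argument the paper leaves implicit in its constants $c_l$, $c_l'$.
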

\begin{proof}
  Consider the coupling measure $\frac{1}{n^k} \sum^n_{i_1, \ldots, i_k=1} \de_{(\bbB^{\{k\}, i_1, \ldots, i_k},\bbB^{(m);\{k\},i_1, \ldots, i_k})}$ with marginals $\bfL^{\bfB^{(m)};\{k\}}_n$ and $\bfL^{\bfB;\{k\}}_n$.  As in the double layer case, we obtain that
  \begin{align}\label{eq:Wasserstein:ub:k}
    d_{W,\al}\big(\bfL^{\bfB;\{k\}}_n,\bfL^{\bfB^{(m)};\{k\}}_n\big)
    &\;\leq\;
    \frac{1}{n^k} \sum^n_{i_1,\ldots i_k=1}\,
    d_{\al}(\bfB^{\{k\}, i_1, \ldots, i_k}, \bfB^{(m);\{k\}, i_1, \ldots, i_k})
    \nonumber\\
    &\;\leq\;
    \frac{(n-k)!}{n!} \sum^n_{\substack{i_1, \ldots, i_k=1\\ \text{mutually distinct}}}\,
    H_{(m);\{k\}}(i_1, \ldots, i_k),
  \end{align}
  where the second sum is obtained from the first one rearranging the terms with at least two equal indices. For instance, in the case $k = 3$, $H_{(m);\{3\}}$ spells out as
  \begin{align*}
    H_{(m);\{3\}}(i_1,i_2,i_3)
    &\;\ldef\;
    \frac{(n-1)(n-2)}{n^2}\, 
    d_{\al}(\bfB^{\{3\},i_1,i_2,i_3}, \bfB^{(m);\{3\},i_1,i_2,i_3})
    \\[.5ex]
    &\mspace{36mu}+\,
    \frac{n-1}{n}\,
    \Big( d_{\al}(\bfB^{\{3\},i_1,i_2,i_1}, \bfB^{(m);\{k\},\{3\},i_1,i_2,i_1})
    \,+\, d_{\al}(\bfB^{\{3\},i_1,i_1,i_3}, \bfB^{(m);\{3\},i_1,i_1,i_3})    \\[.5ex]
    &\mspace{72mu}+
    d_{\al}(\bfB^{\{3\},i_1,i_2,i_2}, \bfB^{(m);\{k\},\{3\},i_1,i_2,i_2}) \Big)
    \\[.5ex]
    &\mspace{36mu}+\,
    \frac{1}{n^2}\, d_{\al}(\bfB^{\{3\},i_1,i_1,i_1}, \bfB^{(m);\{k\},\{3\},i_1,i_1,i_1}).
  \end{align*}
  For a general $k$, $H_{(m);\{k\}}(i_1,\ldots i_k)$ can be written as
  \begin{align}\label{eq:expression_H}
    H_{(m);\{k\}}(i_1, \ldots, i_k)
    &\;=\;
    \frac{n!}{(n-k)!\, n^k}\,
    d_{\al}(\bfB^{\{k\}, i_1, \ldots, i_k}, \bfB^{(m);\{k\},i_1,\ldots i_k})
    \nonumber\\[.5ex]
    &\mspace{36mu}+\;
    \frac{n!}{(n-k)!\,n^k}\,
    \sum_{j_1, \ldots, j_k = 1}^n a_{j_1, \ldots, j_k}\,
    d_{\al}(\bfB^{\{k\}, j_1, \ldots, j_k}, \bfB^{(m);\{k\}, j_1, \ldots, j_k}),
  \end{align}
  where the sum in the second addend is over all $(j_1, \ldots, j_k)$, with at least one repetition of indices, such that, for any $ l \in \{1,\ldots k\}$, there exists $l' \leq l$ with $i_{l'} = j_l$, and the coefficient $a_{j_1, \ldots, j_k}$ is the inverse of a positive integer depending on the repetition of indices in $(j_1,\ldots, j_k)$.  The only relevant fact is that the number of terms is independent of $m$ and $n$ (for fixed $k$) and the coefficients $a_{j_1, \ldots, j_k}$ are bounded by $1$.

  Now, we use Hoeffding's decomposition \cite{Ho63}, for the general $k$ layer case: the right-hand side of \eqref{eq:Wasserstein:ub:k} can be rewritten as
  \begin{align*}
    \frac{1}{n!}\,
    \sum_{\si \in \cS_n} \frac{1}{\lfloor n/k \rfloor}\,
    \sum_{j=1}^{\lfloor n/k \rfloor}
    H_{(m),\{k\}}\big( \si(k j - (k-1)), \ldots, \si(k j)\big),
  \end{align*}
  where $\cS_n$ denotes the set of all permutations of $\{1, \ldots, n\}$. As in the double layer case, an application of the Markov inequality and Jensen's inequality gives, for any $C > 0$ and any $n$ and $m$,
  \begin{align*}
    &\prob\!\Big[
      d_{W,\al}\big(\bfL_n^{\bfB;\{k\}},\bfL_n^{\bfB^{(m)};\{k\}}\big) > \de
    \Big]
    \\[1ex]
    &\mspace{36mu}\leq\;
    \me^{-C \de}\,
    \frac{1}{n!} \sum_{\si \in \cS_n}
    \mean\!%
    \bigg[
      \exp\!%
      \bigg(
      \frac{C}{\lfloor n/k\rfloor}
      \sum^{\lfloor n/k\rfloor}_{i=1}
      H_{(m);\{k\}}\big(\si(k j - (k-1)), \ldots, \si(k j)\big)
      \bigg)
    \bigg].
    \nonumber
  \end{align*}
  By using the mutually independence of $\{H\big(\si(2i-1), \si(2i)\big) : i = 1, \ldots, \lfloor n/2\rfloor\}$, we finally get that
  \begin{align}\label{eq:Wasserstein:exp:ub:k}
    \prob\!\Big[d_{W,\al}\big(\bfL_n^{\bfB}, \bfL_n^{\bfB^{(m)}}\big) > \de \Big]
    \;\leq\;
    \me^{-C \de}\,
    \mean\!%
    \bigg[
      \exp\!\bigg(\frac{C}{\lfloor n/k\rfloor}\, H_{(m);\{k\}}(1,\ldots k)\bigg)
    \bigg]^{\lfloor n/k\rfloor}.
  \end{align}
  On the other hand, using the equality \eqref{eq:expression_H}, we obtain via H\"older inequality
    \begin{align*}
    & \mean\!%
    \bigg[
      \exp\!\bigg(\frac{C}{\lfloor n/k\rfloor}\, H_{(m);\{k\}}(1,\ldots k)\bigg)
    \bigg]^{\lfloor n/k\rfloor}
    \;\leq\;
    \prod^k_{l=1}
    \mean\!\bigg[
    \exp\!\bigg(
      \frac{c_lC}{\lfloor n/k\rfloor}\, d_{\al}(\bfB^{\{l\},i_1,\ldots i_l}, \bfB^{(m);\{l\},i_1,\ldots i_l})
    \bigg)
    \bigg]^{\lfloor c_l'n/k\rfloor}.
    \end{align*}
    Note that the constant $c_l$, $c_l'$ can depend on $l=1,\ldots k$ but are independent of $n$ and $m$, because of the aforementioned uniform bounds on the number of addends and on the coefficients in \eqref{eq:expression_H}. Now, by choosing a suitable $C$, proportional to $m^{\eta} n$ and applying Lemma~\ref{lemma:Strat_approx} to $\bfB^{(m);\{l\},i_1,\ldots i_l}$, $l=1,\ldots k$, we get for a suitable constant $c>0$, for any $\eta \in (0, 1/2 - \al)$ and any $n$ large enough,
  \begin{align*}
    &\sup_{m \geq 1}
    \mean\!%
    \bigg[
      \exp\!%
      \bigg(
        c\frac{m^{\eta} n}{\lfloor n/k\rfloor}\, H_{(m);\{k\}}(1,\ldots k)
      \bigg)
    \bigg]
    \;<\;
    \infty,
  \end{align*}
  By combining this estimate with \eqref{eq:Wasserstein:exp:ub:k}, the assertion follows.
\end{proof}
\begin{lemma}\label{lemma:LDP_approx_2k}
  For every $a < \infty$, it holds
  \begin{align}
    \lim_{m \to \infty}\, \sup_{Q: H(Q\mid P^{\{d\}}) \leq a}\, 
    d_{W,\al}\big(F^{(m);\{k\}})(Q), F^{(m)}(Q)\big)
    \;=\;
    0.
  \end{align}
\end{lemma}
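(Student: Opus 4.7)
The plan is to mimic verbatim the proof of Lemma \ref{lemma:LDP_approx_2}, replacing the double tensor product $Q\otimes Q$ by the $k$-fold tensor product $Q^{\otimes k}$ and the target dimension $2d$ by $kd$. First, by using the natural coupling $(Q^{\otimes k}) \circ (S^{(m),\{kd\}}(X), S^{\{kd\}}(X))^{-1}$ of $F^{(m);\{k\}}(Q)$ and $F^{\{k\}}(Q)$, and recalling that $P^{\{kd\}}$ is the law of a $kd$-dimensional Brownian motion with initial law $\lambda^{\otimes k}$, one obtains
\begin{align*}
  d_{W,\al}\big(F^{(m);\{k\}}(Q), F^{\{k\}}(Q)\big)
  &\;\leq\;
  \int_{C^{0,\al}([0,T],\bbR^{kd})} d_\al\big(S^{(m),\{kd\}}(X), S^{\{kd\}}(X)\big)\,
  Q^{\otimes k}(\md X) \\
  &\;=\;
  \int_{C^{0,\al}([0,T],\bbR^{kd})} d_\al\big(S^{(m),\{kd\}}(X), S^{\{kd\}}(X)\big)\,
  \Big(\textstyle\bigotimes_{i=1}^{k}\frac{\md Q}{\md P^{\{d\}}}\Big)(X)\,
  P^{\{kd\}}(\md X).
\end{align*}

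Second, I apply the Orlicz--Birnbaum estimate \eqref{eq:Holder_Orlicz_2} on the space $(\Lambda,\mu)=(C^{0,\al}([0,T],\bbR^{kd}), P^{\{kd\}})$, with $g = d_\al(S^{(m),\{kd\}}(X), S^{\{kd\}}(X))$ and $h = \bigotimes_{i=1}^{k}\frac{\md Q}{\md P^{\{d\}}}$. The key observation is the tensorisation of entropy: since $P^{\{kd\}}=(P^{\{d\}})^{\otimes k}$,
\begin{align*}
  \int_\Lambda h \log h\,\md\mu
  \;=\; k\, H(Q\,|\,P^{\{d\}}) \;\leq\; k a .
\end{align*}
Thus for any $k_0>0$,
\begin{align*}
  d_{W,\al}\big(F^{(m);\{k\}}(Q), F^{\{k\}}(Q)\big)
  \;\leq\;
  \frac{4}{k_0}\,(2+k a)\,\Big(1 + \mean\big[\exp\!\big(k_0\, d_\al(\bfB^{\{k\},1,\dots,k},\bfB^{(m);\{k\},1,\dots,k})\big)\big]\Big),
\end{align*}
where $\bfB^{\{k\},1,\dots,k}$ is the Stratonovich enhanced Brownian motion on $\bbR^{kd}$ with initial law $\lambda^{\otimes k}$ (which still satisfies Condition \eqref{eq:exp:intcond}).

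Third, I invoke Lemma \ref{lemma:Strat_approx} applied to the $kd$-dimensional enhanced Brownian motion: for any $\eta \in (0,1/2-\al)$ there exists $c>0$ with $\sup_{m\ge 1}\mean[\exp(c m^{\eta/2} d_\al(\bfB^{\{k\},1,\dots,k},\bfB^{(m);\{k\},1,\dots,k}))]<\infty$. Choosing $k_0 = c m^{\eta/2}$ yields
\begin{align*}
  \sup_{Q:\,H(Q|P^{\{d\}})\le a} d_{W,\al}\big(F^{(m);\{k\}}(Q), F^{\{k\}}(Q)\big)
  \;\leq\; \frac{4(2+k a)(1+C)}{c\, m^{\eta/2}} \;\xrightarrow[m\to\infty]{}\; 0 .
\end{align*}

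There is essentially no new obstacle beyond the $k=2$ case: the tensorisation of entropy replaces the factor $2$ by $k$ (both are constants uniform in $Q$ and $m$), and Lemma \ref{lemma:Strat_approx} already holds on $\bbR^e$ for arbitrary $e$, so taking $e=kd$ is immediate. The only book-keeping point to check carefully is that the initial law on $\bbR^{kd}$ is the product $\lambda^{\otimes k}$, which inherits the exponential integrability \eqref{eq:exp:intcond} from $\lambda$, so that the $kd$-dimensional version of Lemma \ref{lemma:Strat_approx} is genuinely applicable.
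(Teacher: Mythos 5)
Your proposal is correct and follows essentially the same route as the paper: the paper's own proof of this lemma consists precisely of the instruction to repeat the argument of Lemma \ref{lemma:LDP_approx_2} with $Q\otimes Q$ replaced by $Q^{\otimes k}$, $\mu=(P^{\{d\}})^{\otimes k}$, $h=(\frac{\md Q}{\md P^{\{d\}}})^{\otimes k}$, and the entropy bound $\int h\log h\,\md\mu = kH(Q\,|\,P^{\{d\}})\le ka$, which is exactly your tensorisation step. Your additional remark about the initial law $\lambda^{\otimes k}$ is harmless but not really needed, since $\bfB$ and $\bfB^{(m)}$ start from the same point and so the initial-condition term in $d_\al$ vanishes.
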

\begin{proof}
  The proof of the lemma goes on as the proof of Lemma~\ref{lemma:LDP_approx_2}. The only changes are: $Q \otimes Q$ must be replaced by $Q^{\otimes k}$, and similarly for the density with respect to $P^{\otimes k}$, $\mu$ must be taken as $(P^{\{d\}})^{\otimes k}$, $h$ as $(\frac{\md Q}{\md P^{\{d\}}})^{\otimes k}$ and the estimate on $\int h\log h\, \md \mu$ becomes $\int h\log h\, \md \mu = k H(Q|P^{\{d\}})\le ka$.
\end{proof}
\begin{proof}[Proof of Theorem~\ref{thm:Sanov_ebm_k}]
  As for the proof of the double layer case, Lemmata \ref{lemma:LDP_approx_1k} and \ref{lemma:LDP_approx_2k} allow to apply the extended contraction principle, which gives the desired result.
\end{proof}
We also have the convergence (in probability) of the enhanced $k$ layer empirical measures, which follows again from the LDP.
\begin{corro}
  The sequence of $\cP_1(\cC^{0,\al}_g([0,T];\bbR^{kd}))$-valued random variables $\{\bfL^{\bfB;\{k\}}_n : n \in \bbN\}$ converges in probability (and in law) to the constant random variable $\bfP^{\{kd\}}$, the enhancement of the $kd$-Wiener measure, that is the law on $\cC^{0,\al}_g([0,T];\bbR^{kd})$ of $(B^{1\ldots k},\bbB^{1\ldots k})$.
\end{corro}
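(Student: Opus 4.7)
The plan is to deduce the corollary directly from the LDP in Theorem~\ref{thm:Sanov_ebm_k} via the standard argument that a large deviation principle with a good rate function having a unique zero forces convergence in probability of the underlying random variables to that zero. The space $\cP_1(\cC^{0,\al}_g([0,T];\bbR^{kd}))$ with the $1$-Wasserstein metric is Polish, so convergence in probability of $\cP_1$-valued random variables to a constant random variable is meaningful and is equivalent (for a constant limit) to weak convergence of the laws of $\bfL^{\bfB;\{k\}}_n$ to the Dirac mass $\delta_{\bfP^{\{kd\}}}$.

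First, I would verify that $\bfP^{\{kd\}}$ is the unique zero of $\bfI^{\{k\}}$. If $\bfI^{\{k\}}(\mu)=0$, then on the one hand $\mu = F^{\{k\}}(\mu\circ\pi_1^{-1})$, and on the other hand $H(\mu\circ\pi_1^{-1}\,|\,P^{\{d\}})=0$. Strict positivity of the relative entropy away from equality forces $\mu\circ\pi_1^{-1} = P^{\{d\}}$, and then $\mu = F^{\{k\}}(P^{\{d\}}) = (P^{\{d\}})^{\otimes k}\circ (S^{\{kd\}})^{-1} = \bfP^{\{kd\}}$ by definition of $F^{\{k\}}$ and of the enhanced Wiener measure (cf.\ Proposition~\ref{prop:conv_S}, which says that $S^{\{kd\}}$ applied to the coordinate Brownian motion yields $\bfB^{1\ldots k}$ almost surely).

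Second, I would translate the LDP upper bound into convergence in probability. Fix any open neighbourhood $U$ of $\bfP^{\{kd\}}$ in $\cP_1(\cC^{0,\al}_g([0,T];\bbR^{kd}))$. Its complement $U^c$ is closed and does not contain the unique zero of the good rate function $\bfI^{\{k\}}$. Since $\bfI^{\{k\}}$ is good (has compact sublevel sets) and $U^c$ is closed, the infimum $c \ldef \inf_{\mu\in U^c}\bfI^{\{k\}}(\mu)$ is attained and strictly positive. The LDP upper bound then gives
\begin{align*}
  \limsup_{n\to\infty}\,\frac{1}{n}\,\log\prob\!\big[\bfL^{\bfB;\{k\}}_n\in U^c\big] \;\leq\; -c \;<\; 0,
\end{align*}
so $\prob[\bfL^{\bfB;\{k\}}_n\in U^c]\to 0$ exponentially fast. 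This is precisely convergence in probability of $\bfL^{\bfB;\{k\}}_n$ to the constant $\bfP^{\{kd\}}$, and since the $1$-Wasserstein distance is a continuous function of the pair of measures, convergence in law of $\bfL^{\bfB;\{k\}}_n$ to $\delta_{\bfP^{\{kd\}}}$ follows at once.

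There is essentially no obstacle: the work has been done in Theorem~\ref{thm:Sanov_ebm_k}, and the only thing left to check carefully is the uniqueness of the zero, which reduces to the elementary fact $H(Q|P)=0 \Leftrightarrow Q=P$ combined with the definition of $F^{\{k\}}$. As a bonus, the exponential decay $\prob[\bfL^{\bfB;\{k\}}_n\in U^c]\leq e^{-nc/2}$ for large $n$ makes the series $\sum_n \prob[\bfL^{\bfB;\{k\}}_n\in U^c]$ summable, so Borel--Cantelli yields the (stronger) almost sure convergence along the whole sequence, as already announced after \eqref{eq:convEMRP} in the Introduction.
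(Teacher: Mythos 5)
Your proof is correct and follows exactly the route the paper takes: the paper's own (one-line) justification for the $k=2$ analogue is precisely "a consequence of the LDP and the fact that the good rate function has a unique zero at $\bfP^{\{2d\}}$," and you have simply filled in the standard details (uniqueness of the zero via $H(Q|P)=0\Rightarrow Q=P$ and $F^{\{k\}}(P^{\{d\}})=\bfP^{\{kd\}}$, plus the LDP upper bound on the complement of a neighbourhood). Nothing further is needed.
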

As in the double layer case, the LDP can be extended to the modified Wasserstein topology on $\cC^{0,\al}_g([0,T];\bbR^{kd})$. This completes the proof of Theorem \ref{thm:main_res_intro} as stated in the Introduction.
\begin{theorem}\label{thm:Sanov_strong_W_k}
  The sequence $\mathop{\mathrm{Law}}(\bfL^{\bfB;\{k\}}_n)_n$ satisfies a LDP on $\cP_{(\|\cdot\|+N)^{1+\ve}}(\cC^{0,\al}_g([0,T];\bbR^{kd}))$ (endowed with the $(\|\cdot\|+N)^{1+\ve}$-Wasserstein topology) with scale $n$ and good rate function
  \begin{align}
    \bfI^{\{k\}}(\mu)
    \;=\;
    \begin{cases}
      H(\mu\circ\pi_1^{-1}\,|\, P^{\{d\}}), \quad
      &\text{if }\; \mu = F^{\{k\}}(\mu \circ \pi_1^{-1}),
      \\[1ex]
      \infty,
      &\text{otherwise}. 
    \end{cases}
  \end{align}
\end{theorem}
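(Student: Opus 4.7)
The plan is to follow exactly the same strategy as in the double-layer proof of Theorem~\ref{thm:Sanov_strong_W}. I apply the inverse contraction principle (\cite[Theorem~4.2.4]{DeZe10}) to the identity map from $\cP_1(\cC^{0,\al}_g([0,T];\bbR^{kd}))$ endowed with the $1$-Wasserstein topology into $\cP_{(\|\cdot\|+N)^{1+\ve}}(\cC^{0,\al}_g([0,T];\bbR^{kd}))$ endowed with the stronger modified Wasserstein topology. The LDP on the source space is Theorem~\ref{thm:Sanov_ebm_k}, and the target space is regular Hausdorff by (the obvious extension of) Lemma~\ref{Mod_regular}. The only new thing to check is exponential tightness of $\{\mathop{\mathrm{Law}}(\bfL^{\bfB;\{k\}}_n) : n \in \bbN\}$ in the modified Wasserstein topology; the rate function is then automatically the one identified in the weaker topology, namely $\bfI^{\{k\}}$.

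For the compact candidate sets, I would take, for each $M>0$,
\begin{align*}
  K_M \;=\; \Big\{\mu \in \cP_{(\|\cdot\|+N)^{1+\ve}}\big(\cC^{0,\al}_g([0,T];\bbR^{kd})\big) \,:\, \int_{\cC^{0,\al}_g} G\, \md\mu \leq M \Big\},
\end{align*}
with $G(\bfX) \ldef (|X_0| + \|\bfX\|_\be + N_\al(\bfX))^{1+\ve}$ for some fixed $\be \in (\al, 1/2)$. Compactness of $K_M$ in the modified Wasserstein topology follows exactly as in Lemma~\ref{lemma:exp_tight_mod_W}: $G$ is lower semi-continuous (using Lemma~\ref{Nlsc}) with pre-compact sublevel sets (via compact embedding $\cC^\be_g \hookrightarrow \cC^{0,\al}_g$), giving compactness in the $1$-Wasserstein metric by Lemma~\ref{criterion_compactness}; combined with the uniform bound built into $K_M$, Lemma~\ref{char_bdd_cpt}(ii) upgrades this to modified Wasserstein compactness.

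For the exponential bound, I would proceed by Markov and Jensen as before:
\begin{align*}
  \prob\!\big[\bfL^{\bfB;\{k\}}_n \in K_M^c\big]
  \;\leq\;
  \me^{-CM}\, \mean\!\bigg[\exp\!\bigg(\frac{C}{n^k} \sum_{i_1,\ldots,i_k = 1}^n G(\bfB^{\{k\},i_1,\ldots,i_k})\bigg)\bigg],
\end{align*}
then apply the Hoeffding decomposition for $k$-tuples exactly as in the proof of Lemma~\ref{lemma:LDP_approx_1k}. This replaces the sum above by an average over permutations of $\cS_n$ of a sum over disjoint blocks of $k$ indices, whose terms are i.i.d.\ copies of an expression $H''_{(k)}(1,\ldots,k)$ built out of $d_\al(\bfB^{\{l\},i_1,\ldots,i_l},0)$ for $l = 1,\ldots,k$ (accounting for index repetitions, with bounded combinatorial coefficients independent of $n$). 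After Jensen, the problem reduces, via H\"older's inequality applied to the finitely many summands of $H''_{(k)}$, to the finiteness of
\begin{align*}
  \sup_{l \leq k} \mean\!\Big[\exp\!\big(c\, G(\bfB^{\{l\},i_1,\ldots,i_l})\big)\Big]
\end{align*}
for some $c > 0$, which is precisely what Lemma~\ref{N_Gaussian} (applied in each dimension $ld \leq kd$, with initial measure $\la^{\otimes l}$ which still verifies Condition~\eqref{eq:exp:intcond}) provides. Choosing $C$ linearly in $n$ and then $M$ large yields exponential tightness at rate $n$.

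The main obstacle I expect is purely combinatorial rather than conceptual: carefully tracking the coefficients in the $k$-layer Hoeffding decomposition to ensure that the auxiliary moments that need to be controlled really are the $(1{+}\ve)$-exponential moments of $G$ applied to the lower-layer rough paths $\bfB^{\{l\}}$ for $l \leq k$. Once this bookkeeping is in place (identical in structure to \eqref{eq:expression_H}), Lemma~\ref{N_Gaussian} closes the argument and we are done.
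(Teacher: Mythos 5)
Your proposal is correct and follows essentially the same route as the paper: inverse contraction principle from the $1$-Wasserstein LDP of Theorem~\ref{thm:Sanov_ebm_k}, with exponential tightness in the modified topology established via the candidate sets $K_M$ built from $G=(|X_0|+\|\cdot\|_\be+N_\al)^{1+\ve}$, the $k$-tuple Hoeffding decomposition of Lemma~\ref{lemma:LDP_approx_1k}, and the Gaussian tail bound of Lemma~\ref{N_Gaussian} applied to the $k$-layer lifts with possibly repeated indices. The combinatorial bookkeeping you flag is exactly the point the paper also relies on, and it is handled by the same uniform bounds on the number of terms and coefficients as in \eqref{eq:expression_H}.
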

\begin{proof}
  The proof is analogous to the proof of Theorem \ref{thm:Sanov_strong_W}, we recall only the main points and changes. In all the arguments, $\bbR^{2d}$ must be replaced in all the arguments by $\bbR^{kd}$.

  First, Lemma \ref{N_Gaussian} is extended to $\bfB^{\{k\};i_1, \ldots, i_k}$ for any multiindex $(i_1,\ldots, i_k)$, with a similar proof.  Then, Lemma \ref{lemma:exp_tight_mod_W} is extended to the $k$ layer case and gives the exponential tightness, in the modified Wasserstein topology, of $\mathop{\mathrm{Law}}(\bfL^{\bfB;\{k\}}_n)_n$; the proof is similar to the proof of Lemma \ref{lemma:exp_tight_mod_W}, using the Hoeffding decomposition for the general $k$ layer case (as in the proof of Lemma \ref{lemma:LDP_approx_1k}) and Lemma \ref{N_Gaussian} applied to $\bfB^{\{k\};1,\ldots, k}$ and to $\bfB^{\{k\};i_1, \ldots ,i_k}$ with repetition of indices. The exponential tightness allows to conclude the LDP in the modified Wasserstein topology, by the inverse contraction principle.
\end{proof}

\section{Large deviations for weakly interacting diffusions}   \label{sec:LDP_wid} 
We consider an interacting particle system of the following type:
\begin{align}\label{eq:IPS}
  \left\{
    \begin{array}{rcll}
      \md X^{i,n}_t
      &\mspace{-5mu}=\mspace{-5mu}
      &
      {\displaystyle
        \frac{1}{n}\,
        \sum_{j=1}^n b\big(X^{i,n}_t, X^{j,n}_t\big)\, \md t \,+\, \md B^i_t,
      }
      &\quad i = 1, \ldots n,\\[2ex]
      \mathop{\mathrm{Law}}(X^{i,n}_0)
      &\mspace{-5mu}=\mspace{-5mu}
      &\la
      \qquad \text{i.i.d.}
    \end{array}
  \right.
\end{align}
Here $X^{i,n}$, $i = 1, \ldots, n$ are the unknown positions of the particles, each of them in $\bbR^d$, $b\!:\bbR^d \times \bbR^d \to \bbR^d$ is a given vector field, which we assume regular as much as needed (precisely $C^2_b(\bbR^d\times\bbR^d$), $B^i$ are independent standard $\bbR^d$-valued Brownian motions, on a fixed filtered probability space $(\Om, \cA, (\cF_t)_t, \prob)$, and $\la$ is a given probability measure on $\bbR^d$ satisfying the exponential integrability condition \eqref{eq:exp:intcond} for some $c > 0$, $\ve > 0$. We will omit the superscript $n$ when not necessary. It is well known that the above system admits existence and strong uniqueness (i.e.\ uniqueness for fixed $X_0$ and $B$). \\

The object of interest is an empirical measure associated to this system. For this, let $X^n = (X^{1,n}, \ldots, X^{n,n})$ be the solution to the SDE \eqref{eq:IPS}. However, we will not, as is classical \cite{Sz91}, study  
\begin{align}\label{eq:empmeasIPS}
  L_n^X \;=\; \frac{1}{n}\, \sum_{i=1}^n \de_{X^{i,n}}
\end{align}
but instead, as $n \to \infty$, the \emph{$k$-layer, enhanced} empirical measure $L_n^{\bfX;\{k\}}$, defined in complete analogy to the Brownian motion setting.  To wit, with $k=2$ for notational simplicity only,
\begin{align}\label{eq:enhempmeas}
  \bfL_n^{\bfX, \{2\}} (\om) \equiv  \bfL_n^{\bfX}(\om)  
  \; :=\;
  \frac{1}{n^2}\, \sum_{i,j=1}^n \de_{\bfX^{ij,n}},
\end{align}
where $\bfX^{ij,n} = (X^{ij,n}, \bbX^{ij,n})$ is the rough path on $\bbR^{2d}$ associated with $X^{ij,n} = (X^{i,n}, X^{j,n})$, defined by $\bfX^{ij,n} = S^{\{2d\}}(X^{ij,n})$. Clearly, $L_n^{\bfX;\{2\}}(\om)$ is a (random) measure on $\cC^{0,\al}_g([0,T]; \bbR^{2d})$ and we define, on the space of such measures,\footnote{Given $b\!: \bbR^{2d} \to \bbR^d$, using notation $(x^1, x^2) \in \bbR^d \times \bbR^d = \bbR^{2d}$, we denote by $\brb\!:\bbR^{2d} \to \bbR^{2d}$ the function such that $\brb(x^1,x^2)^1 = b(x^1,x^2)$ and $\brb(x^1,x^2)^2 = 0$. } 
\begin{align}\label{eq:defK}
  \bfK_b(\mu)
  &\;=\;
  \int_{\cC^{0,\al}_g([0,T];\bbR^{2d})} \int^T_0 \brb(X_t)\, \md\bfX_t\, \mu(\md\bfX)
  \nonumber\\[1ex]
  &\mspace{36mu}-\;
  \frac{1}{2}\,
  \int^T_0
    \int_{\cC^{0,\al}_g([0,T];\bbR^{2d})} \diverg \brb(X_t)\, \mu(\md\bfX)\,
  \md t
  \\[1ex]
  &\mspace{36mu}-\;
  \frac{1}{2}\,
  \int^T_0
    \int_{\cC^{0,\al}_g(\bbR^{2d})}
      \bigg(\int_{\cC^{0,\al}_g(\bbR^{2d})}\brb(X^1_t,Y^2_t)\, \mu(\md\bfY)\bigg)^2\,
    \mu(\md\bfX)\,
  \md t,
  \nonumber
\end{align}
noting that $\bfK_b(\mu)$ is well-defined whenever $b \in C^2_b$ and $\mu \in \cP_{(\|\cdot\|+N)^{1+\ve}}(\cC^{0,\al}_g([0, T];\bbR^{2d}))$.

Call $\Pi_{2}$ the projection from $G^2 (\bbR^{kd}) \to G^2 (\bbR^{2d})$\footnote{$\Pi_2$ is the projection $(X^{12}, \ldots; \bbX^{12}, \ldots) \mapsto (X^{12}; \bbX^{12})$.}. Given a measure $\mu$ on $\cC_g^{0,\al}([0,T]; \bbR^{kd})$, the image measure $(\Pi_2)_* \mu \equiv \mu \circ \Pi_2^{-1}$ is a measure on the $2$-layer rough path space $\cC_g^{0,\al}([0,T]; \bbR^{2d})$. As previously, $P^{\{d\}}$ is $d$-dimensional Wiener measure with $\lambda$ initial distribution. $N$ was introduced in (\ref{def:N}).
\begin{theorem}\label{generalres}
  Assume that $b$ is in $C^2_b(\bbR^d \times \bbR^d)$, let $X^n = (X^{1,n}, \ldots, X^{n,n})$ be the solution to the system \eqref{eq:IPS}, with initial law $\la$ satisfying Condition \eqref{eq:exp:intcond} for fixed $\ve >0$, and let $\bfL^{\bfX,\{k\}}_n$ be the corresponding enhanced $k$-layer empirical measure, $k \ge 2$.  Fix $\al \in (1/3,1/2)$. Then, the sequence of laws $\{\mathop{\mathrm{Law}}(\bfL^\bfX_n) : n \in \bbN\}$ satisfies a large deviation principle on $\cP_{(\|\cdot\|+N)^{1+\ve}}(\cC^{0,\al}_g([0, T];\bbR^{kd}))$ with scale $n$ and good rate function $\bfJ_b$ given by
  \begin{align}
    \bfJ_b^{\{k\}}(\mu) \equiv \bfJ_b(\mu)   
    \;=\;
    \begin{cases}
      H(\mu\circ\pi_1^{-1}|P^{\{d\}})-\bfK_b(\mu \circ \Pi_2^{-1}), \quad      &\text{if }\; \mu=F^{\{k\}}(\mu\circ\pi_1^{-1}),\\[1ex]
      \infty,
      &\text{otherwise}. 
    \end{cases}
  \end{align}
\end{theorem}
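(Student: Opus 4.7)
The plan is a Girsanov change of measure that reduces the problem to the enhanced Sanov theorem (Theorem~\ref{thm:Sanov_strong_W_k}), followed by a Varadhan-type tilted LDP. First I introduce a measure $\tilde{\bbP}$, equivalent to $\bbP$ on $\cF_T$, under which the paths $X^{i,n}$ become i.i.d.\ $d$-dimensional Brownian motions with initial law $\la$; boundedness of $b$ makes Novikov's criterion immediate, so
\begin{align*}
\frac{d\bbP}{d\tilde{\bbP}}
\;=\;
\exp\!\Big(
\sum_{i=1}^n \int_0^T \beta_t^{i,n}\cdot dX_t^{i,n}
\,-\, \frac{1}{2}\sum_{i=1}^n \int_0^T |\beta_t^{i,n}|^2\, dt
\Big),
\qquad
\beta_t^{i,n}\;\ldef\;\frac{1}{n}\sum_{j=1}^n b(X_t^{i,n},X_t^{j,n}),
\end{align*}
is a bona fide Radon-Nikodym density. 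Under $\tilde{\bbP}$ the family $(X^{i,n})$ has the same law as $(B^i)$, so Theorem~\ref{thm:Sanov_strong_W_k} applies directly, giving an LDP for $\{\mathop{\mathrm{Law}}_{\tilde{\bbP}}(\bfL_n^{\bfX;\{k\}})\}$ on $\cP_{(\|\cdot\|+N)^{1+\ve}}(\cC^{0,\al}_g([0,T];\bbR^{kd}))$ with good rate function $\bfI^{\{k\}}$.

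The second step is the algebraic identity relating $\log d\bbP/d\tilde{\bbP}$ to $\bfK_b$. Converting the It\^o integral to a Stratonovich one, and using Proposition~\ref{rough_Strat_int} to identify Stratonovich with the rough integral along $\bfX^{ij}=S^{\{2d\}}(X^{i,n},X^{j,n})$, yields
\begin{align*}
\int_0^T b(X^{i,n},X^{j,n})\cdot dX^{i,n}
\;=\;
\int_0^T \brb(X^{ij})\, d\bfX^{ij}\,-\,\frac{1}{2}\int_0^T \diverg \brb(X^{ij})\, dt,
\end{align*}
where the choice $\brb(x^1,x^2)=(b(x^1,x^2),0)$ is tuned precisely so that the integral and the It\^o--Stratonovich correction are both recovered from the rough structure on $\bbR^{2d}$. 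Summing on $(i,j)$, normalising by $n^2$, and expanding $|\beta^{i,n}_t|^2$ bilinearly produces exactly $n$ times the three summands of $\bfK_b(\bfL_n^{\bfX;\{2\}})$. Since $\bfL_n^{\bfX;\{k\}}\circ\Pi_2^{-1}=\bfL_n^{\bfX;\{2\}}$, one can equivalently write
\begin{align*}
\frac{1}{n}\,\log\frac{d\bbP}{d\tilde{\bbP}}
\;=\;
\bfK_b\!\big(\bfL_n^{\bfX;\{k\}}\circ\Pi_2^{-1}\big),
\end{align*}
which is the input needed to tilt the Sanov LDP; the $k$-layer case thereby reduces to the $2$-layer case for free.

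The third step is the tilted LDP. The functional $\Psi(\mu)\ldef\bfK_b(\mu\circ\Pi_2^{-1})$ is continuous on the modified Wasserstein space: its first summand is an integral against the rough integral $\bfX\mapsto\int\brb\,d\bfX$, which by \eqref{eq:lineargrowthRP} has linear growth in $N$ and so falls under Proposition~\ref{Nlingrowth}, while the divergence and quadratic summands involve bounded continuous integrands. The identity $\bbE_{\tilde{\bbP}}[\exp(n\Psi(\bfL_n^{\bfX;\{k\}}))]=1$ pins the Laplace value $\sup_\mu[\Psi(\mu)-\bfI^{\{k\}}(\mu)]$ at zero, so no renormalisation is needed and the tilted rate function reads cleanly as $\bfI^{\{k\}}(\mu)-\Psi(\mu)$; on the set $\{\mu=F^{\{k\}}(\mu\circ\pi_1^{-1})\}$ this is exactly the claimed $\bfJ_b^{\{k\}}(\mu)$, and $+\infty$ off this set.

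The main obstacle I expect is making the Varadhan-type tilting rigorous in this non-metrizable setting with $\Psi$ of linear growth in the unbounded functional $N$. The standard hypothesis needed on top of continuity is a super-exponential moment bound
\begin{align*}
\limsup_{n\to\infty}\,\frac{1}{n}\,
\log \bbE_{\tilde{\bbP}}\!\Big[\exp\!\big(n\gamma\,\Psi(\bfL_n^{\bfX;\{k\}})\big)\Big]\;<\;\infty
\qquad\text{for some }\gamma>1,
\end{align*}
which I would establish by combining the estimate $|\Psi(\mu)|\le C(1+\int N\,d\mu)$ with the Hoeffding-decomposition plus Gaussian-tail technique already deployed in Lemma~\ref{lemma:exp_tight_mod_W}, using the Gaussian tails of $N(\bfB^{12})$ and $N(\bfB^{11})$ from Lemma~\ref{N_Gaussian} under Condition~\eqref{eq:exp:intcond}. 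With this bound and the continuity statement in hand, the tilted LDP transfers from $\tilde{\bbP}$ to $\bbP$ and the theorem follows; the rest is bookkeeping.
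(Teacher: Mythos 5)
Your overall strategy coincides with the paper's: Girsanov to reduce to the enhanced Sanov theorem, rewrite the log-density as a functional of the two-layer enhanced empirical measure, and tilt via Varadhan. There is, however, one concrete error in Step 2 that propagates into Step 3. The identity
\begin{align*}
\int_0^T b(X^{i,n},X^{j,n})\cdot \md X^{i,n}
\;=\;
\int_0^T \brb(X^{ij})\, \md\bfX^{ij}\,-\,\frac{1}{2}\int_0^T \diverg \brb(X^{ij})\, \md t
\end{align*}
is correct only for $i\neq j$, where $\langle X^{j,n},X^{i,n}\rangle\equiv 0$. On the diagonal $i=j$ the It\^o--Stratonovich correction for $b(X^{i}_t,X^{i}_t)$ against $X^{i}$ contains the additional term $-\tfrac12\int_0^T \diverg_y b(X^{i}_t,X^{i}_t)\,\md t$, which is not captured by $\diverg\brb$ (the latter only differentiates in the first slot). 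Summing with the $\tfrac1n$ prefactor, the log-density is therefore \emph{not} $n\,\bfK_b(\bfL_n^{\bfX;\{2\}})$ but $n\,\bfK_b(\bfL_n^{\bfX;\{2\}})+\bfK_b'(\bfL_n^{\bfX;\{2\}})$ with $\bfK_b'(\mu)=-\int_0^T\int \diverg_y b(\pi_1(\bfY_t),\pi_1(\bfY_t))\,\mu(\md\bfY)\,\md t$ a bounded functional. Consequently your assertion that $\bbE_{\tilde{\bbP}}[\exp(n\Psi(\bfL_n^{\bfX;\{k\}}))]=1$, hence $Z_n=1$ and ``no renormalisation is needed,'' is false as stated: what equals $1$ is $\bbE_{\tilde{\bbP}}[\exp(n\Psi+\bfK_b'\circ\Pi_2^{-1})]$. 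The argument survives because $\bfK_b'$ is uniformly bounded, so $|\log Z_n|=O(1)$ and the correction is invisible at scale $n$; but this requires an explicit extra step (the paper's ``Conclusion'' step) transferring the LDP from the exactly tilted measures $Z_n^{-1}\exp(n\bfK_b\circ\Pi_2^{-1})\mathop{\mathrm{Law}}(\bfL_n^{\bfB;\{k\}})$ to $\mathop{\mathrm{Law}}(\bfL_n^{\bfX;\{k\}})$, and it also shows $\sup_\mu(\Psi-\bfI^{\{k\}})=0$ only up to this bounded error, which is all one needs.

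Two further remarks. First, your route to the Varadhan moment condition (Hoeffding decomposition plus the Gaussian tails of $N(\bfB^{12})$, $N(\bfB^{11})$ from Lemma~\ref{N_Gaussian}) is viable, but the paper gets the bound more cheaply: the exponential of $2n\bfK_b(\bfL_n^{\bfB})$ minus its quadratic-variation compensator is a martingale, and boundedness of $b$ immediately yields $\bbE[\exp(2n\bfK_b(\bfL_n^{\bfB}))]\leq e^{2nT\|b\|_\infty}$. Second, to apply Varadhan on the non-metrizable modified Wasserstein space one also needs to know that $\mu\mapsto\bfL_n$ is a measurable image of the underlying rough path (the paper's Lemma~\ref{lemma:RP_cont_empmeas}), and that Varadhan's lemma holds on regular Hausdorff spaces (Lemma~\ref{Mod_regular} and Theorem~\ref{Varadhan_lemma}); these are bookkeeping points you implicitly assume but should cite.
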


\begin{proof}
  \textsc{First step}: Enhanced Girsanov theorem. Let $X=X^n = (X^{1,n}, \ldots, X^{n,n})$ be the solution to the SDE \eqref{eq:IPS} with Stratonovich lift $\bfX=S^{\{nd\}}(X)$. We prove that the law of $\bfX$ on $\cC^{0,\al}_g([0,T];\bbR^{nd})$ is absolutely continuous with respect to the law of the enhanced $nd$-dimensional Brownian motion $\bfB$, with density given by $\exp(\rho_n(\bfB))$, where $\rho_n$ is deterministically defined by (recall $b=b(x,y)$)
  \begin{align}\label{eq:Girsanov_density}
    \rho_n(\bfY)
    &\;=\;
    \frac{1}{n}\, \sum^n_{i,j = 1}\,
    \int_0^T \brb(Y^{ij}_t) \cdot \md\bfY^{ij}_t
    \,-\,
    \frac{1}{2 n}\, \sum^n_{i,j = 1}\,
    \int_0^T \diverg \brb(Y^{ij}_t)\, \md t
    \,-\,
    \frac{1}{2 n}\, \sum^n_{i = 1}\,
    \int_0^T \diverg_y b(Y^{ii}_t)\, \md t
    \nonumber\\[1ex]
    &\mspace{36mu}
    \,-\,
    \frac{1}{2}\, \sum^n_{i=1}\,
    \int_0^T \bigg| \frac{1}{n}\, \sum^n_{j=1} \brb(Y^{ij}_t)\, \bigg|^2\, \md t.
  \end{align}
  Indeed, the classical Girsanov theorem applied to \eqref{eq:IPS}, combined with Proposition \ref{rough_Strat_int}, gives that, for every $\psi$ measurable bounded function on $C^\alpha([0,T];\bbR^{nd})$,
  \begin{align*}
    \mean\!\big[ \psi(X) \big] \;=\; \mean\!\big[\me^{\rho_n(\bfB)}\psi(B)\big].
  \end{align*}
  By applying the previously obtained formula to $\psi(X) \ldef \vp(S^{\{nd\}}(X))$, where $\vp$ is any measurable bounded function on $\cC^{0,\al}([0,T];\bbR^{nd})$, we get
  \begin{align}\label{eq:densityenhGirsanov}
    \mean\!\big[\vp(\bfX)\big]
    \;=\;
    \mean\!\big[\me^{\rho_n(\bfB)}\vp(\bfB)\big],
  \end{align}
  that is enhanced Girsanov theorem.
  
  \textsc{Second step}: Density for the law of the enhanced empirical measures. First consider the double-layer case $k=2$. We prove that on the space $\cP_{(\|\cdot\|+N)^{1+\ve}}(\cC^{0,\al}_g([0,T];\bbR^{2d}))$ the law of the enhanced empirical measure $\bfL^\bfX_n$ is absolutely continuous with respect to the law of $\bfL^\bfB_n$, with density given by $\exp(n\bfK_b)\exp(\bfK_b')$ for a bounded function $\bfK_b'$ specified below. The main point is that
  \begin{align}\label{eq:MF_Girsanov}
    \rho_n(\bfB)
    \;=\;
    n\, \bfK_b(\bfL_n^{\bfB}) \,+\, \bfK_b'(\bfL_n^{\bfB})
    \;=\;
    n\, \bfK_b(\bfL_n^{\bfB;\{2\}}) \,+\, \bfK_b'(\bfL_n^{\bfB;\{2\}}),
  \end{align}
  where
  \begin{align*}
    \bfK_b'(\mu)
    \;=\;
    -\int_0^T \int_{\cC^{0,\alpha}_g([0,T];\bbR^{2d})}
    \diverg_y b(\pi_1(\bfY_t),\pi_1(\bfY_t))\, \mu(\md \bfY)\, \md t.
  \end{align*}
  This follows from formula \eqref{eq:Girsanov_density}, the structural reason being the mean field interaction. Now by Lemma \ref{lemma:RP_cont_empmeas} in the Appendix (applied with $k=2$) the enhanced empirical measure associated with a rough path in $\bbR^{nd}$ is a continuous, in particular measurable function $G_n$ of the rough path, that is $\bfL_n^\bfX = G_n(\bfX)$, $\bfL_n^\bfB = G_n(\bfB)$.  So it is enough to apply formula \eqref{eq:densityenhGirsanov} to $\vp = \phi \circ G_n$, where $\phi$ is any measurable bounded function on $\cP_{(\|\cdot\|+N)^{1+\ve}}(\cC^{0,\al}_g([0, T];\bbR^{2d}))$, and to use \eqref{eq:MF_Girsanov}.
  
  In the case $k>2$, on the space $\cP_{(\|\cdot\|+N)^{1+\ve}}(\cC^{0,\al}_g([0,T];\bbR^{kd}))$ the law of the enhanced empirical measure $\bfL^{\bfX,\{k\}}_n$ has density (with respect to the law of $\bfL^{\bfB,\{k\}}_n$ given by $\exp(n\bfK_b\circ\Pi_2)\exp(\bfK_b'\circ\Pi_2)$. Indeed,
  \begin{align*}
    (\Pi_2)_* \bfL_n^{\bfB;\{k\}}
    \;=\;
    \bfL_n^{\bfB;\{k\}} \circ \Pi_2^{-1}
    \;=\;
    \bfL_n^{\bfB;\{2\}},
  \end{align*}
  and therefore
  \begin{align*}
    \rho_n(\bfB)
    \;=\;
    n\, \bfK_b(\bfL_n^{\bfB;\{k\}} \circ \Pi_2^{-1})
    \,+\,
    \bfK_b'(\bfL_n^{\bfB;\{k\}} \circ \Pi_2^{-1})
    \;=\;
    n\, \bfK_b(\bfL_n^{\bfB;\{2\}}) \,+\, \bfK_b'(\bfL_n^{\bfB;\{2\}}).
  \end{align*}
  We can conclude as in the double-layer case (applying Lemma \ref{lemma:RP_cont_empmeas} to the general $k$ layer case).

  \textsc{Third step}: LDP for $Z_n^{-1}\exp(n\bfK_b\circ\Pi_2^{-1})\mathop{\mathrm{Law}}(\bfL_n^{\bfB;\{k\}})$ (and goodness of $\bfJ_b$). We are ready to prove a large deviation principle for the family $Z_n^{-1}\exp(n\bfK_b)\mathop{\mathrm{Law}}(\bfL_n^{\bfB;\{k\}})$, where $Z_n = \mean[\exp(n\bfK_b(\bfL_n^{\bfB;\{2\}}))]$ is the usual renormalization constant. Indeed, the second step invites to apply Varadhan lemma (Theorem \ref{Varadhan_lemma}, which is an easy and well-known consequence of Varadhan lemma in \cite[Theorem~4.3.1]{DeZe10}).  We need to verify the hypotheses, namely, for $k=2$, that $\bfK_b$ is a continuous function on $\cP_{(\|\cdot\|+N)^{1+\ve}}(\cC^{0,\al}_g([0,T]; \bbR^{2d}))$ and that it holds, for some $\ga > 1$,
  \begin{align}\label{eq:condVaradhan}
    \limsup_{n \to \infty}\,
    \frac{1}{n}\,
    \log \mean\!\big[\exp\!\big(n \ga \bfK_b(\bfL^\bfB_n) \big) \big]
    \;<\;
    \infty;
  \end{align}
  The hypotheses for general $k$ follow from those for $k=2$ (so we will fix and omit $k=2$ in the argument below).
  
  On the continuity of $\bfK_b$, it is easy to see that the deterministic integrals in formula \eqref{eq:defK} (i.e.\ the second and third addend) are continuous bounded functions of $\mu$ (they are actually continuous bounded functions of $Q=\mu\circ\pi_1^{-1}$ in the $C_b$-weak topology on $\cP(C^{0,\al}([0,T];\bbR^d))$), so we concentrate on the term with the rough integral.  By Theorem~\ref{thm:cont_rough_int}, the rough integral
  \begin{align}
    I_b(\bfX) \;\ldef\; \int^T_0 \brb(X)\, \md\bfX
  \end{align}
  is continuous on $\cC^{0,\al}_g$ with at most linear growth with respect to $N$ (by \eqref{eq:lineargrowthRP}). So, by Proposition \ref{Nlingrowth}, the term
  \begin{align*}
    \int_{\cC^{0,\al}_g([0, T];\bbR^{2d})} \int^T_0 \brb(X_t)\, \md\bfX_t\, \mu(\md\bfX)
    \;=\;
    \int_{\cC^{0,\al}_g([0, T]; \bbR^{2d})} I_b\, \md\mu
  \end{align*}
  is continuous on $\cP_{(\|\cdot\|+N)^{1+\ve}}(\cC^{0,\al}_g([0,T]; \bbR^{2d}))$.  Hence $\bfK_b$ is continuous.  Now we prove \eqref{eq:condVaradhan} with $\gamma=2$. We use the fact that
  \begin{align*}
    M_t
    \;=\;
    \exp\!\bigg(
      2n\, \bfK_b(\bfL^\bfB_n)
      \,-\,
      \sum^n_{i=1}\,
      \int_0^T \frac{2}{n^2}\, \bigg| \sum^n_{j=1} b(X^i-X^j) \bigg|^2\, \md t
    \bigg)
  \end{align*}
  is a martingale, as one can verify easily (and classically). Hence we have
  \begin{align*}
    \mean\!\Big[\exp\!\big(2n\bfK_b(\bfL^\bfB_n)\big)\Big]
    &\;=\;
    \mean\!\bigg[
      M_T\,
      \exp\!\bigg(
        \sum^n_{i=1}\,
        \int_0^T \frac{2}{n^2}\, \bigg| \sum^n_{j=1} b(X^i-X^j) \bigg|^2\, \md t
      \bigg)
    \bigg]
    \\[1ex]
    &\;\leq\;
    \me^{2nT \|b\|_\infty}\, \mean\!\big[M_T \big]
    \;=\;
    \me^{2nT \|b\|_\infty},
  \end{align*}
  which implies \eqref{eq:condVaradhan}.  Hence, we can apply Varadhan lemma and get the LDP for $\{Z_n^{-1}\exp(n\bfK_b\circ\Pi_2^{-1})\mathop{\mathrm{Law}}(\bfL_n^{\bfB;\{k\}}) : n \in \bbN\}$ with rate function $\bfJ_b$.  Moreover $\bfJ_b$ is good: this follows from exponential tightness of $\{\mathop{\mathrm{Law}}(\bfL^{\bfB;\{k\}}_n) : n \in \bbN\}$ and Varadhan lemma (in the version \ref{Varadhan_lemma}).
  
  \textsc{Conclusion}. In order to conclude the LDP for $\{\mathop{\mathrm{Law}}(\bfL^{\bfX;\{k\}}_n) : n \in \bbN\}$, note that
  \begin{align*}
    \{\mathop{\mathrm{Law}}(\bfL^{\bfB;\{k\}}_n) : n \in \bbN\}
    \;=\;
    Z_n\,
    \exp(\bfK_b'\circ\Pi_2^{-1})\,
    \big(
      Z_n^{-1}\, \exp(n\bfK_b\circ\Pi_2^{-1})\,
      \mathop{\mathrm{Law}}(\bfL_n^{\bfB;\{k\}})
    \big).
  \end{align*}
  Therefore, for any Borel set $A$ in $\cP_{(\|\cdot\|+N)^{1+\ve}}(\cC^{0,\al}_g([0,T]; \bbR^{kd}))$, we have
  \begin{align*}
    \frac{1}{n}\, \log \prob\!\big[\bfL^{\bfB;\{k\}}_n\in A \big]
    &\;\leq\;
    \limsup_n \frac{1}{n}\, \log
    \big(
      Z_n^{-1}\, \exp(n\bfK_b\circ\Pi_2^{-1})\,
      \mathop{\mathrm{Law}}(\bfL_n^{\bfB;\{k\}})(A)
    \big)
    \\[.5ex]
    &\mspace{36mu}+\;
    \limsup_n \frac{1}{n}\, \sup_{\mu\in A} \big|\bfK_b'(\Pi_2^{-1}(\mu))\big|
    \,+\,
    \limsup_n \frac{1}{n}\, \big|\log Z_n\big|.
  \end{align*}
  Now, $\bfK_b'$ is bounded on the whole $\cP_{(\|\cdot\|+N)^{1+\ve}}(\cC^{0,\al}_g([0,T]; \bbR^{kd}))$. Moreover,
  \begin{align*}
    Z_n
    \;=\;
    \mean\!\big[%
      \exp(n\bfK_b(\bfL_n^{\bfB;\{2\}}))
    \big]
    \;=\;
    \mean\!\big[%
      \exp(-\bfK_b'(\bfL_n^{\bfB;\{2\}}))\, \exp(n\bfK_b(\bfL_n^{\bfB;\{2\}})
      \,+\,
      \bfK_b'(\bfL_n^{\bfB;\{2\}}))
    \big],
  \end{align*}
  since $\mean[\exp(n\bfK_b(\bfL_n^{\bfB;\{2\}}) + \bfK_b'(\bfL_n^{\bfB;\{2\}}))] = 1$ (the exponential being a density) and $\bfK_b'$ is bounded from above and from below, $|\log Z_n|$ is bounded uniformly in $n$. Hence,
  \begin{align*}
    \frac{1}{n}\, \log \prob\!\big[\bfL^{\bfB;\{k\}}_n\in A \big]
    \;\leq\;
    \limsup_n \frac{1}{n}\, \log
    \big(
      Z_n^{-1}\, \exp(n\bfK_b\circ\Pi_2^{-1})\,
      \mathop{\mathrm{Law}}(\bfL_n^{\bfB;\{k\}})(A)
    \big);
  \end{align*}
  similarly (with reverse inequalities) for the $\liminf_n$. Then the LDP for $\{\mathop{\mathrm{Law}}(\bfL^{\bfX;\{k\}}_n) : n \in \bbN\}$ follows from that for $\{Z_n^{-1}\exp(n\bfK_b\circ\Pi_2^{-1})\mathop{\mathrm{Law}}(\bfL_n^{\bfB;\{k\}}) : n \in \bbN\}$. The proof is complete.
\end{proof}
We insist that it is crucial in the above proof to work with $k=2$ (or more) layers, for otherwise the argument - based on continuity of $\bfK$ - fails. Theorem \ref{generalres} implies, of course, an immediate LPD for the ($1$-layer, non-enhanced) empircal measure  $L_n^X$ as defined in \ref{eq:empmeasIPS}: it suffices to apply the contraction principle, applied to the map
\begin{align*}
  \cP_{(\|\cdot\|+N)^{1+\ve}}(\cC^{0,\al}_g([0,t];\bbR^{2d}))
  \;\ni\;
  \nu
  \;\longmapsto\;
  \nu \circ \pi_1^{-1}
  \;\in\;
  \cP_1(C^{0,\al}([0,T];\bbR^d)),
\end{align*}
with resulting (good) rate function
\begin{align*}
  J_b(Q) \;\ldef\l \inf\{\bfJ_b(\mu) : \mu \circ \pi_1^{-1} = Q\}.
\end{align*}
The (only) purpose of the following corollary is to re-express this rate function in more familiar terms of stochastic analysis. To this end, we define, for any measure $Q$ on $C^{0,\alpha}([0,T];\bbR^d)$ which makes the coordinate process, and then also the doubled coordinate process $X = (X^1, X^2)$ under $Q \otimes Q$, a Wiener process plus a square integrable (in time and $\Omega$) drift (this happens when $H(Q|P^{\{d\}}) < \infty$, see the proof of Corollary \ref{mainres}),
\begin{align}\label{eq:def_b}
  K_b(Q)
  &\;\ldef\;
  \int_{C^{0,\alpha}([0,T];\bbR^{2d})}
    \int_0^T \brb(X_t) \circ \md X_t\,
  (Q \otimes Q)(\md X)
  \nonumber\\[1ex]
  &\mspace{36mu}-\;
  \frac{1}{2}\,
  \int_0^T
    \int_{C^{0,\alpha}([0,T];\bbR^{2d})} \diverg\, \brb(X_t)\, (Q\otimes Q)(\md X)
  \md t
  \\[1ex]
  &\mspace{36mu}-\;
  \frac{1}{2}\,
  \int^T_0
    \int_{C^{0,\alpha}([0,T];\bbR^d)}
      \bigg| \int_{C^{0,\alpha}([0,T];\bbR^d)} \brb(Y_t,Z_t)\, Q(\md Z) \bigg|^2\,
    Q(\md Y)\,
  \md t.
  \nonumber
\end{align}
Note the last two summands (integrals against $\md t$) are finite under our assumptions on $b$.
\begin{corollary}\label{mainres}
  Under the assumptions of Theorem \ref{generalres}, the sequence of laws $\{\mathop{\mathrm{Law}}(L^X_n) : n \in \bbN\}$ satisfies an LDP on $\cP_1(C^{0,\al}([0,T];\bbR^d))$ with scale $n$ and good rate function $J_b$ given by
  \begin{align}
    J_b(Q) \;=\; H(Q \,|\, P^{\{d\}}) \,-\, K_b(Q),
  \end{align}
  with the understanding that the right-hand side above is $+\infty$ whenever $H(Q \,|\, P^{\{d\}})=+\infty $.
\end{corollary}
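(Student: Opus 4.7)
My plan is to deduce Corollary~\ref{mainres} from Theorem~\ref{generalres} (specialized to $k=2$) by the contraction principle. Consider the map
\begin{align*}
\Psi\!: \cP_{(\|\cdot\|+N)^{1+\ve}}\big(\cC^{0,\al}_g([0,T];\bbR^{2d})\big) \;\longrightarrow\; \cP_1\big(C^{0,\al}([0,T];\bbR^d)\big), \qquad \nu \;\longmapsto\; \nu\circ\pi_1^{-1}.
\end{align*}
Continuity of $\Psi$ holds because $\pi_1$ is $1$-Lipschitz in the homogeneous metric and the induced push-forward is continuous from the $(\|\cdot\|+N)^{1+\ve}$-Wasserstein topology into the $1$-Wasserstein topology. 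A direct check gives $\Psi(\bfL_n^{\bfX,\{2\}}) = L_n^X$ almost surely: applying $\pi_1$ to $\bfX^{ij,n}$ retrieves $X^{i,n}$, so $\tfrac{1}{n^2}\sum_{i,j}\de_{X^{i,n}}=L_n^X$. The contraction principle therefore delivers the desired LDP with good rate function
\begin{align*}
J_b(Q) \;=\; \inf\big\{ \bfJ_b(\mu) \,:\, \mu \circ \pi_1^{-1} = Q \big\}.
\end{align*}

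Next, because $\bfJ_b(\mu)<+\infty$ forces $\mu = F^{\{2\}}(\mu\circ\pi_1^{-1})$, the infimum has the unique admissible candidate $\mu^{*} \ldef F^{\{2\}}(Q) = (Q\otimes Q)\circ(S^{\{2d\}})^{-1}$, yielding
\begin{align*}
J_b(Q) \;=\; H(Q \,|\, P^{\{d\}}) \,-\, \bfK_b(\mu^{*})
\end{align*}
when $H(Q \,|\, P^{\{d\}}) < \infty$, and $J_b(Q)=+\infty$ otherwise (since then $\bfJ_b(\mu^{*})=+\infty$). This already matches the convention of the corollary, so it remains to identify $\bfK_b(\mu^{*}) = K_b(Q)$ for finite-entropy $Q$.

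This identification is a term-by-term comparison of \eqref{eq:defK} and \eqref{eq:def_b} under the push-forward definition of $\mu^{*}$. The two deterministic summands (those involving $\diverg\brb$ and the mean-field square) transfer immediately, as they depend on $\bfX$ only through the base path $\pi_1\circ S^{\{2d\}}$. The main obstacle is matching the rough-integral term, namely verifying
\begin{align*}
\int_{\cC^{0,\al}_g([0,T];\bbR^{2d})} \!\int_0^T \brb(X_t)\,\md\bfX_t\, \mu^{*}(\md\bfX) \;=\; \int_{C^{0,\al}([0,T];\bbR^{2d})} \!\int_0^T \brb(X_t) \circ \md X_t\, (Q\otimes Q)(\md X).
\end{align*}
Here the finite-entropy hypothesis is essential: $H(Q \,|\, P^{\{d\}})<\infty$ implies $Q\otimes Q \ll P^{\{d\}}\otimes P^{\{d\}}=P^{\{2d\}}$, so Girsanov's theorem makes the coordinate process a semimartingale under $Q\otimes Q$. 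Proposition~\ref{prop:conv_S} then ensures that $S^{\{2d\}}(X)$ coincides $(Q\otimes Q)$-a.s.\ with the Stratonovich lift of $X$, and Proposition~\ref{rough_Strat_int} identifies the rough integral against this lift with the Stratonovich integral $\int_0^T \brb(X)\circ \md X$ $(Q\otimes Q)$-a.s. A Fubini argument combined with the push-forward formula then yields the equality, completing the proof.
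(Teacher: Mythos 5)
Your proposal is correct and follows essentially the same route as the paper: contraction under $\nu\mapsto\nu\circ\pi_1^{-1}$, reduction of the infimum to the single candidate $F^{\{2\}}(Q)$, and identification of $\bfK_b(F^{\{2\}}(Q))$ with $K_b(Q)$ via the finite-entropy Girsanov structure together with Propositions~\ref{prop:conv_S} and~\ref{rough_Strat_int}. The only point stated implicitly rather than checked is that $F^{\{2\}}(Q)$ is indeed admissible, i.e.\ $F^{\{2\}}(Q)\circ\pi_1^{-1}=Q$, which follows from the same Proposition~\ref{prop:conv_S} you already invoke.
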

\begin{proof}% [Proof of Theorem \ref{mainres}] 
  Consider a measure $Q$ with $H(Q,P^{\{d\}}) = \infty$.  We need to show that $\inf\{\bfJ_b(\mu) : \mu \circ \pi_1^{-1} = Q\} = \infty$, that is, $\bfJ_b(\mu) = \infty$ whenever $\mu$ projects to $Q$.  By looking at the definition of $\bfJ_b$, there is nothing to show unless $\mu = F^{\{2\}}(\mu \circ \pi_1^{-1}) = F^{\{2\}}(Q)$. But in this case $\bfJ_b(\mu) = H(Q|P^{\{d\}})-\bfK_b(\mu) = \infty$, as desired.
  
  We now consider a measure $Q$ with $H(Q,P^{\{d\}}) < \infty$.  We have to show that
  \begin{align*}
    H(Q \,|\, P^{\{d\}}) \,-\, K_b(Q)
    \;=\;
    \inf\{\bfJ_b(\mu) : \mu \circ \pi_1^{-1} = Q\}.
  \end{align*}
  In fact, from the very definition of $\bfJ_b$, we have $\bfJ_b (\mu) = \infty$ unless $\mu = F^{\{2\}} (Q)$. This measure $\mu=F^{\{2\}}(Q)$ satisfies $Q = \mu \circ \pi_1^{-1} $: by Proposition \ref{prop:conv_S}, denoting $Y = (Y^1,Y^2)$ the canonical process on $C^{0,\al}([0,T];\bbR^{2d})$, $P^{\{d\}} \otimes P^{\{d\}} \simeq P^{\{2d\}}$-a.s.,\ and so $Q\otimes Q$-a.s.,\ $S^{m,\{2\}}(Y)$ converges to $S^{\{2\}}(Y) = ((Y^1,Y^2),\bbY)$, in particular $\pi_1(S(Y)) = Y^1$ $Q\otimes Q$-a.s.\ and so $F^{\{2\}}(Q) \circ \pi_1^{-1} = Q$.  For such a $\mu$, we have $\bfJ_b(\mu) =   H( \mu \circ \pi_1^{-1} |P^{\{d\}})-\bfK_b(\mu) = H (Q | P^{\{d\}}) - \bfK_2 (F^{\{2\}} (Q))$. Thus, it only remains to see that
  \begin{align*}
    K_b(Q) \;=\; \bfK_b( F^{\{2\}} (Q) ).
  \end{align*}
  Since $H(Q,P^{\{d\}}) < \infty$, by a classical result (see for example \cite[Section II Remark 1.3]{Fo88}), there exists an adapted process $g$ such that $W_t = X_t - X_0 - \int^t_0 g_r\, \md r$ is a Wiener process under $Q$ and, denoting by $\nu$ the marginal of $Q$ at time $0$, it holds
  \begin{align}\label{eq:entropy_Girsanov}
    H(Q\,|\,P)
    \;=\;
    H(\nu \,|\, \la)
    \,+\,
    \frac{1}{2}\, \Mean^Q\!\bigg[\int^t_0 |g_r|^2\, \md r\bigg].
  \end{align}
  In particular we can define $\int^t_s \brb(X_r) \circ\md X_r$, which appears in the definition of $K_b$ (so $K_b(X)$ makes sense), as a Stratonovich integral under $Q\otimes Q$ or equivalently under $P\otimes P$, and by Proposition~\ref{rough_Strat_int} this integral coincides $P\otimes P$-a.s.\ (and so $Q\otimes Q$-a.s.)\ with the rough integral $\int^T_0 \brb(X_t)\, \md\bfX_t$ in the definition of $\bfK_b$. Therefore,
  \begin{align*}
    \int_{\cC^{0,\al}_g([0,T];\bbR^{2d})}
    \int^T_0 \brb(X_t)\, \md\bfX_t\, \mu(\md\bfX)
    \;=\;
    \int_{C^{0,\alpha}([0,T];\bbR^{2d})}\,
    \int_0^T \brb(X_t) \circ \md X_t\, (Q \otimes Q)(\md X),
  \end{align*}
  i.e.\ the first addends in the definitions of $K_b(Q)$ and $\bfK_b(\mu)$ coincide. The other addends also coincide, as easily verified (they are classical integrals). Therefore $K_b(Q) = \bfK_b( F^{\{2\}} (Q) )$ as desired.
\end{proof}
The above discussion has another useful consequence.
\begin{lemma} \label{lem:bfJisJ}
  The rate function given in Theorem \ref{generalres} satisfies
  \begin{align}
    \bfJ_b^{\{k\}}(\mu) \;=\; J_b ( Q)  
  \end{align}
  whenever $\mu = F^{\{k\}}(Q)$ and infinite otherwise.
\end{lemma}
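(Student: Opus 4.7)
The plan is to unpack the two maps $\pi_1$ and $\Pi_2$ against the definition $F^{\{k\}}(Q) = Q^{\otimes k} \circ (S^{\{kd\}})^{-1}$ and then reduce to the identity $\bfK_b(F^{\{2\}}(Q)) = K_b(Q)$ that was already established inside the proof of Corollary~\ref{mainres}.

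First, suppose $\mu = F^{\{k\}}(Q)$ for some $Q \in \cP_1(C^{0,\al}([0,T];\bbR^d))$. Under the measure $Q^{\otimes k}$, the canonical coordinates $(X^1,\ldots,X^k)$ are i.i.d.\ with law $Q$; since $\pi_1$ reads off the first $d$-dimensional path component of $S^{\{kd\}}(X^1,\ldots,X^k)$, which is just $X^1$, we conclude $\mu \circ \pi_1^{-1} = Q$. In particular $\mu = F^{\{k\}}(\mu \circ \pi_1^{-1})$, so the first branch of the definition of $\bfJ_b^{\{k\}}$ applies. Similarly, $\Pi_2$ reads off $S^{\{2d\}}(X^1,X^2)$, so $\mu \circ \Pi_2^{-1} = (Q \otimes Q) \circ (S^{\{2d\}})^{-1} = F^{\{2\}}(Q)$. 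Therefore
\begin{align*}
  \bfJ_b^{\{k\}}(\mu) \;=\; H(Q \,|\, P^{\{d\}}) \,-\, \bfK_b(F^{\{2\}}(Q)).
\end{align*}

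Next, I invoke the identification $\bfK_b(F^{\{2\}}(Q)) = K_b(Q)$ in both regimes. When $H(Q \,|\, P^{\{d\}}) < \infty$, this is exactly what was shown at the end of the proof of Corollary~\ref{mainres}: the canonical process under $Q \otimes Q$ is a Wiener process plus a square-integrable drift, so the rough integral appearing inside $\bfK_b$ agrees $Q \otimes Q$-a.s.\ (hence as an integral against $F^{\{2\}}(Q)$) with the Stratonovich integral appearing inside $K_b$, while the two remaining (classical) integrals coincide tautologically. When $H(Q \,|\, P^{\{d\}}) = \infty$, both $J_b(Q)$ and $\bfJ_b^{\{k\}}(\mu)$ are $+\infty$ by convention, so there is nothing to verify. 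Combining, $\bfJ_b^{\{k\}}(\mu) = H(Q|P^{\{d\}}) - K_b(Q) = J_b(Q)$.

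Finally, if $\mu$ is not of the form $F^{\{k\}}(Q')$ for any $Q'$, then in particular $\mu \neq F^{\{k\}}(\mu \circ \pi_1^{-1})$, whence $\bfJ_b^{\{k\}}(\mu) = +\infty$ by the second branch of its definition. This completes the plan. The only nontrivial step is the identification $\bfK_b(F^{\{2\}}(Q)) = K_b(Q)$, but this was already the content of Corollary~\ref{mainres}, so here it really is a matter of bookkeeping the projections $\pi_1, \Pi_2$ in the $k$-layer setting.
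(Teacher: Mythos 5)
Your proposal is correct and follows essentially the same route as the paper, which states this lemma as a direct consequence of the discussion in Corollary~\ref{mainres}: you unpack $\mu\circ\pi_1^{-1}=Q$ and $\mu\circ\Pi_2^{-1}=F^{\{2\}}(Q)$ and then invoke the identity $\bfK_b(F^{\{2\}}(Q))=K_b(Q)$ established there. The only (shared) caveat is that $\pi_1\circ S^{\{kd\}}=X^1$ holds only on the a.s.\ set where the dyadic lift converges (Proposition~\ref{prop:conv_S}), which is guaranteed when $Q\ll P^{\{d\}}$ and is glossed over in the infinite-entropy case by the paper as well.
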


\section{Application 1: Robust propagation of chaos } \label{rPOC}
It is an elementary fact of large deviations theory, that a LDP at scale $n$ with good rate function, which has a \emph{single} zero, implies a (weak and in fact -- thanks to Borel--Cantelli -- strong) law of large numbers.  We now give different representations of the rate functions obtained in the last section, which will allow to ``see'' the single zero. This requires us to consider the following mean field (McKean--Vlasov) SDE on $\bbR^d$
\begin{align}\label{eq:meanfieldSDE}
  \left\{
    \begin{array}{rcl}
      \md \bar{X}_t
      &\mspace{-5mu}=\mspace{-5mu}
      &
      {\displaystyle
        (b*u_t)(\bar{X}_t)\, \md t \,+\, \md\bar{B}_t
      }
      \\[.5ex]
      u_t
      &\mspace{-5mu}=\mspace{-5mu}
      &\mathop{\mathrm{Law}}(\bar{X}_t)
      \\[.5ex]
      \la
      &\mspace{-5mu}=\mspace{-5mu}
      &\mathop{\mathrm{Law}}(\bar{X}_0)
    \end{array}
  \right.
\end{align}
where
\begin{align}
  (b*u_t)(x) \; \ldef\; \int_{\bbR^d} b(x ,y)\, u_t(\md y).
\end{align}
The law $\prob^{\bar{X}}$ of the solution $\bar{X}$ can be seen as fixed point of the map $\Phi$ defined in this way: for any probability measure $Q$ on $C^{0,\al}([0,T],\bbR^d)$, calling $Q_t$ the marginal of $Q$ at time $t$, $\Phi(Q)$ is the law of the solution to the SDE
\begin{align}
  \left\{
    \begin{array}{rcl}
      \md \bar{Y}_t
      &\mspace{-5mu}=\mspace{-5mu}
      &
      {\displaystyle
        (b*Q_t)(\bar{Y}_t)\, \md t \,+\, \md\bar{B}_t
      }
      \\[.5ex]
      \la
      &\mspace{-5mu}=\mspace{-5mu}
      &\mathop{\mathrm{Law}}(\bar{X}_0)
    \end{array}
  \right.
\end{align}
(for given $X_0$ and $Q$, this SDE has a pathwise-unique solution).  
\begin{lemma} \label{lem:zeros} 
  \begin{itemize}
  \item[(i)] (``one-layer, non-enhanced'') The zeros of $J_b$ are precisely fixed points of $\Phi$, as is seen from
    \begin{align*}
      J_b(Q) \;=\; H(Q \,|\, \Phi(Q)). 
    \end{align*}
  \item[(ii)] The zeros of $\bfJ^{\{k\}}_b$ are precisely the image under $F^{\{k\}}$ of fixed points of $\Phi$, as is seen from Lemma~\ref{lem:bfJisJ}.
  \end{itemize}
\end{lemma}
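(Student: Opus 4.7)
The plan is to prove (i) via the identity $J_b(Q) = H(Q\,|\,\Phi(Q))$; the characterization of zeros then follows immediately from the non-negativity of relative entropy together with the fact that it vanishes precisely when the two measures agree, so $J_b(Q) = 0 \Leftrightarrow Q = \Phi(Q)$. Part (ii) is then an immediate consequence of Lemma~\ref{lem:bfJisJ}: since $\bfJ_b^{\{k\}}(\mu) = J_b(Q)$ when $\mu = F^{\{k\}}(Q)$ and $+\infty$ otherwise, the zero set of $\bfJ_b^{\{k\}}$ is precisely $\{F^{\{k\}}(Q) : \Phi(Q) = Q\}$.

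\textbf{The identity via Girsanov.} The case $H(Q\,|\,P^{\{d\}}) = +\infty$ is trivial: boundedness of $b$ makes $\Phi(Q) \sim P^{\{d\}}$ with bounded log-density, forcing $H(Q\,|\,\Phi(Q)) = +\infty$, which matches the convention for $J_b(Q)$. For $H(Q\,|\,P^{\{d\}}) < \infty$, I would invoke the F\"ollmer representation already used in the proof of Corollary~\ref{mainres}: there exists an adapted drift $g$ with $\mathbb{E}^Q\!\int_0^T |g_s|^2\,ds < \infty$ and $X_0 \sim \nu \ll \lambda$ such that $dX_t = g_t\,dt + dW^Q_t$ under $Q$. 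Girsanov's theorem (Novikov is trivial since $b \ast Q_t$ is bounded) gives
\begin{equation*}
\log \tfrac{d\Phi(Q)}{dP^{\{d\}}}(X) \;=\; \int_0^T (b \ast Q_t)(X_t)\cdot dW_t \,-\, \tfrac{1}{2}\int_0^T \bigl|(b\ast Q_t)(X_t)\bigr|^2\,dt,
\end{equation*}
where $W_t = X_t - X_0$ is the $P^{\{d\}}$-Brownian motion. Substituting $dW_t = g_t\,dt + dW^Q_t$ under $Q$ and annihilating the (bounded) $Q$-It\^o martingale yields
\begin{equation*}
H(Q\,|\,\Phi(Q)) \;=\; H(Q\,|\,P^{\{d\}}) \,-\, \mathbb{E}^Q\!\int_0^T (b\ast Q_t)(X_t)\cdot g_t\,dt \,+\, \tfrac{1}{2}\,\mathbb{E}^Q\!\int_0^T \bigl|(b\ast Q_t)(X_t)\bigr|^2\,dt,
\end{equation*}
so the proof reduces to identifying the latter two summands with $-K_b(Q)$.

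\textbf{Identification with $K_b$ and main obstacle.} This identification is essentially the calculation carried out already in the proof of Corollary~\ref{mainres}. Working on $C^{0,\alpha}([0,T];\bbR^{2d})$ under $Q \otimes Q$, Proposition~\ref{rough_Strat_int} converts the rough integral in the first summand of $K_b(Q)$ into the Stratonovich integral $\int_0^T \brb(X_t) \circ dX_t$; the It\^o--Stratonovich correction (the doubled coordinate process has quadratic variation $I_{2d}\,t$) cancels exactly the divergence summand of $K_b$ and leaves the It\^o integral $\int_0^T b(X^1_t, X^2_t)\,dX^1_t$. Writing $X^1_t = X^1_0 + \int_0^t g^{(1)}_s\,ds + W^{1,Q}_t$, boundedness of $b$ kills the $Q$-It\^o martingale, and Fubini together with the independence of $X^1, X^2$ under $Q \otimes Q$ reduces this to $\mathbb{E}^Q\!\int_0^T g_t \cdot (b \ast Q_t)(X_t)\,dt$; the third summand of $K_b$ is by definition $-\tfrac{1}{2}\mathbb{E}^Q\!\int_0^T |(b\ast Q_t)(X_t)|^2\,dt$, matching the displayed formula. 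The only real subtlety is the It\^o--Stratonovich bookkeeping under the doubled measure (meaningful only because $H(Q\,|\,P^{\{d\}}) < \infty$ makes the coordinate process a semimartingale) and the Fubini unwinding of the mean-field convolution; but since these steps already appear in Corollary~\ref{mainres}, the present lemma is essentially a repackaging of that computation.
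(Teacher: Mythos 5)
Your proof is correct and follows essentially the same route as the paper: Girsanov's theorem gives the explicit log-density of $\Phi(Q)$ with respect to $P^{\{d\}}$, the chain rule for relative entropy yields $H(Q\,|\,\Phi(Q)) = H(Q\,|\,P^{\{d\}}) - \int \log\frac{\md\Phi(Q)}{\md P^{\{d\}}}\,\md Q$, and the remaining work is to identify that integral with $K_b(Q)$ via the It\^o--Stratonovich correction and (stochastic) Fubini, exactly as in the paper's proof (which reuses the computation from Corollary~\ref{mainres}). The only inaccuracy is your justification of the infinite-entropy case: the log-density of $\Phi(Q)$ with respect to $P^{\{d\}}$ contains a stochastic integral and is \emph{not} bounded, so one should instead argue via its exponential integrability under both measures (or simply note that the case is irrelevant for locating the zeros, since any zero of $J_b$ has finite entropy by definition and any fixed point of $\Phi$ automatically satisfies $H(Q\,|\,P^{\{d\}})<\infty$).
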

\begin{proof}
  (i) Indeed, by Girsanov theorem, $\Phi(Q)$ is absolutely continuous with respect to $P^{\{d\}}$, with density satisfying
  \begin{align*}
    \log\frac{\md\Phi(Q)}{\,d P^{\{d\}}}(X)
    &\;=\;
    \int_{C^{0,\alpha}([0,T];\bbR^d)} \int^T_0 \brb(X_t,Z_t) \circ \md X_t\; Q(\md Z)
    \\[1ex]
    &\mspace{36mu}-\;
    \frac{1}{2}\,
    \int^T_0
      \int_{C^{0,\alpha}([0,T];\bbR^d)} \diverg \brb(X_t,Z_t)\, Q(\md Z)\,
    \md t
    \\[1ex]
    &\mspace{36mu}-\;
    \frac{1}{2}\,
    \int^T_0
        \bigg| \int_{C^{0,\alpha}([0,T];\bbR^d)} \brb(X_t,Z_t)\, Q(\md Z)\bigg|^2\,
    \md t
  \end{align*}
  (where we have used stochastic Fubini theorem for exchanging stochastic integration and integration in $Q$ in the first term). Notice that, for $Q$ absolutely continuous with respect to $P^d$,
  \begin{align*}
    K_b(Q)
    \;=\;
    \int_{C^{0,\al}([0,T];\bbR^d)} \log \frac{\md\Phi(Q)}{\md P^{\{d\}}}(X)\, Q(\md X),
  \end{align*}
  so we have
  \begin{align*}
    J_b(Q)
    \;=\;
    \int_{C^{0,\al}([0,T];\bbR^d)} \log \frac{\md Q}{\md P^{\{d\}}}\, \md Q
    \,+\,
    \int_{C^{0,\al}([0,T];\bbR^d)} \log \frac{\md P^{\{d\}}}{\md \Phi(Q)}\, \md Q
    \;=\;
    H(Q \,|\, \Phi(Q)).
  \end{align*}

  (ii) The statement follows from Part (i) and Lemma \ref{lem:bfJisJ}.
\end{proof}
The previous lemma applies nicely in view of the following result (well-known, see e.g.\ \cite[Theorem 1.1 in Chapter 1]{Sz91}).
\begin{prop}
  For $b$ as in Theorem \ref{generalres}, there is a unique strong solution to (\ref{eq:meanfieldSDE}) and its law $\mathop{\mathrm{Law}}(\bar{X})$ is the unique fixed point of $\Phi$.
\end{prop}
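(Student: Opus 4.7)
The proof follows the classical Sznitman fixed-point strategy. The plan is to show that $\Phi$ has a unique fixed point on the Polish space $(\cP_1(C([0,T];\bbR^d)), W_T)$ equipped with the $1$-Wasserstein distance associated with the sup-norm metric $d_T(\ga,\ga') = \sup_{s\leq T} |\ga(s)-\ga'(s)|$, and then to identify this fixed point with the law of the unique strong solution to the McKean--Vlasov SDE \eqref{eq:meanfieldSDE}.

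First I would check that $\Phi$ is well-defined as a map $\cP_1 \to \cP_1$. Since $b \in C^2_b$, for any fixed $Q$ the coefficient $(t,y)\mapsto (b*Q_t)(y)$ is uniformly bounded and globally Lipschitz in $y$ with constant $L \ldef \|\nabla_x b\|_\infty$. Hence the auxiliary SDE defining $\Phi(Q)$ admits a pathwise unique strong solution $\bar{Y}^Q$ built from $\bar{X}_0 \sim \la$ and a fixed Brownian motion $\bar{B}$. Finiteness of the first moment of $\Phi(Q) = \mathop{\mathrm{Law}}(\bar{Y}^Q)$ follows from boundedness of $b$ and the exponential integrability \eqref{eq:exp:intcond} of $\la$.

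Next I would establish the key contraction-type estimate. For $Q, Q' \in \cP_1$, couple $\bar{Y}^Q$ and $\bar{Y}^{Q'}$ synchronously along the same $(\bar{X}_0, \bar{B})$. Splitting the drift difference gives
\begin{align*}
  |\bar{Y}^Q_t - \bar{Y}^{Q'}_t|
  \;\leq\;
  L\int_0^t |\bar{Y}^Q_s - \bar{Y}^{Q'}_s|\,\md s
  \,+\, L \int_0^t W_1(Q_s, Q'_s)\,\md s,
\end{align*}
where Lipschitzness of $b$ in its first argument controls the first term, while for the second the Kantorovich--Rubinstein duality (against the Lipschitz test function $y \mapsto b(\bar{Y}^{Q'}_s,y)/L$) yields $|b*Q_s(y) - b*Q'_s(y)|\leq L\, W_1(Q_s,Q'_s)$. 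Since $W_1(Q_s,Q'_s) \leq W_s(Q,Q')$, Gronwall's lemma produces
\begin{align*}
  W_t\big(\Phi(Q),\Phi(Q')\big)
  \;\leq\;
  C \int_0^t W_s(Q,Q')\,\md s,
  \qquad C \ldef L\me^{LT}.
\end{align*}
Iterating $n$ times yields $W_T(\Phi^n(Q),\Phi^n(Q')) \leq (CT)^n/n!\cdot W_T(Q,Q')$, so $\Phi^n$ is a strict contraction on the complete metric space $(\cP_1, W_T)$ for $n$ large enough. Banach's fixed-point theorem then provides a unique fixed point $Q^* \in \cP_1$ of $\Phi$.

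Finally, I would tie back to the SDE. The process $\bar{X} \ldef \bar{Y}^{Q^*}$ is by construction a strong solution to \eqref{eq:meanfieldSDE}, since $\mathop{\mathrm{Law}}(\bar{X}) = \Phi(Q^*) = Q^*$ makes the drift $(b*\mathop{\mathrm{Law}}(\bar{X}_t))(\bar{X}_t)$. Conversely, any (even weak) solution has law which is a fixed point of $\Phi$, hence equals $Q^*$, and then pathwise uniqueness for the classical Lipschitz SDE with drift $(b*Q^*_t)(y)$ gives pathwise uniqueness for the McKean--Vlasov SDE. The only subtle step is the contraction estimate: one must track the time variable so that the integral $\int_0^t W_s(Q,Q')\,\md s$ (and not $t\cdot W_t(Q,Q')$) appears, which is what allows the Picard-type iteration to close on all of $[0,T]$ at once rather than only on a small interval.
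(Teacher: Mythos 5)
Your proof is correct and follows precisely the classical Sznitman fixed-point argument that the paper invokes by citation only (the paper offers no proof of this proposition, referring instead to \cite[Theorem 1.1, Chapter 1]{Sz91}). Your write-up supplies the details of that standard route --- well-posedness of the linearized SDE for fixed $Q$, the synchronous-coupling estimate giving $W_t(\Phi(Q),\Phi(Q'))\le C\int_0^t W_s(Q,Q')\,\md s$, the Picard iteration making $\Phi^n$ a contraction, and the identification of the fixed point with the law of the unique strong solution --- so it matches the intended argument and needs no correction.
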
 
We hence know that $J_b$, although not necessarily convex, has exactly one zero, given by $\bbP^{\bar{X}} = \mathop{\mathrm{Law}} (\bar{X})$. Similarly, and more importantly, $\bfJ_b^k$ has exactly one zero given by 
\begin{align*}
  F^{\{k\}} ( \mathop{\mathrm{Law}} (\bar{X}))
  \;=\;
  ( \mathop{\mathrm{Law}} (\bar{X})^{\otimes{k}}) \circ (S^{\{kd\}})^{-1}.
\end{align*}
This law is, of course, nothing else that the law of the Stratonovich lift of $k$ IID copies $\bar{X}^{1},\dots ,\bar{X}^{k}$ of the McKean--Vlasov diffusion $\bar{X}$.  We can now deduce the enhanced propagation of chaos result as stated in the introduction. 
\begin{theorem} % \label{EPOC}
  Under the assumptions of Theorem \ref{generalres} (that is, $b \in C^2_b$) and for all integer $k$,
  \begin{align}
    \mathop{\mathrm{Law}}\big( S^{\{kd\}}( X^{1,n}, \dots, X^{k,n}) \big)    \underset{n \to \infty }{\;\longrightarrow\;}
    \mathop{\mathrm{Law}}\big( S^{\{kd\}}( \bar{X}^{1}, \dots ,\bar{X}^{k}) \big),
  \end{align}
  as $C_b$-weak convergence of probability measures on $\mathcal{C}^{0,\alpha}_g( [0, T]; \bbR^{kd}) $ equipped with $\al$-H\"{o}lder rough path topology.
\end{theorem}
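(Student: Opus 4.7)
The plan is to derive the desired chaos statement directly from the large deviation principle of Theorem~\ref{generalres} via a standard symmetrisation argument, once we know the rate function has a unique zero.

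\emph{Step 1 (Collapse of the empirical measure).} By Theorem~\ref{generalres}, the sequence $\{\mathop{\mathrm{Law}}(\bfL_n^{\bfX;\{k\}}) : n \in \bbN\}$ satisfies an LDP on the modified Wasserstein space $\cP_{(\|\cdot\|+N)^{1+\ve}}(\cC^{0,\al}_g([0,T];\bbR^{kd}))$ with good rate function $\bfJ_b^{\{k\}}$. By Lemma~\ref{lem:zeros}(ii), combined with the proposition identifying the unique fixed point of $\Phi$ as $\mathop{\mathrm{Law}}(\bar{X})$, $\bfJ_b^{\{k\}}$ has exactly one zero, namely $\mu^{\ast} := F^{\{k\}}(\mathop{\mathrm{Law}}(\bar{X}))$. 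Since $\bfJ_b^{\{k\}}$ is good and the topology is Hausdorff (Lemma~\ref{Mod_regular}), a standard LDP argument (for every open neighbourhood $U$ of $\mu^{\ast}$, the infimum of $\bfJ_b^{\{k\}}$ on $U^c$ is strictly positive by goodness and unique minimality) yields that $\bfL_n^{\bfX;\{k\}} \to \mu^{\ast}$ in probability in the modified Wasserstein topology. A fortiori, convergence holds in law as random probability measures in the $C_b$-weak topology on $\cC^{0,\al}_g([0,T];\bbR^{kd})$.

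\emph{Step 2 (Symmetrisation/diagonal splitting).} Convergence in probability to a constant implies that for any bounded continuous $\vp \!: \cC^{0,\al}_g([0,T];\bbR^{kd}) \to \bbR$,
\begin{align*}
  \mean\!\bigg[ \int \vp\, \md \bfL_n^{\bfX;\{k\}} \bigg]
  \;\longrightarrow\;
  \int \vp\, \md \mu^{\ast}.
\end{align*}
Expanding the empirical measure and using the exchangeability of the particle system \eqref{eq:IPS} (all permutations of the labels $\{1,\dots,n\}$ preserve the joint law of $(X^{1,n},\dots,X^{n,n})$, and $S^{\{kd\}}$ is equivariant under permutations of the coordinate components of $\bbR^{kd}$), each distinct $k$-tuple $(i_1,\dots,i_k)$ contributes the same expectation, namely $\mean[\vp(S^{\{kd\}}(X^{1,n},\dots,X^{k,n}))]$. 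Splitting the sum into distinct and repeated indices,
\begin{align*}
  \mean\!\bigg[\int \vp\, \md \bfL_n^{\bfX;\{k\}}\bigg]
  \;=\;
  \frac{n(n-1)\cdots(n-k+1)}{n^k}\,
  \mean\!\big[\vp\big(S^{\{kd\}}(X^{1,n},\dots,X^{k,n})\big)\big]
  \,+\, R_n(\vp),
\end{align*}
where $|R_n(\vp)| \le \|\vp\|_\infty\, (1 - n!/((n-k)!\,n^k)) \to 0$ as $n \to \infty$. Combining with the previous display and noting that the prefactor tends to $1$,
\begin{align*}
  \mean\!\big[\vp\big(S^{\{kd\}}(X^{1,n},\dots,X^{k,n})\big)\big]
  \;\longrightarrow\;
  \int \vp\, \md \mu^{\ast}.
\end{align*}

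\emph{Step 3 (Identification of the limit).} Since this holds for every $\vp \in C_b(\cC^{0,\al}_g([0,T];\bbR^{kd}))$, we obtain $\mathop{\mathrm{Law}}(S^{\{kd\}}(X^{1,n},\dots,X^{k,n})) \to \mu^{\ast}$ in the $C_b$-weak topology. By the definition of $F^{\{k\}}$ in \eqref{defFk} and the fact that the $\bar{X}^j$ are IID copies of $\bar{X}$, we have $\mu^{\ast} = \mathop{\mathrm{Law}}(\bar{X})^{\otimes k} \circ (S^{\{kd\}})^{-1} = \mathop{\mathrm{Law}}(S^{\{kd\}}(\bar{X}^1,\dots,\bar{X}^k))$, which completes the proof. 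The only substantive obstacle is the bookkeeping in Step~2: all the analytical work (LDP for the enhanced empirical measure, continuity of the map $F^{\{k\}}$, uniqueness of the fixed point of $\Phi$) has already been done in Theorem~\ref{generalres} and Lemma~\ref{lem:zeros}, so the remaining argument is genuinely soft.
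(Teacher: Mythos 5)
Your proposal is correct and follows essentially the same route as the paper: the LDP from Theorem~\ref{generalres} together with the unique zero of $\bfJ_b^{\{k\}}$ (Lemma~\ref{lem:zeros}) gives convergence in probability of $\bfL_n^{\bfX;\{k\}}$ to $F^{\{k\}}(\mathop{\mathrm{Law}}(\bar X))$, and then testing against bounded continuous $\vp$, taking expectations, and using exchangeability plus the $\cO(1/n)$ bound on repeated-index tuples yields the stated weak convergence. The bookkeeping in your Step~2 is exactly the paper's symmetrisation argument, so nothing further is needed.
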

\begin{proof}
  Theorem \ref{generalres} gives us a LDP that quantifies the convergence (in the $C_b$-weak topology on $\cP(\cC^{0,\al}_g([0, T]; \bbR^{kd}))$) in probability
  \begin{align*}
    \frac{1}{n^{k}}\,
    \sum_{i_{1}, \ldots, i_k = 1}^n\, \de_{S(X^{i_{1},n}(\omega), \dots, X^{i_{k},n}(\omega))}
    \underset{n \to \infty}{\;\longrightarrow\;}
    \mathop{\mathrm{Law}}\big(S_{2}(\bar{X}^{1}, \dots, \bar{X}^{k}) \big).
  \end{align*}
  This convergence follows (by standard reasoning in large deviations on metric spaces) from Theorem~\ref{generalres} and Lemma~\ref{lem:zeros} which identifies the law of $S_{2}(\bar{X}^{1}, \dots, \bar{X}^{k})$ as the unique zero of the rate function.  Testing against
  \begin{align*}
    \varphi \;\in\; C_{b}\big( \cC^{0,\alpha}_g([0, T]; \bbR^{kd})\big)
  \end{align*}
  we get convergence (in probability, as $n \to \infty$)
  \begin{align*}
    \frac{1}{n^{k}}\!
    \sum_{i_{1}, \ldots, i_k = 1}\mspace{-6mu}
    \varphi\big( S_{2}( X^{i_{1},n}(\omega), \dots, X^{i_{k},n}(\omega)) \big)
    &\underset{n \to \infty}{\;\longrightarrow\;}
    \big\langle
      \varphi,\,
      \mathop{\mathrm{Law}}\big(S_{2}(\bar{X}^{1}, \dots, \bar{X}^{k}) \big)
    \big\rangle
    \;=\;
    \Mean\!\big[\varphi\big(S_{2}(\bar{X}^{1}, \dots, \bar{X}^{k})\big) \big].
  \end{align*}
  Now take expectation $\mean[\cdot]$ on both sides.  By using the boundedness of $\varphi$,
  \begin{align*}
    &\frac{1}{n^{k}}\!
    \sum_{i_1, \ldots, i_k = 1}^n\mspace{-6mu}
    \mean\big[
      \varphi \big(S_{2}( X^{i_{1},n}(\omega), \dots, X^{i_{k},n}(\omega) ) \big)
    \big]
    \\[1ex]
    &\mspace{36mu}=\;
    \frac{1}{n^{k}}\!
    \sum_{\substack{i_1, \ldots, i_k=1\\ \text{mutually distinct}}}^n\mspace{-24mu}
    \mean\!\big[
      \varphi\big(S_{2}(X^{i_{1},n}(\omega), \dots, X^{i_{k},n}(\omega) ) \big)
    \big]
    \,+\, \cO(1/n)
    \\[1ex]
    &\mspace{36mu}=\;
    \frac{n!}{n^k (n-k)!}\,
    \Mean\!\big[
      \varphi\big(S_{2}(X^{1,n}(\omega), \dots, X^{k,n}(\omega) ) \big)
    \big]
    \,+\, \cO(1/n),
  \end{align*}
  we have
  \begin{align*}
    \Mean\!\big[
      \varphi\big(S_{2}(X^{1,n}(\omega), \dots, X^{k,n}(\omega) ) \big)
    \big]
    \underset{n \to \infty}{\;\longrightarrow\;}
    \Mean\!\big[
      \varphi\big(S_{2}(\bar{X}^1, \dots, \bar{X}^k ) \big)
    \big].
  \end{align*}
  Since $\varphi \in C_{b}( \cC^{0,\alpha}_g([0, T]; \bbR^{kd}))$ is arbitrary, we proved (in rough path topology!), that
  \begin{align*}
    S_{2}\big( X^{1,n}(\omega), \dots, X^{k,n}(\omega) \big)
    \;\Longrightarrow\;
    S_{2}\big( \bar{X}^{1}, \dots, \bar{X}^{k}\big)
    \qquad \text{as }\; n \to \infty.
  \end{align*}
\end{proof}

\begin{remark}
As already noted in the Introduction, this enhanced propagation of chaos is also a consequence of classical propagation of chaos and classical It\^o calculus, applying It\^o formula to $\int^t_s X^{i,n}_{s,r}\circ \md X^{i,n}_r$. We leave the computations as exercise.
\end{remark}

\section{Application 2: An LDP for SDEs driven by $k$-layer noises}\label{appl2}

We start recalling the notation. We fix $k$ in $\bbN$ and, for a multi-index $I$ in $\{1,\ldots n\}^k$, we use the notation $I_j$ for the $j$-th component of $I$. We denote by $X^{I,n}=X^{\{k\};I,n}$ the vector $(X^{I_1,n},\ldots X^{I_k,n})$. We take $f_j:\bbR^d\rightarrow\bbR^m$, $j=1,\ldots k$, given $C^3_b$ vector fields. We consider the following family of SDEs on $\bbR^m$ driven by $X^{i,n}$, parametrized by multi-indices $I$ in $\{1,\ldots n\}^k$:
\begin{align}
dY^{I,n}_t = \sum^k_{j=1}f_j(Y^{I,n}_t)\circ \md X^{I_j,n}_t,\ \ Y^{I,n}_0=y_0,
\end{align}
where $y_0$ is a point in $\bbR^m$ independent of $I$ and $n$ (however more general choices of initial data should be possible). We call
\begin{align*}
L_n^{Y;\{k\}} = \frac{1}{n^k}\sum_{I\in \{1,\ldots n\}^k}\delta_{Y^{I,n}};
\end{align*}
it is a random variable with values in $\cP(C^{0,\beta}([0,T];\bbR^m)$, for any $1/3<\beta<1/2$.

By rough paths theory, precisely Theorems 8.4 and 8.5 in \cite{FrHa14}, there exists a (unique) continuous function $\varphi:\cC^{0,\alpha}_g([0,T];\bbR^{kd})\rightarrow C^{0,\beta}([0,T];\bbR^m)$ such that, for every $I$ and every $n$, $Y^{I,n}=\varphi(S^{kd}(X^{I,n}))$ (actually $\varphi$ is locally Lipschitz continuous). This brings to the following LDP, as recalled in the introduction:

\begin{corro}
Fix $1/3<\beta<1/2$. The sequence $\{\text{Law}(L_n^{Y;\{k\}})|n\in\mathbb{N}\}$ satisfies a large deviation principle on $\cP(C^{0,\beta}([0,T];\bbR^m)$, endowed with the $C_0$-weak topology, with scale $n$ and good rate function given by
\begin{align*}
J^Y(Q)=\inf\{\bfJ^{\{k\}}(\mu) \mid Q=\mu\circ\varphi^{-1}\}.
\end{align*}
\end{corro}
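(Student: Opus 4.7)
The plan is a direct application of the contraction principle, with continuity of the It\^o--Lyons solution map from rough paths theory as the essential input; this is precisely the situation for which the enhanced formulation (Theorem~\ref{generalres}) was set up.

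First, by Theorems 8.4 and 8.5 of \cite{FrHa14}, the map $\varphi\!:\cC^{0,\alpha}_g([0,T];\bbR^{kd}) \to C^{0,\beta}([0,T];\bbR^m)$ is continuous (in fact locally Lipschitz), and $Y^{I,n} = \varphi(\bfX^{I,n})$ with $\bfX^{I,n} \ldef S^{\{kd\}}(X^{I,n})$. Hence the empirical measure is a pushforward of the enhanced $k$-layer empirical measure:
\begin{align*}
  L_n^{Y;\{k\}}
  \;=\;
  \frac{1}{n^k}\, \sum_{I \in \{1,\ldots,n\}^k} \de_{\varphi(\bfX^{I,n})}
  \;=\;
  \Psi\big(\bfL^{\bfX;\{k\}}_n\big),
\end{align*}
where $\Psi(\mu) \ldef \mu\circ\varphi^{-1}$.

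Second, I would verify that $\Psi$ is continuous from $\cP_{(\|\cdot\|+N)^{1+\ve}}(\cC^{0,\alpha}_g([0,T];\bbR^{kd}))$, endowed with the modified Wasserstein topology, into $\cP(C^{0,\beta}([0,T];\bbR^m))$ endowed with the $C_b$-weak topology (which dominates the $C_0$-weak topology appearing in the statement). This is elementary: the modified Wasserstein topology is stronger than the $C_b$-weak topology on the source, so if $\mu_n \to \mu$ in the former and $g \in C_b(C^{0,\beta})$, then $g\circ\varphi \in C_b(\cC^{0,\alpha}_g)$ by continuity of $\varphi$, and hence $\int g\, d\Psi(\mu_n) = \int (g\circ\varphi)\, d\mu_n \to \int (g\circ\varphi)\, d\mu = \int g\, d\Psi(\mu)$.

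Finally, I would apply the contraction principle (e.g. \cite[Theorem 4.2.1]{DeZe10}) to the LDP supplied by Theorem~\ref{generalres}. This yields the claimed LDP for $\{\mathop{\mathrm{Law}}(L_n^{Y;\{k\}})\}$ with scale $n$ and good rate function
\begin{align*}
  J^Y(Q)
  \;=\;
  \inf\big\{ \bfJ^{\{k\}}_b(\mu) \,:\, Q = \mu\circ\varphi^{-1}\big\},
\end{align*}
with goodness of $J^Y$ inherited automatically, since sublevel sets $\{J^Y \le a\} = \Psi(\{\bfJ^{\{k\}}_b \le a\})$ are continuous images of compact sets. There is no real obstacle here; the entire argument rests on the continuity of the It\^o--Lyons map, which is the raison d'\^etre of rough paths theory. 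Classical It\^o theory would supply only measurability of the solution map, making such a one-line contraction argument infeasible.
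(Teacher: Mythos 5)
Your proposal is correct and follows essentially the same route as the paper: identify $L_n^{Y;\{k\}}$ as the pushforward of $\bfL_n^{\bfX;\{k\}}$ under $\mu\mapsto\mu\circ\varphi^{-1}$, invoke continuity of the It\^o--Lyons map $\varphi$ from \cite[Theorems 8.4, 8.5]{FrHa14}, and conclude by the contraction principle applied to Theorem~\ref{generalres}. Your version is in fact slightly more careful than the paper's in making explicit that the pushforward map is continuous from the modified Wasserstein topology on the source into the weak topology on the target, which is the topology in which the LDP of Theorem~\ref{generalres} is actually stated.
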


\begin{proof}
%First step. Fix $\alpha$ in $]\beta,1/2[$, there exists a (unique) continuous function $\varphi:\cC^{0,\alpha}_g([0,T];\bbR^{kd})\rightarrow C^{0,\beta}([0,T];\bbR^m)$ such that, for every $I$ and every $n$, $Y^{I,n}=\varphi(S^{kd}(X^{I,n}))$. This is a consequence of Theorems 8.4 and 8.5 in \cite{FrHa14} (which imply also that $\varphi$ is locally Lipschitz).
We have
\begin{align*}
L_n^{Y;\{k\}} = \bfL_n^{\bfX;\{k\}}\circ\varphi^{-1}.
\end{align*}
as it can be easily verified by testing the two measures with a function $\psi$ in $C_b(C^{0,\beta}([0,T];\bbR^m))$. In particular $L_n^{Y;\{k\}}$ is the image of $\bfL_n^{\bfX;\{k\}}$ under the map $F:\cP(\cC^{0,\alpha}_g([0,T];\bbR^{kd})\rightarrow \cP(C^{0,\beta}([0,T];\bbR^m)$, defined by $F(Q)=Q\circ\varphi^{-1}$, which is continuous between the $C_0$-weak topologies. We then conclude by Theorem \ref{generalres} via contraction principle.
\end{proof}

\appendix

\section{Basic facts on $1$-Wasserstein metric}\label{W_section}
Let $(F,d_F)$ be a Polish space. We denote by $\cP_1(F)$ the space of probability measures on $F$ with finite first moment. It is a Polish space endowed with the $1$-Wasserstein distance $d_{W}$, namely
\begin{align}
  d_{W}(\mu, \nu)
  \;=\;
  \inf_{\pi \in \Ga(\mu, \nu)} \int_{F \times F} d_F(x^1, x^2)\; \pi(\md(x^1,x^2))
\end{align}
where $\Ga(\mu,\nu)$ is the set of all probability measures on $F \times F$ with the first marginal and the second marginal equal resp.\ to $\mu$ and $\nu$ (such measures are sometimes called transportation plans). When $F = C^{0,\al}([0,T];E)$ (for some Polish space $E$), we use the notation $d_{W,\al}$ for the $1$-Wasserstein distance associated with the $\al$-H\"older distance on $C^{0,\al}([0,T]; E)$.

We recall the following characterization of convergence in the $1$-Wasserstein metric, stated in \cite[Definition~6.8]{Vi09}. Here and in the following, we say that a map $\vp\!: F \to F'$ ($F$, $F'$ being Polish spaces) has at most linear growth if there exists $x_0 \in F$, $y_0 \in F'$ and $C \geq 0$ such that, for every $x$ in $F$
\begin{align}\label{lingrowth}
  d_{F'}(\vp(x), y_0)
  \;\leq\;
  C\,\big(1 + d_F(x,x_0)\big).
\end{align}
It is easy to see that this property is equivalent to the following fact: for any $x_0$ in $F$, $y_0$ in $F'$, there exists $C\ge0$ such that, for every $x$ in $F$, \eqref{lingrowth} holds.

\begin{lemma}\label{char_1Wasserstein}
  The following facts are equivalent:
  \begin{itemize}
    % \item $\mu_n\rightharpoonup\mu$ (convergence against $C_b$ functions) and $\int_Ed(x,x_0)\mu_n(\md x)\rightarrow \int_Ed(x,x_0)\mu(\md x)$ for some $x_0$ in $E$;
  \item $\mu_n\rightarrow\mu$ in $1$-Wasserstein distance;
  \item $\int_F \vp(x)\, \mu_n(\md x) \to \int_F\vp(x)\, \mu(\md x)$ for any function continuous $\vp\!: F \to \bbR$ with at most linear growth;
  \item $\mu_n \rightharpoonup \mu$ and there exists $x_0 \in F$ such that, for any $\eta > 0$, there exists $R > 0$ verifying
    \begin{align}
      \sup_{n \geq 1} \int_{\{d(\cdot,x_0)>R\}} d(x,x_0)\, \mu_n(\md x) \;<\; \eta.
    \end{align}
  \end{itemize}
\end{lemma}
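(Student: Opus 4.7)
The plan is to establish the cyclic implications $(1) \Rightarrow (2) \Rightarrow (3) \Rightarrow (1)$, treating $(1) \Leftrightarrow (3)$ as the classical Villani-type characterization and inserting $(2)$ between them. Throughout, fix a reference point $x_0 \in F$; since all three conditions imply $\mu_n \in \cP_1(F)$ with $\mu \in \cP_1(F)$, the quantities $\int d_F(x,x_0)\,\mu_n(\md x)$ and $\int d_F(x,x_0)\,\mu(\md x)$ are finite.

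For $(1) \Rightarrow (2)$, let $\varphi\!:F \to \bbR$ be continuous with $|\varphi(x)| \leq C(1 + d_F(x,x_0))$. First note that $1$-Wasserstein convergence implies weak convergence (by testing against $1$-Lipschitz bounded functions and using the Kantorovich--Rubinstein duality) and, by choosing $\varphi = d_F(\cdot, x_0)$ which is $1$-Lipschitz, also implies convergence of first moments $\int d_F(x,x_0)\,\mu_n(\md x) \to \int d_F(x,x_0)\,\mu(\md x)$. A standard uniform-integrability argument (write $\varphi = \varphi \cdot \chi_R + \varphi \cdot (1-\chi_R)$ for a continuous cutoff $\chi_R$ supported in $\{d_F(\cdot,x_0) \leq 2R\}$ equal to $1$ on $\{d_F(\cdot,x_0) \leq R\}$) then shows convergence of $\int \varphi\, \md\mu_n$: the bounded part converges by weak convergence, and the tail is controlled uniformly in $n$ by the convergence of first moments together with the linear growth.

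For $(2) \Rightarrow (3)$, weak convergence is immediate by restricting to $\varphi \in C_b(F)$. Taking $\varphi = d_F(\cdot, x_0)$ yields $\int d_F(x,x_0)\,\mu_n(\md x) \to \int d_F(x,x_0)\,\mu(\md x) < \infty$. Combining this with weak convergence and the lower semicontinuity of $x \mapsto d_F(x,x_0) \,\mathbf{1}_{\{d_F(x,x_0) > R\}}$ (or using a continuous truncation), a classical argument (e.g.\ as in \cite[Theorem~6.9]{Vi09}) gives the desired tail bound: for every $\eta > 0$ one can choose $R$ large such that $\sup_n \int_{\{d_F(\cdot,x_0) > R\}} d_F(x, x_0)\, \mu_n(\md x) < \eta$.

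For $(3) \Rightarrow (1)$, the most efficient route is Skorohod representation: on a suitable probability space, construct random variables $X_n \sim \mu_n$ and $X \sim \mu$ with $X_n \to X$ a.s., which is possible since $F$ is Polish and $\mu_n \rightharpoonup \mu$. Then $d_F(X_n, X) \to 0$ a.s., and $d_F(X_n, X) \leq d_F(X_n, x_0) + d_F(X, x_0)$; the tail condition in (3) together with the integrability of $d_F(X, x_0)$ yields uniform integrability of the family $\{d_F(X_n, X)\}_n$, so by Vitali's convergence theorem $\mean[d_F(X_n, X)] \to 0$. Since the joint law of $(X_n, X)$ is a coupling of $\mu_n$ and $\mu$, we conclude $d_W(\mu_n, \mu) \leq \mean[d_F(X_n, X)] \to 0$. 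The main technical point here, and the only step that uses the full structure of the problem, is verifying uniform integrability from the tail bound uniformly in $n$, which amounts to an elementary splitting argument using that $d_F(X_n, x_0) \,\mathbf{1}_{\{d_F(X_n, x_0) > R\}}$ has uniformly small expectation by hypothesis.
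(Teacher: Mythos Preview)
Your proof is correct. However, note that the paper does not actually prove this lemma: it is stated as a recall, with the reference \cite[Definition~6.8]{Vi09} (the relevant result in Villani's book is Theorem~6.9). So you have supplied a full argument where the paper simply cites a known characterization.

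Your cyclic scheme $(1)\Rightarrow(2)\Rightarrow(3)\Rightarrow(1)$ is the standard one. The steps $(1)\Rightarrow(2)$ and $(2)\Rightarrow(3)$ are essentially Villani's proof: Kantorovich--Rubinstein gives convergence against all $1$-Lipschitz functions (in particular $d_F(\cdot,x_0)$), and the truncation/uniform-integrability argument handles general linear-growth $\varphi$. For $(3)\Rightarrow(1)$ you take a slightly different route than Villani, who argues via relative compactness in $\cP_1$ (Prokhorov plus a tightness-of-moments criterion) and identification of the limit; your Skorohod-plus-Vitali argument is equally valid and arguably more direct, since it produces an explicit coupling witnessing $d_W(\mu_n,\mu)\to 0$. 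Either way, the content is classical and your write-up is sound.
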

As a consequence, we have the following Corollary.
\begin{corro}\label{cont_lingrowth}
  Let $h\!: F \to F'$ be a continuous map ($F$, $F'$ being Polish spaces) with at most linear growth.  Then, the corresponding map at the level of measures, namely $\cP_1(F) \ni \mu \mapsto \mu \circ h^{-1} \in \cP_1(F')$, is continuous in the $1$-Wasserstein metric.
\end{corro}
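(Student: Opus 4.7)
The plan is to reduce the claim to the ``linear growth test function'' characterization of $1$-Wasserstein convergence supplied by Lemma~\ref{char_1Wasserstein}. First I would verify that the pushforward $\mu \circ h^{-1}$ actually belongs to $\cP_1(F')$ whenever $\mu \in \cP_1(F)$: fixing $x_0 \in F$ and $y_0 \in F'$, the linear growth hypothesis gives $d_{F'}(h(x), y_0) \leq C(1 + d_F(x, x_0))$, so by change of variables
\begin{align*}
  \int_{F'} d_{F'}(y, y_0)\, (\mu \circ h^{-1})(\md y)
  \;=\;
  \int_F d_{F'}(h(x), y_0)\, \mu(\md x)
  \;\leq\;
  C \int_F \big(1 + d_F(x, x_0)\big)\, \mu(\md x) \;<\; \infty.
\end{align*}
Hence the map is well-defined into $\cP_1(F')$.

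Next, suppose $\mu_n \to \mu$ in the $1$-Wasserstein metric on $\cP_1(F)$. To conclude $\mu_n \circ h^{-1} \to \mu \circ h^{-1}$ in the $1$-Wasserstein metric on $\cP_1(F')$, I would apply the second characterization in Lemma~\ref{char_1Wasserstein}: it suffices to check that
\begin{align*}
  \int_{F'} \vp(y)\, (\mu_n \circ h^{-1})(\md y)
  \;\longrightarrow\;
  \int_{F'} \vp(y)\, (\mu \circ h^{-1})(\md y)
\end{align*}
for every continuous $\vp\!: F' \to \bbR$ of at most linear growth. By the standard change-of-variables formula, both sides equal $\int_F (\vp \circ h)\, \md\mu_n$ and $\int_F (\vp \circ h)\, \md\mu$ respectively, so it is enough to prove $\vp \circ h$ is continuous $F \to \bbR$ with at most linear growth; then Lemma~\ref{char_1Wasserstein} applied on $F$ yields the conclusion.

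Continuity of $\vp \circ h$ is immediate as a composition of continuous maps. For linear growth, combining $|\vp(y)| \leq C'(1 + d_{F'}(y, y_0))$ with the linear growth of $h$ gives
\begin{align*}
  |\vp(h(x))|
  \;\leq\; C'\big(1 + d_{F'}(h(x), y_0)\big)
  \;\leq\; C'\big(1 + C(1 + d_F(x, x_0))\big)
  \;=\; C'(1+C) + C'C\, d_F(x, x_0),
\end{align*}
which is of the required form. There is no substantial obstacle here: the whole argument is a one-line change of variables combined with Lemma~\ref{char_1Wasserstein}, once one notes that linear growth is preserved under composition with a linearly growing map. The mildly delicate point (but still routine) is just bookkeeping of the constants and the choice of base points $x_0, y_0$ to ensure the growth bounds line up.
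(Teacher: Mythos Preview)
Your proposal is correct and follows essentially the same approach as the paper's own proof: both reduce to the test-function characterization in Lemma~\ref{char_1Wasserstein}, rewrite $\int_{F'} \vp\, \md(\mu_n \circ h^{-1})$ as $\int_F (\vp \circ h)\, \md\mu_n$, and observe that $\vp \circ h$ is continuous with at most linear growth. Your version is slightly more detailed (you explicitly check that the pushforward lies in $\cP_1(F')$, which the paper leaves implicit), but there is no substantive difference in method.
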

\begin{proof}
  Using the equivalence above, it is enough to verify that, for any sequence $\{\mu_n : n \in \bbN\}$ converging to $\mu$ in $\cP_1(F)$, for any continuous function $\vp\!: F'\to \bbR$ with at most linear growth, $\int_F \vp(h(x))\, \mu_n(\md x) \to \int_F \vp(h(x))\, \mu(\md x)$.  Now, since $h$ is continuous with at most linear growth, also $\vp \circ h$ is continuous with at most linear growth, hence the convergence above holds.
\end{proof}
The following Lemma provides a wide class of compact sets in the $1$-Wasserstein metric.
\begin{lemma}\label{criterion_compactness}
  Let $G$ be a function $G\!: F \to [0, \infty]$, with compact sublevel sets and with more than linear growth.  Define the set
  \begin{align}
    K_M
    \;\ldef\;
    \Big\{\nu \in \cP_1(F) \,:\, \int_F G\, \md\nu \leq M \Big\}.
  \end{align}
  Then $K_M$ is compact (in the $1$-Wasserstein metric).
\end{lemma}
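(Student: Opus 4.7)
The plan is to establish sequential compactness of $K_M$ in the $1$-Wasserstein metric, which is equivalent to compactness since $(\cP_1(F), d_W)$ is a metric space. Given a sequence $\{\nu_n\}_n \subset K_M$, I will extract a subsequence that converges in $d_W$ to a limit which still lies in $K_M$. Three ingredients are needed: tightness (coming from compactness of the sublevel sets of $G$), uniform control on first-moment tails (coming from the more-than-linear growth of $G$), and closedness of $K_M$ (coming from lower semi-continuity of $\nu\mapsto\int G\,\md\nu$).

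First I would prove tightness: the sublevel set $\{G \leq R\}$ is compact by assumption, and Markov's inequality yields $\nu(\{G > R\}) \leq M/R$ uniformly in $\nu \in K_M$, so the compact sets $\{G\leq R\}$ carry all but an arbitrarily small mass as $R\to\infty$. Prokhorov's theorem then produces a subsequence (still denoted $\{\nu_n\}$) converging weakly to some $\nu \in \cP(F)$. To upgrade this to $d_W$-convergence I would invoke Lemma~\ref{char_1Wasserstein}. Fix $x_0 \in F$. By the more-than-linear growth hypothesis, for every $\eta > 0$ there is $R_\eta > 0$ such that $d_F(x,x_0) \leq (\eta/M)\, G(x)$ whenever $d_F(x,x_0) > R_\eta$, whence
\begin{align*}
\sup_n \int_{\{d_F(\cdot,x_0) > R_\eta\}} d_F(x,x_0)\, \nu_n(\md x)
\;\leq\;
\frac{\eta}{M}\, \sup_n \int_F G\, \md\nu_n
\;\leq\;
\eta.
\end{align*}
This is precisely the uniform tail condition of the third characterization in Lemma~\ref{char_1Wasserstein}, so $\nu_n \to \nu$ in $d_W$.

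It remains to check $\nu \in K_M$. Since $G$ has compact, hence closed, sublevel sets, $G$ is lower semi-continuous; writing it as an increasing limit of bounded continuous functions and applying the Portmanteau theorem shows that $\nu \mapsto \int G\,\md\nu$ is lower semi-continuous under weak convergence, and in particular $\int G\,\md\nu \leq \liminf_n \int G\,\md\nu_n \leq M$, so $\nu \in K_M$. I do not anticipate any serious obstacle: the only moderately delicate point is the quantitative tail estimate above, which is exactly where the ``more than linear'' (rather than merely linear) growth of $G$ is essential and where a purely linear bound would fail.
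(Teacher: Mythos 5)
Your proof is correct and follows essentially the same route as the paper's: sequential compactness via tightness from Markov's inequality on the compact sublevel sets of $G$, the uniform first-moment tail bound from the more-than-linear growth (invoking the characterization of $d_W$-convergence in Lemma~\ref{char_1Wasserstein}), and closedness of $K_M$ from lower semi-continuity of $\nu\mapsto\int_F G\,\md\nu$. No gaps.
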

\begin{proof}
  We prove sequential compactness (which is equivalent to compactness for metric spaces). Let $\{\nu_n : n \in \bbN\}$ be a sequence of measures in $K_M$, we will prove that $\nu_n$ is tight and that there exists $x_0 \in F$ such that, for every $\eta > 0$, there exists $R > 0$ verifying
  \begin{align}\label{unifint}
    \sup_{n \geq 1} \int_{\{d(\cdot,x_0)>R\}} d(x,x_0)\, \nu_n(\md x) \;<\; \eta.
  \end{align}
  This two conditions imply the existence of a subsequence $\{\nu_{n_k} : k \in \bbN\}$ converging to some measure $\nu$ in $\cP_1(F)$ in the $1$-Wasserstein metric; it is easy to prove that $\nu$ is still in $K_M$ (since the functional $\nu\to \int_F G\, \md\nu$ is lower semi-continuous by Corollary \ref{lsc_functional}), so that $K_M$ is compact.

  For tightness, we use the compact sublevel sets property of $G$: for every $\delta > 0$, the set $\{G \leq \delta\}$ is compact and, by Markov inequality, we have, for any $n$,
  \begin{align}
    \nu_n\big[G > \de^{-1}\big]
    \;\leq\;
    \de\, \int_F G\, \md\nu_n
    \;\leq\;
    \de M.
  \end{align}
  This proves tightness.
  
  For \eqref{unifint}, we use the more than linear growth property of $G$: for some $x_0 \in F$, for any $\eta > 0$, there exists $R > 0$ such that $d(x,x_0)/G(x)<\eta$. Hence, for any $n$,
  \begin{align}
    \int_{\{d(\cdot,x_0)>R\}} d(x,x_0)\, \nu_n(\md x)
    \;\leq\;
    \eta\, \int_{\{d(\cdot,x_0)>R\}} G(x)\, \nu_n(\md x)
    \;\leq\;
    \eta M.
  \end{align}
  This proves \eqref{unifint} (up to choosing a different $R$). The lemma is proved.
\end{proof}
We conclude this section with a result on the continuity of the doubling map for measures under the $1$-Wasserstein metric. Recall that, if $(F, d)$ is a Polish space, then $(F^2, d^{\{2\}})$ is a Polish space as well, where $d^{\{2\}}((x,y),(x',y'))^2 = d(x,x')^2 + d(y,y')^2$; similarly, for any $k \geq 2$, $(F^k, d^{\{k\}})$ is a Polish space as well, where $d^{\{k\}}((x_1, \ldots, x_k),(x_1', \ldots, x_k'))^2 = d(x_1,x_1')^2 + \ldots + d(x_k,x_k')^2$.
\begin{lemma}\label{continuity_doubling}
  Let $F$ be a Polish space, $k \geq 2$ integer. Then the map
  \begin{align}
    \cP_1(F) \ni \mu \;\longmapsto\; \mu^{\otimes k} \in \cP_1(F^k)
  \end{align}
  is continuous (where $\cP_1(F,d)$, $\cP_1(F^k,d^{\{k\}})$ are endowed with the $1$-Wassestein distance induced by $d$ and $d^{\{k\}}$, respectively).
\end{lemma}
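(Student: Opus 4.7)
The plan is to use the direct (coupling) definition of $d_W$ and construct an explicit coupling between $\mu^{\otimes k}$ and $\nu^{\otimes k}$ from a coupling between $\mu$ and $\nu$. This reduces the statement to a quantitative Lipschitz bound
\begin{align*}
  d_W^{\{k\}}\!\big(\mu^{\otimes k}, \nu^{\otimes k}\big) \;\leq\; k \cdot d_W(\mu, \nu),
\end{align*}
from which continuity (in fact Lipschitz continuity) follows immediately.

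First, I would check that $\mu \in \cP_1(F)$ implies $\mu^{\otimes k} \in \cP_1(F^k)$: fixing $x_0 \in F$ and using $d^{\{k\}}((x_0,\ldots,x_0),(y_1,\ldots,y_k)) \leq \sum_{i=1}^k d(x_0, y_i)$ (since $\sqrt{\sum a_i^2} \leq \sum a_i$ for $a_i \geq 0$), Fubini gives the bound $k \int_F d(x_0,y) \mu(\md y) < \infty$. Hence the map is well-defined.

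Next, given $\mu, \nu \in \cP_1(F)$, pick an optimal coupling $\pi \in \Gamma(\mu, \nu)$ (such exists on Polish spaces, cf.\ Villani), so that $\int_{F \times F} d(x,y)\, \pi(\md(x,y)) = d_W(\mu, \nu)$. Consider the product measure $\pi^{\otimes k}$ on $(F \times F)^k$, and identify $(F \times F)^k \cong F^k \times F^k$ by regrouping first and second coordinates. Under this identification the marginals of $\pi^{\otimes k}$ are precisely $\mu^{\otimes k}$ and $\nu^{\otimes k}$, so $\pi^{\otimes k}$ is an admissible coupling. Then, using again $d^{\{k\}} \leq \sum_{i=1}^k d$ and Fubini,
\begin{align*}
  d_W^{\{k\}}\!\big(\mu^{\otimes k}, \nu^{\otimes k}\big)
  &\;\leq\;
  \int d^{\{k\}}\!\big((x_1,\ldots,x_k),(y_1,\ldots,y_k)\big)\; \pi^{\otimes k}
  \\
  &\;\leq\;
  \sum_{i=1}^k \int d(x_i, y_i)\; \pi^{\otimes k}
  \;=\;
  k \int_{F \times F} d(x,y)\, \pi(\md(x,y))
  \;=\;
  k\, d_W(\mu, \nu),
\end{align*}
which yields the desired Lipschitz bound.

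There is no real obstacle here; the only minor subtlety is ensuring that the product coupling $\pi^{\otimes k}$ has the correct marginals after reindexing $(F \times F)^k \cong F^k \times F^k$, which is a direct computation on rectangles extended by the monotone class theorem. One can alternatively bypass the appeal to existence of optimal couplings by selecting, for each $\varepsilon > 0$, an $\varepsilon$-optimal $\pi \in \Gamma(\mu,\nu)$ and letting $\varepsilon \to 0$; the argument is unchanged.
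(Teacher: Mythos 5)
Your proof is correct, and the core idea coincides with the paper's: build a coupling of the product measures as the $k$-fold product of a coupling of $\mu$ and $\nu$, then use $d^{\{k\}}\le\sum_i d$ to reduce to the one-dimensional transport cost. The only genuine difference is in how general $k$ is handled. The paper proves the $k=2$ case with exactly your product-coupling argument (obtaining $d_W(\mu\otimes\mu,\nu\otimes\nu)\le 2\,d_W(\mu,\nu)$), then treats $k=2^h$ by iterating the doubling map $h$ times and recovers arbitrary $k$ by projecting $\mu^{\otimes 2^h}$ onto the first $k$ coordinates. You instead run the product coupling directly for general $k$, which is both shorter and quantitatively sharper: you get the Lipschitz constant $k$, whereas the iteration route only yields $2^{\lceil\log_2 k\rceil}$. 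Your remarks on well-definedness of the map and on avoiding the existence of optimal couplings via $\varepsilon$-optimal ones are fine (the paper takes the infimum over all couplings, which amounts to the same thing), and the marginal check for $\pi^{\otimes k}$ after reindexing is indeed routine.
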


\begin{proof}
  We start with the case $k = 2$. Let $\mu$, $\nu$ be two probability measures in $\cP_1(F)$.  Let $\pi$ in $\Ga(\mu, \nu)$ be an admissible plan between $\mu$ and $\nu$, namely a probability measure on $F \times F$ with first marginal $\mu$ and second marginal $\nu$.  Then, an admissible plan $\pi^{\{2\}}$ on $F^2 \times F^2$ between $\mu \otimes \mu$ and $\nu \otimes \nu$ is built from $\Pi$ as follows: identifying $F^2 \times F^2$ with $F^4$ and calling $q_j$, $j=1, \ldots, 4$, the canonical projections, $\pi^{\{2\}}$ is the unique measure on $F^4$ such that, under $\pi^{\{2\}}$, $(q_1, q_3)$ and $(q_2, q_4)$ are i.i.d.\ with distribution $\pi$.  Indeed, with this definition, $\pi^{\{2\}} \circ (q_1,q_2)^{-1} = \pi \circ(q_1)^{-1} \otimes \pi \circ(q_2)^{-1} = \mu \otimes \mu$ and similarly $\pi^{\{2\}} \circ (q_3,q_4)^{-1} =\nu \otimes \nu$, so $\pi^{\{2\}}$ is in $\Ga(\mu \otimes \mu, \nu \otimes\nu)$. Now we have
  \begin{align*}
    d_{W}(\mu \otimes \mu, \nu \otimes \nu)
    &\;=\;
    \inf_{\xi \in \Ga(\mu \otimes \mu, \nu \otimes \nu)}
    \int_{F\times F}
      d^{\{2\}}((x^1,y^1),(x^2,y^2))\,
    \xi\big(\md((x^1,y^1),(x^2,y^2))\big)
    \\[.5ex]
    &\;\leq\;
    \inf_{\pi \in \Ga(\mu, \nu)}
    \int_{F^2 \times F^2}
      d^{\{2\}}((x^1,y^1),(x^2,y^2))\,
    \pi^{\{2\}}\big( \md((x^1,y^1),(x^2,y^2)) \big)
    \\[.5ex]
    &\;\leq\;
    \inf_{\pi \in \Ga(\mu, \nu)}
    \int_{F^2\times F^2}
      \big(d(x^1,x^2) + d(y^1,y^2)\big)\,
    \pi^{\{2\}}\big(\md((x^1,y^1),(x^2,y^2))\big)
    \\[.5ex]
    &\;\leq\;
    \inf_{\pi \in \Ga(\mu, \nu)}
    \int_{F \times F} d(x^1,x^2)\, \pi\big(\md(x^1,x^2)\big)
    \,+\, \int_{F \times F} d(y^1,y^2)\, \pi\big(\md(y^1,y^2)\big)
    \;=\;
    2d_W(\mu,\nu),
  \end{align*}
  where in the second inequality we used the simple estimate $d^{\{2\}}((x^1,y^1),(x^2,y^2)) \leq d(x^1,x^2) + d(y^1,y^2)$ and in the third inequality we used the fact that $(x^1,x^2) = (q_1,q_3)$ and $(y^1,y^2) = (q_2,q_4)$ are distributed according to $\pi$.  The estimate above implies immediately continuity (and even Lipschitz continuity) for $k = 2$.

  In the case $k = 2^h$ for some positive integer $h$, it is enough to note that $\mu \mapsto \mu^{\otimes 2^h}$ is the $h$-times iteration of the map $\mu \mapsto \mu^{\otimes 2}$. In the case $k$ general, the measure $\mu^{\otimes k}$ is obtained projecting the measure $\mu^{\otimes 2^h}$ on the first $k$ components, for some $h$ with $k \leq 2^h$, so continuity of $\mu \mapsto \mu^{\otimes k}$ follows.
\end{proof}

\section{Technical results and proofs}\label{App_C}
We start with a known result on lower semi-continuous functions, that we use at least twice in the paper.
\begin{lemma}
  Let $(E, d)$ be a metric space.  Any lower semi-continuous function $f\!: E \to(-\infty, \infty]$, bounded from below, is the pointwise supremum of an increasing sequence of continuous (actually Lipschitz) maps.
\end{lemma}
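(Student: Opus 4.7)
The plan is to use the classical inf-convolution (Moreau--Yosida) regularization. After subtracting a constant we may assume $f \geq 0$. For each integer $n \geq 1$, I would define
\begin{align*}
  f_n(x) \;\ldef\; \inf_{y \in E}\, \big\{ f(y) + n\, d(x, y) \big\}.
\end{align*}
The goal is to show that $\{f_n\}$ is an increasing sequence of $n$-Lipschitz functions converging pointwise to $f$.

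The elementary properties follow directly from the definition. First, monotonicity in $n$ is immediate, and $f_n(x) \leq f(x)$ follows by taking $y = x$ in the infimum. For the Lipschitz property, given $x, x' \in E$ and any $y \in E$, the triangle inequality gives $f(y) + n\, d(x, y) \leq f(y) + n\, d(x', y) + n\, d(x, x')$; taking the infimum over $y$ yields $f_n(x) \leq f_n(x') + n\, d(x, x')$, and symmetrically, so $f_n$ is $n$-Lipschitz.

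The key step is pointwise convergence $f_n(x) \uparrow f(x)$, and this is where lower semi-continuity enters. Fix $x \in E$ and set $L \ldef \lim_n f_n(x) \leq f(x)$. I split into two cases. If $f(x) < \infty$, fix $\ve > 0$; by lower semi-continuity there exists $\de > 0$ such that $f(y) \geq f(x) - \ve$ whenever $d(x, y) \leq \de$. Choose $y_n$ with $f(y_n) + n\, d(x, y_n) \leq f_n(x) + 1/n$; using $f \geq 0$ this forces $d(x, y_n) \leq (L + 1)/n \to 0$, so $d(x, y_n) \leq \de$ for large $n$. Then $f_n(x) + 1/n \geq f(y_n) \geq f(x) - \ve$, and letting $n \to \infty$ gives $L \geq f(x) - \ve$, hence $L = f(x)$. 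If $f(x) = \infty$, fix $M > 0$; by lower semi-continuity choose $\de > 0$ with $f(y) \geq M$ for $d(x, y) \leq \de$. Then for any $y$ one has either $d(x, y) \leq \de$ (so $f(y) + n\, d(x, y) \geq M$) or $d(x, y) > \de$ (so $f(y) + n\, d(x, y) \geq n\de$), giving $f_n(x) \geq \min(M, n\de)$, which exceeds $M$ for $n \geq M/\de$. Hence $f_n(x) \to \infty = f(x)$.

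There is essentially no serious obstacle; the only minor subtlety is the $f(x) = +\infty$ case, where one cannot directly extract an ``almost minimizer'' argument and must instead use a direct two-regime estimate as above. The construction is standard and works in the generality of metric spaces.
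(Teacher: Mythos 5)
Your proof is correct and is essentially identical to the paper's: both use the inf-convolution $f_n(x)=\inf_{y\in E}\{f(y)+n\,d(x,y)\}$, with the same Lipschitz estimate and the same two-case analysis of the pointwise convergence (almost-minimizers when $f(x)<\infty$, a direct two-regime bound when $f(x)=+\infty$). The only point you gloss over is the degenerate case $f\equiv+\infty$, where $f_n\equiv+\infty$ is not a real-valued Lipschitz map; the paper disposes of it separately by taking $f_k\equiv k$.
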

\begin{proof}
  If $f$ is identically $+\infty$, then it is enough to take $f_k \equiv k$ as Lipschitz approximants. Hence, we consider $f$ assuming at least one finite value.  We define $\{f_k : k\in \bbN\}$ as the lower envelope of $f$, namely
  \begin{align}\label{def_f_k}
    f_k(x) \;=\; \inf_{y \in E} \big\{f(y) + k\, d(x,y)\big\}.
  \end{align}
  Since $f$ is bounded from below and not identically $+\infty$, $f_k$ is a real-valued function. The sequence $f_k$ is increasing and, for every $k$, $x$, we have $f_k(x) \leq f(x)$ (by choosing $y=x$ in \eqref{def_f_k}). Moreover, for each $k$, $f_k$ is Lipschitz continuous: for every $y$, $|(f(y) + kd(x,y))-(f(y) + kd(x',y)) \leq k d(x,x')$ and therefore $|f_k(x) - f_k(x')| \leq k d(x,x')$. We are left to prove the pointwise convergence of $f_k$ to $f$.

  We start with proving convergence on the points $x$ with $f(x)$ finite. Fix $\ve > 0$ and, for every $k$, take a point $x_k$ such that $f(x_k) + k d(x,x_k) < f_k(x) + \ve$. The sequence $\{x_k : k \in \bbN\}$ converges to $x$: indeed $k d(x,x_k) \leq f_k(x) + \ve +(\inf(f))^-\le f(x)+\ve+(\inf(f))^-$ for every $k$. Therefore, by lower semi-continuity,
  \begin{align}
    f(x)
    \;\leq\;
    \liminf_{k \to \infty} f(x_k)
    \;\leq\;
    \liminf_{k \to \infty} f_k(x) + \ve.
  \end{align}
  By the arbitrariness of $\ve$, we conclude $f(x)=\lim_{k \to \infty} f_k(x)$.

  For the case $f(x) = +\infty$, fix $N>0$, by lower semi-continuity, there exists $\de > 0$ such that $f>N$ on $B(x, \de)$. Therefore $f_k(x) \geq N + k \de$ and so $\{f_k(x) : k \in \bbN\}$ converges to $+\infty = f(x)$. The proof is complete.
\end{proof}
\begin{corro}\label{lsc_functional}
  Let $(E,d)$ be a metric space and let $f\!: E \to (-\infty, \infty]$ be lower semi-continuous, bounded from below.  Then, for every sequence $\{\mu_n : n \in \bbN\}$ in $\cP(E)$, converging $C_b(E)$-weakly to $\mu$ in $\cP(E)$, it holds
  \begin{align}
    \int_E f(x)\, \mu(\md x)
    \;\leq\;
    \liminf_{n \to \infty} \int_E f(x)\, \mu_n(\md x).
  \end{align}
\end{corro}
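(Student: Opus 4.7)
The plan is to reduce the statement to the standard weak-convergence property of bounded continuous test functions, using the preceding approximation lemma together with two applications of monotone convergence. First, I would shift $f$ by its lower bound: replacing $f$ by $f - \inf f$ does not change the inequality, so I may assume $f \geq 0$. Then the preceding lemma supplies an increasing sequence of real-valued Lipschitz functions $\{f_k\}_{k \in \bbN}$, with $0 \le f_k \le f$, such that $f_k \uparrow f$ pointwise on $E$.

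The next step is to truncate from above in order to obtain bounded continuous test functions: for each $k, M > 0$ set $g_{k,M} \ldef f_k \wedge M$. Each $g_{k,M}$ is continuous (as the minimum of two continuous functions) and satisfies $0 \le g_{k,M} \le M$, so it lies in $C_b(E)$. The $C_b(E)$-weak convergence $\mu_n \to \mu$ then gives
\begin{align*}
  \int_E g_{k,M}\, \mu(\md x) \;=\; \lim_{n \to \infty} \int_E g_{k,M}\, \mu_n(\md x).
\end{align*}
On the other hand, $g_{k,M} \le f_k \le f$ implies $\int g_{k,M}\, \md\mu_n \le \int f\, \md\mu_n$, so
\begin{align*}
  \int_E g_{k,M}\, \mu(\md x) \;\le\; \liminf_{n \to \infty} \int_E f\, \mu_n(\md x).
\end{align*}

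Finally, I would pass to the limit on the left-hand side: first let $M \to \infty$ (valid by monotone convergence since $g_{k,M} \uparrow f_k$ as $M \uparrow \infty$, each $f_k$ being finite-valued and nonnegative), and then let $k \to \infty$ (monotone convergence again, since $f_k \uparrow f$). This yields
\begin{align*}
  \int_E f\, \mu(\md x) \;\le\; \liminf_{n \to \infty} \int_E f\, \mu_n(\md x),
\end{align*}
which is the desired inequality. No serious obstacle is expected here: the argument is standard, and the only point of care is the truncation from above, which is needed because the Lipschitz approximants $f_k$ supplied by the previous lemma are not a priori bounded above, so cannot be tested directly against weak convergence.
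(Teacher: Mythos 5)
Your proof is correct and follows essentially the same route as the paper: both rely on the preceding lemma to write $f$ as an increasing limit of Lipschitz functions, truncate from above to land in $C_b(E)$, and conclude via weak convergence plus monotone convergence (the paper merely uses the single truncation $f_k\wedge k$ where you use the two-parameter $f_k\wedge M$, a cosmetic difference). No gaps.
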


\begin{proof}
  The previous Lemma gives that $f = \sup_{k \geq 1} f_k$, where $\{f_k : k \in \bbN\}$ is an increasing sequence of continuous functions.  We can assume, possibly replacing $f_k$ with $f_k\wedge k$, that $f_k$ is bounded for every $k$. By monotone convergence theorem, we have for every $\nu$ in $\cP(E)$
  \begin{align}
    \int_E f(x)\, \nu(\md x) \;=\; \sup_{k \geq 1} \int_E f_k(x)\, \nu(\md x).
  \end{align}
  So the function $\nu \mapsto \int_E f(x)\, \nu(\md x)$ is the supremum of a family of continuous functions in the $C_b(E)$-weak topology, therefore, by a standard argument, it is sequentially lower semi-continuous in that topology.
\end{proof}
Here is the version of Varadhan lemma we need.
\begin{theorem}[Varadhan lemma]\label{Varadhan_lemma}
  Let $E$ be a regular Haussdorff space. Suppose that $\{\mu_n : n \in \bbN\}$ is a sequence of probability measures on $E$ satisfying a large deviation principle with scale $n$ and good rate function $I$. Let $\vp\!: E \to \bbR$ be a continuous function such that
  \begin{align}\label{unif_exp}
    \limsup_{n \to \infty} \frac{1}{n}\, \log \int_E \exp(n \ga \vp)\, \md\mu_n
    \;<\;
    \infty
  \end{align}
  for some $\ga > 1$. For any $n$, let $\nu_n$ be the probability measure having density $Z_n^{-1} \me^{n\vp}$ with respect to $\mu_n$ ($Z_n$ being the normalization constant). Then the sequence $\{\nu_n : n \in  \bbN\}$ satisfies a large deviation principle with scale $n$ and rate function $J = I - \vp - \inf_E (I - \vp)$. It also holds
  \begin{align}
    \lim_{n \to \infty} \frac{1}{n}\, \log Z_n
    \;=\;
    \inf_E ( I - \vp).
  \end{align}
  In particular, if $Z_n = 1$ for each $n$ (i.e.\ if $\me^{n \vp}\,\mu_n$ is a probability measure), then $J = I - \vp$.  Furthermore, if $\{\mu_n : n\in \bbN\}$ is exponentially tight, then so is $\{\nu_n : n \in \bbN\}$ and the rate function $J$ is good.
\end{theorem}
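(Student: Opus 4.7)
The plan is to reduce the statement to the classical Varadhan lemma \cite[Theorem 4.3.1]{DeZe10} by applying it twice: once on all of $E$, to identify the exponential rate of $Z_n$, and once on closed (resp.\ open) subsets, to derive the upper (resp.\ lower) LDP bounds for the tilted family $\{\nu_n\}$. Exponential tightness of $\{\nu_n\}$ will then follow from H\"older's inequality, using crucially that the exponent $\ga$ in \eqref{unif_exp} is strictly greater than $1$; goodness of $J$ will be a consequence of exponential tightness plus the LDP upper bound.

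Applied to $\vp$ on all of $E$, classical Varadhan together with \eqref{unif_exp} yields
\begin{equation*}
  \lim_n \tfrac{1}{n} \log Z_n \;=\; \sup_E (\vp - I) \;=\; -\inf_E(I - \vp),
\end{equation*}
so the rate $J = I - \vp - \inf_E(I - \vp)$ in the statement equals $I - \vp + \lim_n \tfrac{1}{n}\log Z_n$. To obtain the LDP upper bound, I would fix a closed set $F$, truncate $\vp$ at a level $M$, decompose $\int_F \me^{n\vp}\,\md\mu_n$ according to level sets of $\vp$, and apply the LDP upper bound for $\{\mu_n\}$ to each piece; sending $n \to \infty$ and then $M \to \infty$ (with the tail $\{\vp > M\}$ controlled by the $\ga > 1$ moment) gives
\begin{equation*}
  \limsup_n \tfrac{1}{n} \log \int_F \me^{n \vp}\, \md\mu_n \;\leq\; \sup_F(\vp - I).
\end{equation*}
Dividing by $Z_n$ yields $\limsup_n \tfrac{1}{n}\log \nu_n(F) \leq -\inf_F J$. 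The matching lower bound for an open set $O$ is obtained by choosing a near-minimizer $x_0 \in O$ of $I - \vp$ and, by continuity of $\vp$, a small open neighbourhood $U \subseteq O$ of $x_0$ on which $\vp$ is almost constant, then applying the LDP lower bound of $\{\mu_n\}$ to $U$.

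For the exponential tightness of $\{\nu_n\}$, fix $M > 0$ and, by exponential tightness of $\{\mu_n\}$, choose a compact $K \subseteq E$ with $\limsup_n \tfrac{1}{n}\log \mu_n(K^c)$ sufficiently negative. H\"older's inequality with conjugate exponents $\ga$ and $\ga/(\ga - 1)$ gives
\begin{equation*}
  \int_{K^c} \me^{n\vp}\,\md\mu_n \;\leq\; \Bigl(\int_E \me^{n \ga \vp}\,\md\mu_n\Bigr)^{\!1/\ga}\, \mu_n(K^c)^{(\ga-1)/\ga},
\end{equation*}
and, dividing by $Z_n$ (whose exponential rate was identified above), yields $\limsup_n \tfrac{1}{n}\log \nu_n(K^c) \leq -M$ for a suitable $K$. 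Goodness of $J$ then follows by the standard route: exponential tightness combined with the upper bound implies that each sublevel $\{J \leq a\}$ is contained in some compact $K_a$, and $\{J \leq a\}$ is closed by lower semi-continuity of $J$, hence compact.

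The main delicacy, I expect, will be the upper bound step: the truncation of $\vp$ must be arranged so that the contribution from $\{\vp > M\}$ is absorbed by the $\ga > 1$ moment in \eqref{unif_exp}, and this is precisely why the hypothesis is formulated with a strict exponent $\ga > 1$ rather than $\ga = 1$. Everything else amounts to bookkeeping around the asymptotics of $Z_n$ and to the standard passage from an LDP upper bound plus exponential tightness to goodness of the rate function.
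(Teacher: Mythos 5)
Your proposal is correct and follows essentially the same route as the paper: the paper simply cites \cite[Theorem~4.3.1, Lemmas~4.3.4 and 4.3.6]{DeZe10} for the LDP of the tilted measures and the asymptotics of $Z_n$ (the parts you re-derive by the standard truncation and near-minimizer arguments), and then proves exponential tightness of $\{\nu_n\}$ by exactly the H\"older-inequality estimate with exponents $\ga$ and $\ga/(\ga-1)$ that you give, concluding goodness of $J$ from \cite[Lemma~1.2.18]{DeZe10}. No gaps.
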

\begin{proof}
  Apart for the last sentence, the statement is a simple consequence of Varadhan lemma in in \cite[Theorem~4.3.1, Lemma~4.3.4 and Lemma~4.3.6]{DeZe10}. The goodness of $J$ follows by \cite[Lemma!1.2.18]{DeZe10}, if we have exponential tightness for $\{\nu_n : n \in \bbN\}$.  Since $\{\mu_n : n \in \bbN\}$ is exponentially tight, for any $M > 0$, there exists $K_M$ compact set such that
  \begin{align}
    \limsup_{n \to \infty} \frac{1}{n}\log\mu_n\big[K_M^c\big] \;<\; -M.
  \end{align}
  We have
  \begin{align}
    \nu_n\big[K_M^c\big]
    \;=\;
    \frac{1}{Z_n}\,
    \int_E\, 1_{K_M^c}\, \me^{n\vp}\, \md\mu_n
    \;\leq\;
    \frac{1}{Z_n}\, \mu_n\big[K_M^c\big]^{1-1/\ga}\,
    \left( \int_E \me^{n\ga\vp}\,\md\mu_n\right)^{\!\!1/\ga}.
  \end{align}
  Now, using the assumption \eqref{unif_exp}, we easily get that
  \begin{align}
    \limsup_{n \to \infty} \frac{1}{n}\, \log\mu_n\big[K_M^c\big]
    \;<\;
    -C(M-1) - \inf_E (I - \vp)
  \end{align}
  for some constant $C>0$. The proof is complete.
\end{proof}
We prove now the lower-semi-continuity of $N_\al$.
\begin{proof}[Proof of Lemma~\ref{Nlsc}]
  Notice first that, for any $i$,
  \begin{align}
    \{N \leq i\} \;=\; \{\tau_{i+1} \geq T\}
  \end{align}
  so that lower semi-continuity of $N$ follows from upper semi-continuity of $\tau_i$, for any $i$, which we now aim to prove. We must show that, for any $i$ in $\bbN$, for any $t > 0$,
  \begin{align}
    \{\tau_{i} \geq t\}
    \;=\;
    \big\{
      X \in \cC^{0,\al}_g \,:\, \|X\|_{(1/\al)-var,[\tau_{i-1}(\bfX),t]} \leq 1
    \big\}
    \;\rdef\;
    A_i(t)
  \end{align}
  is a closed set.  We use induction on $i$. For $i=1$, since $\tau_0 = 0$, closedness follows from continuity of the $(1/\al)-var$ norm (with respect to $\bfX$). For the passage from $i$ to $i+1$, take $\{\bfX^m : m \in \bbN\}$ sequence in $A_{i+1}(t)$ converging to some $\bfX$ in $\cC_g^{0, \al}([0,T];\bbR^e)$, we must prove that $\bfX$ belongs to $A_{i+1}(t)$. By upper semi-continuity of $\tau_i$ (inductive hypothesis), we have that $\tau_i(\bfX) \geq \limsup_{m \to \infty} \tau_i(\bfX^m)$, so, for any $\de > 0$, the interval $[\tau_i(\bfX)+\delta, t]$ is contained in $[\tau_i(\bfX^m), t]$ for $m$ large enough.  So, for any $\de > 0$, by continuity and monotonicity properties of the $(1/\al)-var$ norm, we have
  \begin{align}
    \|X\|_{(1/\al)-var,[\tau_{i}(\bfX)+\delta,t]}
    \;\leq\;
    \limsup_{m \to \infty} \|X\|_{(1/\al)-var,[\tau_{i}(\bfX^m),t]} 
    \;\leq\;
    1.
  \end{align}
  By arbitrariness of $\delta>0$ and again by continuity of the norm, we get that $\|X\|_{(1/\al)-var,[\tau_{i}(\bfX),t]}\le 1$, that is $\bfX$ belongs to $A_{i+1}(t)$. The proof is complete.
\end{proof}
Now we prove Lemma \ref{Mod_regular}.
\begin{proof}[Proof of Lemma \ref{Mod_regular}]
  The Haussdorff property follows from the fact that the $(\|\cdot\|+N)^{1+\ve}$-Wasserstein topology is stronger than the $1$-Wasserstein metric (which is an Haussdorff space).

  As for the regularity property, we prove it by embedding this space into a topological vector space (which is regular). Precisely, let $V$ be the space of finite signed measures $\nu$ on $\cC_g^{0, \al}([0,T];\bbR^e)$, with finite $(|X_0|+\|\cdot\|+N)^{1+\ve}$ moment, i.e.\
  \begin{align}
    \int_{\cC^{0,\al}_g}(|X_0|+\|\bfX\|_\al+N_\al(\bfX))^{1+\ve}\, |\nu|(\md\bfX)
    \;<\;
    \infty,
  \end{align}
  where $|\nu|$ denotes the total variation measure of $\nu$. We say that a sequence $(\nu_n)_n$ converges to $\nu$ in $V$, in the $(\|\cdot\|+N)^{1+\ve}$-sense, if:
  \begin{enumerate}
  \item $\{\nu_n : n\}$ converges to $\nu$ in the $C_b$-weak topology, i.e.\ against any test function in $C_b(\cC_g^{0, \al}([0,T];\bbR^e))$;
  \item we have
    \begin{align}
      \sup_{n \geq 1}
      \int_{\cC^{0,\al}_g}(|X_0|+\|\bfX\|_\al+N_\al(\bfX))^{1+\ve}\,|\nu_n|(\md\bfX)
      \;<\;
      \infty.
    \end{align}
  \end{enumerate}
  This defines a topology on $V$ which we call $(\|\cdot\|+N)^{1+\ve}$ signed topology (or just signed topology). It is easy to see that this topology is Haussdorff and that it makes the operations $V\times V\ni (\nu_1,\nu_2)\mapsto \nu_1+\nu_2 \in V$, $\bbR\times V\ni (\alpha,\nu)\mapsto \alpha\nu \in V$ continuous; so $V$ is a Haussdorff topological vector space with the $(\|\cdot\|+N)^{1+\ve}$ signed topology. As a general result in topology, any Haussdorff topological vector space is regular, so $V$ is regular.

  It is also easy to see that $\cP_{(\|\cdot\|+N)^{1+\ve}}(\cC^{0,\al}_g)$ is closed in $V$ and that the $(\|\cdot\|+N)^{1+\ve}$ signed topology induces the $(\|\cdot\|+N)^{1+\ve}$-Wasserstein topology on $\cP_{(\|\cdot\|+N)^{1+\ve}}(\cC^{0,\al}_g)$: any subset in $\cP_{(\|\cdot\|+N)^{1+\ve}}(\cC^{0,\al}_g)$ which is closed in the $(\|\cdot\|+N)^{1+\ve}$-Wasserstein topology is also closed in the signed topology and, viceversa, the intersection of any closed (in the signed topology) subset of $V$ with $\cP_{(\|\cdot\|+N)^{1+\ve}}(\cC^{0,\al}_g)$ is closed in the $(\|\cdot\|+N)^{1+\ve}$-Wasserstein topology.

  This allows to prove that $\cP_{(\|\cdot\|+N)^{1+\ve}}(\cC^{0,\al}_g)$ is regular (with the original $(\|\cdot\|+N)^{1+\ve}$-Wasserstein topology). Indeed, let $\mu$ be in $\cP_{(\|\cdot\|+N)^{1+\ve}}(\cC^{0,\al}_g)$ and let $C$ be a closed set in the $(\|\cdot\|+N)^{1+\ve}$-Wasserstein topology. Since $C$ is closed also in the signed topology, then there exist $A$, $B$ disjoint subset of $V$, open in the signed topology, such that $\mu\in A$ and $C\subseteq B$. Hence, calling $A'$, resp.\ $B'$ the intersection of $A$, resp.\ $B$, with $\cP_{(\|\cdot\|+N)^{1+\ve}}(\cC^{0,\al}_g)$, then $A'$, $B'$ are two disjoint subset of $\cP_{(\|\cdot\|+N)^{1+\ve}}(\cC^{0,\al}_g)$, open in the $(\|\cdot\|+N)^{1+\ve}$-Wasserstein topology, with $\mu\in A'$ and $C\subseteq B'$. This proves regularity of $\cP_{(\|\cdot\|+N)^{1+\ve}}(\cC^{0,\al}_g)$. The proof is complete.
\end{proof}
Here we prove that the enhanced empirical measure associated with a rough path in $\bbR^{nd}$ is a continuous function (in the modified Wasserstein topology) of the rough path itself.
\begin{lemma}\label{lemma:RP_cont_empmeas}
  Fix $n$ and $k$ (with $n \geq k$). The map $G_n\!:\cC^{0,\al}_g([0,T]; \bbR^{nd}) \to \cP_{(\|\cdot\|_\al+N_\al)^{1+\ep}}(\cC^{0,\al}_g([0,T];\bbR^{kd}))$ given by
  \begin{align*}
    G_n(\bfX)
    \;=\;
    \bfL^{\bfX,\{k\}}_n
    \;=\;
    \frac{1}{n^k}\, \sum^n_{i_1, \ldots, i_k=1} \de_{\bfX^{\{k\};i_1, \ldots, i_k}}
  \end{align*}
  is continuous (in particular measurable).
\end{lemma}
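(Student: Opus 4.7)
The plan is to reduce continuity of $G_n$ to Lipschitz continuity of the coordinate-selection map on rough path space, then exploit the diagonal coupling for the $1$-Wasserstein part, and finally verify the moment bound required by the stronger modified Wasserstein topology.

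First I would observe that for each fixed multi-index $(i_1,\ldots,i_k)\in\{1,\ldots,n\}^k$, the selection map
\begin{align*}
  \pi_{i_1,\ldots,i_k}\colon
  \cC^{0,\al}_g([0,T];\bbR^{nd})
  \;\longrightarrow\;
  \cC^{0,\al}_g([0,T];\bbR^{kd}),
  \qquad
  \bfX \;\longmapsto\; \bfX^{\{k\};i_1,\ldots,i_k}
\end{align*}
is Lipschitz with respect to the homogeneous $\al$-H\"older distance $d_\al$. This is because the underlying linear coordinate-projection $\bbR^{nd}\to\bbR^{kd}$, $(x^1,\ldots,x^n)\mapsto(x^{i_1},\ldots,x^{i_k})$, lifts canonically to a graded group homomorphism $G^2(\bbR^{nd})\to G^2(\bbR^{kd})$, which is Lipschitz for the Carnot--Carath\'eodory distances, so $\|\pi_{i_1,\ldots,i_k}(\bfX)\|_\al\le C\|\bfX\|_\al$ and $d_\al(\pi_{i_1,\ldots,i_k}(\bfX),\pi_{i_1,\ldots,i_k}(\bfY))\le C\,d_\al(\bfX,\bfY)$ for a constant $C$ depending only on $k$.

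Next, given a sequence $\bfX^{(l)}\to\bfX$ in $\cC^{0,\al}_g([0,T];\bbR^{nd})$, the diagonal coupling
\begin{align*}
  \frac{1}{n^k}\sum_{(i_1,\ldots,i_k)}
  \de_{\bigl(\bfX^{\{k\};i_1,\ldots,i_k},\,\bfX^{(l),\{k\};i_1,\ldots,i_k}\bigr)}
  \;\in\;
  \Ga\bigl(G_n(\bfX),G_n(\bfX^{(l)})\bigr)
\end{align*}
yields at once
\begin{align*}
  d_{W,\al}\bigl(G_n(\bfX),G_n(\bfX^{(l)})\bigr)
  \;\leq\;
  \frac{1}{n^k}\sum_{(i_1,\ldots,i_k)}
  d_\al\bigl(\bfX^{\{k\};i_1,\ldots,i_k},\bfX^{(l),\{k\};i_1,\ldots,i_k}\bigr)
  \;\leq\;
  C\,d_\al(\bfX,\bfX^{(l)}) \;\longrightarrow\; 0,
\end{align*}
which gives continuity in the $1$-Wasserstein metric, and in particular $C_b$-weak convergence of $G_n(\bfX^{(l)})$ to $G_n(\bfX)$.

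To upgrade to the $(\|\cdot\|+N)^{1+\ve}$-Wasserstein topology, by definition it remains to show that the moment
\begin{align*}
  \int_{\cC^{0,\al}_g([0,T];\bbR^{kd})}
  (|Y_0|+\|\bfY\|_\al+N_\al(\bfY))^{1+\ve}\,G_n(\bfX^{(l)})(\md\bfY)
  \;=\;
  \frac{1}{n^k}\sum_{(i_1,\ldots,i_k)}
  \bigl(|X^{(l),i_1}_0|+\|\bfX^{(l),\{k\};i_1,\ldots,i_k}\|_\al+N_\al(\bfX^{(l),\{k\};i_1,\ldots,i_k})\bigr)^{1+\ve}
\end{align*}
stays bounded in $l$. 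Convergence of $\bfX^{(l)}$ in $d_\al$ forces $\sup_l(|X^{(l)}_0|+\|\bfX^{(l)}\|_\al)<\infty$, and by the Lipschitz property above the same bound holds uniformly for all projections. The only point requiring attention is $N_\al$, which is merely lower semi-continuous, so the bound must be \emph{quantitative}: using that for any geometric $\al$-H\"older rough path $\bfY$ one has $\|\bfY\|_{(1/\al)\text{-var},[s,t]}\le C\|\bfY\|_\al(t-s)^\al$, the definition of $N_\al$ yields $\tau_{i+1}^\al(\bfY)-\tau_i^\al(\bfY)\ge c\|\bfY\|_\al^{-1/\al}$, whence
\begin{align*}
  N_\al(\bfY) \;\leq\; c^{-1}\,T\,\|\bfY\|_\al^{1/\al}.
\end{align*}
Applied to every projection $\bfX^{(l),\{k\};i_1,\ldots,i_k}$, this gives the required uniform moment bound, completing the proof of continuity into the modified Wasserstein space. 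The main point to be careful about is precisely this last step: the lower-semicontinuity of $N_\al$ is insufficient and one must invoke the quantitative domination of $N_\al$ by $\|\cdot\|_\al^{1/\al}$ from rough path theory.
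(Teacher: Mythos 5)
Your proof is correct and follows essentially the same route as the paper's: verify the two defining conditions of convergence in the $(\|\cdot\|+N)^{1+\ve}$-Wasserstein topology, with the weak-convergence part handled via the (Lipschitz) continuity of the coordinate projections and the moment bound handled via the quantitative domination $N_\al(\bfY)\le C\,T\,\|\bfY\|_\al^{1/\al}$ (the paper obtains the same bound by citing $N_\al\le\|\cdot\|_{(1/\al)\text{-var}}^{1/\al}$ together with the H\"older-to-variation embedding, while you rederive it from the stopping-time definition; the paper also tests directly against $C_b$ functions rather than passing through the $1$-Wasserstein coupling, but your coupling argument proves something slightly stronger and is equally valid). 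Your emphasis on the fact that lower semicontinuity of $N_\al$ alone would not suffice is exactly the right point of care.
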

\begin{proof}
  Let $\{\bfX^m : m \in \bbN\}$ be a sequence of $nd$-dimensional geometric rough paths, converging to $\bfX$ in $\cC^{0,\al}_g([0,T];\bbR^{nd})$ (as $m\rightarrow+\infty$), we have to prove that $\bfL^{\bfX^m,\{k\}}_n$ converges to $\bfL^{\bfX,\{k\}}_n$ in the modified Wasserstein topology.
  We start proving convergence in the $C_b$-weak topology. For any $\varphi$ in $C_b(\cC^{0,\al}_g([0,T];\bbR^{kd}))$, we have
  \begin{align*}
    \int_{\cC^{0,\al}_g([0,T];\bbR^{kd})} \vp\, \md\bfL^{\bfX^m,\{k\}}_n
    \;=\;
    \frac{1}{n^k}\, \sum^n_{i_1, \ldots, i_k=1} \vp(\bfX^{m,\{k\}; i_1, \ldots, i_k}),
  \end{align*}
  so convergence of $\int \vp\, \md\bfL^{\bfX^m,\{k\}}_n$ to $\int \vp\, \md\bfL^{\bfX,\{k\}}_n$ follows from continuity of $\varphi$ (and of the projections on the $(i_1,\ldots i_k)$ components).

  To conclude, we have to prove that
  \begin{align*}
    \sup_{m \geq 1}
    \int_{\cC^{0,\al}_g([0,T];\bbR^{kd})}
      \big(|Y_0|+\|Y\|_\al+N_\al(Y)\big)^{1+\ep}\,
    \bfL^{\bfX^m,\{k\}}_n(\md\bfY)
    \;<\;
    \infty.
  \end{align*}
  For this, we remind that, for any geometric rough path $Y$, $N_\al(\bfY)\le \|\bfY\|_{(1/\al)-\mathrm{var},[0,T]}^{1/\al}$ (see for example \cite{FrHa14}, Section 11.2.3) and $\|\bfY\|_{(1/\al)-\mathrm{var},[0,T]}\le C\|\bfY\|_\al$ for some constant $C$ (as easily verified). Therefore (using also that $\|\bfY^{\{k\};i_1,\ldots i_k}\|_\al \leq C\|\bfY\|_\al$ for some $C$), we get
  \begin{align*}
    &\int_{\cC^{0,\al}_g([0,T];\bbR^{kd})}
      \big(|Y_0|+\|Y\|_\al+N_\al(Y)\big)^{1+\ep}\,
    \bfL^{\bfX^m,\{k\}}_n(\md\bfY)
    \\[.5ex]
    &\mspace{36mu}=\;
    \frac{1}{n^k}\, \sum^n_{i_1, \ldots, i_k=1}
    \big(
      |X^{m,\{k\};i_1, \ldots, i_k}_0| + \|\bfX^{m,\{k\};i_1, \ldots, i_k}\|_\al
      + N_\al(\bfX^{m,\{k\};i_1,\ldots i_k})
    \big)^{1+\ep}
    \\[.5ex]
    &\mspace{36mu}\leq\;
    C\big(|X^m_0|+\|\bfX^m\|_\al+\|\bfX^m\|_\al^{1/\al}\big)^{1+\ep}
  \end{align*}
  and the RHS above is uniformly bounded in $m$. The proof is complete.
\end{proof}
Finally we prove Lemma~\ref{lemma:Strat_approx}, starting from Corollary 13.22 in \cite{FrVi10}, following Exercise 13.22 there, and Lemma \ref{N_Gaussian}, starting from Theorems 11.9 and 11.13 in \cite{FrHa14} (see also \cite{CLL13}, Theorem 6.3).
\begin{proof}[Proof of Lemma~\ref{lemma:Strat_approx}]
  Corollary 13.22 in \cite{FrVi10} applies clearly also to Brownian rough path starting from any initial measure (since $\bfB$ and $\bfB^{(m)}$ start from the same point) and gives the existence of a constant $C>0$ such that, for every $q\ge1$, for every $m$, it holds
  \begin{align*}
    \mean\!\big[d_\al(\bfB^{(m)},\bfB)^q\big]
    \;\leq\;
    (Cq^{1/2}m^{-\eta/2})^q.
  \end{align*}
  From this we get the following estimate on the exponential of the distance above: for any $\rho > 0$,
  \begin{align*}
    \mean\!\big[\exp(\rho d_\al(\bfB^{(m)},\bfB))\big]
    \;=\;
    1 + \sum^\infty_{q=1}
    \frac{\rho^q\,\mean\!\big[d_\al(\bfB^{(m)},\bfB)^q\big]}{q!}
    \;\leq\;
    1 + \sum^\infty_{q=1} \frac{(\rho C q^{1/2}m^{-\eta/2})^q}{q!}.
  \end{align*}
  Using the elementary estimate $q^q\le e^{q-1}q!$ (which can be easily proved by induction on $q$), we have
  \begin{align*}
    \mean\!\big[\exp(\rho d_\al(\bfB^{(m)},\bfB))\big]
    \;\leq\;
    1 + \sum^\infty_{q=1}(\me \rho Cm^{-\eta/2})^q.
  \end{align*}
  So, taking $\rho = m^{\eta/2}/(2\me C)$, we get that this series converges. Hence,
  \begin{align}
    \mean\!\big[\exp\big((2\me C)^{-1} m^{\eta/2} d_\al(\bfB^{(m)},\bfB)\big)\big]
    \;<\;
    \infty.
  \end{align}
The proof is complete. [Notice that some estimates were not optimal: in fact the result holds also for $d_\al(\bfB^{(m)},\bfB)^2$ replacing $d_\al(\bfB^{(m)},\bfB)$.]
\end{proof}
\begin{proof}[Proof of Lemma \ref{N_Gaussian}]
  Notice that (for $\ve<1$, using independence of the initial datum and the increments of Brownian motion)
  \begin{align}
    \mean\!\big[\exp\big(c(|B_0|+\|\bfB\|_\beta+N_\al(\bfB))^{1+\ve}\big)\big]
    \;\leq\;
    \mean\!\big[\me^{2c|B_0|}\big]\,
    \mean\!\big[\exp\big(2c(\|\bfB\|_\beta+N_\al(\bfB))^{1+\ve}\big)\big].
  \end{align}
  Now, $\mean\!\big[\exp\big(2c(\|\bfB\|_\beta + N_\al(\bfB))^{1+\ve}\big)\big]$ is finite (actually for every $c>0$), as proved in Theorems 11.9 and 11.13 in \cite{FrHa14}; $\bbE\!\big[\me^{2c|B_0|}\big] = \int_{\bbR^e} \me^{2cx}\, \tilde{\la}(\md x)$ is finite because of the exponential integrability condition \ref{eq:exp:intcond} (replacing $c$ with $2c$). The same proof applies to $\bfB^{11}$ (and to $\bfB^{\{k\};i_1,\ldots, i_k}$ for any multi-index $(i_1,\ldots, i_k)$ also with repetition of indices).
\end{proof}

\section*{Acknowledgements}

P.K. Friz's and M. Maurelli's research was supported by the Research Center MATHEON through project C-SE8, funded by the Einstein Center for Mathematics Berlin, and by ERC grant agreement nr. 258237, under the European Union's Seventh Framework Programme (FP7/2007-2013).

%\bibliographystyle{plain}
%\bibliography{literature}

\begin{bibdiv}
\begin{biblist}

\bib{Ba15}{article}{
      author={Bailleul, I.},
       title={Flows driven by rough paths},
        date={2015},
        ISSN={0213-2230},
     journal={Rev. Mat. Iberoam.},
      volume={31},
      number={3},
       pages={901\ndash 934},
      review={\MR{3420480}},
}

\bib{CLL13}{article}{
      author={Cass, T.},
      author={Litterer, C.},
      author={Lyons, T.},
       title={Integrability and tail estimates for {G}aussian rough
  differential equations},
        date={2013},
        ISSN={0091-1798},
     journal={Ann. Probab.},
      volume={41},
      number={4},
       pages={3026\ndash 3050},
      review={\MR{3112937}},
}

\bib{CL15}{article}{
      author={Cass, T.},
      author={Lyons, T.},
       title={Evolving communities with individual preferences},
        date={2015},
        ISSN={0024-6115},
     journal={Proc. Lond. Math. Soc. (3)},
      volume={110},
      number={1},
       pages={83\ndash 107},
         url={http://dx.doi.org/10.1112/plms/pdu040},
      review={\MR{3299600}},
}

\bib{dPdH96}{article}{
      author={Dai~Pra, P.},
      author={den Hollander, F.},
       title={Mc{K}ean-{V}lasov limit for interacting random processes in
  random media},
        date={1996},
        ISSN={0022-4715},
     journal={J. Statist. Phys.},
      volume={84},
      number={3-4},
       pages={735\ndash 772},
         url={http://dx.doi.org/10.1007/BF02179656},
      review={\MR{1400186 (97f:60208)}},
}

\bib{DG87}{article}{
      author={Dawson, D.~A.},
      author={G{\"a}rtner, J.},
       title={Large deviations from the {M}c{K}ean-{V}lasov limit for weakly
  interacting diffusions},
        date={1987},
        ISSN={0090-9491},
     journal={Stochastics},
      volume={20},
      number={4},
       pages={247\ndash 308},
         url={http://dx.doi.org/10.1080/17442508708833446},
      review={\MR{885876 (89c:60092)}},
}

\bib{dMZ03}{article}{
      author={Del~Moral, P.},
      author={Zajic, T.},
       title={A note on the {L}aplace-{V}aradhan integral lemma},
        date={2003},
        ISSN={1350-7265},
     journal={Bernoulli},
      volume={9},
      number={1},
       pages={49\ndash 65},
         url={http://dx.doi.org/10.3150/bj/1068129010},
      review={\MR{1963672 (2004a:60061)}},
}

\bib{DeZe10}{book}{
      author={Dembo, A.},
      author={Zeitouni, O.},
       title={Large deviations techniques and applications},
      series={Stochastic Modelling and Applied Probability},
   publisher={Springer-Verlag, Berlin},
        date={2010},
      volume={38},
        ISBN={978-3-642-03310-0},
         url={http://dx.doi.org/10.1007/978-3-642-03311-7},
        note={Corrected reprint of the second (1998) edition},
      review={\MR{2571413 (2011b:60094)}},
}

\bib{DeSt89}{book}{
      author={Deuschel, J.-D.},
      author={Stroock, D.~W.},
       title={Large deviations},
      series={Pure and Applied Mathematics},
   publisher={Academic Press, Inc., Boston, MA},
        date={1989},
      volume={137},
        ISBN={0-12-213150-9},
      review={\MR{997938 (90h:60026)}},
}

\bib{Du02}{book}{
      author={Dudley, R.~M.},
       title={Real analysis and probability},
      series={Cambridge Studies in Advanced Mathematics},
   publisher={Cambridge University Press, Cambridge},
        date={2002},
      volume={74},
        ISBN={0-521-00754-2},
         url={http://dx.doi.org/10.1017/CBO9780511755347},
        note={Revised reprint of the 1989 original},
      review={\MR{1932358 (2003h:60001)}},
}

\bib{Fo88}{incollection}{
      author={F{\"o}llmer, H.},
       title={Random fields and diffusion processes},
        date={1988},
   booktitle={\'{E}cole d'\'{E}t\'e de {P}robabilit\'es de {S}aint-{F}lour
  {XV}--{XVII}, 1985--87},
      series={Lecture Notes in Math.},
      volume={1362},
   publisher={Springer, Berlin},
       pages={101\ndash 203},
         url={http://dx.doi.org/10.1007/BFb0086180},
      review={\MR{983373 (90f:60099)}},
}

\bib{FrHa14}{book}{
      author={Friz, P.~K.},
      author={Hairer, M.},
       title={A course on rough paths},
      series={Universitext},
   publisher={Springer, Cham},
        date={2014},
        ISBN={978-3-319-08331-5; 978-3-319-08332-2},
         url={http://dx.doi.org/10.1007/978-3-319-08332-2},
        note={With an introduction to regularity structures},
      review={\MR{3289027}},
}

\bib{FrVi10}{book}{
      author={Friz, P.~K.},
      author={Victoir, N.~B.},
       title={Multidimensional stochastic processes as rough paths},
      series={Cambridge Studies in Advanced Mathematics},
   publisher={Cambridge University Press, Cambridge},
        date={2010},
      volume={120},
        ISBN={978-0-521-87607-0},
         url={http://dx.doi.org/10.1017/CBO9780511845079},
        note={Theory and applications},
      review={\MR{2604669 (2012e:60001)}},
}

\bib{Ho63}{article}{
      author={Hoeffding, Wassily},
       title={Probability inequalities for sums of bounded random variables},
        date={1963},
        ISSN={0162-1459},
     journal={J. Amer. Statist. Assoc.},
      volume={58},
       pages={13\ndash 30},
      review={\MR{0144363 (26 \#1908)}},
}

\bib{LQZ02}{article}{
      author={Ledoux, M.},
      author={Qian, Z.},
      author={Zhang, T.},
       title={Large deviations and support theorem for diffusion processes via
  rough paths},
        date={2002},
        ISSN={0304-4149},
     journal={Stochastic Process. Appl.},
      volume={102},
      number={2},
       pages={265\ndash 283},
         url={http://dx.doi.org/10.1016/S0304-4149(02)00176-X},
      review={\MR{1935127 (2003m:60152)}},
}

\bib{RaRe91}{book}{
      author={Rao, M.~M.},
      author={Ren, Z.~D.},
       title={Theory of {O}rlicz spaces},
      series={Monographs and Textbooks in Pure and Applied Mathematics},
   publisher={Marcel Dekker, Inc., New York},
        date={1991},
      volume={146},
        ISBN={0-8247-8478-2},
         url={http://dx.doi.org/10.1080/03601239109372748},
      review={\MR{1113700 (92e:46059)}},
}

\bib{Sz91}{incollection}{
      author={Sznitman, A.-S.},
       title={Topics in propagation of chaos},
        date={1991},
   booktitle={\'{E}cole d'\'{E}t\'e de {P}robabilit\'es de {S}aint-{F}lour
  {XIX}---1989},
      series={Lecture Notes in Math.},
      volume={1464},
   publisher={Springer, Berlin},
       pages={165\ndash 251},
         url={http://dx.doi.org/10.1007/BFb0085169},
      review={\MR{1108185 (93b:60179)}},
}

\bib{Ta84}{incollection}{
      author={Tanaka, H.},
       title={Limit theorems for certain diffusion processes with interaction},
        date={1984},
   booktitle={Stochastic analysis ({K}atata/{K}yoto, 1982)},
      series={North-Holland Math. Library},
      volume={32},
   publisher={North-Holland, Amsterdam},
       pages={469\ndash 488},
         url={http://dx.doi.org/10.1016/S0924-6509(08)70405-7},
      review={\MR{780770 (86i:60204)}},
}

\bib{Vi09}{book}{
      author={Villani, C.},
       title={Optimal transport},
      series={Grundlehren der Mathematischen Wissenschaften [Fundamental
  Principles of Mathematical Sciences]},
   publisher={Springer-Verlag, Berlin},
        date={2009},
      volume={338},
        ISBN={978-3-540-71049-3},
         url={http://dx.doi.org/10.1007/978-3-540-71050-9},
        note={Old and new},
      review={\MR{2459454 (2010f:49001)}},
}

\bib{WWW10}{article}{
      author={Wang, R.},
      author={Wang, X.},
      author={Wu, L.},
       title={Sanov's theorem in the {W}asserstein distance: a necessary and
  sufficient condition},
        date={2010},
        ISSN={0167-7152},
     journal={Statist. Probab. Lett.},
      volume={80},
      number={5-6},
       pages={505\ndash 512},
         url={http://dx.doi.org/10.1016/j.spl.2009.12.003},
      review={\MR{2593592 (2010k:60106)}},
}

\end{biblist}
\end{bibdiv}

\end{document}